\definecolor{ANDREW}{RGB}{255,127,0}
\theoremstyle{plain}
\newtheorem{proposition}{Proposition}[section]
\newtheorem{theorem}[proposition]{Theorem}
\newtheorem{lemma}[proposition]{Lemma}
\newtheorem{corollary}[proposition]{Corollary}
\theoremstyle{definition}
\newtheorem{definition}[proposition]{Definition}
\newtheorem*{assumption}{Assumption}
\newtheorem*{addassumption}{Additional Assumption}
\theoremstyle{remark}
\newtheorem{remark}[proposition]{Remark}
\DeclareMathOperator{\Aff}{Aff}
\DeclareMathOperator{\Aut}{Aut}
\DeclareMathOperator{\Real}{Re}
\DeclareMathOperator{\Imaginary}{Im}
\DeclareMathOperator{\SL}{SL}
\DeclareMathOperator{\GL}{GL}
\DeclareMathOperator{\SO}{SO}
\DeclareMathOperator{\SU}{SU}
\DeclareMathOperator{\Hol}{Hol}
\DeclareMathOperator{\Span}{Span} 
\DeclareMathOperator{\Euc}{Euc}
\DeclareMathOperator{\Haus}{Haus}
\DeclareMathOperator{\Sp}{Sp}
\DeclareMathOperator{\Ad}{Ad}
\DeclareMathOperator{\Ac}{\mathcal{A}}
\DeclareMathOperator{\Ec}{\mathcal{E}}
\DeclareMathOperator{\Gc}{\mathcal{G}}
\DeclareMathOperator{\Hc}{\mathcal{H}}
\DeclareMathOperator{\Lc}{\mathcal{L}}
\DeclareMathOperator{\Nc}{\mathcal{N}}
\DeclareMathOperator{\Oc}{\mathcal{O}}
\DeclareMathOperator{\Pc}{\mathcal{P}}
\DeclareMathOperator{\Bb}{\mathbb{B}}
\DeclareMathOperator{\Cb}{\mathbb{C}}
\DeclareMathOperator{\Db}{\mathbb{D}}
\DeclareMathOperator{\Kb}{\mathbb{K}}
\DeclareMathOperator{\Nb}{\mathbb{N}}
\DeclareMathOperator{\Rb}{\mathbb{R}}
\DeclareMathOperator{\Xb}{\mathbb{X}}
\DeclareMathOperator{\Zb}{\mathbb{Z}}
\DeclareMathOperator{\gL}{\mathfrak{g}}
\newcommand{\abs}[1]{\left|#1\right|}
\newcommand{\norm}[1]{\left\|#1\right\|}
\newcommand{\wh}[1]{\widehat{#1}}
\begin{document}

\title[The automorphism group and limit set of a bounded domain]{The automorphism group and limit set of a bounded domain II: the convex  case}
\author{Andrew Zimmer}\address{Department of Mathematics, College of William and Mary, Williamsburg, VA, 23185.}
\email{amzimmer@wm.edu}
\date{\today}
\keywords{automorphism group, hyperbolic geometry, convex domains, Kobayashi metric}
\subjclass[2010]{32M99, 22F50, 53C24, 32F45, 53C35}

\maketitle

\begin{abstract} For convex domains with $C^{1,\epsilon}$ boundary we give a precise description of the automorphism group: if an orbit of the automorphism group accumulates on at least two different closed complex faces of the boundary, then the automorphism group has finitely many components and the connected component of the identity is the almost direct product of a compact group and a non-compact connected simple Lie group with real rank one and finite center. In this case, we also show the limit set is homeomorphic to a sphere and prove a gap theorem:  either the domain is biholomorphic to the unit ball (and the limit set is the entire boundary) or the limit set has co-dimension at least two in the boundary.\end{abstract}

\section{Introduction} 

Given a domain $\Omega \subset \Cb^d$, let $\Aut(\Omega)$ denote the automorphism group of $\Omega$, that is the group of biholomorphic maps $\Omega \rightarrow \Omega$. The group $\Aut(\Omega)$ is a topological group when endowed with the compact-open topology and when $\Omega$ is bounded H. Cartan proved that $\Aut(\Omega)$ is a Lie group. We will let $\Aut_0(\Omega)$ denote the connected component of the identity in $\Aut(\Omega)$.  The \emph{limit set of $\Omega$}, denoted $\Lc(\Omega)$, is the set of points $x \in \partial \Omega$ where there exists some $z \in \Omega$ and a sequence $\varphi_n \in \Aut(\Omega)$ such that $\varphi_n(z) \rightarrow x$. When $\Omega$ is bounded, $\Aut(\Omega)$ acts properly on $\Omega$. Hence for bounded domains, $\Lc(\Omega)$ is non-empty if and only if $\Aut(\Omega)$ is non-compact. 

This is the second of a series of papers studying the group $\Aut(\Omega)$ and the set $\Lc(\Omega)$. As in~\cite{Z2017b} our motivating examples are the so-called \emph{generalized ellipses}: 
\begin{align*}
\Ec_{m_1, \dots, m_d} = \left\{ (z_1,\dots, z_d)  \in \Cb^{d} : \abs{z_1}^{2m_1} +\dots +\abs{z_d}^{2m_d} < 1 \right\}
\end{align*}
where $m_1, \dots, m_d \in \Nb$. Webster~\cite{W1979} showed that  $\Aut(\Ec_{m_1,\dots, m_d})$ has finitely many components and that there is a compact normal subgroup $N \leq \Aut_0(\Omega)$ such that the quotient $\Aut_0(\Omega)/N$ is biholomorphic to $\Aut(\Bb_k)$ where
\begin{align*}
k = \#\{ i : m_i = 1\}.
\end{align*}
In addition, if $e_1,\dots, e_d$ is the standard basis of $\Cb^d$ and
\begin{align*}
V = \Span_{\Cb} \{ e_i : m_i = 1\},
\end{align*}
then $\Lc(\Ec_{m_1,\dots, m_d}) = \partial \Ec_{m_1,\dots, m_d} \cap V$. In particular, $\Lc(\Ec_{m_1,\dots, m_d})$ is diffeomorphic to an odd dimensional sphere and  either $\Ec_{m_1,\dots, m_d}$ is the unit ball or 
\begin{align*}
\dim_{\Rb} \Lc(\Ec_{m_1,\dots, m_d}) \leq \dim_{\Rb} \partial \Ec_{m_1,\dots, m_d}-2.
\end{align*}

In this paper we extend these properties to convex domains with $C^{1,\epsilon}$ boundary. Before stating the main result we need two more definitions. 

Given a group $G$ and subgroups $G_1, \dots, G_n \leq G$ we say that $G$ is the \emph{almost direct product of $G_1, \dots, G_n$} if $G=G_1\cdots G_n$ and distinct pairs of $G_1,\dots, G_n$ commute and have finite intersection. 

Given a convex domain $\Omega \subset \Cb^d$ with $C^1$ boundary and $x \in \partial \Omega$, let $T_{x}^{\Cb} \partial \Omega \subset \Cb^d$ be the complex affine hyperplane tangent to $\partial\Omega$ at $x$. Then the \emph{closed complex face of $x$ in $\partial \Omega$} is the set $T_{x}^{\Cb} \partial \Omega \cap \partial \Omega$. 
 
 \begin{theorem}\label{thm:main_convex} Suppose $\Omega \subset \Cb^d$ is a bounded convex domain with $C^{1,\epsilon}$ boundary and $\Lc(\Omega)$ intersects at least two different closed complex faces of $\partial \Omega$. Then:
 \begin{enumerate}
 \item $\Aut(\Omega)$ has finitely many connected components. 
 \item $\Aut_0(\Omega)$ is the almost direct product of closed subgroups $G$ and $N$ where 
 \begin{enumerate}
 \item $N$ is compact,
 \item  $G$ is a connected simple Lie group with finite center and real rank one.
 \end{enumerate}
   \item $\Lc(\Omega)$ is homeomorphic to a positive dimensional sphere. Moreover, either 
 \begin{enumerate}
 \item $\dim \Lc(\Omega) \leq \dim \partial \Omega - 2$ or
 \item $\Lc(\Omega) = \partial \Omega$ and $\Omega$ is biholomorphic to the unit ball. 
 \end{enumerate}
 \end{enumerate}
 \end{theorem}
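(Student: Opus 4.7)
The plan is to use the author's earlier results on the coarse geometry of convex $C^{1,\epsilon}$ domains under the Kobayashi distance, combined with the structure theory of Lie groups acting on Gromov hyperbolic spaces. Although $(\Omega, k_\Omega)$ need not be Gromov hyperbolic when the boundary contains non-trivial complex faces, collapsing each closed complex face of $\partial\Omega$ produces a quotient under which $\Omega$ is quasi-isometric to a proper Gromov hyperbolic space $X$ on which $\Aut(\Omega)$ acts properly by isometries. The associated continuous $\Aut(\Omega)$-equivariant surjection $q : \partial\Omega \to \partial_\infty X$ has fibers equal to the closed complex faces of $\partial\Omega$, so the hypothesis that $\Lc(\Omega)$ meets two distinct complex faces translates to the statement that $\Aut(\Omega)$ acts non-elementarily on $X$.

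From non-elementarity I would invoke the classification of connected Lie groups admitting a proper non-elementary isometric action on a proper Gromov hyperbolic space: any such group is an almost direct product of a compact normal subgroup $N$ and a non-compact connected simple Lie group $G$ of real rank one with finite center. Applied to $\Aut_0(\Omega)$ this gives (2). For (1), finiteness of $\Aut(\Omega)/\Aut_0(\Omega)$ follows because $\Aut(\Omega)$ normalizes the decomposition $G \cdot N$, and both $\Aut(N)$ (for $N$ a compact Lie group) and $\Aut(G)$ (for $G$ a simple Lie group with finite center) have finitely many components.

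For (3), the limit set of the non-elementary $G$-action on $X$ is the full Furstenberg boundary $G/P$, a sphere $S^m$, and compatibility with the complex structure (a $C^{1,\epsilon}$ CR-invariant embedding into $\partial\Omega$) forces $G$ to be locally isomorphic to $\SU(k,1)$ with $m = 2k-1$. I would then show that the restriction of $q$ to $\Lc(\Omega)$ is a homeomorphism onto $S^m$: any positive-dimensional complex face meeting $\Lc(\Omega)$ would be stabilized by a parabolic subgroup of $G$ and would produce unbounded symmetries of $\Omega$, contradicting properness of the $\Aut(\Omega)$-action. Hence $\Lc(\Omega) \cong S^{2k-1}$. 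If $k < d$, then $\dim \Lc(\Omega) = 2k-1 \leq 2d-3 = \dim\partial\Omega - 2$. If $k = d$, then $\Lc(\Omega)$ is a compact $(2d-1)$-dimensional submanifold of $\partial\Omega$, hence open by invariance of domain and thus equal to $\partial\Omega$ by connectedness; in this case $\Aut(\Omega)$ acts cocompactly on $\Omega$, making $\Omega$ a bounded homogeneous convex domain with $C^{1,\epsilon}$ boundary, and the Vinberg--Piatetski-Shapiro classification combined with the regularity hypothesis forces $\Omega$ to be biholomorphic to the unit ball $\Bb_d$.

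I expect the main obstacle to be the injectivity step in (3), namely showing that the complex faces meeting $\Lc(\Omega)$ must be singletons. This is the bridge between the abstract Lie-theoretic output on $X$ and the concrete boundary geometry of $\Omega$, and it seems to require combining the dynamics of the rank-one $G$-action on the Gromov boundary with convex-geometric rigidity coming from the $C^{1,\epsilon}$ hypothesis---exactly the ingredients that fail in coarser regularity classes where ``parabolic'' complex faces can persist inside the limit set.
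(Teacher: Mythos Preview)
Your overall architecture is plausible, but there is a genuine error in part (3) that breaks the dimension gap. You assert that ``compatibility with the complex structure \dots\ forces $G$ to be locally isomorphic to $\SU(k,1)$,'' and then your gap argument rests entirely on $\dim\Lc(\Omega)=2k-1$ being odd. The theorem does \emph{not} claim this, and the paper explicitly leaves open whether $\SO(k,1)$, $\Sp(k,1)$, or $F_4^{-20}$ can occur for $C^{1,\epsilon}$ domains; the $\SU(k,1)$ conclusion is the separate Theorem~\ref{thm:main_convex_Cinfty}, which requires $C^\infty$ boundary and passes through finite type. With only $C^{1,\epsilon}$ you have no CR structure on $\partial\Omega$ to leverage, so your ``CR-invariant embedding'' step is unavailable. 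Consequently your dichotomy $2k-1\le 2d-3$ or $2k-1=2d-1$ does not cover the case $\dim\Lc(\Omega)=2d-2$, which is exactly the case the paper has to rule out by hand: it shows that $\dim\Lc(\Omega)=2d-2$ would force $G$ to be locally $\SO(2d-1,1)$, and then a dimension count on the isotropy (comparing $\dim\SO(2d-1)$ against $\dim\mathrm{U}(d)$, since the stabilizer of a point embeds in the unitary group of the Bergman metric) gives a contradiction for $d\ge 2$.

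There are also softer issues upstream. The ``collapse the complex faces to get a proper Gromov hyperbolic space $X$ with a proper isometric $\Aut(\Omega)$-action'' step is not among the author's earlier results you cite; what is available is a weak visibility/Gromov-product statement (Theorems~\ref{thm:GP} and~\ref{thm:visible}), which the paper uses pointwise rather than via a global quotient. The paper's route to the almost direct product is not an off-the-shelf classification of Lie groups acting on hyperbolic spaces, but a direct argument: the solvable radical cannot contain hyperbolic elements (ping-pong would produce a free subgroup) or parabolic ones (conjugating by a hyperbolic element moves the limit set), hence is a compact central torus; then the centralizer of a hyperbolic element is virtually cyclic, forcing real rank one and finite center. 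Your abstract route may be salvageable, but you would need to supply both the quotient construction and a precise reference for the Lie-group classification you invoke, and in any case you must replace the $\SU(k,1)$ claim with a direct argument excluding $\dim\Lc(\Omega)=2d-2$.
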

 
 \begin{remark} \ \begin{enumerate} 
 \item Two Lie groups are said to be \emph{locally isomorphic} if they have isomorphic Lie algebras. It follows from the classification of simple Lie groups that every simple Lie group with real rank one is locally isomorphic to one of $\SO(k,1)$, $\SU(k,1)$, $\Sp(k,1)$, or $F^{-20}_{4}$. Further these groups coincide, up to a finite quotient, with ${ \rm Isom}_0(X)$ where $X$ is real hyperbolic space, complex hyperbolic space, quaternionic hyperbolic space, or the Cayley hyperbolic plane.
  \item A theorem of Griffiths~\cite{G1971} implies that there exists examples of domains $\Omega \subset \Cb^2$ where $\Aut(\Omega)$ is infinite, discrete, and the quotient $\Aut(\Omega) \backslash \Omega$ is compact (see~\cite{GR2015} for details). The last condition implies that $\Lc(\Omega) = \partial \Omega$. These examples are never convex by a theorem of Frankel~\cite{F1989}.
 \end{enumerate}
 \end{remark}
 
The automorphism group of the unit ball $\Bb_d$ in $\Cb^d$ is locally isomorphic to $\SU(d,1)$ and it is unclear whether there exists examples of convex domains $\Omega$ where the group $G$ in the statement of Theorem~\ref{thm:main_convex} is locally isomorphic to $\SO(k,1)$, $\Sp(k,1)$, or $F^{-20}_{4}$.  For smooth convex domains this is impossible. 

\begin{theorem}\label{thm:main_convex_Cinfty}
Suppose $\Omega \subset \Cb^d$ is a bounded convex domain with $C^{\infty}$ boundary and $\Lc(\Omega)$ intersects at least two different closed complex faces of $\partial \Omega$. If $G$ is the group in the statement of Theorem~\ref{thm:main_convex}, then $G$ is locally isomorphic to $\SU(1,k)$ for some $k \geq 1$.
\end{theorem}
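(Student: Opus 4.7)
The plan is to use a scaling argument at a limit point $x_0 \in \Lc(\Omega)$ to reduce the problem to identifying the non-compact simple factor in the automorphism group of a convex polynomial model domain, and to show that this factor is always locally isomorphic to $\SU(1,k)$. The $C^{\infty}$ hypothesis (rather than just $C^{1,\epsilon}$) enters precisely through this reduction: smooth convex domains of finite type admit well-defined polynomial tangent models via Pinchuk-type scaling, while $C^{1,\epsilon}$ domains need not.

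Concretely, pick a base point $z_0 \in \Omega$ and a sequence $\{g_n\} \subset G$ with $g_n(z_0) \to x_0 \in \Lc(\Omega)$. Using convexity and $C^{\infty}$ smoothness, first show that $\partial \Omega$ is of finite D'Angelo type at $x_0$: otherwise the accumulation of the $G$-orbit at $x_0$ would be incompatible with properness of the $\Aut(\Omega)$-action, by a standard iterated bracket argument for smooth convex domains. After a holomorphic change of coordinates centered at $x_0$ with outward normal in the $\Real w$-direction, $\partial\Omega$ is locally the graph $\Real w = P(z_1,\dots,z_{d-1}) + o(\cdots)$ with $P$ a convex polynomial with no pluriharmonic terms. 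Composing $g_n$ with anisotropic dilations $\Lambda_n$ matched to the multi-type weights of $P$ and passing to a subsequence produces a biholomorphism $F : \Omega \to \wh\Omega$ onto the polynomial model $\wh\Omega = \{(z,w) \in \Cb^d : \Real w > P(z)\}$, and this $F$ conjugates $\Aut(\Omega)$ to $\Aut(\wh\Omega)$, so $G$ embeds as a closed subgroup of $\Aut(\wh\Omega)$.

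The final step is to identify the non-compact simple factor of $\Aut(\wh\Omega)$. A direct analysis of the polynomial holomorphic vector fields on $\wh\Omega$ tangent to $\partial \wh\Omega$ should show that this factor is always locally isomorphic to $\SU(1,k)$ for some $k \geq 1$, where roughly $k-1$ is the complex dimension of the maximal complex subspace on which $P$ restricts to a Hermitian quadratic form --- the \emph{ball directions} coming from the Siegel upper half-space realization of $\Bb_{k}$; the remaining polynomial part of $P$ is controlled by the compact factor $N$. I expect the main obstacle to be this classification step: ruling out a simple rank-one action of type $\SO(m,1)$, $\Sp(m,1)$, or $F_4^{-20}$ on a convex polynomial model requires showing that any such action would force $P$ to be degenerate in a way incompatible with $\wh\Omega$ being a tangent model of a bounded convex smooth domain. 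This is essentially a Lie-algebraic computation using the explicit polynomial structure of $P$ together with the convexity and finite-type constraints.
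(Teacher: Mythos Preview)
The paper's proof is a two-line citation: first, the hypotheses force $\Omega$ to have finite type (this is \cite[Theorems 8.1, 9.1, Proposition 10.1]{Z2017}), and then the finite-type case is already handled by \cite[Theorem 1.2]{Z2017b}, which gives $G/Z(G) \cong \Aut(\Bb_k)$ directly. Your outline is in the same spirit---scaling to a polynomial model and then classifying the non-compact factor---but you are essentially proposing to re-derive both cited results from scratch rather than invoke them.

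Two concrete remarks on the gaps. First, your sketch for finite type (``incompatible with properness by a standard iterated bracket argument'') is not how this goes: properness of the $\Aut(\Omega)$-action is automatic on any bounded domain, independent of type. The actual argument in \cite{Z2017} uses that the two-face hypothesis produces a hyperbolic element, and then shows that the boundary cannot contain a complex-analytic disk through its attracting point (cf.\ Proposition~\ref{prop:cplx_disks} here); this is what forces finite type, and it is not a bracket computation. Second, the step you correctly flag as the main obstacle---ruling out $\SO(m,1)$, $\Sp(m,1)$, $F_4^{-20}$ on a convex polynomial model---is exactly the content of \cite[Theorem 1.2]{Z2017b}, where it is done by showing the model is biholomorphic to a weighted homogeneous polynomial domain and analyzing $\Aut$ of that domain explicitly. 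Your heuristic about ``ball directions'' from the Hermitian-quadratic part of $P$ is the right intuition, but turning it into a proof is the whole paper \cite{Z2017b}, not a short Lie-algebraic computation.

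So: correct route, same as the paper's (once unpacked), but the two places you mark as needing work are precisely the substance of the cited references and should be cited rather than sketched.
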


\begin{remark} In~\cite[Theorems 8.1 and 9.1, Proposition 10.1]{Z2017}, we proved that if $\Omega \subset \Cb^d$ is a bounded convex domain with $C^{\infty}$ boundary and $\Lc(\Omega)$ intersects at least two different closed complex faces of $\partial \Omega$, then $\Omega$ has finite type. In particular, Theorem~\ref{thm:main_convex_Cinfty} follows from~\cite[Theorem 1.2]{Z2017b}.
\end{remark}

 \subsection{Prior Work and Motivation} 
 
 As mentioned above, this is the second paper in a series of papers studying the biholomorphism group and limit set of a bounded domain. In the first paper we considered finite type domains and proved the following.
 
  \begin{theorem}\cite[Theorem 1.2]{Z2017b} Suppose $\Omega \subset \Cb^d$ is a bounded pseudoconvex domain with finite type and $\Lc(\Omega)$ contains at least two distinct points. Then:
 \begin{enumerate}
 \item $\Omega$ is biholomorphic to a weighted homogeneous polynomial domain.  
 \item $\Aut(\Omega)$ has finitely many connected components. 
 \item $\Aut(\Omega)$ is the almost direct product of closed subgroups $G$ and $N$ where 
 \begin{enumerate}
 \item $N$ is compact,
 \item $G$ is a connected Lie group with finite center and there exists an isomorphism $\rho:G/Z(G)\rightarrow \Aut(\Bb_k)$ for some $k \geq 1$.
 \end{enumerate}
   \item $\Lc(\Omega)$ is a smooth submanifold of $\partial \Omega$ and there exists an $\rho$-equivariant diffeomorphism $F:\Lc(\Omega) \rightarrow \partial \Bb_k$. In particular, $\Lc(\Omega)$ is an odd dimensional sphere and so either 
 \begin{enumerate}
 \item $\dim \Lc(\Omega) \leq \dim \partial \Omega - 2$ or
 \item $\Lc(\Omega) = \partial \Omega$ and $\Omega$ is biholomorphic to the unit ball. 
 \end{enumerate}
 \end{enumerate}
 \end{theorem}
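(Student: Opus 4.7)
The plan is to reduce the theorem to the finite type case treated in~\cite{Z2017b} by invoking earlier work on smooth convex domains. The entire argument is essentially already contained in the remark following the theorem statement; what remains is to spell out the reduction and check that the resulting almost direct product decomposition is compatible with the one supplied by Theorem~\ref{thm:main_convex}.

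First, I would apply~\cite[Theorems 8.1 and 9.1, Proposition 10.1]{Z2017}: under the standing hypotheses that $\Omega$ is bounded convex with $C^{\infty}$ boundary and that $\Lc(\Omega)$ intersects at least two distinct closed complex faces of $\partial \Omega$, the boundary $\partial \Omega$ is of finite type in the sense of D'Angelo. In particular, $\Omega$ is pseudoconvex of finite type, and the two-face hypothesis forces $\Lc(\Omega)$ to contain at least two points (indeed, to intersect two disjoint faces).

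Next, I would invoke the main result~\cite[Theorem 1.2]{Z2017b} recalled in the excerpt. Its hypotheses are now verified, so $\Aut_0(\Omega)$ is realized as an almost direct product of closed subgroups $G' \cdot N'$, where $N'$ is compact and $G'$ is a connected Lie group with finite center satisfying $G'/Z(G') \cong \Aut(\Bb_k)$ for some $k \geq 1$. Since $\Aut(\Bb_k)$ is locally isomorphic to $\SU(1,k)$, the group $G'$ is itself locally isomorphic to $\SU(1,k)$.

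Finally, I would identify this $G'$ with the group $G$ produced by Theorem~\ref{thm:main_convex}. The main (though routine) obstacle is to check that the two almost direct product decompositions of $\Aut_0(\Omega)$ have locally isomorphic non-compact factors. I would argue at the Lie algebra level: the Lie algebra $\gL$ of $\Aut_0(\Omega)$ splits uniquely into an ideal of compact type and a non-compact semisimple complement $\gL_{nc}$. Theorem~\ref{thm:main_convex} identifies $\gL_{nc}$ with the Lie algebra of $G$, while the application of~\cite[Theorem 1.2]{Z2017b} identifies $\gL_{nc}$ with the Lie algebra of $G'$. Consequently $G$ and $G'$ share a common Lie algebra, and $G$ is locally isomorphic to $\SU(1,k)$ as claimed.
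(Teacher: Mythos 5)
You have proved the wrong statement, and the step that would prove the right one is circular. The theorem in question is the quoted result \cite[Theorem 1.2]{Z2017b} itself: its hypotheses are that $\Omega \subset \Cb^d$ is a bounded \emph{pseudoconvex} domain of \emph{finite type} and that $\Lc(\Omega)$ contains two distinct points. Your argument instead starts from the standing hypotheses of Theorem~\ref{thm:main_convex_Cinfty} (bounded \emph{convex}, $C^\infty$ boundary, limit set meeting two closed complex faces), which are neither implied by nor equivalent to the hypotheses of the statement, and then your second step is to ``invoke the main result \cite[Theorem 1.2]{Z2017b}'' --- which is precisely the theorem you were asked to prove. In the present paper this theorem is not proved at all; it is recalled as background from the earlier paper, where its proof goes through a rescaling to a weighted homogeneous polynomial model and an analysis of the resulting automorphism group. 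A proof attempt here must either reproduce that argument or give a genuinely independent one; quoting the theorem itself establishes nothing.

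Moreover, even granting the circular citation, what your reduction yields is a different conclusion: that the non-compact factor $G$ of $\Aut_0(\Omega)$ is locally isomorphic to $\SU(1,k)$ for smooth convex domains. That is Theorem~\ref{thm:main_convex_Cinfty} of this paper, and your argument is essentially the one given in the remark following it (finite type via \cite[Theorems 8.1 and 9.1, Proposition 10.1]{Z2017}, then \cite[Theorem 1.2]{Z2017b}). None of the actual conclusions of the quoted theorem are addressed: that $\Omega$ is biholomorphic to a weighted homogeneous polynomial domain, that $\Aut(\Omega)$ has finitely many components, that $\Aut(\Omega)$ is an almost direct product with $G/Z(G) \cong \Aut(\Bb_k)$, and that $\Lc(\Omega)$ is a smooth submanifold admitting a $\rho$-equivariant diffeomorphism onto $\partial \Bb_k$. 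The Lie-algebra matching argument in your last paragraph is fine as far as it goes, but it only compares two decompositions of $\Aut_0(\Omega)$; it does not supply any of the missing content.
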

 
 \begin{remark} A domain $\Pc$ is a \emph{weighted homogeneous polynomial domain} if 
\begin{align*}
\Pc = \{ (w,z) \in \Cb \times \Cb^{d-1} : { \rm Im}(w) > p(z) \}
\end{align*}
where $p$ is a weighted homogeneous polynomial. 
 \end{remark}
 
Theorem~\ref{thm:main_convex} can be seen as a low regularity analogue of the above theorem.  We suspect that there exist examples of bounded convex domains $\Omega$ with $C^{1,\epsilon}$ boundary where $\Lc(\Omega)$ intersects at least two closed complex faces of $\Omega$, but $\Omega$ is not biholomorphic to a domain defined by a polynomial. 

Theorem~\ref{thm:main_convex} is also motivated by a number of prior results in the literature (for example \cite{W1977, R1979, GK1987, K1992, BP1994, W1995, Z1995, IK2001, V2009, Z2017}). See Section 1.1 in~\cite{Z2017b} for a detailed discussion. 

\subsection{Structure of the paper} Sections~\ref{sec:kob_convex} through \ref{sec:proof_convex} are devoted to the proof of Theorem~\ref{thm:main_convex}. At the end of the paper, there is a brief appendix describing some basic properties of semisimple Lie groups and the symmetric spaces they act on.  

 \subsection{Outline of the Proof of Theorem~\ref{thm:main_convex}}
 
 The proof of Theorem~\ref{thm:main_convex} has three main parts. 
 
 In the first part we show that the action of $\Aut(\Omega)$ on $\Omega$  is similar to the action of a Gromov hyperbolic group on its Cayley graph. This build upon work in~\cite{Z2017} and occupies Sections~\ref{sec:kob_convex},~\ref{sec:elem_auto_convex}, and~\ref{sec:hyp_elem_convex} of this paper. However, this similarity only goes so far, for instance we are unable to show that the action of $\Aut(\Omega)$ on $\Omega$ extends to a continuous action on $\partial \Omega$. This lack of extension creates a great deal of technical complications through out the entire paper.

In the second part of the proof, we refine Frankel's rescaling method to construct certain one-parameter groups of automorphisms with nice properties. In the late 1980's Frankel developed a method for showing, under certain conditions, that $\Aut(\Omega)$ contains one-parameter subgroups. His method is very useful, but has one problem - it provides little information about the one-parameter subgroups that are produced. In Section~\ref{sec:frankel_refined}, we refine Frankel's method using our knowledge of the geometry of the Kobayashi metric to produce one-parameter subgroups with nice dynamical properties. The main purpose of this refinement is to produce many ``hyperbolic'' automorphisms which is accomplished in Section~\ref{sec:construct_more_hyp}.
 
The third part of the proof takes place in Section~\ref{sec:proof_convex}. There we combine the structure theory of Lie groups with the facts established in parts one and two. In particular, we use the geometry of the Kobayashi metric to restrict the possible solvable subgroups of $\Aut(\Omega)$. This is used to show that the solvable radical of $\Aut_0(\Omega)$ is a torus in the center of $\Aut_0(\Omega)$. Which in turn implies that $\Aut_0(\Omega)$ is the almost direct product of a compact subgroup $N$ and a semisimple Lie group $G$ with only non-compact factors. By studying Abelian subgroups we show that $G$ has real rank one and finite center. Using the fact that ${ \rm Out}(G)$ is finite, we will show that $\Aut(\Omega)$ has finitely many components. 

To establish that $\Lc(\Omega)$ is homeomorphic to a sphere we consider the symmetric space associated to $G$. Since $G$ is a simple Lie group with real rank one and finite center, it acts transitively and by isometries on a negatively curved Riemannian symmetric space $X$. We will show that any orbit of $G$ in $\Omega$ endowed with the Kobayashi metric is quasi-isometric to $X$. Further, that $\Lc(\Omega)$ is homeomorphic to the geodesic boundary of $X$, which is a sphere. Finally, to prove that $\Lc(\Omega)$ cannot have real dimension $2d-2$, we show that $G$ cannot be locally isomorphic to $\SO(1,2d-1)$. 

 \subsection{Some notations} 
 
If $(X,d)$ is a metric space, $x \in X$, and $A \subset X$, then we define
\begin{align*}
d(x,A) = \inf\{ d(x,a) : a \in A\}.
\end{align*}
Then given subsets $A,B \subset X$ we define \emph{the Hausdorff pseudo-distance between $A$ and $B$} to be
 \begin{align*}
 d^{\Haus}(A,B) := \max\left\{ \sup_{a \in A} d(a,B), \ \sup_{b \in B} d(b,A) \right\}.
 \end{align*}

Given a domain $\Omega \subset \Cb^d$ with $C^1$ boundary and $x \in \partial \Omega$ let ${ \bf n}_\Omega(x) \in \Cb^d$ be the inward pointing unit normal vector of $\partial \Omega$ at $x$.

 \subsection*{Acknowledgements} This material is based upon work supported by the National Science Foundation under grants DMS-1400919 and DMS-1760233.
 
\section{The Kobayashi metric on convex domains}\label{sec:kob_convex}

 In this expository section we recall the definition of the Kobayashi metric and state some of its properties.  For a more thorough introduction see for instance~\cite{A1989} or~\cite{K2005}.
 
 Given a domain $\Omega \subset \Cb^d$ the \emph{(infinitesimal) Kobayashi metric} is the pseudo-Finsler metric
\begin{align*}
k_{\Omega}(x;v) = \inf \left\{ \abs{\xi} : f \in \Hol(\Db, \Omega), \ f(0) = x, \ d(f)_0(\xi) = v \right\}.
\end{align*}
By a result of Royden~\cite[Proposition 3]{R1971} the Kobayashi metric is an upper semicontinuous function on $\Omega \times \Cb^d$. In particular, if $\sigma:[a,b] \rightarrow \Omega$ is an absolutely continuous curve (as a map $[a,b] \rightarrow \Cb^d$), then the function 
\begin{align*}
t \in [a,b] \rightarrow k_\Omega(\sigma(t); \sigma^\prime(t))
\end{align*}
is integrable and we can define the \emph{length of $\sigma$} to  be
\begin{align*}
\ell_\Omega(\sigma)= \int_a^b k_\Omega(\sigma(t); \sigma^\prime(t)) dt.
\end{align*}
One can then define the \emph{Kobayashi pseudo-distance} to be
\begin{multline*}
 K_\Omega(x,y) = \inf \left\{\ell_\Omega(\sigma) : \sigma\colon[a,b]
 \rightarrow \Omega \text{ is abs. cont., } \sigma(a)=x, \text{ and } \sigma(b)=y\right\}.
\end{multline*}
This definition is equivalent to the standard definition using analytic chains by a result of Venturini~\cite[Theorem 3.1]{V1989}.

When $\Omega$ is a bounded domain, $K_\Omega$ is a non-degenerate distance. Further, directly from the definition one obtains the following proposition. 

\begin{proposition} If $f:\Omega_1 \rightarrow \Omega_2$ is holomorphic, then 
\begin{align*}
k_{\Omega_2}(f(z); df_z(v)) \leq k_{\Omega_1}(z;v)
\end{align*}
for all $z \in \Omega_1$ and $v \in \Cb^d$. Moreover, 
\begin{align*}
K_{\Omega_2}(f(z_1),f(z_2)) \leq K_{\Omega_1}(z_1,z_2)
\end{align*}
for all $z_1,z_2 \in \Omega_1$. 
\end{proposition}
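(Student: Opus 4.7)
The plan is to give the standard two-line argument: the infinitesimal inequality is immediate from the definition of $k_\Omega$, and the integrated inequality follows by pushing forward admissible curves.

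First I would prove the infinitesimal statement. Fix $z \in \Omega_1$ and $v \in \Cb^d$, and let $g \in \Hol(\Db, \Omega_1)$ satisfy $g(0) = z$ and $dg_0(\xi) = v$ for some $\xi \in \Cb$. The composition $f \circ g \in \Hol(\Db, \Omega_2)$ satisfies $(f \circ g)(0) = f(z)$ and, by the chain rule, $d(f \circ g)_0(\xi) = df_z(dg_0(\xi)) = df_z(v)$. Therefore $f \circ g$ is an admissible map in the defining infimum for $k_{\Omega_2}(f(z); df_z(v))$, so
\begin{align*}
k_{\Omega_2}(f(z); df_z(v)) \leq \abs{\xi}.
\end{align*}
Taking the infimum over all admissible $g$ and $\xi$ yields the infinitesimal inequality.

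Next I would deduce the distance inequality. Let $\sigma\colon[a,b] \rightarrow \Omega_1$ be absolutely continuous with $\sigma(a) = z_1$ and $\sigma(b) = z_2$. Since $f$ is holomorphic (hence smooth), $f \circ \sigma\colon[a,b] \rightarrow \Omega_2$ is absolutely continuous, with $(f\circ \sigma)(a) = f(z_1)$, $(f \circ \sigma)(b) = f(z_2)$, and $(f \circ \sigma)^\prime(t) = df_{\sigma(t)}(\sigma^\prime(t))$ for almost every $t$. Applying the infinitesimal inequality pointwise and integrating gives
\begin{align*}
\ell_{\Omega_2}(f \circ \sigma) = \int_a^b k_{\Omega_2}\bigl(f(\sigma(t)); df_{\sigma(t)}(\sigma^\prime(t))\bigr)\, dt \leq \int_a^b k_{\Omega_1}(\sigma(t); \sigma^\prime(t))\, dt = \ell_{\Omega_1}(\sigma).
\end{align*}
Thus $K_{\Omega_2}(f(z_1), f(z_2)) \leq \ell_{\Omega_1}(\sigma)$, and taking the infimum over all admissible $\sigma$ yields $K_{\Omega_2}(f(z_1), f(z_2)) \leq K_{\Omega_1}(z_1, z_2)$.

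There is essentially no obstacle here: the only minor point requiring care is the measurability and integrability of $t \mapsto k_{\Omega_2}(f(\sigma(t)); df_{\sigma(t)}(\sigma^\prime(t)))$, but this is guaranteed by the upper semicontinuity of $k_{\Omega_2}$ cited from Royden's result, combined with the fact that $f \circ \sigma$ is itself absolutely continuous and hence yields a valid curve in the definition of $\ell_{\Omega_2}$.
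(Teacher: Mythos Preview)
Your proof is correct and is precisely the standard argument the paper alludes to when it says the proposition follows ``directly from the definition''; the paper does not spell out any details beyond that remark.
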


We will frequently use the following basic estimate. 

\begin{proposition}\label{prop:zero_dist_est} Suppose that $\Omega$ is a bounded domain and $z_n \in \Omega$ is a sequence such that $z_n \rightarrow x \in \partial \Omega$. If $w_n \in \Omega$ is sequence and
\begin{align*}
\lim_{n \rightarrow \infty} K_\Omega(z_n,w_n) =0,
\end{align*}
then $w_n \rightarrow \xi$. 
\end{proposition}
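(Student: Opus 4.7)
The plan is to reduce to an explicit computation on a Euclidean ball containing $\Omega$. Since $\Omega$ is bounded, fix a Euclidean ball $B \subset \Cb^d$ with $\overline{\Omega} \subset B$, so in particular $x \in \overline{\Omega}$ lies in the interior of $B$. Applying the contraction property from the preceding proposition to the holomorphic inclusion $\Omega \hookrightarrow B$ gives
\[
K_B(z_n, w_n) \leq K_\Omega(z_n, w_n) \longrightarrow 0.
\]
Up to a linear rescaling sending $B$ to the unit ball, $K_B$ is the classical hyperbolic distance on the unit ball; in particular $K_B$ is continuous as a function $B \times B \to [0,\infty)$, and it is complete in the sense that $K_B(p, w_k) \to \infty$ whenever $p \in B$ is fixed and $w_k$ exits every compact subset of $B$.

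With these two properties of $K_B$ in hand, I argue by contradiction. Suppose $w_n \not\to x$. By compactness of $\overline{B}$ I may pass to a subsequence so that $w_n \to y \in \overline{B}$ with $y \neq x$. If $y \in B$, then continuity of $K_B$ on $B \times B$ gives $K_B(z_n, w_n) \to K_B(x,y) > 0$, contradicting $K_B(z_n, w_n) \to 0$. If instead $y \in \partial B$, then completeness of $K_B$ implies $K_B(x, w_n) \to \infty$; since $x$ is an interior point of $B$ and $z_n \to x$, continuity gives $K_B(x, z_n) \to 0$, and the reverse triangle inequality
\[
K_B(z_n, w_n) \geq K_B(x, w_n) - K_B(x, z_n)
\]
forces $K_B(z_n, w_n) \to \infty$, again contradicting $K_B(z_n, w_n) \to 0$. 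Hence $w_n \to x$.

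The one place requiring any care is the choice of $B$: I take $B$ to strictly contain $\overline{\Omega}$ precisely to guarantee that $x$ lies in the interior of $B$, which is what allows continuity and completeness of $K_B$ to be used at the point $x$. Working with $K_\Omega$ directly would be awkward, because the Kobayashi distance of a general bounded domain need not extend continuously to the boundary and the domain itself need not be Kobayashi complete; comparison with a ball bypasses both issues in one step, and this is the conceptual heart of the argument rather than any technical obstacle.
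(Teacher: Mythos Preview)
Your proof is correct and follows the same idea as the paper: enlarge to a domain containing $\overline{\Omega}$ so that $x$ becomes an interior point, then use the distance-decreasing property and the fact that the Kobayashi distance on the larger domain is a genuine metric there. The paper's version is terser (it uses any bounded $\Omega_1 \supset \overline{\Omega}$ and simply invokes that $K_{\Omega_1}$ is a metric generating the usual topology), and note that your case $y \in \partial B$ can never occur since $w_n \in \Omega$ forces any subsequential limit to lie in $\overline{\Omega} \subset B$.
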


\begin{proof} Fix a bounded domain $\Omega_1$ such that $\overline{\Omega} \subset \Omega_1$. Then 
\begin{align*}
\limsup_{n \rightarrow \infty} K_{\Omega_1}(z_n,w_n) \leq \limsup_{n \rightarrow \infty} K_\Omega(z_n,w_n) =0.
\end{align*}
Since $K_{\Omega_1}$ is a metric on $\Omega_1$ we see that $w_n \rightarrow x$. 
\end{proof}

\subsection{Convex domains}

For general domains there is no known characterization of when the Kobayashi metric is Cauchy complete, but for convex domains we have the following result of Barth.

\begin{theorem}\cite{B1980}\label{thm:barth}
Suppose $\Omega$ is a convex domain. The the following are equivalent:
\begin{enumerate}
\item $\Omega$ does not contain any complex affine lines, 
\item $(\Omega, K_\Omega)$ is a Cauchy complete, proper metric space. 
\end{enumerate}
\end{theorem}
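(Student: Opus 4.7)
The theorem splits into an easy direction and a hard one. For $(2)\Rightarrow(1)$ I argue by contrapositive: if $\Omega$ contains a complex affine line $\{p+zv : z\in\Cb\}$, then $f(z):=p+zv$ is a holomorphic map $\Cb\to\Omega$; since $K_{\Cb}\equiv 0$ (one can dilate the disk into $\Cb$), the distance-decreasing property forces $K_\Omega(f(z),f(w))=0$ for all $z,w\in\Cb$, so the sequence $f(n)=p+nv$ is Cauchy in $(\Omega,K_\Omega)$ while leaving every compact subset of $\Omega$, contradicting (2).

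For $(1)\Rightarrow(2)$ the heart of the matter is the following claim: \emph{assume $\Omega$ is convex and contains no complex affine line; then for every $p\in\Omega$ and every sequence $x_n\in\Omega$ with no subsequence convergent inside $\Omega$, one has $K_\Omega(p,x_n)\to\infty$.} Once this is proved, closed Kobayashi balls are both Euclidean bounded and bounded away from $\partial\Omega$, hence relatively compact in $\Omega$; this gives properness, and Cauchy completeness then follows because a Cauchy sequence is Kobayashi bounded, hence subconverges by properness and in turn converges. Non-degeneracy of $K_\Omega$ is handled by the same technique, as explained at the end.

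The plan to prove the claim is to produce, in each case, a complex affine functional $L:\Cb^d\to\Cb$ with $L(\Omega)\subset\Pi:=\{\Real w>0\}$ such that $L(x_n)$ accumulates on $\partial\Pi\cup\{\infty\}$; distance-decreasing for the holomorphic map $L:\Omega\to\Pi$, combined with the completeness of the Poincar\'e metric on the half-plane, then yields $K_\Omega(p,x_n)\geq K_\Pi(L(p),L(x_n))\to\infty$. Passing to a subsequence, $x_n\to\zeta\in\partial\Omega\cup\{\infty\}$. When $\zeta\in\partial\Omega$, convexity provides a real supporting hyperplane at $\zeta$ of the form $\Real L|_\Omega>0$, $\Real L(\zeta)=0$; then $\Real L(x_n)\to 0$, finishing this case. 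When $\zeta=\infty$, pass further so that $u_n:=(x_n-p)/\abs{x_n-p}\to u$ with $\abs{u}=1$. A short convexity argument (averaging points on $p+\Cb u$ with an interior point of $\Omega$) upgrades the hypothesis to ``$\overline\Omega$ contains no complex line,'' so there is some $\alpha\in\Cb\setminus\{0\}$ with $w:=p+\alpha u\notin\overline\Omega$. Strict Hahn--Banach separation of $\{w\}$ from $\overline\Omega$ yields a complex affine $L$ with $\Real L|_{\overline\Omega}>0$ and $\Real L(w)<0$; subtracting these inequalities gives $\Real(\alpha L_{\mathrm{lin}}(u))<-\Real L(p)<0$, forcing the linear part $L_{\mathrm{lin}}$ of $L$ to be non-zero on $u$. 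Consequently $\abs{L(x_n)-L(p)}=\abs{x_n-p}\,\abs{L_{\mathrm{lin}}(u_n)}\to\infty$, so $L(x_n)$ escapes to $\infty$ inside $\Pi$.

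For non-degeneracy, given distinct $x,y\in\Omega$ the line $x+\Cb(y-x)$ is not contained in $\overline\Omega$ (else $\Omega$ would contain a complex line, by the same averaging trick), so a separating functional $L$ produced for a point on that line off $\overline\Omega$ is non-constant on the line (otherwise it would take both positive and negative real parts there), forcing $L(x)\neq L(y)$ and $K_\Omega(x,y)\geq K_\Pi(L(x),L(y))>0$. I expect the main obstacle to be the case $\zeta=\infty$ in the claim, where the ``no complex affine line'' hypothesis must be converted non-trivially into the non-vanishing of $L_{\mathrm{lin}}(u)$ via the judicious choice of the separator $w$; the boundary case and the subsequent topological consequences are comparatively routine once the functional $L$ is in hand.
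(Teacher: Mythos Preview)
Your argument is correct and is essentially Barth's original proof: reduce to the half-plane via a separating complex-linear functional and use completeness of the Poincar\'e metric there. The paper itself gives no proof of this statement---it is quoted from \cite{B1980} as background---so there is nothing to compare against beyond noting that you have reproduced the classical argument. One small presentational point: your claim is stated for \emph{every} sequence $x_n$ with no convergent subsequence in $\Omega$, but your proof begins by passing to a subsequence with $x_n\to\zeta$; to close the loop you should phrase it contrapositively (if $K_\Omega(p,x_n)\not\to\infty$, extract a bounded subsequence, then a further subsequence converging in $\partial\Omega\cup\{\infty\}$, and derive the contradiction), which you clearly have in mind but do not quite spell out.
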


We will also frequently use the following result about the asymptotic geometry of $(\Omega, K_\Omega)$. 

\begin{proposition}\cite[Proposition 3.5]{Z2014b}\label{prop:finite_dist}
Suppose $\Omega$ is a bounded convex domain and $x, y \in \partial \Omega$ are distinct. Assume $z_m, w_n \in \Omega$ are sequences such that $z_m \rightarrow x$ and $w_n \rightarrow y$. If 
\begin{align*}
\liminf_{m,n \rightarrow \infty} K_{\Omega}(z_m, w_n) < \infty
\end{align*}
and $L$ is the complex line containing $x$ and $y$, then $L \cap \Omega = \emptyset$ and the interior of $\partial\Omega \cap L$ in $L$ contains $x$ and $y$. In particular, if $\partial \Omega$ is $C^1$, then 
\begin{align*}
T_{x}^{\Cb} \partial \Omega = T_{y}^{\Cb} \partial \Omega.
\end{align*}
\end{proposition}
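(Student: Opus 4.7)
The plan is to obtain lower bounds on the Kobayashi distance via the distance-decreasing property applied to complex affine maps $\Omega\to\mathbb{H}$ built from real supporting hyperplanes of $\Omega$. For any $p\in\partial\Omega$, convexity gives $\alpha\in\mathbb{C}^d\setminus\{0\}$ with $\Re\bigl(\alpha\cdot(z-p)\bigr)>0$ on $\Omega$, and the associated complex affine map $L_p(z):=\alpha\cdot(z-p)$ sends $\Omega$ holomorphically into $\mathbb{H}=\{\Re w>0\}$ with $L_p(p)=0$. Since $K_{\mathbb{H}}$ is the standard hyperbolic metric, two sequences in $\mathbb{H}$ can have bounded hyperbolic distance while one converges to $0\in\partial\mathbb{H}$ only if the other converges to $0$ as well.

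Applying this with $p=x$, the contraction $K_\Omega(z,w)\ge K_{\mathbb{H}}(L_x(z),L_x(w))$ together with the hypothesis $\liminf K_\Omega(z_m,w_n)<\infty$ and $L_x(z_m)\to 0$ forces $L_x(y)=0$, so $y$ lies on the complex hyperplane $\{L_x=0\}$ obtained by complexifying the chosen real supporting hyperplane at $x$. Running this over every real supporting hyperplane at $x$ (and symmetrically at $y$) puts the complex line $L$ through $x$ and $y$ inside every such complex hyperplane; each of them is disjoint from $\Omega$, hence $L\cap\Omega=\emptyset$.

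For the interior-in-$L$ statement I would extract a limiting holomorphic disc from discs $\phi_n\colon\mathbb{D}\to\Omega$ realizing $K_\Omega(z_n,w_n)$ (these exist via Lempert's theorem for convex domains, or via chains of discs whose total length converges to the Kobayashi distance): $\phi_n(0)=z_n$, $\phi_n(r_n)=w_n$ with the $r_n$ in a compact subset of $(0,1)$ because the Kobayashi distance is bounded. A normal-family extraction produces a holomorphic limit $\phi_\infty\colon\mathbb{D}\to\overline\Omega$ with $\phi_\infty(0)=x$ and $\phi_\infty(r_\infty)=y$. Applying the half-plane contraction pointwise, and using the maximum principle on the non-negative harmonic function $\Re L_p\circ\phi_\infty$ vanishing at an interior point whenever $\phi_\infty$ hits $\partial\Omega$, one forces $\phi_\infty(\mathbb{D})\subset\partial\Omega$ and, moreover, $\phi_\infty(\mathbb{D})$ sits inside every complex supporting hyperplane at $x$ and $y$. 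A rigidity step then identifies $\phi_\infty(\mathbb{D})$ with a piece of $L$, providing an $L$-open neighborhood of $x$ inside $\partial\Omega$; the symmetric argument works at $y$.

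When $\partial\Omega$ is $C^1$ the unique complexified supporting hyperplane at $p$ is $T^\mathbb{C}_p\partial\Omega$, so the above gives $y\in T^\mathbb{C}_x\partial\Omega$; since $y\in\partial\Omega$ and $T^\mathbb{C}_x\partial\Omega\cap\Omega=\emptyset$, this hyperplane also supports $\Omega$ at $y$, and uniqueness at $y$ forces $T^\mathbb{C}_x\partial\Omega=T^\mathbb{C}_y\partial\Omega$. The main obstacle is the rigidity step in the previous paragraph: the half-plane separations only put $\phi_\infty(\mathbb{D})$ inside the intersection of complex tangent hyperplanes at $x$ and $y$, which is generically of complex codimension two; collapsing this to the single complex line $L$ through $x$ and $y$ requires further structural input about holomorphic discs in boundaries of convex domains.
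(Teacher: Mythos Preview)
The paper does not prove this proposition; it is quoted from \cite[Proposition 3.5]{Z2014b} without argument, so there is no in-paper proof to compare against. I will comment on your proposal directly.

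Your first step --- projecting to the right half-plane via $L_x(z)=\alpha\cdot(z-x)$ and using that hyperbolic balls of fixed radius shrink to a point at the boundary to force $\alpha\cdot(y-x)=0$ for every supporting functional $\alpha$ at $x$, hence $L\subset\{L_x=0\}$ and $L\cap\Omega=\emptyset$ --- is correct, as is your deduction of the $C^1$ consequence.

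For the interior statement, your limiting-disk approach works and the ``rigidity step'' you flag as an obstacle is not needed: you do not have to show $\phi_\infty(\Db)\subset L$. Instead, let $W$ be the smallest complex affine subspace of $\Cb^d$ containing $\phi_\infty(\Db)$. Your maximum-principle argument already gives $\phi_\infty(\Db)\subset H^{\Cb}$ for every complex supporting hyperplane $H^{\Cb}$ of $\Omega$ at every point of $\phi_\infty(\Db)$; by minimality of $W$ this yields $W\subset H^{\Cb}$, hence $W\cap\Omega=\emptyset$. Now run the same minimum-principle argument \emph{inside} $W$: if some $p=\phi_\infty(\zeta_0)$ lay on the boundary (in $W$) of the convex body $W\cap\overline{\Omega}$, a real supporting functional $g=\Real(\beta\cdot\,)+c$ on $W$ would satisfy $g\circ\phi_\infty\geq 0$ with equality at $\zeta_0$, hence $g\circ\phi_\infty\equiv 0$, hence $\beta\cdot\phi_\infty$ is constant, placing $\phi_\infty(\Db)$ in a proper complex hyperplane of $W$ and contradicting minimality. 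Thus $\phi_\infty(\Db)\subset\interior_W(W\cap\overline{\Omega})$. Since $x,y\in\phi_\infty(\Db)\subset W$ and $W$ is complex-affine, $L\subset W$, and therefore $x,y\in\interior_L(L\cap\overline{\Omega})$ as required.

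One technical remark: for the existence of a single extremal disk $\phi_n$ through $z_n,w_n$ in a general bounded convex domain you should invoke Royden--Wong (or Lempert plus approximation), since Lempert's original theorem assumes smoothness; the ``chains of discs'' alternative would make the normal-family extraction considerably messier.
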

 
 \subsection{The Gromov product associated to the Kobayashi metric} In a metric space $(X,d)$, the Gromov product of $x,y \in X$ at $z \in X$ is defined to be
 \begin{align*}
 (x|y)_z = \frac{1}{2} \left( d(x,z) +d(z,y) - d(x,y) \right).
 \end{align*}
 When $(X,d)$ is a proper geodesic Gromov hyperbolic metric space, there is a compactification $X \cup X(\infty)$ of $X$, called the \emph{ideal boundary}, with the following property.
 
 \begin{proposition} Suppose $(X,d)$ is a proper geodesic Gromov hyperbolic metric space. Suppose $x_m, y_n$ are sequences in $X$ such that $x_m \rightarrow \xi \in X(\infty)$ and $y_n \rightarrow \eta \in X(\infty)$. Then $\xi = \eta$ if and only if 
 \begin{align*}
 \lim_{m,n \rightarrow \infty} (x_m |y_n)_{z} = \infty
 \end{align*}
 for any $z \in X$. 
 \end{proposition}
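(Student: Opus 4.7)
My plan is to repeatedly apply the $\delta$-hyperbolicity inequality $(x|y)_z \geq \min\{(x|w)_z, (w|y)_z\} - \delta$ using as ``middle point'' $w$ either a point on a geodesic ray representing the relevant boundary point or one of the sequence points itself. Because $X$ is proper and geodesic, for each $\xi \in X(\infty)$ and each $z \in X$ there exists a geodesic ray $\gamma\colon [0,\infty) \to X$ with $\gamma(0)=z$ and $\gamma(t) \to \xi$ in $X \cup X(\infty)$. I will freely use the standard characterization that a sequence $x_m \in X$ converges to $\xi$ in $X\cup X(\infty)$ if and only if $(x_m|\gamma(t))_z \to \infty$ as $m,t \to \infty$ for such a ray $\gamma$, together with the observation that the conclusion is independent of the basepoint $z$ since Gromov products at different basepoints differ by at most $d(z,z')$.

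For the forward direction, assume $\xi = \eta$ and fix a geodesic ray $\gamma$ from $z$ representing $\xi$. Given $R > 0$, the convergence $x_m \to \xi$ and $y_n \to \xi$ allows me to choose $T$ and $N$ so large that $(x_m|\gamma(T))_z > R + \delta$ and $(y_n|\gamma(T))_z > R+\delta$ whenever $m,n \geq N$. Applying $\delta$-hyperbolicity with $w = \gamma(T)$ then gives $(x_m|y_n)_z \geq R$ for all $m,n \geq N$, so $\lim_{m,n \to \infty}(x_m|y_n)_z = \infty$.

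For the backward direction, assume $\lim_{m,n \to \infty}(x_m|y_n)_z = \infty$ and fix a geodesic ray $\gamma$ from $z$ representing $\eta$, so $(y_n|\gamma(t))_z \to \infty$ as $n,t \to \infty$. Applying $\delta$-hyperbolicity with the middle point $w = y_n$ yields
\[
(x_m|\gamma(t))_z \geq \min\bigl\{(x_m|y_n)_z,\ (y_n|\gamma(t))_z\bigr\} - \delta.
\]
Given $R$, first pick $n$ large enough that $(y_n|\gamma(t))_z > R + \delta$ for all sufficiently large $t$ and $(x_m|y_n)_z > R + \delta$ for all sufficiently large $m$; this forces $(x_m|\gamma(t))_z > R$ for all large $m,t$, hence $x_m \to \eta$, and therefore $\xi = \eta$.

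The argument is essentially direct manipulation of the defining inequality for $\delta$-hyperbolicity; the only care needed is the bookkeeping of the order of quantifiers in the ``first choose $T$ (or $n$), then choose $m,n$'' steps. No deeper input beyond the existence of geodesic rays representing boundary points and the characterization of boundary convergence is required.
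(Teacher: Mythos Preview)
Your argument is correct and is the standard way to prove this fact. Note, however, that the paper does not actually give a proof of this proposition: it is stated as a well-known background result about Gromov hyperbolic spaces, with no proof or proof sketch provided. So there is no ``paper's proof'' to compare against; your write-up simply supplies the routine verification that the paper omits.
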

 
 For the Kobayashi metric on convex domains the Gromov product behaves almost as nicely near the topological boundary. For a domain $\Omega \subset \Cb^d$ we define the \emph{Gromov product of $x,y$ at $z$} to be 
 \begin{align*}
 (x|y)_z^{\Omega} = \frac{1}{2} \left( K_\Omega(x,z) + K_\Omega(z,y) - K_\Omega(x,y) \right).
 \end{align*}
 Then we have the following. 
  
  \begin{theorem}\label{thm:GP}\cite[Theorem 4.1]{Z2017} Suppose $\Omega \subset \Cb^d$ is a bounded convex domain with $C^{1,\epsilon}$ boundary. Suppose $z_m, w_n$ are sequences in $\Omega$ such that $z_m \rightarrow x \in \partial \Omega$ and $w_n \rightarrow y \in \partial \Omega$. Then:
 \begin{enumerate}
 \item If $x = y$, then 
 \begin{align*}
 \lim_{m,n \rightarrow \infty} (z_m | w_n)_{z_0}^{\Omega} = \infty.
 \end{align*}
 \item If 
 \begin{align*}
 \limsup_{m,n \rightarrow \infty} (z_m | w_n)_{z_0}^{\Omega} = \infty,
 \end{align*}
  then $T_{x}^{\Cb} \partial \Omega = T_{y}^{\Cb} \partial \Omega$. 
\end{enumerate}
 \end{theorem}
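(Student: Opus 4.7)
The plan is to combine sharp asymptotic estimates on the Kobayashi distance — available for convex domains with $C^{1,\epsilon}$ boundary — with the distance-decreasing property applied to a holomorphic projection onto a complex half-plane supporting $\Omega$. The two parts of the theorem then call for a direct estimate (part 1) and a normal-family argument (part 2).

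For part (1), where $x=y$, I would establish two matching asymptotic bounds on $K_\Omega$ near $x$. The lower bound $K_\Omega(z_0,z) \geq \tfrac{1}{2}\log(1/\delta_\Omega(z)) - C$ comes from a holomorphic projection: letting $\ell_x$ be a complex-affine functional with $\ell_x(x)=0$ and $\Real(\ell_x)>0$ on $\Omega$, the map $\ell_x:\Omega\to H^+=\{\Real(w)>0\}$ is $K$-decreasing, and combining the explicit hyperbolic formula on $H^+$ with the $C^1$ relation $\Real(\ell_x(z))\asymp\delta_\Omega(z)$ near $x$ yields the bound. Matching this, one obtains an asymptotic upper bound
\[
K_\Omega(z_m,w_n) \leq \tfrac{1}{2}\log(1/\delta_\Omega(z_m))+\tfrac{1}{2}\log(1/\delta_\Omega(w_n))+\log\bigl(1+|z_m-w_n|/\sqrt{\delta_\Omega(z_m)\delta_\Omega(w_n)}\bigr)+C
\]
by constructing an explicit absolutely continuous curve from $z_m$ to $w_n$ that descends along the inward normal towards $x$ and rises back to $w_n$, using convexity (so the curve stays in $\Omega$) and $C^{1,\epsilon}$ regularity (so the Kobayashi density along the curve is controlled). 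Subtracting these gives
\[
2(z_m|w_n)_{z_0}^\Omega \geq \min\bigl(-\log|z_m-w_n|,\,\tfrac{1}{2}\log(1/\delta_\Omega(z_m))+\tfrac{1}{2}\log(1/\delta_\Omega(w_n))\bigr) - C',
\]
and each term on the right diverges as $z_m,w_n\to x$.

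For part (2), I would take the contrapositive: assume $T_x^\Cb\partial\Omega\neq T_y^\Cb\partial\Omega$, and show the Gromov product is bounded along every subsequence. Proposition \ref{prop:finite_dist} already gives $K_\Omega(z_m,w_n)\to\infty$; the additional input needed is the sharper lower bound $K_\Omega(z_m,w_n)\geq K_\Omega(z_0,z_m)+K_\Omega(z_0,w_n)-O(1)$. Suppose for contradiction that $(z_{m_k}|w_{n_k})_{z_0}^\Omega\to\infty$ along a subsequence. For each $k$ pick a holomorphic disk $f_k:\Db\to\Omega$ and parameters $a_k,b_k\in\Db$ with $f_k(a_k)=z_{m_k}$, $f_k(b_k)=w_{n_k}$ whose hyperbolic distance $K_\Db(a_k,b_k)$ is within $o(1)$ of $K_\Omega(z_{m_k},w_{n_k})$. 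The Gromov-product divergence supplies a point on the image disk whose $f_k$-image is at bounded $K_\Omega$-distance from $z_0$; Möbius-reparameterize $\Db$ so this point is $0$. Using Montel's theorem and the properness of $(\Omega, K_\Omega)$ from Barth's theorem (Theorem \ref{thm:barth}), extract a holomorphic limit $f_\infty:\Db\to\overline\Omega$ that takes an interior value in $\Omega$ and whose boundary values include both $x$ and $y$. The maximum principle, together with convexity and $C^1$ regularity of $\partial\Omega$, forces the boundary values of $f_\infty$ to lie in a single closed complex face, so $y\in T_x^\Cb\partial\Omega$ and therefore $T_x^\Cb\partial\Omega=T_y^\Cb\partial\Omega$, contradicting the standing hypothesis.

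The main obstacle is the upper bound in part (1): one needs a path construction that genuinely exploits the Hölder exponent $\epsilon>0$ and not merely $C^1$ regularity, since $\epsilon$ controls the rate at which $\partial\Omega$ deviates from its tangent hyperplane near $x$ and is precisely what keeps the Kobayashi density on the chosen path within the claimed bounds. A secondary challenge is the reparameterization step in part (2) — one must use the divergence of the Gromov product quantitatively to guarantee that the limit $f_\infty$ is non-constant, reaches into $\Omega$, and genuinely witnesses both $x$ and $y$; without this quantitative control the limit can collapse and the argument breaks down.
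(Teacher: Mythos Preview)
This theorem is quoted from \cite[Theorem 4.1]{Z2017} and is not proved in the present paper, so there is no proof here against which to compare your proposal. I can only comment on the internal soundness of your outline.

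Your plan for part (1) is the standard route and is essentially correct: a lower bound on $K_\Omega(z_0,\cdot)$ from a linear projection onto a supporting half-plane, matched against an upper bound on $K_\Omega(z_m,w_n)$ from a path that drops along the normal, moves tangentially, and rises again. The $C^{1,\epsilon}$ hypothesis is precisely what makes the tangential portion of the path controllable, and your diagnosis of that as the main technical point is accurate.

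Your plan for part (2), however, contains a genuine error. You write that ``the Gromov-product divergence supplies a point on the image disk whose $f_k$-image is at bounded $K_\Omega$-distance from $z_0$.'' This is backwards. In any geodesic metric space, for any $p$ on a geodesic from $x$ to $y$ one has
\[
(x\,|\,y)_{z_0} \;=\; \tfrac{1}{2}\bigl(d(x,z_0)-d(x,p)\bigr)+\tfrac{1}{2}\bigl(d(y,z_0)-d(p,y)\bigr) \;\leq\; d(p,z_0),
\]
so $(x\,|\,y)_{z_0} \leq d\bigl(z_0,[x,y]\bigr)$ always. Hence $(z_{m_k}\,|\,w_{n_k})_{z_0}\to\infty$ forces any near-extremal disk through $z_{m_k},w_{n_k}$ to move \emph{away} from $z_0$, not toward it. There is no point on $f_k(\Db)$ at bounded Kobayashi distance from $z_0$, your normalization step is unavailable, and the limit map $f_\infty$ you extract will land entirely in $\partial\Omega$; from that you cannot conclude anything about the relative position of $T_x^{\Cb}\partial\Omega$ and $T_y^{\Cb}\partial\Omega$.

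The argument that actually works for part (2) is not a normal-family argument at all but a direct estimate of the same flavor as part (1): when $T_x^{\Cb}\partial\Omega\neq T_y^{\Cb}\partial\Omega$ one combines the upper bound $K_\Omega(z_0,z_m)\leq \tfrac{1}{2}\log(1/\delta_\Omega(z_m))+C$ (from the normal-line almost-geodesic, Proposition~\ref{prop:normal_lines}) with a lower bound on $K_\Omega(z_m,w_n)$ obtained by projecting onto supporting half-planes at $x$ and at $y$ separately, exploiting that $y\notin T_x^{\Cb}\partial\Omega$ keeps $\ell_x(w_n)$ bounded away from $0$. This yields $K_\Omega(z_m,w_n)\geq K_\Omega(z_0,z_m)+K_\Omega(z_0,w_n)-C$ uniformly, which is exactly the boundedness of the Gromov product.
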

 
 \subsection{Almost-geodesics}

A \emph{geodesic} in a metric space $(X,d)$ is a curve $\sigma: I \rightarrow X$ such that 
\begin{align*}
d(\sigma(s), \sigma(t))=\abs{t-s}
\end{align*}
for all $s,t \in I$. When the Kobayashi metric is Cauchy complete, every two points are joined by a geodesic. However, it is often more convenient to work with larger classes of curves.

\begin{definition}\label{def:almost_geodesic}
Suppose $\Omega \subset \Cb^d$ is a bounded domain and $I \subset \Rb$ is an interval. For $\lambda \geq 1$ and $\kappa \geq 0$ a curve $\sigma:I \rightarrow \Omega$ is called an \emph{$(\lambda, \kappa)$-almost-geodesic} if 
\begin{enumerate} 
\item for all $s,t \in I$  
\begin{align*}
\frac{1}{\lambda} \abs{t-s} - \kappa \leq K_\Omega(\sigma(s), \sigma(t)) \leq \lambda \abs{t-s} +  \kappa;
\end{align*}
\item $\sigma$ is absolutely continuous (hence $\sigma^\prime(t)$ exists for almost every $t\in I$), and for almost every $t \in I$
\begin{align*}
k_\Omega(\sigma(t); \sigma^\prime(t)) \leq \lambda e^{\kappa}.
\end{align*}
\end{enumerate}
\end{definition}

\begin{remark} In~\cite[Proposition 4.6]{BZ2017}, we proved that every geodesic in the Kobayashi metric is an $(1,0)$-almost-geodesic. \end{remark}

There are several reasons to study almost-geodesics instead of geodesics. First almost-geodesics always exist: for domains $\Omega$ where the metric space $(\Omega, K_\Omega)$ is not Cauchy complete there may not be a geodesic joining every two points, but there is always an $(1,\kappa)$-almost-geodesic joining any two points in $\Omega$, see \cite[Proposition 4.4]{BZ2017}. Further, it is sometimes possible to find explicit almost-geodesics: for convex domains with $C^{1,\epsilon}$ boundary, inward pointing normal lines can be parametrized to be almost-geodesics, see Proposition~\ref{prop:normal_lines} below. Finally, almost-geodesics are close enough to geodesics that one can establish properties about their behavior, see Theorem~\ref{thm:visible} below. 
 
\begin{proposition}\label{prop:normal_lines}\cite[Proposition 4.3]{Z2017} Suppose that $\Omega$ is a bounded convex domain with $C^{1,\epsilon}$ boundary. Assume $r > 0$ is such that 
\begin{align*}
x+ r \cdot {\bf n}_{\Omega}(x) \in \Omega
\end{align*}
for all $x \in \partial \Omega$. Then there exists $\kappa >0$ such that: for any $x \in \partial \Omega$ the curve $\sigma_x:[0,\infty) \rightarrow \Omega$ given by 
\begin{align*}
\sigma_x(t) = x+r e^{-2t} {\bf n}_{\Omega}(x)
\end{align*}
is an $(1,\kappa)$-almost-geodesic. 
\end{proposition}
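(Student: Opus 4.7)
After an affine change of coordinates depending on $x$, we may assume $x = 0$ and $\mathbf{n}_\Omega(x) = e_d$, whence $\Omega \subset H := \{w \in \Cb^d : \Real(w_d) > 0\}$ by convexity and $C^1$ smoothness, and $\sigma_x(t) = re^{-2t}e_d$. The plan is to sandwich $K_\Omega(\sigma_x(s),\sigma_x(t))$ between $|t-s|$ (coming from the outer tangent half-space $H$) and $|t-s| + O(1)$ (coming from an explicit M\"obius inner disk in $\Omega$), with all constants uniform in $x$ via compactness of $\partial\Omega$. The lower bound is immediate: the holomorphic inclusion $\Omega \hookrightarrow H$ gives $K_\Omega \geq K_H$, and under the product decomposition $H \cong \Cb^{d-1} \times \{\Real > 0\}$ the Kobayashi distance on $H$ reduces to the Poincar\'e distance on the right half-plane between the positive reals $re^{-2s}$ and $re^{-2t}$, which is exactly $|t-s|$.

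For the upper bound on $K_\Omega$ and for the infinitesimal condition (2), I would estimate $k_\Omega(\sigma_x(t); \sigma_x'(t))$ pointwise. The extremal M\"obius disk for $H$ at $\sigma_x(t)$ along $e_d$ is
\begin{align*}
\psi_t(\zeta) := re^{-2t}\frac{1+\zeta}{1-\zeta}\, e_d : \Db \to H,
\end{align*}
with $\psi_t(0) = \sigma_x(t)$, $\psi_t'(0) = -\sigma_x'(t)$, and image equal to $\Cb e_d \cap H$, which generally exits $\Omega$. Consider instead the contracted disk $\phi_t(\zeta) := \psi_t(a_t \zeta)$, where $a_t \in (0,1)$ is chosen maximally subject to $\phi_t(\Db) \subset \Omega$; this yields $k_\Omega(\sigma_x(t); \sigma_x'(t)) \leq 1/a_t$. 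The image $\phi_t(\Db)$ is a Euclidean disk inside $\Cb e_d$ with centre $\frac{re^{-2t}(1+a_t^2)}{1-a_t^2}e_d$ and radius $\frac{2 a_t r e^{-2t}}{1-a_t^2}$. Comparing this disk to the local $C^{1,\epsilon}$ envelope $\Real(w_d) \geq C|\Imaginary(w_d)|^{1+\epsilon}$ of $\partial\Omega$ near $x$ (with $C$ uniform over $x$ by compactness of $\partial\Omega$ and continuity of $\mathbf{n}_\Omega$) and balancing the dominant terms gives $1 - a_t \asymp (re^{-2t})^{\epsilon/(2+\epsilon)}$, hence $k_\Omega(\sigma_x(t); \sigma_x'(t)) \leq 1/a_t = 1 + O(e^{-2\epsilon t/(2+\epsilon)})$ for $t$ large. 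For $t$ in a bounded interval $[0,T]$, compactness of $\{\sigma_x(u) : x \in \partial\Omega,\, u \in [0,T]\}$ inside $\Omega$ gives a uniform pointwise upper bound on $k_\Omega(\sigma_x(t); \sigma_x'(t))$, which is absorbed into $\kappa$. Integrating over $[s,t]$ then yields $K_\Omega(\sigma_x(s), \sigma_x(t)) \leq |t-s| + \kappa$, and condition (2) of the almost-geodesic definition is immediate from the pointwise estimate.

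The main obstacle is the quantitative $C^{1,\epsilon}$ comparison: one must verify $\phi_t(\Db) \subset \Omega$ for $a_t$ as close to $1$ as the H\"older exponent allows, since the rate at which $a_t \to 1$ determines whether $1/a_t - 1$ is integrable on $[0,\infty)$. The Euclidean disk $\phi_t(\Db) \subset \Cb e_d$ grows and approaches the boundary point $x=0$ as $a_t \to 1^-$, and extracting the rate $2\epsilon/(2+\epsilon)$ amounts to balancing the disk's Euclidean size against the $C^{1,\epsilon}$-controlled defect of $\partial\Omega$ from its tangent plane. Integrability of $e^{-2\epsilon t/(2+\epsilon)}$ against $dt$ then yields a finite $\kappa$ uniform in $x$.
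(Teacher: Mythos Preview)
The paper does not give a proof of this proposition; it is quoted from \cite[Proposition 4.3]{Z2017} and used as a black box. So there is no in-paper argument to compare against.

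Your outline is correct and is the natural route. The lower bound $K_\Omega(\sigma_x(s),\sigma_x(t))\geq |t-s|$ via the inclusion $\Omega\hookrightarrow H$ is exactly right (the Kobayashi distance on the right half-plane between $re^{-2s}$ and $re^{-2t}$ equals $|t-s|$). For the upper bound, estimating $k_\Omega(\sigma_x(t);\sigma_x'(t))$ by a contracted M\"obius disk in the normal complex line is the standard device; integrating the resulting $1+O(e^{-ct})$ bound gives the additive constant~$\kappa$, and condition~(2) of Definition~\ref{def:almost_geodesic} follows from the same pointwise estimate together with compactness on $[0,T]$.

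Two small points to tighten. First, with your choice of $a_t$ you should check explicitly that the far edge of the disk stays in the region where the local $C^{1,\epsilon}$ graph description applies: one has $c_t+R_t=re^{-2t}\tfrac{1+a_t}{1-a_t}\asymp (re^{-2t})^{1-\alpha}$ whenever $1-a_t\asymp (re^{-2t})^{\alpha}$, so any $\alpha\in(0,1)$ forces $c_t+R_t\to 0$ and the disk is eventually contained in the $\delta$-neighbourhood of $x$. Second, your exponent $\alpha=\epsilon/(2+\epsilon)$ works, but a slightly sharper balancing (using $c_t^2-R_t^2=(re^{-2t})^2$ after squaring the constraint $c_t-\sqrt{R_t^2-y^2}>C|y|^{1+\epsilon}$) actually allows $\alpha=\epsilon$; either choice gives an integrable error and the argument goes through.
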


For convex domains with $C^{1,\epsilon}$ boundary, we can use Theorem~\ref{thm:GP} to understand the behavior of almost geodesics. 
 
 \begin{theorem}\cite[Theorem 6.1]{Z2017}\label{thm:visible} Suppose $\kappa \geq 0$, $\Omega \subset \Cb^d$ is a bounded convex domain with $C^{1,\epsilon}$ boundary, and $x,y \in \partial \Omega$ satisfy $T_{x}^{\Cb} \partial \Omega \neq T_{y}^{\Cb} \partial \Omega$. If $U,V \subset \overline{\Omega}$ are open sets containing $T_{x}^{\Cb} \partial \Omega \cap \partial \Omega, T_{y}^{\Cb} \partial \Omega \cap \partial \Omega$ respectively and $\overline{U} \cap \overline{V} = \emptyset$, then there exists a compact set $K \subset \Omega$ such that: if $\sigma: [a,b] \rightarrow \Omega$ is an $(1,\kappa)$-almost-geodesic with $\sigma(a) \in U$ and $\sigma(b) \in V$, then 
 \begin{align*}
 \sigma([a,b]) \cap K \neq \emptyset.
 \end{align*}
 \end{theorem}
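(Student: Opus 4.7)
The plan is to argue by contradiction. Suppose no such compact $K \subset \Omega$ exists. Taking the exhaustion $K_n = \{z \in \Omega : d_{\text{Euc}}(z, \partial \Omega) \geq 1/n\}$, for each $n$ I extract an $(1,\kappa)$-almost-geodesic $\sigma_n : [a_n, b_n] \to \Omega$ with $\sigma_n(a_n) \in U$, $\sigma_n(b_n) \in V$, and $\sigma_n([a_n, b_n]) \cap K_n = \emptyset$. In particular $d_{\text{Euc}}(\sigma_n(t), \partial \Omega) \leq 1/n$ uniformly in $t$. After passing to subsequences, $\sigma_n(a_n) \to x_\infty \in \overline U \cap \partial \Omega$ and $\sigma_n(b_n) \to y_\infty \in \overline V \cap \partial \Omega$, and $x_\infty \neq y_\infty$ because $\overline U \cap \overline V = \emptyset$.

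The core of the argument is to force $T_{x_\infty}^{\Cb}\partial \Omega = T_{y_\infty}^{\Cb}\partial \Omega$. From Definition~\ref{def:almost_geodesic}, a direct computation (combining $K_\Omega(\sigma_n(a_n),p) + K_\Omega(p,\sigma_n(b_n)) \leq (b_n - a_n) + 2\kappa$ with $K_\Omega(\sigma_n(a_n), \sigma_n(b_n)) \geq (b_n - a_n) - \kappa$) gives the uniform bound
\[
(\sigma_n(a_n) \mid \sigma_n(b_n))_p^{\Omega} \leq \tfrac{3\kappa}{2} \quad \text{for every } p \in \sigma_n([a_n, b_n]).
\]
I then exploit the Gromov product shift identity to rewrite $(\sigma_n(a_n) \mid \sigma_n(b_n))_{z_0}^{\Omega}$ as the sum of this bounded term and $(p_n \mid \sigma_n(a_n))_{z_0}^{\Omega} + (p_n \mid \sigma_n(b_n))_{z_0}^{\Omega} - K_{\Omega}(z_0, p_n)$ for a suitable intermediate $p_n \in \sigma_n$. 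Choosing $p_n$ so that $p_n \to q_\infty \in \partial \Omega$ via the escape condition, and applying Theorem~\ref{thm:GP}(1) to the pair of sequences sharing the limit $q_\infty$ (with case analysis $q_\infty = x_\infty$, $q_\infty = y_\infty$, or neither --- the last handled using Theorem~\ref{thm:GP}(2) on the off-diagonal pair), I extract divergence of the shift terms and conclude $(\sigma_n(a_n) \mid \sigma_n(b_n))_{z_0}^{\Omega} \to \infty$. Theorem~\ref{thm:GP}(2) then yields the common complex tangent hyperplane.

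Consequently $x_\infty$ and $y_\infty$ lie on a common closed complex face $F$. Since distinct closed complex faces of a convex $C^1$ domain are disjoint (each boundary point admits a unique complex tangent hyperplane), either $F = T_x^{\Cb}\partial \Omega \cap \partial \Omega$, $F = T_y^{\Cb}\partial \Omega \cap \partial \Omega$, or $F$ is disjoint from both. The first two cases immediately violate $\overline U \cap \overline V = \emptyset$: for instance if $F = T_x^{\Cb}\partial \Omega \cap \partial \Omega$ then $y_\infty \in F \subset U \subset \overline U$, contradicting $y_\infty \in \overline V$. The third case is ruled out by refining the choice of intermediate points $p_n$ nearer the $U$-end of $\sigma_n$: using the uniform estimate $(\sigma_n(a_n) \mid \sigma_n(b_n))_p^{\Omega} \leq \tfrac{3\kappa}{2}$ and the escape condition, accumulation points of such refined $p_n$ are forced to lie in $T_x^{\Cb}\partial\Omega \cap \partial \Omega$, giving $F \cap (T_x^{\Cb}\partial \Omega \cap \partial \Omega) \neq \emptyset$ and hence $F = T_x^{\Cb}\partial \Omega \cap \partial \Omega$ by uniqueness, reducing to the first case.

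The main obstacle is the divergence of the Gromov product at $z_0$ in the second paragraph: the shift identity contains three a priori unbounded terms whose combination could conceivably stay bounded, so one must coordinate the choice of the intermediate point $p_n$ against the asymptotic behavior of boundary limits, invoking both parts of Theorem~\ref{thm:GP} in tandem to control the cancellations. Once this divergence is in hand, the remainder of the argument is comparatively routine.
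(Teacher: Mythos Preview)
Your contradiction setup and the bound $(\sigma_n(a_n)\mid\sigma_n(b_n))_{p}^{\Omega}\le \tfrac{3\kappa}{2}$ for $p$ on $\sigma_n$ are correct, and this is exactly the estimate the paper uses. The gap is in the second paragraph: the case analysis through the shift identity does \emph{not} yield $(\sigma_n(a_n)\mid\sigma_n(b_n))_{z_0}^{\Omega}\to\infty$. In the case $q_\infty=x_\infty$, Theorem~\ref{thm:GP}(1) gives $(p_n\mid\sigma_n(a_n))_{z_0}\to\infty$, but the remaining piece equals $-(z_0\mid\sigma_n(b_n))_{p_n}$, which is $\le 0$ with no lower bound --- so nothing forces the sum to diverge. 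In your ``neither'' case, applying Theorem~\ref{thm:GP}(2) in contrapositive to the pairs $(p_n,\sigma_n(a_n))$ and $(p_n,\sigma_n(b_n))$ (assuming the tangent plane at $q_\infty$ differs from those at $x_\infty,y_\infty$) gives that both cross Gromov products are \emph{bounded}; plugging into the identity then bounds $K_\Omega(z_0,p_n)$, which is already the desired contradiction. It does not, and cannot, show $(\sigma_n(a_n)\mid\sigma_n(b_n))_{z_0}\to\infty$ --- indeed if $T^{\Cb}_{x_\infty}\partial\Omega\neq T^{\Cb}_{y_\infty}\partial\Omega$ that quantity stays bounded by the contrapositive of Theorem~\ref{thm:GP}(2). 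So the target of paragraph~2 is the wrong one, and paragraph~3 is then built on a conclusion that need not hold.

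The paper's argument is the ``neither'' sub-argument run directly: one \emph{first} selects $t_n$ (e.g.\ the first exit time of $\sigma_n$ from $U$) so that any subsequential limit $\tau$ of $\sigma_n(t_n)$ lies in $\partial\Omega$ but outside both closed complex faces, hence $T^{\Cb}_\tau\partial\Omega\notin\{T^{\Cb}_x\partial\Omega,\,T^{\Cb}_y\partial\Omega\}$. Then Theorem~\ref{thm:GP}(2) bounds both $(\sigma_n(a_n)\mid\sigma_n(t_n))_{z_0}$ and $(\sigma_n(t_n)\mid\sigma_n(b_n))_{z_0}$, and the shift identity (equivalently, adding the two triangle-type inequalities as in the paper's sketch) forces $K_\Omega(z_0,\sigma_n(t_n))$ to be bounded, contradicting escape. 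You should reorganize around this: prove existence of such a $\tau$ with a different complex tangent, then contradict directly --- there is no need to pass through divergence of the endpoint Gromov product or the face trichotomy of your third paragraph.
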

 
 \begin{remark} Informally this theorem says that almost-geodesics bend into the domain just like geodesics do in the Poincar{\'e} model of the real hyperbolic plane.
 \end{remark}
 
 \begin{proof}[Proof Sketch] Suppose not, then we can find a sequence of $(1,\kappa)$-almost-geodesics $\sigma_n : [a_n, b_n] \rightarrow \Omega$ with 
 \begin{align*}
 \lim_{n \rightarrow \infty} d_{\Euc}\left(\sigma(a_n), T_{x}^{\Cb} \partial \Omega\right) = 0 =  \lim_{n \rightarrow \infty} d_{\Euc}\left(\sigma(b_n), T_{y}^{\Cb} \partial \Omega\right)
 \end{align*}
 and $\sigma_n([a_n, b_n])$ leaves every compact set of $\Omega$. Then there exists a sequence $t_n \in [a_n, b_n]$ such that $\sigma_n(t_n) \rightarrow \tau$ and 
 \begin{align*}
 T_{\tau}^{\Cb} \partial \Omega \notin \{ T_{x}^{\Cb} \partial \Omega,  T_{y}^{\Cb} \partial \Omega\}.
 \end{align*}
 Then by Theorem~\ref{thm:GP} there exists some $M \geq 0$ such that 
 \begin{align*}
 K_\Omega(\sigma_n(a_n), \sigma_n(t_n)) \geq K_\Omega(\sigma_n(a_n), z_0)+ K_\Omega(z_0,\sigma_n(t_n)) -M
 \end{align*}
 and
  \begin{align*}
 K_\Omega(\sigma_n(b_n), \sigma_n(t_n)) \geq K_\Omega(\sigma_n(b_n), z_0)+ K_\Omega(z_0,\sigma_n(t_n)) -M.
 \end{align*}
But each $\sigma_n$ is an $(1,\kappa)$-almost-geodesic and so
  \begin{align*}
 K_\Omega(\sigma_n(a_n), \sigma_n(t_n)) & +  K_\Omega(\sigma_n(t_n), \sigma_n(b_n)) \leq K_\Omega(\sigma_n(a_n), \sigma_n(b_n)) + 3\kappa \\
 & \leq K_\Omega(\sigma_n(a_n), z_0)+K_\Omega(0, \sigma_n(b_n))+3\kappa.
 \end{align*}
So we have
 \begin{align*}
 2 K_\Omega(z_0,\sigma_n(t_n)) \leq  3 \kappa +2M,
 \end{align*}
but since $K_\Omega(\sigma_n(t_n),z_0) \rightarrow \infty$ this is impossible. 
 \end{proof}

\section{Elements of the automorphism group}\label{sec:elem_auto_convex}

For convex domains with $C^{1,\epsilon}$ boundary,  one can use Theorem~\ref{thm:GP} to establish the following analogue of the Wolff-Denjoy theorem. 

\begin{theorem}\label{thm:wolf_i}\cite[Theorem 5.1]{Z2017} Suppose $\Omega \subset \Cb^d$ is a bounded convex domain with $C^{1,\epsilon}$ boundary. If $f: \Omega \rightarrow \Omega$ is a holomorphic map, then either 
\begin{enumerate}
\item $f$ has a fixed point in $\Omega$ or 
\item there exists a point $x \in \partial \Omega$ such that 
\begin{align*}
\lim_{n \rightarrow \infty} d_{\Euc}\left(f^n(z), T_{x}^{\Cb} \partial \Omega\right) = 0
\end{align*}
for all $z \in \Omega$. 
\end{enumerate}
\end{theorem}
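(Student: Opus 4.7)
If $f$ has a fixed point in $\Omega$, conclusion (1) holds, so assume $f$ has none. Fix $z \in \Omega$ and set $z_n = f^n(z)$, $a_n = K_\Omega(z, z_n)$. I would first show $a_n \to \infty$: by Barth's theorem (Theorem~\ref{thm:barth}) the space $(\Omega, K_\Omega)$ is Cauchy complete and proper, and a standard Montel-plus-pigeonhole argument --- diagonalizing to $f^{n_{k+1} - n_k}$ if some subsequential limit of $\{f^{n_k}\}$ had image in $\Omega$, thereby producing a fixed point of $f$ --- rules out any interior accumulation. Hence the accumulation set $A := \{x \in \overline{\Omega} : z_{n_k} \to x \text{ for some } n_k \to \infty\}$ is contained in $\partial\Omega$, and conclusion (2) reduces to showing $A$ is contained in a single closed complex face of $\partial\Omega$; independence from the choice of $z$ then follows from $K_\Omega(f^n(z_1), f^n(z_2)) \leq K_\Omega(z_1, z_2)$ together with Proposition~\ref{prop:finite_dist}.

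The essential tool is the combination of Kobayashi contraction with Theorem~\ref{thm:GP}. For every fixed $r \geq 1$ the contraction gives $K_\Omega(z_n, z_{n+r}) \leq a_r$, hence
\[
(z_n \mid z_{n+r})_z^\Omega \;\geq\; \tfrac{1}{2}(a_n + a_{n+r} - a_r) \;\xrightarrow{n\to \infty}\; \infty.
\]
By Theorem~\ref{thm:GP}(2) this yields a \emph{fixed-gap equality}: if $z_{n_k} \to x$ and $z_{n_k+r} \to y$ along a common subsequence with $r$ held fixed, then $T^{\Cb}_x \partial \Omega = T^{\Cb}_y \partial \Omega$. Consequently any violation of conclusion (2) must occur at unbounded gap.

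Suppose for contradiction $x, y \in A$ have $T^{\Cb}_x \partial \Omega \neq T^{\Cb}_y \partial \Omega$, realized by $z_{n_k} \to x$ and $z_{m_k} \to y$ with $n_k < m_k$. By the fixed-gap equality $m_k - n_k \to \infty$; since $\{m : z_m \to y\}$ is infinite, thinning arranges $m_k \geq n_k^3$. Choose small disjoint open neighborhoods $U, V \subset \overline{\Omega}$ of the closed complex faces $F_x := T^{\Cb}_x\partial\Omega \cap \partial\Omega$ and $F_y := T^{\Cb}_y\partial\Omega \cap \partial\Omega$. Theorem~\ref{thm:visible} combined with the Kobayashi contraction $K_\Omega(z_p, z_q) \leq a_{|q-p|}$ yields a uniform constant $C_0$ with
\begin{equation*}
(\star) \qquad a_{|q-p|} \;\geq\; a_p + a_q - C_0 \quad \text{whenever } z_p \in U,\ z_q \in V.
\end{equation*}
Extract so $z_{m_k - n_k} \to w \in \partial\Omega$. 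In the \emph{Case A} alternative $w \notin F_y$, applying $(\star)$ to $(z_{m_k - n_k}, z_{m_k})$ with small disjoint neighborhoods of $F_w, F_y$ gives $a_{n_k} \geq a_{m_k - n_k} + a_{m_k} - C_0'$; combined with $(\star)$ at $(p,q) = (n_k, m_k)$ this forces $2a_{m_k} \leq C_0 + C_0'$, contradicting $a_{m_k} \to \infty$. In the \emph{Case B} alternative $w \in F_y$, $z_{m_k - n_k} \in V$ eventually, and iterating $(\star)$ with $p = n_k$, $q = m_k - j n_k$ (or triggering a Case A contradiction whenever the chain of limits $\lim z_{m_k - j n_k}$ leaves $F_y$) gives inductively
\[
a_{m_k - j n_k} \;\geq\; j\, a_{n_k} + a_{m_k} - j\, C_0 \qquad \bigl(0 \leq j \leq \lfloor m_k/n_k \rfloor\bigr).
\]
At $j = \lfloor m_k/n_k \rfloor \geq n_k^2 - 1$ (from $m_k \geq n_k^3$) the left-hand side is $\leq (m_k \bmod n_k)\, a_1 \leq n_k\, a_1$, forcing $a_{n_k} - C_0 \leq n_k a_1 / (n_k^2 - 1) \to 0$, contradicting $a_{n_k} \to \infty$.

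The \emph{main obstacle} is the bookkeeping at each step of the Case B descent: the uniform constant $C_0$ in $(\star)$ must be obtained once from a fixed choice of neighborhoods $U, V$; a Case A contradiction must be telescoped correctly through the accumulated inductive bound whenever the chain of limit points leaves $F_y$; and the growth rate $m_k \geq n_k^3$ is calibrated precisely to make the terminal index $m_k \bmod n_k$ small relative to the accumulated coefficient $\lfloor m_k/n_k \rfloor$ on $a_{n_k}$.
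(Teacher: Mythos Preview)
The paper does not actually prove Theorem~\ref{thm:wolf_i}; it is quoted from \cite[Theorem 5.1]{Z2017}, with only the remark that Theorem~\ref{thm:GP} is the essential input. So there is no in-paper proof to compare against, and I can only assess your argument on its own terms.

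Your overall strategy is the correct one: Kobayashi contraction plus Theorem~\ref{thm:GP} (packaged through visibility, Theorem~\ref{thm:visible}) yields the uniform Gromov-product bound $(\star)$ for endpoints in $U$ and $V$, and both the fixed-gap equality and Case~A are correct as written.

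The gap is in Case~B. Suppose the descent first exits $V$ at step $j_k$, so $z_{m_k-(j_k-1)n_k}\in V$ but $z_{m_k-j_k n_k}\notin V$; extract so that $z_{m_k-j_k n_k}\to w'\in\partial\Omega\setminus F_y$. To run a ``Case~A contradiction'' you need a $(\star)$-type inequality for some pair involving $z_{m_k-j_k n_k}$. The natural candidate $(p,q)=(m_k-j_k n_k,\,m_k)$ gives
\[
a_{j_k n_k}\ \ge\ a_{m_k-j_k n_k}+a_{m_k}-C_0';
\]
combining this with subadditivity $a_{j_k n_k}\le j_k\,a_{n_k}$ and your accumulated inductive bound yields only $2a_{m_k}\le j_k C_0+C_0'$, which is no contradiction when $j_k\to\infty$ along the subsequence. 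The other candidate $(p,q)=(m_k-j_k n_k,\,m_k-(j_k-1)n_k)$ would reproduce Case~A verbatim, but for that you must know $T_{y''}^{\Cb}\partial\Omega\neq T_{w'}^{\Cb}\partial\Omega$, where $y''$ is a subsequential limit of $z_{m_k-(j_k-1)n_k}\in\overline V\cap\partial\Omega$. Membership in $\overline V$ does \emph{not} force $T_{y''}^{\Cb}\partial\Omega=T_y^{\Cb}\partial\Omega$: boundary points near $F_y$ but not on it typically have a different complex tangent hyperplane, so $T_{y''}^{\Cb}\partial\Omega\neq T_{w'}^{\Cb}\partial\Omega$ is unjustified. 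The ``telescoping'' you mention does not rescue this, because the accumulated lower bound on $a_{m_k-j_k n_k}$ is of order $j_k$ and so is the only upper bound available via subadditivity; the two cancel. You need either a different pairing at the exit step or a reorganization of the descent that controls $j_k$.
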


\begin{remark} Abate and Raissy~\cite{AR2014} proved Theorem~\ref{thm:wolf_i} with the additional assumption that $\partial\Omega$ is $C^2$. \end{remark}

Using Theorem~\ref{thm:wolf_i} we can characterize the automorphisms of $\Omega$ by the behavior of their iterates. Suppose $\Omega \subset \Cb^d$ is a bounded convex domain with $C^{1,\epsilon}$ boundary and $\varphi \in \Aut(\Omega)$. Then by Theorem~\ref{thm:wolf_i} either $\varphi$ has a fixed point in $\Omega$ or there exists a complex supporting hyperplane $H_{\varphi}^+$ of $\Omega$ such that 
\begin{align*}
 \lim_{k \rightarrow \infty} d_{Euc}\left(\varphi^k (z) , H_\varphi^+\right) = 0
 \end{align*}
 for all $z \in \Omega$. In this latter case, we call $H_{\varphi}^+$ the \emph{attracting hyperplane of} $\varphi$. 
 
  \begin{definition}\label{defn:elems_convex}
Suppose $\Omega \subset \Cb^d$ is a bounded convex domain with $C^{1,\epsilon}$ boundary and $\varphi \in \Aut(\Omega)$. Then:
\begin{enumerate}
\item $\varphi$ is \emph{elliptic} if $\varphi$ has a fixed point in $\Omega$, 
\item $\varphi$ is \emph{parabolic} if $\varphi$ has no fixed point in $\Omega$ and $H_{\varphi}^+ = H_{\varphi^{-1}}^+$,
\item $\varphi$ is \emph{hyperbolic} if $\varphi$ has no fixed points in $\Omega$ and $H_{\varphi}^+ \neq H_{\varphi^{-1}}^+$. In this case we call $H_{\varphi}^- : = H_{\varphi^{-1}}^+$  the \emph{repelling hyperplane of} $\varphi$.
\end{enumerate}
\end{definition}

\begin{remark} Theorem~\ref{thm:wolf_i} implies that every automorphism of $\Omega$ is either elliptic, hyperbolic, or parabolic. 
\end{remark}

The rest of this section is devoted to recalling some results about the behavior of elliptic, parabolic, and hyperbolic established in~\cite{Z2017}. 

\subsection{Hyperbolic elements}

In a complete negatively curved Riemannian manifold a hyperbolic isometry always translates a geodesic. The next two results show that an orbit of a hyperbolic automorphism of a convex domain shadows an almost-geodesic.

\begin{theorem}\cite[Theorem 8.1]{Z2017}\label{thm:normal_line_shadowing} Suppose $\Omega$ is a bounded convex domain with $C^{1,\epsilon}$ boundary. If $h \in \Aut(\Omega)$ is a hyperbolic element, then there exists a point $x^+_{h} \in H_{h}^{+}$ such that 
\begin{align*}
K_\Omega^{\Haus}\left( \left\{ h^n z_0 : n \in\Nb \right\}, x^+_{h} + (0,r] \cdot {\bf n}_{\Omega}\left( x^+_{h}\right)  \right) < \infty
\end{align*}
for any $z_0 \in \Omega$ and $r > 0$ such that $x_h^+ + r \cdot {\bf n}_\Omega(x_h^+) \in \Omega$.
\end{theorem}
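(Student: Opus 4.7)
My strategy is to realize the forward orbit $\{p_n := h^n z_0\}_{n \in \Nb}$ as a quasi-geodesic in $(\Omega, K_\Omega)$, construct an $h$-invariant almost-geodesic axis shadowing it, and then identify this axis with the normal line at a suitably chosen point of $H_h^+ \cap \partial\Omega$. The main tools are Theorems~\ref{thm:GP} and~\ref{thm:visible} together with Proposition~\ref{prop:normal_lines}.

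\emph{Step 1: positive translation length.} Set $a_n := K_\Omega(z_0, h^n z_0)$. Since $h$ is a Kobayashi isometry the sequence $(a_n)$ is subadditive, so by Fekete's lemma $\tau := \lim a_n/n$ exists. To prove $\tau > 0$, use Theorem~\ref{thm:wolf_i} to extract subsequential Euclidean limits $h^{n_k} z_0 \to x^+ \in H_h^+ \cap \partial\Omega$ and $h^{-n_k} z_0 \to x^- \in H_h^- \cap \partial\Omega$. Since $H_h^+ \ne H_h^-$, Theorem~\ref{thm:GP}(ii) bounds the Gromov product $(h^{n_k} z_0 \mid h^{-n_k} z_0)^\Omega_{z_0}$ uniformly in $k$, and the isometry identity $K_\Omega(h^{n_k} z_0, h^{-n_k} z_0) = K_\Omega(z_0, h^{2n_k} z_0) = a_{2n_k}$ rewrites this as $a_{2n_k} \geq 2 a_{n_k} - O(1)$. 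Since $a_n \to \infty$ by Cauchy-completeness (Theorem~\ref{thm:barth}), this forces $\tau > 0$ and $a_n = n\tau + O(1)$; in particular $K_\Omega(p_m, p_n) = \tau|m-n| + O(1)$ for all $m,n \in \Zb$.

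\emph{Step 2: axis construction and choice of $x_h^+$.} For each $N$, let $\beta_N:[-s_N, t_N] \to \Omega$ be a $(1,\kappa)$-almost-geodesic from $p_{-N}$ to $p_N$. Theorem~\ref{thm:visible}, applied to disjoint open neighborhoods of the closed complex faces of $x^+$ and $x^-$, yields a compact $K \subset \Omega$ meeting every $\beta_N$. After reparametrizing so that $\beta_N(0) \in K$, properness of $(\Omega, K_\Omega)$ and Arzel\`a--Ascoli produce a locally uniform subsequential limit $\alpha: \Rb \to \Omega$, which is itself a $(1,\kappa)$-almost-geodesic. A direct computation with Step~1 gives $\sup_{n \in \Zb} K_\Omega(\alpha(n\tau), p_n) < \infty$, so the orbit lies at bounded $K_\Omega$-Hausdorff distance from $\alpha(\Rb)$. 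Let $x_h^+ \in \partial\Omega$ be any Euclidean subsequential limit of $\alpha(t)$ as $t \to +\infty$; since the orbit approaches $H_h^+$, so does $\alpha$, whence $x_h^+ \in H_h^+ \cap \partial\Omega$.

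\emph{Step 3: Hausdorff comparison with the normal line.} By Proposition~\ref{prop:normal_lines}, the normal ray $\sigma(t) := x_h^+ + r e^{-2t} {\bf n}_\Omega(x_h^+)$ is a $(1,\kappa_0)$-almost-geodesic converging (in the Euclidean sense) to $x_h^+$. It remains to show $K^{\Haus}_\Omega(\alpha([0,\infty)), \sigma([0,\infty))) < \infty$. Since both rays are almost-geodesics with $K_\Omega(z_0, \alpha(t)) = t + O(1) = K_\Omega(z_0, \sigma(t))$ and share the Euclidean limit $x_h^+$, Theorem~\ref{thm:GP}(i) forces the Gromov product $(\alpha(t) \mid \sigma(t))^\Omega_{z_0}$ to diverge; combining with a reparametrization argument on the almost-geodesic inequalities yields the needed Hausdorff bound. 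Combined with Step~2 this proves the theorem.

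\textbf{Main obstacle.} The subtlest point is Step~3: the closed complex face $H_h^+ \cap \partial\Omega$ may be positive-dimensional, so distinct subsequential Euclidean limits of $\alpha(t)$ (or of $\{p_n\}$) could \emph{a priori} sit at different points of the face. Deriving the quantitative Kobayashi bound needed to compare $\alpha$ and $\sigma$ requires going beyond the qualitative divergence in Theorem~\ref{thm:GP}(i), and this is precisely where the $C^{1,\epsilon}$-regularity of $\partial\Omega$ will be essential.
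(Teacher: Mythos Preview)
This theorem is not proved in the present paper; it is quoted from \cite[Theorem~8.1]{Z2017}, so there is no in-paper argument to compare against directly. I will comment on your proposal on its own terms.

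Step~1 is fine and gives an independent route to the quasi-geodesic estimate (the paper's Proposition~\ref{prop:hyperbolic_QI} derives this \emph{from} Theorem~\ref{thm:normal_line_shadowing}, so your argument via Theorem~\ref{thm:GP} is genuinely non-circular).

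In Step~2 the sentence ``a direct computation with Step~1 gives $\sup_{n} K_\Omega(\alpha(n\tau), p_n) < \infty$'' hides a Morse-lemma step: you are asserting that a discrete quasi-geodesic (the orbit) stays uniformly close to an almost-geodesic with the same asymptotic endpoints, which is false in a general proper geodesic metric space and is exactly what Gromov hyperbolicity would buy you. This can be repaired by \emph{choosing} each $\beta_N$ to be the concatenation of geodesic segments $[p_j, p_{j+1}]$ for $-N \le j < N$; by Step~1 this is a $(1,\kappa)$-almost-geodesic that visibly passes through every $p_j$, and then so does the limit $\alpha$. As written, though, the step is not justified.

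Step~3 is the genuine gap, and your sketch does not close it. There are two separate problems. First, you define $x_h^+$ only as a \emph{subsequential} Euclidean limit of $\alpha(t)$, so Theorem~\ref{thm:GP}(i), which requires both sequences to converge to the \emph{same boundary point} and not merely to the same closed complex face, does not apply to the full ray $\alpha$. Second, even along the good subsequence, divergence of the Gromov product $(\alpha(t_k)\mid\sigma(t_k))^\Omega_{z_0}\to\infty$ only yields $K_\Omega(\alpha(t_k),\sigma(t_k)) = o(t_k)$, not $O(1)$; no reparametrization of two $(1,\kappa)$-almost-geodesics upgrades sublinear growth to a uniform bound. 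This is precisely why the paper, following \cite{Z2017}, runs the logic in the opposite direction: the normal-line shadowing is established first, using the $C^{1,\epsilon}$ regularity quantitatively to control the approach to $H_h^+$, and the almost-geodesic axis of Corollary~\ref{cor:alm_geod_shadow} is then \emph{deduced} from it rather than used to prove it.
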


\begin{remark} For a subset $A \subset \Omega$, define
\begin{align*}
\Nc_\Omega(A;R):= \{ z \in \Omega : K_\Omega(z, A) \leq R\}.
\end{align*}
Then $K_\Omega^{\Haus}(A,B) < \infty$ if and only if there exist some $R \geq 0$ with $A \subset \Nc_\Omega(B;R)$ and $B \subset \Nc_\Omega(A;R)$. The statement of Theorem 8.1 in~\cite{Z2017} only says that 
\begin{align*}
x^+_{h}+(0,r] \cdot {\bf n}_\Omega\left( x^+_{h}\right) \subset \Nc_\Omega\left(\left\{ h^n z_0 : n \in \Nb \right\}; R_1\right)
\end{align*}
for some $r, R_1 > 0$. However, in the proof of Theorem 8.1 it is explicitly established that
\begin{align*}
\left\{ h^n z_0  : n \in \Nb \right\} \subset \Nc_\Omega\left(x^+_{h}+(0,r] \cdot {\bf n}_\Omega\left( x^+_{h}\right); R_2\right)
\end{align*}
for some $R_2 > 0$. 
\end{remark}

\begin{corollary}\label{cor:alm_geod_shadow} Suppose $\Omega$ is a bounded convex domain with $C^{1,\epsilon}$ boundary. If $h \in \Aut(\Omega)$ is a hyperbolic element, then there exists points $x^\pm_{h} \in H_{h}^{\pm}$ and an almost-geodesic $\sigma: \Rb \rightarrow \Omega$ such that 
\begin{align*}
\lim_{t \rightarrow \pm \infty} \sigma(t) = x^{\pm}_{h}
\end{align*}
and 
\begin{align*}
K_\Omega^{\Haus}\left( \left\{ h^n z_0 :n \in \Zb\right\}, \sigma(\Rb)  \right) < \infty.
\end{align*}
\end{corollary}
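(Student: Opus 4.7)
My plan is to apply Theorem~\ref{thm:normal_line_shadowing} twice --- once to $h$, once to $h^{-1}$ --- to obtain one-sided inward-normal almost-geodesics shadowing the forward and backward orbits, and then glue them across a finite geodesic bridge into a bi-infinite curve $\sigma$. The only nontrivial point will be checking that the concatenated $\sigma$ is a genuine almost-geodesic, for which the Gromov-product estimate of Theorem~\ref{thm:GP}(2) plays the central role.

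First I fix $r > 0$ small enough that $x + r\cdot {\bf n}_\Omega(x) \in \Omega$ for every $x \in \partial\Omega$; such an $r$ exists because $\partial\Omega$ is compact and $C^{1,\epsilon}$. Theorem~\ref{thm:normal_line_shadowing} applied to $h$ then yields a point $x_h^+ \in H_h^+$ such that $\{h^n z_0 : n \geq 0\}$ is at finite $K_\Omega$-Hausdorff distance from the image of the normal ray $\gamma_+(t) := x_h^+ + re^{-2t}{\bf n}_\Omega(x_h^+)$, which is a $(1,\kappa)$-almost-geodesic by Proposition~\ref{prop:normal_lines} and visibly satisfies $\gamma_+(t) \to x_h^+$ as $t \to \infty$. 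Since $h^{-1}$ is hyperbolic with attracting hyperplane $H_{h^{-1}}^+ = H_h^-$, the same theorem applied to $h^{-1}$ gives the analogous $x_h^- \in H_h^-$ and $\gamma_-$. The domain $\Omega$ is bounded and convex so contains no complex affine line, whence Theorem~\ref{thm:barth} makes $(\Omega,K_\Omega)$ proper and Cauchy complete; I may therefore join $\gamma_-(0)$ to $\gamma_+(0)$ by an actual geodesic $\beta:[-T,T] \to \Omega$ with $\beta(\pm T) = \gamma_\pm(0)$ and $2T = K_\Omega(\gamma_-(0), \gamma_+(0))$. The candidate is
\begin{align*}
\sigma(t) = \begin{cases} \gamma_-(-t-T) & t \leq -T, \\ \beta(t) & -T \leq t \leq T, \\ \gamma_+(t-T) & t \geq T. \end{cases}
\end{align*}
The limits $\sigma(t) \to x_h^\pm$ as $t \to \pm\infty$ and the finite Hausdorff distance $K_\Omega^{\Haus}(\sigma(\Rb), \{h^n z_0 : n \in \Zb\}) < \infty$ are immediate from the construction and the compactness of $\beta([-T,T])$.

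The main technical step is to verify that $\sigma$ is globally an almost-geodesic for some enlarged constant $\kappa'$. Condition (2) of Definition~\ref{def:almost_geodesic} holds piecewise with $\lambda = 1$, hence globally. For condition (1) the upper bound is the triangle inequality, so the only issue is the lower bound in the case $s \leq -T \leq T \leq t$. This is where the hyperbolic hypothesis enters: $H_h^+ \neq H_h^-$ forces $T^\Cb_{x_h^+}\partial\Omega \neq T^\Cb_{x_h^-}\partial\Omega$, and the contrapositive of Theorem~\ref{thm:GP}(2), together with a short argument ruling out bounded $s_k$ or $t_k$, supplies a uniform constant $M$ with
\begin{align*}
\bigl(\sigma(t) \,\big|\, \sigma(s)\bigr)_{\beta(T)}^{\Omega} \leq M \qquad \text{for all } s \leq -T,\ t \geq T.
\end{align*}
Rearranging and combining with the almost-geodesic lower bounds $K_\Omega(\sigma(t),\beta(T)) \geq (t-T) - \kappa$ for $t \geq T$ and $K_\Omega(\sigma(-T),\sigma(s)) \geq (-s-T) - \kappa$ for $s \leq -T$, together with the exact length $K_\Omega(\beta(T),\beta(-T)) = 2T$, yields $K_\Omega(\sigma(s), \sigma(t)) \geq |t-s| - \kappa'$ with $\kappa' := 4T + 2\kappa + 2M$. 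The remaining mixed cases (one or both of $s,t$ inside $[-T,T]$) reduce immediately to the individual almost-geodesic bounds and the fact that $\beta$ is a genuine geodesic. I expect this Gromov-product/visibility step to be the real content; everything else is bookkeeping.
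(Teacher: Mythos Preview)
Your proposal is correct and follows essentially the same approach as the paper's proof: concatenate the two inward-normal almost-geodesics from Theorem~\ref{thm:normal_line_shadowing} across a geodesic bridge, then use Theorem~\ref{thm:GP} to obtain the global lower distance bound. Your treatment is in fact slightly more explicit than the paper's (you spell out the Gromov-product argument and the resulting constant $\kappa' = 4T + 2\kappa + 2M$, whereas the paper simply asserts that Theorem~\ref{thm:GP} yields some $M$).
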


\begin{proof}
Let $r>0$ and $x^{\pm}_{h} \in \partial \Omega$ be as in Theorem~\ref{thm:normal_line_shadowing}. Then define the curves $\sigma^{\pm}: [0,\infty) \rightarrow \Omega$ by 
\begin{align*}
\sigma^{\pm}(t) = x^{\pm}_{h}  + r e^{-2t} {\bf n}_\Omega\left( x^{\pm}_{h} \right).
\end{align*}
By Proposition~\ref{prop:normal_lines} there exists $\kappa_0>0$ such that each $\sigma^{\pm}$ is an $(1,\kappa_0)$-almost-geodesic. Next let $\sigma_0 : [-T,T] \rightarrow \Omega$ be a geodesic with $\sigma_0(\pm T) = \sigma^{\pm}(0)$. Then define the curve
\begin{align*}
\sigma(t) = \left\{ \begin{array}{ll} 
\sigma^-(T-t) & \text{ if } t \leq -T \\
\sigma_0(t) & \text{ if } -T \leq t \leq T \\
\sigma^+(t-T) & \text{ if } T \leq t.
\end{array} \right.
\end{align*}
By~\cite[Proposition 4.6]{BZ2017}, every geodesic in the Kobayashi metric is an $(1,0)$-almost-geodesic. So $\sigma$ is absolutely continuous (as a curve $\Rb \rightarrow \Cb^d$) and 
\begin{align*}
k_\Omega(\sigma(t); \sigma^\prime(t)) \leq e^{\kappa_0}
\end{align*}
for almost every $t \in \Rb$. Further, it is easy to check that
\begin{align*}
K_\Omega(\sigma(t), \sigma(s)) \leq \abs{t-s}+2T+2\kappa_0
\end{align*}
for all $s,t \in \Rb$. Using Theorem~\ref{thm:GP} there exists some $M>0$ 
\begin{align*}
K_\Omega(\sigma(s), \sigma(t)) \geq  \abs{t - s} -M
\end{align*}
for all $s,t \in \Rb$. So $\sigma$ is an $(1,\kappa)$-almost-geodesic for some $\kappa>0$. Finally, by Theorem~\ref{thm:normal_line_shadowing} we have
\begin{equation*}
K_\Omega^{\Haus}\left( \left\{ h^n z_0 : n \in \Zb \right\}, \sigma(\Rb)  \right) < \infty. \qedhere
\end{equation*}
\end{proof}

\subsection{An uniform convergence result}

The following uniform convergence result will be helpful in many arguments that follow.

\begin{proposition}\cite[Lemma 7.5, Lemma 7.7, Proposition 7.8]{Z2017}\label{prop:non_hyp_attracting} Suppose $\Omega$ is a bounded convex domain with $C^{1,\epsilon}$ boundary. Assume that $\varphi_n \in \Aut(\Omega)$ is a sequence of non-hyperbolic elements such that $\varphi_n(z_0) \rightarrow x$ for some $z_0 \in \Omega$ and $x \in \partial \Omega$. If $U$ is a neighborhood of $T_{x}^{\Cb} \partial \Omega \cap \partial \Omega$ in $\overline{\Omega}$, then there exists some $N \geq 0$ such that 
\begin{align*}
\varphi_n(\Omega \setminus U) \subset U\text{ and } \varphi_n^{-1}(\Omega \setminus U) \subset U
\end{align*}
for all $n \geq N$.
\end{proposition}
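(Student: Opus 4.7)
The plan is to argue by contradiction, constructing an almost-geodesic whose $\varphi_n$-image must satisfy two applications of Theorem~\ref{thm:visible} that are mutually incompatible with the almost-geodesic inequalities. The main tools are the isometry of each $\varphi_n \in \Aut(\Omega)$ with respect to $K_\Omega$, Proposition~\ref{prop:finite_dist}, the Gromov-product dichotomy of Theorem~\ref{thm:GP}, the Wolff--Denjoy classification (Theorem~\ref{thm:wolf_i}), and the existence of $(1,\kappa)$-almost-geodesics from~\cite[Proposition~4.4]{BZ2017}. It suffices to prove $\varphi_n(\Omega \setminus U) \subset U$ for large $n$; the inclusion for $\{\varphi_n^{-1}\}$ follows by the same argument applied to the inverse sequence.

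\emph{Step 1 (reduction).} Suppose the conclusion fails. Passing to a subsequence, there exist $w_n \in \Omega \setminus U$ with $\varphi_n(w_n) \notin U$, $w_n \to w_\infty \in \overline{\Omega} \setminus U$, and $\varphi_n(w_n) \to v_\infty \in \overline{\Omega} \setminus U$. The isometric identity $K_\Omega(\varphi_n(z_0), \varphi_n(w_n)) = K_\Omega(z_0, w_n)$ together with Proposition~\ref{prop:finite_dist} rules out $w_\infty \in \Omega$: otherwise a bounded Kobayashi distance would force $v_\infty \in T_x^\Cb \partial\Omega \cap \partial\Omega \subset U$, a contradiction. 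Thus $w_\infty \in \partial\Omega$, and after shrinking $U$ I may assume $T_{w_\infty}^\Cb \partial\Omega \ne T_x^\Cb \partial\Omega$. The remaining possibility $v_\infty \in \Omega$ will be excluded by a bounded-length argument in Step 3.

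\emph{Step 2 (locating the inverse limit).} Passing to a further subsequence, $\varphi_n^{-1}(z_0) \to y \in \partial\Omega$. I claim $T_y^\Cb \partial\Omega = T_x^\Cb \partial\Omega$. In the elliptic subcase each $\varphi_n$ has a fixed point $p_n \in \Omega$, and an interior accumulation of $\{p_n\}$ would yield a bounded orbit, contradicting $\varphi_n(z_0) \to \partial\Omega$; hence $p_n \to p_\infty \in \partial\Omega$. Since $K_\Omega(\varphi_n^{\pm 1}(z_0), p_n) = K_\Omega(z_0, p_n)$,
\[ (\varphi_n^{\pm 1}(z_0) \,|\, p_n)_{z_0}^\Omega = \tfrac{1}{2} K_\Omega(z_0, \varphi_n(z_0)) \to \infty, \]
and Theorem~\ref{thm:GP}(2) gives $T_x^\Cb \partial\Omega = T_{p_\infty}^\Cb \partial\Omega = T_y^\Cb \partial\Omega$. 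In the parabolic subcase, $H_{\varphi_n}^+ = H_{\varphi_n}^-$; the plan is to combine Theorem~\ref{thm:wolf_i} with Theorem~\ref{thm:GP}(2) along a diagonal subsequence (choosing $k_n \to \infty$ so $\varphi_n^{k_n}(z_0)$ approaches $H_{\varphi_n}^+$) to obtain the same coincidence. This is the step I expect to be the main obstacle: the equality $H_{\varphi_n}^+ = H_{\varphi_n}^-$ constrains long-term iterates of each single $\varphi_n$, whereas the limit $x$ is extracted from a single iterate across many $\varphi_n$, so transferring between these two scales requires a careful diagonal argument.

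\emph{Step 3 (contradiction via visibility).} With $T_y^\Cb \partial\Omega = T_x^\Cb \partial\Omega \ne T_{w_\infty}^\Cb \partial\Omega$, let $\sigma_n\colon [a_n, b_n] \to \Omega$ be a $(1,\kappa)$-almost-geodesic from $\varphi_n^{-1}(z_0)$ to $w_n$. By Theorem~\ref{thm:visible} there is a compact $K_0 \subset \Omega$ and $t_n \in [a_n, b_n]$ with $\sigma_n(t_n) \in K_0$. The image $\tau_n := \varphi_n \circ \sigma_n$ is a $(1,\kappa)$-almost-geodesic with $\tau_n(a_n) = z_0$ and $\tau_n(b_n) = \varphi_n(w_n) \to v_\infty$. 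Since $\varphi_n(K_0)$ lies at bounded $K_\Omega$-distance from $\varphi_n(z_0) \to x$, Proposition~\ref{prop:finite_dist} forces (after a further subsequence) $\varphi_n(K_0)$ to accumulate in $T_x^\Cb \partial\Omega \cap \partial\Omega$, so $K_\Omega(z_0, \tau_n(t_n)) \to \infty$; the almost-geodesic upper bound $K_\Omega(z_0,\tau_n(t_n))\le(t_n-a_n)+\kappa$ then forces $t_n - a_n \to \infty$. If $v_\infty \in \Omega$, the almost-geodesic lower bound gives $b_n - a_n \le K_\Omega(\varphi_n^{-1}(z_0), w_n) + \kappa = K_\Omega(z_0, \varphi_n(w_n)) + \kappa \to K_\Omega(z_0, v_\infty) + \kappa < \infty$, contradicting $t_n - a_n \le b_n - a_n$. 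If $v_\infty \in \partial\Omega$, applying Theorem~\ref{thm:visible} to the tail $\tau_n|_{[t_n, b_n]}$ (whose endpoints have distinct complex tangents) yields a compact $K_1 \subset \Omega$ and $u_n \in [t_n, b_n]$ with $\tau_n(u_n) \in K_1$, so $u_n - a_n = O(1)$ by the almost-geodesic lower bound, again contradicting $t_n - a_n \to \infty$.
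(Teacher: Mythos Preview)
The paper does not prove this proposition; it is quoted from~\cite[Lemmas 7.5, 7.7, Proposition 7.8]{Z2017}, so there is no in-paper argument to compare against. Your overall architecture---contradiction, almost-geodesics, and two applications of Theorem~\ref{thm:visible}---is the right one and matches the spirit of the original source. Steps~1 and~3 are correct as written, as is the elliptic half of Step~2.

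The genuine gap is the parabolic half of Step~2, which you yourself flag. Your proposed diagonal argument is not actually carried out, and it is not clear it can be: the equality $H_{\varphi_n}^+=H_{\varphi_n^{-1}}^+$ controls the \emph{long-time} iterates of each fixed $\varphi_n$, while what you need concerns the \emph{first} iterates $\varphi_n^{\pm 1}(z_0)$ as $n\to\infty$. Bridging these two scales is exactly the content of Proposition~\ref{prop:cont_of_att_hyp} (which is Lemma~7.4 in~\cite{Z2017}, proved there \emph{before} the results making up the present proposition, so you may legitimately invoke it). Applying it twice gives $H_{\varphi_n}^+ \to T_x^{\Cb}\partial\Omega$ and $H_{\varphi_n^{-1}}^+ \to T_y^{\Cb}\partial\Omega$; since $H_{\varphi_n}^+=H_{\varphi_n^{-1}}^+$, you obtain $T_x^{\Cb}\partial\Omega=T_y^{\Cb}\partial\Omega$ at once. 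With this substitution your proof is complete.

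Two minor remarks. First, the reduction in your opening paragraph (``the inclusion for $\varphi_n^{-1}$ follows by the same argument'') is premature: running the same argument on $\varphi_n^{-1}$ requires knowing that every subsequential limit $y$ of $\varphi_n^{-1}(z_0)$ satisfies $T_y^{\Cb}\partial\Omega=T_x^{\Cb}\partial\Omega$, which is precisely Step~2. Once Step~2 is secured the reduction is fine. Second, in Step~1 no shrinking of $U$ is needed: $w_\infty\notin U\supset T_x^{\Cb}\partial\Omega\cap\partial\Omega$ already forces $T_{w_\infty}^{\Cb}\partial\Omega\neq T_x^{\Cb}\partial\Omega$, since for a convex $C^1$ boundary membership in the closed complex face is equivalent to sharing the complex tangent.
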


\subsection{Continuity of attracting hyperplanes}

The next result establishes a type of continuity for the hyperplanes $H^+_{\varphi}$. 

\begin{proposition}\cite[Lemma 7.4]{Z2017}\label{prop:cont_of_att_hyp} Suppose $\Omega$ is a bounded convex domain with $C^{1,\epsilon}$ boundary. Assume that $\varphi_n \in \Aut(\Omega)$ is a sequence of non-elliptic elements such that $\varphi_n(z_0) \rightarrow x$ for some $z_0 \in \Omega$ and $x \in \partial \Omega$. Then
\begin{align*}
H_{\varphi_n}^+ \rightarrow T_{x}^{\Cb} \partial \Omega.
\end{align*}
\end{proposition}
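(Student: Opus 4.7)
The plan is to argue by contradiction: suppose that along some subsequence $H_{\varphi_n}^+ \to H_\infty$ for a complex affine hyperplane $H_\infty \neq T_{x}^{\Cb}\partial\Omega$. The strategy is to construct a diagonal sequence $w_n = \varphi_n^{k_n}(z_0)$ converging to a point of $H_\infty \cap \partial\Omega$ in such a way that the Gromov product $(\varphi_n(z_0) | w_n)_{z_0}^{\Omega}$ diverges to $+\infty$; Theorem~\ref{thm:GP} will then force $H_\infty = T_{x}^{\Cb}\partial\Omega$, yielding the contradiction.

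To construct $k_n$, I use that for each non-elliptic $\varphi_n$ one has $d_{\Euc}(\varphi_n^k(z_0), H_{\varphi_n}^+) \to 0$ as $k \to \infty$ (this is the definition of $H_{\varphi_n}^+$ coming from Theorem~\ref{thm:wolf_i}). Since $H_{\varphi_n}^+$ is a complex supporting hyperplane, $H_{\varphi_n}^+ \cap \Omega = \emptyset$, and combined with Cauchy completeness of $(\Omega, K_\Omega)$ (Theorem~\ref{thm:barth}, using that a bounded domain contains no complex affine line), this forces $K_\Omega(z_0, \varphi_n^k(z_0)) \to \infty$ as $k \to \infty$. In particular, the set
\begin{align*}
S_n := \bigl\{ k \geq 1 : K_\Omega(z_0, \varphi_n^k(z_0)) \geq K_\Omega(z_0, \varphi_n^{k-1}(z_0)) \bigr\}
\end{align*}
is infinite, since otherwise the sequence $k \mapsto K_\Omega(z_0, \varphi_n^k(z_0))$ would be eventually decreasing, contradicting its divergence to infinity. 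I then pick $k_n \in S_n$ large enough that $d_{\Euc}(\varphi_n^{k_n}(z_0), H_{\varphi_n}^+) < 1/n$. After passing to a further subsequence, $\varphi_n^{k_n}(z_0) \to x_\infty^+$ for some $x_\infty^+ \in \overline{\Omega}$; since $d_{\Euc}(\varphi_n^{k_n}(z_0), H_\infty) \to 0$ and $H_\infty$ is itself a complex supporting hyperplane (as a limit of such), we conclude $x_\infty^+ \in H_\infty \cap \partial\Omega$, and convexity together with $C^1$ regularity of $\partial\Omega$ then gives $H_\infty = T_{x_\infty^+}^{\Cb}\partial\Omega$.

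The key computation uses that $\varphi_n$ is a Kobayashi isometry, so $K_\Omega(\varphi_n(z_0), \varphi_n^{k_n}(z_0)) = K_\Omega(z_0, \varphi_n^{k_n - 1}(z_0))$. Therefore
\begin{align*}
(\varphi_n(z_0) | \varphi_n^{k_n}(z_0))_{z_0}^{\Omega} & = \tfrac{1}{2} \bigl( K_\Omega(z_0, \varphi_n(z_0)) + K_\Omega(z_0, \varphi_n^{k_n}(z_0)) - K_\Omega(z_0, \varphi_n^{k_n - 1}(z_0)) \bigr) \\
& \geq \tfrac{1}{2} K_\Omega(z_0, \varphi_n(z_0)),
\end{align*}
where the inequality is precisely the defining property of $k_n \in S_n$. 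Since $\varphi_n(z_0) \to x \in \partial\Omega$, Cauchy completeness forces $K_\Omega(z_0, \varphi_n(z_0)) \to \infty$, so the Gromov product diverges to $+\infty$. Part (2) of Theorem~\ref{thm:GP} then yields $T_{x}^{\Cb}\partial\Omega = T_{x_\infty^+}^{\Cb}\partial\Omega = H_\infty$, contradicting the assumption.

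The subtlest point is the simultaneous choice of $k_n$ meeting both the Euclidean closeness condition and membership in $S_n$; this rests on the elementary observation that a real-valued sequence diverging to $+\infty$ cannot be eventually decreasing. A pleasant feature of this approach is that it is uniform in the dynamical type of $\varphi_n$, so no separate treatment of hyperbolic versus parabolic automorphisms is required.
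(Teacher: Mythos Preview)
Your argument is correct. The paper does not include its own proof of this proposition; it simply cites \cite[Lemma 7.4]{Z2017}. Your proof is self-contained within the framework of the present paper, relying only on Theorem~\ref{thm:GP} (the Gromov-product result, itself quoted from \cite{Z2017}) together with elementary properties of the Kobayashi metric. The device of choosing $k_n$ in the set $S_n$ so that $K_\Omega(z_0,\varphi_n^{k_n}(z_0)) \geq K_\Omega(z_0,\varphi_n^{k_n-1}(z_0))$ is a clean way to obtain the lower bound on the Gromov product, and your observation that $S_n$ is infinite and contains arbitrarily large indices is correct, so the simultaneous Euclidean-closeness requirement poses no difficulty. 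The identification $H_\infty = T_{x_\infty^+}^{\Cb}\partial\Omega$ is justified, since a complex affine hyperplane through a $C^1$ boundary point that misses $\Omega$ must lie in the real tangent hyperplane there, and the latter contains a unique complex hyperplane.
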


\subsection{Constructing hyperbolic elements}

Given a subgroup $H \leq \Aut(\Omega)$, let $\Lc(\Omega; H) \subset \partial \Omega$ denote the set of points $x\in \partial\Omega$  where there exists a point $z \in \Omega$ and a sequence $\varphi_n \in H$ such that $\varphi_n(z) \rightarrow x$. 

 \begin{proposition}\label{prop:const_hyp} Suppose $\Omega \subset \Cb^d$ is a bounded convex domain with $C^{1,\epsilon}$ boundary and $H \leq \Aut(\Omega)$ is a subgroup. If $\Lc(\Omega; H)$ intersects at least two different closed complex faces of $\partial \Omega$, then $H$ contains a hyperbolic element. 
 \end{proposition}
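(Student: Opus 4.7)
The plan is to argue by contradiction: suppose $H$ contains no hyperbolic element, so every element of $H$ is elliptic or parabolic; in particular the uniform attraction statement Proposition~\ref{prop:non_hyp_attracting} is at our disposal for any sequence in $H$ escaping to $\partial\Omega$. The goal is to play ping-pong with two well-chosen sequences in $H$ to produce an element $g \in H$ whose forward orbit of a basepoint accumulates on a part of $\partial\Omega$ disjoint from where its backward orbit accumulates; since $g$ must be parabolic, this will force $H_g^+$ and $H_g^-$ to be two distinct complex tangent hyperplanes, contradicting $H_g^+ = H_g^-$.

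First I would pick $x, y \in \Lc(\Omega; H)$ lying on distinct closed complex faces $F_x := T_x^{\Cb}\partial\Omega \cap \partial\Omega$ and $F_y := T_y^{\Cb}\partial\Omega \cap \partial\Omega$, so in particular $T_x^{\Cb}\partial\Omega \neq T_y^{\Cb}\partial\Omega$. Using Proposition~\ref{prop:finite_dist} I can arrange a common basepoint $z_0 \in \Omega$ and sequences $\varphi_n, \psi_n \in H$ with $\varphi_n(z_0)$ accumulating on $F_x$ and $\psi_n(z_0)$ on $F_y$. Since $\partial\Omega$ is $C^1$ and convex, the map $z \mapsto T_z^{\Cb}\partial\Omega$ is continuous and constant on each closed complex face, and since $F_x, F_y$ are disjoint compact sets with distinct tangent hyperplanes, I may choose open neighborhoods $U, V \subset \overline{\Omega}$ of $F_x, F_y$ with $\overline{U} \cap \overline{V} = \emptyset$, $z_0 \notin \overline{U} \cup \overline{V}$, and such that $T_p^{\Cb}\partial\Omega \neq T_q^{\Cb}\partial\Omega$ whenever $p \in \overline{U} \cap \partial\Omega$ and $q \in \overline{V} \cap \partial\Omega$. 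Then by Proposition~\ref{prop:non_hyp_attracting} I fix $n$ large enough that $\varphi_n^{\pm 1}(\Omega \setminus U) \subset U$ and $\psi_n^{\pm 1}(\Omega \setminus V) \subset V$, and set $g := \varphi_n \psi_n \in H$. A straightforward induction then yields $g^k(z_0) \in U$ and $g^{-k}(z_0) \in V$ for all $k \geq 1$.

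To extract the contradiction I first rule out the elliptic case for $g$. Finite order is immediate: if $g^m = \id$, then $g^m(z_0) = z_0$ should lie simultaneously in $U$ and in $\Omega \setminus \overline{U}$. If $g$ were elliptic of infinite order, then $g$ lies in the compact stabilizer of its fixed point, so $T := \overline{\langle g \rangle}$ is a compact abelian group; since closed subsemigroups of compact topological groups are subgroups, $\{g^k : k \geq 1\}$ is dense in $T$, forcing $z_0 \in T \cdot z_0 \subset \overline{\{g^k(z_0) : k \geq 1\}} \subset \overline{U}$, another contradiction. Hence $g$ is parabolic and $H_g^+ = H_g^-$. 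Now Theorem~\ref{thm:wolf_i} implies that any accumulation point $p$ of $\{g^k(z_0)\}_{k\geq 1}$ lies in $\overline{U} \cap H_g^+ \cap \partial\Omega$, so $p$ lies on the closed complex face cut out by $H_g^+$ and hence $T_p^{\Cb}\partial\Omega = H_g^+$. Symmetrically, any accumulation point $q$ of $\{g^{-k}(z_0)\}_{k\geq 1}$ lies in $\overline{V} \cap \partial\Omega$ with $T_q^{\Cb}\partial\Omega = H_g^- = H_g^+$, violating the separation property built into the choice of $U$ and $V$. The main delicacy is the elliptic-infinite-order case, since the ping-pong constrains only the positive and negative halves of the orbit separately; one has to invoke density of $\{g^k\}_{k\geq 1}$ in the torus closure of $\langle g\rangle$ to trap $z_0$ itself inside $\overline{U}$.
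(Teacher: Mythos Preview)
Your proof is correct and shares the same ping-pong skeleton as the paper's: pick $x,y\in\Lc(\Omega;H)$ on different closed complex faces, choose disjoint neighborhoods $U,V$ of those faces, use Proposition~\ref{prop:non_hyp_attracting} to get $\varphi_n^{\pm1}(\Omega\setminus U)\subset U$ and $\psi_n^{\pm1}(\Omega\setminus V)\subset V$, and form the product $g=\varphi_n\psi_n$.

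The endgame, however, differs. The paper keeps the whole family $h_{m,n}=\phi_m\psi_n$ and observes that $h_{m,n}(\Omega\setminus V)\subset U$ while $h_{m,n}^{-1}(\Omega\setminus U)\subset V$; a second application of Proposition~\ref{prop:non_hyp_attracting} to the sequence $h_{m,n}$ then immediately forces hyperbolicity, since a non-hyperbolic sequence would attract both forwards and backwards into the \emph{same} neighborhood. You instead fix a single $g$, show by induction that $g^k(z_0)\in U$ and $g^{-k}(z_0)\in V$, and then eliminate the elliptic and parabolic cases by hand: the elliptic case via density of $\{g^k:k\geq 1\}$ in the compact closure $\overline{\langle g\rangle}$, and the parabolic case by identifying $H_g^+=T_p^{\Cb}\partial\Omega$ for some $p\in\overline U\cap\partial\Omega$ and $H_g^-=T_q^{\Cb}\partial\Omega$ for some $q\in\overline V\cap\partial\Omega$, contradicting $H_g^+=H_g^-$ via your separation condition on $U,V$. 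Your route is more self-contained (it only uses Theorem~\ref{thm:wolf_i} and the definition of parabolic at the end, not a second appeal to Proposition~\ref{prop:non_hyp_attracting}), at the cost of the extra torus-closure argument and the additional care in choosing $U,V$ so that tangent hyperplanes over $\overline U\cap\partial\Omega$ and $\overline V\cap\partial\Omega$ never coincide. The paper's version is shorter because Proposition~\ref{prop:non_hyp_attracting} already packages the ``forward and backward orbits accumulate on the same face'' phenomenon for non-hyperbolic sequences.
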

 
 This is essentially the proof of Theorem 7.1 in~\cite{Z2017}.
 
 \begin{proof}
 Suppose that $x, y \in \Lc(\Omega;H)$ and $T_x^{\Cb} \partial \Omega \neq T_y^{\Cb} \partial \Omega$. Then there exists sequences $\phi_m, \psi_n \in H$ and $z,w \in \Omega$ such that $\phi_m(z) \rightarrow x$ and $\psi_n(w) \rightarrow y$. If one of the $\phi_m$ or $\psi_n$ is hyperbolic, then there is nothing to show. So suppose that every $\phi_m$ and $\psi_n$ is non-hyperbolic. 
 
 Then pick $U$ a neighborhood of $T_{x}^{\Cb} \partial \Omega \cap \partial \Omega$ in $\overline{\Omega}$ and $V$ a neighborhood of $T_{y}^{\Cb} \partial \Omega \cap \partial \Omega$ in $\overline{\Omega}$ such that $\overline{V} \cap \overline{U} \neq \emptyset$. By Proposition~\ref{prop:non_hyp_attracting} there exists $N \geq 0$ such that 
\begin{align*}
\phi_m^{\pm 1}(\Omega \setminus U) \subset U\text{ and } \psi_n^{\pm 1}(\Omega \setminus V) \subset V
\end{align*}
for all $m,n \geq N$. Next consider the elements $h_{m,n} = \phi_m \psi_n$. Then for $m,n$ large we have that 
\begin{align*}
h_{m,n}( \Omega \setminus V) \subset U \text{ and }  h_{m,n}^{-1}( \Omega \setminus U) \subset V.
\end{align*}
Thus Proposition~\ref{prop:non_hyp_attracting} implies that $h_{m,n}$ must be hyperbolic for $m,n$ large. 
 
 \end{proof}

\section{More on hyperbolic elements}\label{sec:hyp_elem_convex}

In this section we establish a number of new results about hyperbolic elements in $\Aut(\Omega)$. 

 \subsection{Stability of hyperbolic elements}
 
  \begin{proposition}\label{prop:stability_convex} Suppose $\Omega \subset \Cb^d$ is a bounded convex domain with $C^{1,\epsilon}$ boundary. If $h \in \Aut(\Omega)$ is a hyperbolic element, then there exists a neighborhood $\Oc$ of $h$ in $\Aut(\Omega)$ such that every $h^\prime \in \Oc$ is also hyperbolic.  \end{proposition}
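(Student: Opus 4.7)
The plan is to argue by contradiction. Suppose there is a sequence $h_n \in \Aut(\Omega)$ with $h_n \to h$ such that every $h_n$ is non-hyperbolic; the goal is to contradict the two-sided asymptotic dynamics of the hyperbolic element $h$.

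First I locate boundary accumulation points on the two distinct complex faces determined by $h$. By Theorem~\ref{thm:normal_line_shadowing} applied to $h$ and to $h^{-1}$, the orbits $\{h^{\pm N} z_0\}$ have finite $K_\Omega^{\Haus}$ distance from inward normal segments at points of $H_h^\pm \cap \partial \Omega$, and in particular escape every compact subset of $\Omega$. Combining this shadowing with Proposition~\ref{prop:finite_dist} shows that every subsequential boundary limit of $h^{\pm N} z_0$ lies on the face $H_h^\pm \cap \partial \Omega$, so after passing to a subsequence $N_j \to \infty$ we obtain points $y^\pm \in H_h^\pm \cap \partial \Omega$ with $h^{\pm N_j} z_0 \to y^\pm$. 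Because $\partial \Omega$ is $C^1$, the complex tangent hyperplane at a boundary point is the unique complex hyperplane sitting inside the real tangent hyperplane, so $T_{y^\pm}^{\Cb} \partial \Omega = H_h^\pm$. In particular the two closed complex faces $T_{y^\pm}^{\Cb} \partial \Omega \cap \partial \Omega$ are distinct, hence disjoint: a common point would force the tangent hyperplanes to coincide.

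Next I perform a joint diagonal extraction. Convergence $h_n \to h$ in the compact-open topology gives $h_n^k z_0 \to h^k z_0$ inside $\Omega$ for each fixed $k \in \Zb$, and interleaving the indices $N_j$ against the rate of this convergence in $n$ produces an increasing sequence $k_n \to \infty$ for which
\[
h_n^{k_n} z_0 \to y^+ \text{ and } h_n^{-k_n} z_0 \to y^-
\]
simultaneously. A short check shows that a power of an elliptic element is elliptic (the fixed point is preserved) and a power of a parabolic element is parabolic (a periodic orbit inside $\Omega$ would contradict the Wolff--Denjoy conclusion of Theorem~\ref{thm:wolf_i}, since a finite orbit cannot lie on the supporting hyperplane $H_{h_n}^+$). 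Hence each $\varphi_n := h_n^{k_n}$ is again non-hyperbolic.

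Finally, I would apply Proposition~\ref{prop:non_hyp_attracting} to the sequence $\varphi_n$ with boundary limit $y^+$. Choose an open neighborhood $U \subset \overline{\Omega}$ of $T_{y^+}^{\Cb} \partial \Omega \cap \partial \Omega$ whose closure contains neither $z_0$ (possible since $z_0 \in \Omega$) nor $y^-$ (possible by disjointness of the two closed complex faces). The proposition then yields $\varphi_n^{-1}(\Omega \setminus U) \subset U$ for all sufficiently large $n$; since $z_0 \in \Omega \setminus U$, this forces $h_n^{-k_n} z_0 \in U$ eventually, contradicting $h_n^{-k_n} z_0 \to y^- \notin \overline{U}$. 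The main obstacle is the joint diagonal step, where one must produce a single sequence of exponents $k_n$ along which the orbits of $h_n$ shadow both ends of the axis of $h$ at once; once this two-sided shadowing is in place, the attraction property of Proposition~\ref{prop:non_hyp_attracting} supplies the contradiction without further work.
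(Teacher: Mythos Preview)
Your proof is correct and follows essentially the same approach as the paper's: argue by contradiction, use a diagonal extraction to produce powers $h_n^{k_n}$ whose forward and backward orbits of $z_0$ approach the two distinct faces $H_h^{\pm}$, and then invoke Proposition~\ref{prop:non_hyp_attracting} to obtain a contradiction. The paper's version is slightly leaner in two places: it works directly with Euclidean distance to the hyperplanes $H_h^{\pm}$ (which follows from the definition of a hyperbolic element) rather than invoking Theorem~\ref{thm:normal_line_shadowing} and Proposition~\ref{prop:finite_dist} to produce specific limit points $y^{\pm}$; and it phrases the conclusion as ``$h_n^{m_n}$ is hyperbolic, hence $h_n$ is hyperbolic'' without spelling out why powers of non-hyperbolic elements are non-hyperbolic, a step you do justify.
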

  
 \begin{proof}Suppose for a contradiction that there exists $h_n \rightarrow h$ such that each $h_n$ is non-hyperbolic. Now fix some $z_0 \in \Omega$. Then 
 \begin{align*}
 \lim_{m \rightarrow \pm \infty} d_{\Euc}(h^mz_0, H_h^\pm) = 0.
 \end{align*}
  Further, 
  \begin{align*}
  \lim_{n \rightarrow \infty} h_n^m(z_0) = h^m(z_0)
  \end{align*}
  for every $m \in \Nb$. So we can select $m_n \rightarrow \infty$ such that 
   \begin{align*}
 \lim_{n \rightarrow \infty} d_{\Euc}(h_n^{\pm m_n}z_0, H_h^\pm) = 0.
 \end{align*}
 But then by Proposition~\ref{prop:non_hyp_attracting}, the elements $h_n^{m_n}$ must be hyperbolic when $n$ is sufficiently large. Which implies that $h_n$ is hyperbolic when $n$ is sufficiently large. So we have a contradiction. 
\end{proof}

 \subsection{North/South Dynamics}
 
 \begin{proposition}\label{prop:NS} Suppose $\Omega$ is a convex domain with $C^{1,\epsilon}$ boundary and $h \in \Aut(\Omega)$ is a hyperbolic element. If $U$ is a neighborhood of $H_{h}^+ \cap \partial \Omega$ in $\overline{\Omega}$ and $V$ is a neighborhood of $H_{h}^+ \cap \partial \Omega$ in $\overline{\Omega}$, then there exists some $N >0$ such that 
 \begin{align*}
h^n (\Omega \setminus V) \subset U \text{ and } h^{-n} (\Omega \setminus U) \subset V
 \end{align*}
 for all $n \geq N$.
 \end{proposition}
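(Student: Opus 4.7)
The plan is to argue by contradiction and, after ruling out two easier cases via Proposition~\ref{prop:finite_dist}, to use a pair of Gromov product bounds from Theorem~\ref{thm:GP} whose sum telescopes exactly to $K_\Omega(z_0, h^{n_k} z_0)$. By replacing $h$ with $h^{-1}$ (which interchanges $H_h^+$ and $H_h^-$) it suffices to prove the first containment. Suppose for contradiction that it fails. Then there exist $n_k \to \infty$ and $z_k \in \Omega \setminus V$ with $h^{n_k} z_k \notin U$. Fix $z_0 \in \Omega$; passing to a subsequence, $z_k \to z \in \overline{\Omega} \setminus V$ and $h^{n_k} z_k \to w \in \overline{\Omega} \setminus U$. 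By Corollary~\ref{cor:alm_geod_shadow}, $h^{\pm n_k} z_0 \to x_h^{\pm}$, so in particular $K_\Omega(z_0, h^{n_k} z_0) \to \infty$.

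First I would dispatch the two easier cases. If $z \in \Omega$, then $K_\Omega(h^{n_k}z_0, h^{n_k}z_k) = K_\Omega(z_0, z_k)$ stays bounded; a triangle inequality with $K_\Omega(z_0, h^{n_k} z_0) \to \infty$ rules out $w \in \Omega$, so $w \in \partial\Omega$, and Proposition~\ref{prop:finite_dist} applied to $h^{n_k} z_0 \to x_h^+$ and $h^{n_k}z_k \to w$ forces $T_w^{\Cb} \partial\Omega = H_h^+$, hence $w \in H_h^+ \cap \partial\Omega \subset U$, a contradiction. Symmetrically, if $z \in \partial\Omega$ but $w \in \Omega$, then $K_\Omega(h^{-n_k} z_0, z_k) = K_\Omega(z_0, h^{n_k}z_k)$ stays bounded, and Proposition~\ref{prop:finite_dist} applied to $h^{-n_k} z_0 \to x_h^-$ and $z_k \to z$ forces $z \in H_h^- \cap \partial\Omega \subset V$, again a contradiction.

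This leaves the main case $z, w \in \partial \Omega$, in which $z \notin V$ and $w \notin U$ become $T_z^{\Cb} \partial\Omega \neq H_h^-$ and $T_w^{\Cb} \partial\Omega \neq H_h^+$ respectively. I would then apply the contrapositive of Theorem~\ref{thm:GP}(2) twice. Since $T_{x_h^+}^{\Cb} \partial\Omega = H_h^+ \neq T_w^{\Cb} \partial\Omega$, there is a uniform bound $(h^{n_k} z_0 | h^{n_k} z_k)_{z_0}^{\Omega} \leq M_1$. Since $T_{x_h^-}^{\Cb} \partial\Omega = H_h^- \neq T_z^{\Cb} \partial\Omega$, there is a uniform bound $(h^{-n_k} z_0 | z_k)_{z_0}^{\Omega} \leq M_2$. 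Using the invariance identities $K_\Omega(h^{n_k} z_0, h^{n_k} z_k) = K_\Omega(z_0, z_k)$ and $K_\Omega(h^{-n_k} z_0, z_k) = K_\Omega(z_0, h^{n_k} z_k)$, expansion gives
\[
(h^{n_k} z_0 | h^{n_k} z_k)_{z_0}^{\Omega} = \tfrac{1}{2}\bigl[K_\Omega(z_0, h^{n_k} z_0) + K_\Omega(z_0, h^{n_k} z_k) - K_\Omega(z_0, z_k)\bigr]
\]
and
\[
(h^{-n_k} z_0 | z_k)_{z_0}^{\Omega} = \tfrac{1}{2}\bigl[K_\Omega(z_0, h^{n_k} z_0) + K_\Omega(z_0, z_k) - K_\Omega(z_0, h^{n_k} z_k)\bigr].
\]
Adding these, the $K_\Omega(z_0, z_k)$ and $K_\Omega(z_0, h^{n_k} z_k)$ terms cancel pairwise and the sum telescopes to exactly $K_\Omega(z_0, h^{n_k} z_0) \leq M_1 + M_2$, contradicting $K_\Omega(z_0, h^{n_k} z_0) \to \infty$.

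The main obstacle is the final case: once both $z$ and $w$ are on $\partial \Omega$, direct estimates on $K_\Omega$ give no contradiction, and Proposition~\ref{prop:finite_dist} no longer applies because we do not control the limiting tangent planes against each other but only against $H_h^{\pm}$. The key observation is that these two independent tangential non-equalities yield two Gromov product bounds whose sum is precisely $K_\Omega(z_0, h^{n_k} z_0)$ — a quantity which must diverge — and this rigidity is what closes the argument.
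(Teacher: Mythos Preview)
Your proof is correct and takes a genuinely different route from the paper's. The paper argues via the visibility of almost-geodesics (Theorem~\ref{thm:visible}): it joins the wandering points $q_k$ to auxiliary points $p_k = h^{m_k}(z_0)$ deep near $H_h^-$ by geodesics $\sigma_k$, uses visibility to extract a convergent subsequence, then pushes forward by $h^{n_k}$ and applies visibility again to two halves of $h^{n_k}\sigma_k$; the contradiction comes from comparing the parametrization length $\beta_k-\alpha_k \to \infty$ with a finite Kobayashi distance. Your argument bypasses geodesics and visibility entirely, working directly with Theorem~\ref{thm:GP}: after disposing of the cases $z\in\Omega$ or $w\in\Omega$ via Proposition~\ref{prop:finite_dist}, the two tangential inequalities $T_w^{\Cb}\partial\Omega \neq H_h^+$ and $T_z^{\Cb}\partial\Omega \neq H_h^-$ each yield a bounded Gromov product, and the key identity
\[
(h^{n_k}z_0\,|\,h^{n_k}z_k)_{z_0}^\Omega + (h^{-n_k}z_0\,|\,z_k)_{z_0}^\Omega = K_\Omega(z_0,h^{n_k}z_0)
\]
forces this divergent quantity to stay bounded. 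Since Theorem~\ref{thm:visible} is itself derived from Theorem~\ref{thm:GP}, your argument is strictly more elementary and considerably shorter. Two small points worth tightening: your citation of Corollary~\ref{cor:alm_geod_shadow} for $h^{\pm n_k}z_0 \to x_h^{\pm}$ overstates what that corollary gives (Hausdorff closeness to an almost-geodesic, not pointwise convergence), but you only need that every subsequential limit of $h^{\pm n_k}z_0$ lies in $H_h^{\pm}\cap\partial\Omega$, which follows directly from the definition of hyperbolic; and to ensure the limits $z,w$ genuinely avoid $H_h^-\cap\partial\Omega$, $H_h^+\cap\partial\Omega$ you should first shrink $U,V$ to have closures inside the original neighborhoods.
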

 
 \begin{proof} Since $h^{-1}$ is also hyperbolic, it is enough to prove that there exists some $N > 0$ such that 
  \begin{align*}
h^n (\Omega \setminus V) \subset U
  \end{align*}
 for all $n \geq N$. Suppose not, then there exists $n_k \rightarrow \infty$ and a sequence $q_k \in \Omega \setminus V$ such that $h^{n_k}(q_k) \notin U$. By passing to a subsequence we can suppose that $q_k \rightarrow y_1$ and $h^{n_k}(q_k) \rightarrow y_2$.

 Fix some $z_0 \in \Omega$ and a sequence $m_k \rightarrow-\infty$ such that $m_k+n_k \rightarrow -\infty$. Then let $p_k = h^{m_k}(z_0)$. Then 
\begin{align*}
\lim_{k \rightarrow \infty} d_{\Euc}\left(p_k, H^-_{h}\right) = \lim_{k \rightarrow \infty} d_{\Euc}\left(h^{m_k}(z_0), H^-_{h}\right) = 0
 \end{align*}
 and
 \begin{align*}
\lim_{k \rightarrow \infty} d_{\Euc}\left(h^{n_k} p_k, H^-_{h}\right) = \lim_{k \rightarrow \infty} d_{\Euc}\left(h^{m_k+n_k}(z_0), H^-_{h}\right) = 0.
 \end{align*}

 Next let $\sigma_k:[a_k,b_k] \rightarrow \Omega$ be a sequence of geodesics with $\sigma_k(a_k)=q_k$ and $\sigma_k(b_k) = p_k$. Since $q_k \in \Omega \setminus V$, Theorem~\ref{thm:visible}  implies that we can pass to a subsequence and reparametrize each $\sigma_k$ so that $\sigma_k$ converges to a geodesic $\sigma$. Next consider the geodesics $\sigma_k^{(1)} = h^{n_k}\sigma_k|_{[a_k,0]}$ and $\sigma_k^{(2)} = h^{n_k}\sigma_k|_{[0,b_k]}$. Since $\sigma_k(0) \rightarrow \sigma(0)$, Proposition~\ref{prop:finite_dist} implies that
 \begin{align*}
\lim_{k \rightarrow \infty} d_{\Euc}\left( h^{n_k}\sigma_k(0), H^+_{h}\right) =0.
 \end{align*}
 Further, $\sigma_k^{(1)}(a_k) = h^{n_k}(q_k) \rightarrow y_2$ and
  \begin{align*}
\lim_{k \rightarrow \infty} d_{\Euc}\left( \sigma_k^{(2)}(b_k) , H^-_{h}\right) =\lim_{k \rightarrow \infty} d_{\Euc}\left( h^{n_k} p_k, H^-_{h}\right)=0.
 \end{align*}
 
 So after passing to a subsequence Theorem~\ref{thm:visible} implies that there exists $\alpha_k \in [a_k,0]$ and $\beta_k \in [0,b_k]$ such that 
 the geodesics $t \rightarrow \sigma_k^{(1)}(t+\alpha_k)$ and $t \rightarrow \sigma_k^{(2)}(t+\beta_k)$ converge locally uniformly to geodesics $\sigma^{(1)}$ and $\sigma^{(2)}$ respectively. Since $\Aut(\Omega)$ acts properly on $\Omega$ we must have that $\alpha_k \rightarrow -\infty$ and $\beta_k \rightarrow \infty$. But then 
 \begin{align*}
\infty> K_\Omega\left(\sigma^{(1)}(0), \sigma^{(2)}(0) \right) & = \lim_{k \rightarrow \infty} K_\Omega\left(\sigma^{(1)}_k(\alpha_k), \sigma^{(2)}_k(\beta_k) \right) \\
& = \lim_{k \rightarrow \infty} K_\Omega\left(h^{n_k}\sigma_k(\alpha_k), h^{n_k}\sigma_k(\beta_k) \right)\\
& = \lim_{k \rightarrow \infty} K_\Omega\left(\sigma_k(\alpha_k), \sigma_k(\beta_k) \right) = \lim_{k \rightarrow \infty} \beta_k - \alpha_k = \infty
\end{align*}
so we have a contradiction. 
 
 \end{proof}
 
 \subsection{Applications of North/South Dynamics}
 
\begin{proposition}\label{prop:PP1} Suppose $\Omega$ is a bounded convex domain with $C^{1,\epsilon}$ boundary. If $h_1, h_2 \in \Aut(\Omega)$ are hyperbolic elements and 
\begin{align*}
\{ H_{h_1}^+, H_{h_1}^-\} \cap \{ H_{h_2}^+, H_{h_2}^-\} = \emptyset,
\end{align*}
then there exists $n,m >0$ such that the elements $h_1^m, h_2^n$ generate a free group.
\end{proposition}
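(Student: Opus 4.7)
The plan is to verify the hypotheses of the classical ping-pong lemma for the subgroup $\langle h_1^m, h_2^n \rangle$, using the North/South dynamics from Proposition~\ref{prop:NS} to produce disjoint attracting/repelling regions for each generator.

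First I would check that the four closed complex faces $F_i^{\pm} := H_{h_i}^{\pm} \cap \partial\Omega$ (for $i\in\{1,2\}$ and sign $\pm$) are pairwise disjoint in $\partial\Omega$. Indeed, if two such faces shared a point $z$, then by convexity and the $C^1$ regularity of the boundary both of the associated supporting hyperplanes would coincide with $T_z^{\Cb}\partial\Omega$; this contradicts either the hypothesis $\{H_{h_1}^+, H_{h_1}^-\} \cap \{H_{h_2}^+, H_{h_2}^-\} = \emptyset$, or the fact that $H_{h_i}^+ \neq H_{h_i}^-$ since $h_i$ is hyperbolic. I can then select pairwise disjoint open neighborhoods $U_i^{\pm} \subset \overline{\Omega}$ of the respective $F_i^{\pm}$, small enough that some point $z_0 \in \Omega$ lies in the complement of all four.

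Next, applying Proposition~\ref{prop:NS} to each $h_i$ produces an integer $N_i \geq 1$ such that for every $k \geq N_i$,
\begin{align*}
h_i^k(\Omega \setminus U_i^{-}) \subset U_i^{+} \quad \text{and} \quad h_i^{-k}(\Omega \setminus U_i^{+}) \subset U_i^{-}.
\end{align*}
Setting $m = N_1$, $n = N_2$, and $g_i := h_i^{N_i}$, every nonzero power $g_i^{k}$ inherits the analogous inclusion (with $k$ replaced by $|k|$ and signs adjusted). In particular, pairwise disjointness of the $U_j^{\pm}$ gives $U_j^{\pm} \subset \Omega \setminus U_i^{\mp}$ for $i \neq j$, so $g_i^{\pm k}(U_j^{\epsilon}) \subset U_i^{\pm}$ for any $\epsilon \in \{+,-\}$ and $k \geq 1$.

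Finally, the standard ping-pong argument closes the proof: given any nontrivial reduced word $w = g_{j_\ell}^{a_\ell} \cdots g_{j_1}^{a_1}$ with $j_r \neq j_{r+1}$ and $a_r \in \Zb \setminus \{0\}$, a straightforward induction on $r$ shows that the partial product $g_{j_r}^{a_r} \cdots g_{j_1}^{a_1}(z_0)$ lies in $U_{j_r}^{\mathrm{sign}(a_r)}$. Since $z_0$ lies outside every $U_j^{\pm}$, this forces $w(z_0) \neq z_0$, hence $w$ is nontrivial in $\Aut(\Omega)$, and therefore $\langle g_1, g_2 \rangle$ is free of rank two. There is no real obstacle here: all substantive work was already done in establishing Proposition~\ref{prop:NS}, and this proposition is simply the translation of North/South dynamics into a ping-pong input.
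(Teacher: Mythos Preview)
Your proposal is correct and follows exactly the route the paper takes: the paper's proof consists of the single sentence ``This follows from Proposition~\ref{prop:NS} and the well known `ping-pong lemma,' see for instance~\cite[Section II.B]{P2000},'' and you have simply written out the details of that argument carefully.
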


\begin{proof}
This follows from Proposition~\ref{prop:NS} and the well known ``ping-pong lemma,'' see for instance~\cite[Section II.B]{P2000}.
\end{proof}

 \begin{proposition}\label{prop:PP2} Suppose that $\Omega \subset \Cb^d$ is a bounded convex domain with $C^{1,\epsilon}$ boundary. If $h_1, h_2 \in \Aut(\Omega)$ are hyperbolic elements,
 \begin{align*}
 \{ H_{h_1}^+, H_{h_1}^-\} \cap  \{ H_{h_2}^+, H_{h_2}^-\}  = \emptyset,
 \end{align*}
$V_1$ is a neighborhood of $H_{h_1}^+ \cap \partial \Omega$ in $\overline{\Omega}$, and $V_2$ is a neighborhood of $H_{h_2}^+ \cap \partial \Omega$ in $\overline{\Omega}$, then there exists $m, n > 0$ such that $h=h_1^m h_2^{-n} \in \Aut(\Omega)$ is a hyperbolic element with $H^+_h \cap \partial \Omega \subset V_1$ and $H^-_h \cap \partial \Omega \subset V_2$.
\end{proposition}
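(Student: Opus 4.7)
My strategy is to use the north--south dynamics (Proposition~\ref{prop:NS}) of $h_1$ and $h_2$ to obtain ping-pong style containments for $h := h_1^m h_2^{-n}$, apply Proposition~\ref{prop:non_hyp_attracting} to force $h$ to be hyperbolic, and then use Proposition~\ref{prop:cont_of_att_hyp} to locate its attracting and repelling hyperplanes. Write $F_i^{\pm} := H_{h_i}^{\pm} \cap \partial\Omega$ for $i=1,2$. The first observation is that these four closed complex faces are pairwise disjoint: any common point $p$ would, by the $C^{1}$ smoothness of $\partial\Omega$, force the two supporting hyperplanes at $p$ to both equal $T_p^{\Cb}\partial\Omega$, contradicting the hypothesis that $H_{h_1}^{\pm}, H_{h_2}^{\pm}$ are distinct. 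Using this disjointness (together with normality of $\overline\Omega$), I choose open sets $V_1' \ni F_1^+$, $V_2' \ni F_2^+$, $W_1 \ni F_1^-$, $W_2 \ni F_2^-$ in $\overline\Omega$ with pairwise disjoint closures and with $\overline{V_1'}\subset V_1$, $\overline{V_2'}\subset V_2$.

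Applying Proposition~\ref{prop:NS} to $h_1$ and $h_2$, there exists $M$ such that for all $m,n\geq M$,
\begin{gather*}
h_1^m(\Omega\setminus W_1)\subset V_1', \quad h_1^{-m}(\Omega\setminus V_1')\subset W_1, \\
h_2^n(\Omega\setminus W_2)\subset V_2', \quad h_2^{-n}(\Omega\setminus V_2')\subset W_2.
\end{gather*}
Since $W_1\cap W_2=\emptyset$, composing yields the ping-pong containments
\[
h(\Omega\setminus V_2')\subset h_1^m(W_2)\subset V_1', \qquad h^{-1}(\Omega\setminus V_1')\subset h_2^n(W_1)\subset V_2'.
\]
Fixing $z_0\in\Omega$, the same chain of inclusions shows that every subsequential limit of $h(z_0)$ as $(m,n)\to(\infty,\infty)$ lies in $F_1^+$ (because $h_2^{-n}(z_0)\in W_2$ eventually and $h_1^m(\Omega\setminus W_1)$ eventually lies in any prescribed neighborhood of $F_1^+$); symmetrically, every subsequential limit of $h^{-1}(z_0)$ lies in $F_2^+$.

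Next I rule out the non-hyperbolic case. Suppose $h_k:=h_{m_k,n_k}$ is non-hyperbolic along some $(m_k,n_k)\to(\infty,\infty)$; after passing to a subsequence $h_k(z_0)\to x\in F_1^+$, so $T_x^{\Cb}\partial\Omega\cap\partial\Omega=F_1^+$. Proposition~\ref{prop:non_hyp_attracting} applied with the neighborhood $V_1'$ of $F_1^+$ then forces $h_k^{-1}(\Omega\setminus V_1')\subset V_1'$ for all large $k$, contradicting the ping-pong inclusion $h_k^{-1}(\Omega\setminus V_1')\subset V_2'$ (as $V_1'\cap V_2'=\emptyset$ and $\Omega\setminus V_1'$ is nonempty, since $V_1'$ misses the open set $W_1\cap\Omega$). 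Hence $h$ is hyperbolic for all sufficiently large $m,n$.

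Finally, to control $H_h^{\pm}\cap\partial\Omega$, I apply Proposition~\ref{prop:cont_of_att_hyp}: along any subsequence with $h_k(z_0)\to x\in F_1^+$, one has $H_{h_k}^+\to T_x^{\Cb}\partial\Omega=H_{h_1}^+$. Upper semicontinuity of the intersection of a complex hyperplane with the compact set $\partial\Omega$ then yields $H_{h}^+\cap\partial\Omega\subset V_1$ for $m,n$ sufficiently large: otherwise one could extract $p_k\in H_{h_k}^+\cap\partial\Omega$ with $p_k\notin V_1$ converging to $p\in\partial\Omega\setminus V_1$, and passage to the limit of the normalized linear defining equations would place $p\in H_{h_1}^+\cap\partial\Omega=F_1^+\subset V_1$, a contradiction. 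The symmetric argument applied to $h^{-1}=h_2^n h_1^{-m}$, using that $h^{-1}(z_0)$ accumulates in $F_2^+$, gives $H_h^-\cap\partial\Omega\subset V_2$, completing the proof. The main obstacle is ruling out the non-hyperbolic case, which is resolved by playing the ping-pong containment for $h^{-1}$ against the two-sided attraction estimate of Proposition~\ref{prop:non_hyp_attracting}.
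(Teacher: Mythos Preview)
Your proof is correct and follows essentially the same approach as the paper's: both use Proposition~\ref{prop:NS} to show that $g_k=h_1^{m_k}h_2^{-n_k}$ pushes $z_0$ toward $H_{h_1}^+$ while $g_k^{-1}$ pushes $z_0$ toward $H_{h_2}^+$, invoke Proposition~\ref{prop:non_hyp_attracting} to rule out the non-hyperbolic case, and then apply Proposition~\ref{prop:cont_of_att_hyp} to localize $H_{g_k}^{\pm}$. Your version is more explicit about the disjointness of the four faces and the upper-semicontinuity step needed to pass from $H_{g_k}^+\to H_{h_1}^+$ to $H_{g_k}^+\cap\partial\Omega\subset V_1$, whereas the paper leaves these as one-line assertions.
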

 
 \begin{proof} Fix some $z_0 \in \Omega$. Using Proposition~\ref{prop:NS} we can find $m_k, n_k \rightarrow \infty$ such that if $g_k = h_1^{m_k} h_2^{-n_k}$, then
 \begin{align*}
 \lim_{k \rightarrow \infty} d_{\Euc}(g_k(z_0), H_{h_1}^+) = 0
 \end{align*}
 and
  \begin{align*}
 \lim_{k \rightarrow \infty} d_{\Euc}(g_k^{-1}(z_0), H_{h_2}^+) = 0.
 \end{align*}
 Since $H_{h_1}^+ \neq H_{h_2}^+$, Proposition~\ref{prop:non_hyp_attracting} implies that $g_k$ is hyperbolic for large  $k$. Further, Proposition~\ref{prop:cont_of_att_hyp} implies that
 \begin{align*}
 H_{g_k}^+ \rightarrow H_{h_1}^+
 \end{align*}
 and
  \begin{align*}
 H_{g_k}^- \rightarrow H_{h_2}^+.
 \end{align*}
 So we can pick $k$ such that $H^+_{g_k} \cap \partial \Omega \subset V_1$ and $H^-_{g_k} \cap \partial \Omega \subset V_2$.
  \end{proof}
  
  \subsection{Complex affine disks in the boundary}

A subset $A \subset \Cb^d$ is called an \emph{complex affine disk} if there exists a non-constant complex affine map $\ell: \Cb \rightarrow \Cb^d$ such that $\ell(\Db) = A$.

  \begin{proposition}\label{prop:cplx_disks} Suppose $\Omega \subset \Cb^d$ is a bounded convex domain with $C^{1,\epsilon}$ boundary. Assume $h \in \Aut(\Omega)$ is a hyperbolic element and $x^+_h \in \partial \Omega$ is the boundary point in Theorem~\ref{thm:normal_line_shadowing}. Then there does not exists a complex affine disk $A \subset \partial \Omega$ with $x^+_h \in A$.   \end{proposition}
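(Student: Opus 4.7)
My plan is to argue by contradiction. I would suppose there exists a complex affine disk $A\subset\partial\Omega$ containing $x^+_h$ and parametrize it as $A=\{x^+_h+\lambda u:\lambda\in\Db\}$ for some nonzero $u\in\Cb^d$. Because $\partial\Omega$ is $C^1$ and $\Omega$ is convex, the unique complex supporting hyperplane at every point of $A$ must equal $T^\Cb_{x^+_h}\partial\Omega$, so the inward unit normal is the same vector $\mathbf{n}:=\mathbf{n}_\Omega(x^+_h)$ along all of $A$; in particular $y+\delta\mathbf{n}\in\Omega$ for each $y\in A$ and each sufficiently small $\delta>0$. The heart of the argument is then a tangential Kobayashi estimate along the normal almost-geodesic $\sigma(t)=x^+_h+re^{-2t}\mathbf{n}$ of Proposition~\ref{prop:normal_lines}: for $t$ large and $|\lambda|\leq 1/4$ the point $\sigma(t)+\lambda u=(x^+_h+\lambda u)+re^{-2t}\mathbf{n}$ lies in $\Omega$, so $\lambda\mapsto\sigma(t)+(\lambda/4)u$ is a holomorphic disk $\Db\to\Omega$ through $\sigma(t)$ with derivative $u/4$ at the origin. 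This yields $k_\Omega(\sigma(t);u)\leq 4$ for $t\geq T_0$ (and, by the same argument at the nearby points $\sigma(t)+rsu$), one obtains a constant $C$ with $K_\Omega(\sigma(t),\sigma(t)+su)\leq C|s|$ for $t\geq T_0$ and $|s|\leq 1/4$.

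Next I would transport a displaced orbit back toward $z_0$. By Theorem~\ref{thm:normal_line_shadowing} there is $R>0$ and $t_n\to\infty$ with $K_\Omega(h^nz_0,\sigma(t_n))\leq R$. Fix a nonzero $s$ with $|s|<1/4$ and set $q_n:=\sigma(t_n)+su$; then $q_n\to y:=x^+_h+su\in\partial\Omega\setminus\{x^+_h\}$, while
\begin{align*}
K_\Omega(z_0,h^{-n}q_n)=K_\Omega(h^nz_0,q_n)\leq R+C|s|.
\end{align*}
Since $\Omega$ is bounded convex it contains no complex affine line, so by Theorem~\ref{thm:barth} the metric space $(\Omega,K_\Omega)$ is proper, and after passing to a subsequence $h^{-n_k}q_{n_k}\to q\in\Omega$ for some $q$. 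Then $K_\Omega(h^{n_k}q,q_{n_k})=K_\Omega(q,h^{-n_k}q_{n_k})\to 0$, and Proposition~\ref{prop:zero_dist_est} together with $q_{n_k}\to y\in\partial\Omega$ forces $h^{n_k}q\to y$ in the Euclidean topology.

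On the other hand, applying Theorem~\ref{thm:normal_line_shadowing} with basepoint $q$ in place of $z_0$ gives $h^nq\to x^+_h$ in the Euclidean topology, contradicting $y\neq x^+_h$. The main delicate point of the argument is precisely this last step: extracting the full Euclidean convergence $h^nq\to x^+_h$ from the Hausdorff-Kobayashi shadowing in~\cite{Z2017}; without additional input, Proposition~\ref{prop:finite_dist} and the shadowing only constrain the Euclidean limit points of $\{h^nq\}$ to lie in the closed complex face of $x^+_h$, which in the presence of a disk $A$ could a priori include all of $A$. Either this pointwise convergence is part of the identification of the specific boundary point $x^+_h$ inside the proof of Theorem~\ref{thm:normal_line_shadowing}, or one should close the loop with an additional input such as a Frankel-type rescaling at $x^+_h$ or a visibility comparison involving the backward attracting point $x^-_h$ (which lies in a different complex face of $\partial\Omega$). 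The other ingredients, particularly the tangential Kobayashi bound of the first paragraph and the properness/pullback argument of the second, are routine consequences of $A\subset\partial\Omega$.
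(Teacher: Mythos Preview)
Your self-identified gap in step~7 is a genuine one, and your proposal as written does not close it. The statement of Theorem~\ref{thm:normal_line_shadowing} only gives a Kobayashi--Hausdorff shadowing of $\{h^nq\}$ by the normal segment at $x^+_h$; combined with Proposition~\ref{prop:finite_dist}, this only forces every Euclidean limit point of $\{h^nq\}$ to lie in the closed complex face of $x^+_h$. Under the standing assumption that a complex affine disk $A\subset\partial\Omega$ passes through $x^+_h$, that face contains all of $A$, so the conclusion $h^{n_k}q\to y\in A\setminus\{x^+_h\}$ is entirely consistent with the shadowing. There is no contradiction, and the circularity you flag (``pointwise convergence to $x^+_h$ requires knowing there is no disk through $x^+_h$'') is real.

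The paper's argument avoids this circularity by shifting the contradiction from the dynamics of $h$ on a single point to a \emph{uniform} tangential estimate along the entire open affine disk. Let $\Oc$ be the interior of $L\cap\partial\Omega$ in the complex line $L\supset A$. Your first paragraph already shows that $K_\Omega(\sigma_{x^+_h}(t),\sigma_y(t))$ is bounded for each fixed $y\in\Oc$; the crucial additional step is to prove this bound is \emph{uniform in $y\in\Oc$}. The paper does this by a compactness argument: assuming the bound blows up along $y_n\in\Oc$, one pulls back the almost-geodesics $\sigma_{y_n}$ by $h^{-m_n}$ (with $m_n$ chosen via the shadowing so that $h^{-m_n}\sigma_{x^+_h}(t_n)$ stays in a compact set), observes that the endpoints of $h^{-m_n}\sigma_{y_n}$ accumulate on the two distinct faces $H^+_h$ and $H^-_h$, and then applies the visibility Theorem~\ref{thm:visible} to extract a limiting almost-geodesic, contradicting unboundedness. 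Once the bound $K_\Omega(\sigma_{x^+_h}(t),\sigma_y(t))\leq C$ is uniform on $\Oc$, it persists on $\overline{\Oc}$ by continuity, and choosing $y\in\partial\Oc$ (which is \emph{not} in the interior of $L\cap\partial\Omega$) contradicts Proposition~\ref{prop:finite_dist}. Your instinct that the repelling face $H^-_h$ and visibility must enter is correct; they enter precisely in establishing this uniformity, not in a pointwise convergence statement for $h^nq$.
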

  
This result follows from the proof of~\cite[Theorem 9.1]{Z2017}, but in this subsection we will provide a different argument.  

\begin{proof}
Fix some $r > 0$ such that $x+r\cdot {\bf n}_{\Omega}(x) \in \Omega$ for all $x \in \partial \Omega$. Then for $x \in \partial \Omega$, define the curve 
\begin{align*}
\sigma_x(t) = x + r e^{-2t} {\bf n}_\Omega(x).
\end{align*}
By Proposition~\ref{prop:normal_lines} there exists some $\kappa > 0$ such that each $\sigma_x$ is an $(1,\kappa)$-almost-geodesic.

Suppose for a contradiction that there exists a complex affine disk $A \subset \partial \Omega$ with $x^+_h \in A$. Let $L$ be the complex affine line containing $A$ and let $\Oc$ denote the interior of $L \cap \partial \Omega$ in $L$. Now by~\cite[Proposition 4.6]{Z2014b} if $y \in \Oc$, then
\begin{align*}
K_\Omega^{\Haus}\Big(\sigma_{y}, \sigma_{x_h^+}\Big) < \infty.
\end{align*}

\noindent \textbf{Claim 1:} There exists some $M_1 > 0$ such that 
\begin{align*}
K_\Omega\Big(\sigma_{y}, \sigma_{x_h^+}(t) \Big) \leq M_1
\end{align*}
for all $y \in \Oc$ and $t \geq 0$. 

\begin{proof}[Proof of Claim 1:] If not we can find $y_n \in \Oc$ and $t_n \geq 0$ such that 
\begin{align*}
K_\Omega\Big(\sigma_{y_n}, \sigma_{x_h^+}(t_n) \Big) > n.
\end{align*}
Since 
\begin{align*}
\sup_{x_1, x_2 \in \partial\Omega} K_\Omega(\sigma_{x_1}(0), \sigma_{x_2}(0) ) < \infty
\end{align*}
we must have that $t_n \rightarrow \infty$. Now by Theorem~\ref{thm:normal_line_shadowing} there exists a sequence $m_n$ such that 
\begin{align*}
\sup_{n \in \Nb} K_\Omega\left( h^{m_n} \sigma_{x_h^+}(0),  \sigma_{x_h^+}(t_n) \right) < \infty.
\end{align*}
So by passing to a subsequence we can assume that $h^{-m_n} \sigma_{x_h^+}(t_n)$ converges to some $z_0\in \Omega$. Since $t_n \rightarrow \infty$ we must have $m_n \rightarrow \infty$. 

Next consider the curves $\gamma_n = h^{-m_n} \sigma_{y_n}$. Then
\begin{align*}
\lim_{n \rightarrow \infty} d_{\Euc}(\gamma_n(0) , H^-_h) = \lim_{n \rightarrow \infty} d_{\Euc}\left(  h^{-m_n}\sigma_{y_n}(0), H^-_h\right) = 0.
\end{align*}
Further 
\begin{align*}
K_\Omega^{\Haus}\Big(\gamma_n, & \sigma_{x_h^+}\Big) = K_\Omega^{\Haus}\Big(\sigma_{y_n}, h^{m_n}\sigma_{x_h^+}\Big) \\
& \leq K_\Omega^{\Haus}\Big(\sigma_{y_n}, \sigma_{x_h^+}\Big) + K_\Omega^{\Haus}\Big(\sigma_{x_h^+}, h^{m_n}\sigma_{x_h^+}\Big) < \infty
\end{align*}
and so by Proposition~\ref{prop:finite_dist}
\begin{align*}
\lim_{t \rightarrow \infty} d_{\Euc}(\gamma_n(t), H^+_h) = 0.
\end{align*}
Then by Theorem~\ref{thm:visible} we can pass to a subsequence and find some $T_n$ such that the almost geodesics $t \rightarrow \gamma_n(t+T_n)$ converge to locally uniformly to an almost geodesic $\gamma : \Rb \rightarrow \infty$. But then 
\begin{align*}
\infty = \lim_{n \rightarrow \infty} K_\Omega\Big(\sigma_{y_n}, \sigma_{x_h^+}(t_n) \Big) 
&\leq \lim_{n \rightarrow \infty} K_\Omega\Big(\sigma_{y_n}(T_n), \sigma_{x_h^+}(t_n) \Big)\\
& = \lim_{n \rightarrow \infty} K_\Omega\Big(\gamma_n(T_n), h^{-m_n}\sigma_{x_h^+}(t_n) \Big) \\
&= K_\Omega(\gamma(0), z) < \infty
\end{align*}
so we have a contradiction. \end{proof}

Next define 
\begin{align*}
M_2:=\sup_{x_1, x_2 \in \partial\Omega} K_\Omega(\sigma_{x_1}(0), \sigma_{x_2}(0) ).
\end{align*}

\noindent \textbf{Claim 2:} $K_\Omega\Big(\sigma_{y}(t), \sigma_{x_h^+}(t) \Big) \leq 2M_1+M_2+3\kappa$
for all $t >0$ and $y \in \Oc$.

\begin{proof}[Proof of Claim 2:] Fix $t > 0$ and $y \in \Oc$. Then there exists some $s > 0$ such that 
\begin{align*}
K_\Omega\Big(\sigma_{y}(s), \sigma_{x_h^+}(t) \Big) \leq M_1.
\end{align*}
Since $\sigma_y$ and $\sigma_{x_h^+}$ are $(1,\kappa)$-almost-geodesics we have 
\begin{align*}
K_\Omega\left( \sigma_y(s), \sigma_{x^+_h}(t)\right) 
&\geq \abs{ K_\Omega\left( \sigma_y(s), \sigma_{y}(0)\right) - K_\Omega\left( \sigma_{x^+_h}(0), \sigma_{x^+_h}(t)\right)}-K_\Omega\left(\sigma_y(0), \sigma_{x^+_h}(0)\right)\\
& \geq \abs{t-s}-2\kappa-M_2.
\end{align*}
So $\abs{t-s} \leq M_1 + M_2 + 2\kappa$. So
\begin{align*}
K_\Omega\left( \sigma_y(t), \sigma_{x^+_h}(t)\right) \leq K_\Omega\left( \sigma_y(t), \sigma_y(s)\right) + K_\Omega\left(\sigma_y(s), \sigma_{x^+_h}(t)\right) \leq 2M_1 + M_2 + 3\kappa.
\end{align*}
\end{proof}

Then taking limits we see that
\begin{align*}
K_\Omega\left( \sigma_y(t), \sigma_{x^+_h}(t)\right) \leq 2M_1 + M_2 + 3\kappa
\end{align*}
for all $t >0$ and $y \in \overline{\Oc}$. But this contradicts Proposition~\ref{prop:finite_dist}. \end{proof}

\subsection{A distance estimate}

\begin{proposition}\label{prop:hyperbolic_QI}  Suppose that $\Omega \subset \Cb^d$ is a bounded convex domain with $C^{1,\epsilon}$ boundary, $h \in \Aut(\Omega)$ is a hyperbolic element, and $z_0 \in \Omega$. Then there exists some $\alpha > 1$ and $\beta >0$ such that 
\begin{align*}
\frac{1}{\alpha} \abs{m-n}-\beta \leq K_\Omega(h^m(z_0),h^{n}(z_0)) \leq \alpha \abs{m-n}+\beta
\end{align*}
for all $m,n \in \Zb$. 
\end{proposition}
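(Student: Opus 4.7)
The upper bound is immediate from the triangle inequality and the fact that $h$ is a $K_\Omega$-isometry: $K_\Omega(h^m z_0, h^n z_0) \leq |m-n| \cdot K_\Omega(z_0, hz_0)$ for all $m,n \in \Zb$, so the right-hand inequality holds with any $\alpha \geq \max(K_\Omega(z_0,hz_0), 1)$ and $\beta = 0$.

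For the lower bound, the plan is to shadow the orbit by the almost-geodesic $\sigma : \Rb \to \Omega$ produced by Corollary~\ref{cor:alm_geod_shadow} and extract linear growth of the shadow parameters from a quasi-morphism identity. By that corollary, $\sigma$ is an $(1,\kappa)$-almost-geodesic with $\sigma(\pm\infty) = x_h^\pm$ and $K_\Omega^{\Haus}(\sigma(\Rb), \{h^n z_0 : n \in \Zb\}) \leq R$. After reparametrizing so that $K_\Omega(\sigma(0), z_0) \leq R$, for each $n \in \Zb$ I would select $s_n \in \Rb$ with $K_\Omega(h^n \sigma(0), \sigma(s_n)) \leq 2R$. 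The almost-geodesic inequality combined with the triangle inequality then yields a uniform bound
\begin{align*}
\bigl| K_\Omega(h^m z_0, h^n z_0) - |s_m - s_n| \bigr| \leq C_1,
\end{align*}
reducing the lower bound to producing $\Delta > 0$ with $s_n = n\Delta + O(1)$.

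To get this I would exploit the isometric action of $h$. Because $h$ permutes the orbit $\{h^k z_0\}$, the image $h^n \sigma(\Rb)$ lies within Hausdorff distance $2R$ of $\sigma(\Rb)$; for each $t$ pick $\tau_n(t) \in \Rb$ with $K_\Omega(h^n \sigma(t), \sigma(\tau_n(t))) \leq 2R$, arranging $\tau_n(0) = s_n$. Playing the almost-geodesic bounds on $\sigma$ off against the isometric action of $h$ gives $\bigl| |\tau_n(t) - \tau_n(s)| - |t-s| \bigr| \leq C_2$ uniformly in $n, s, t$. The crucial point is orientation preservation: $\tau_n(t) \to +\infty$ as $t \to +\infty$. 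If instead $\tau_n(t_k) \to -\infty$ along some $t_k \to +\infty$, then $\sigma(\tau_n(t_k)) \to x_h^-$, while $h^n \sigma(t_k)$ simultaneously lies at bounded $K_\Omega$-distance from $\sigma(\tau_n(t_k))$ and from an orbit point $h^{n+m_k} z_0$ with $m_k \to +\infty$ (the latter coming from the shadow bound on $\sigma(t_k)$), so Proposition~\ref{prop:finite_dist} would force $T_{x_h^-}^{\Cb} \partial \Omega = T_{x_h^+}^{\Cb} \partial \Omega$, i.e. $H_h^- = H_h^+$, contradicting hyperbolicity. Orientation preservation promotes the previous bound to $\tau_n(s_m) - s_n = s_m + O(1)$, and tracking $h^{n+m} \sigma(0) = h^n(h^m \sigma(0))$ through the shadow map yields the quasi-morphism identity
\begin{align*}
|s_{n+m} - s_n - s_m| \leq C_3
\end{align*}
for all $n, m \in \Zb$. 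The standard fact that every quasi-morphism on the abelian group $\Zb$ is bounded distance from a homomorphism then gives $s_n = n\Delta + O(1)$ for some $\Delta \in \Rb$, and the same orientation argument (using properness of $K_\Omega$ from Theorem~\ref{thm:barth}) forces $s_n \to +\infty$, so $\Delta > 0$.

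The only real obstacle is orientation preservation of the shadow map: without it, the quasi-morphism identity acquires sign ambiguities permitting sub-linear growth of $s_n$ (e.g.\ $s_n \sim \sqrt{|n|}$ satisfies every hypothesis except orientation). This is precisely where the assumption that $h$ is hyperbolic rather than parabolic is essential, via $H_h^+ \neq H_h^-$ and Proposition~\ref{prop:finite_dist}.
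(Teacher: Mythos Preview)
Your argument is correct and takes a genuinely different route from the paper's. The paper runs a \v{S}varc--Milnor style argument: having fixed $t_m$ with $K_\Omega(h^m z_0,\sigma(t_m))\le R$, it subdivides $[0,t_m]$ into unit-length pieces, approximates each subdivision point $\sigma(s_i)$ by an orbit point $h^{m_i}z_0$, and then invokes \emph{properness of the $\Aut(\Omega)$-action} to bound $|m_{i+1}-m_i|$ uniformly, yielding $m\le M(t_m+1)\lesssim K_\Omega(h^m z_0,z_0)$. No orientation analysis and no quasi-morphism machinery appear; hyperbolicity enters only through the existence of the shadowing almost-geodesic.

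Your approach replaces properness of the action by the quasi-morphism identity $|s_{m+n}-s_m-s_n|\le C_3$, obtained from orientation preservation of the shadow maps $\tau_n$, and then appeals to the standard fact that real-valued quasi-morphisms on $\Zb$ are bounded distance from homomorphisms. This is more conceptual and has the virtue of isolating precisely where the hypothesis $H_h^+\ne H_h^-$ is used (in the orientation step, via Proposition~\ref{prop:finite_dist}); the paper's proof would go through verbatim for a parabolic element whose orbit happened to shadow an almost-geodesic ray, whereas yours would not, as you correctly note. The cost is the extra lemma that a $(1,C_2)$-quasi-isometric self-embedding of $\Rb$ with $\tau_n(+\infty)=+\infty$ satisfies $|\tau_n(t)-\tau_n(0)-t|\le C'(C_2)$ uniformly; this is elementary but should be made explicit, since the uniformity in $n$ is what makes the quasi-morphism bound uniform. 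It is worth noting that the paper does build essentially your shadow maps $\tau_n$ in the \emph{next} result (Proposition~\ref{prop:translating}), but there the linear growth of $\tau(m,t)$ in $m$ is \emph{derived from} Proposition~\ref{prop:hyperbolic_QI} rather than used to prove it, so the logical orders are reversed.
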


The proof of Proposition~\ref{prop:hyperbolic_QI} is essentially the proof of the {\v S}varc-Milnor Lemma given in~\cite[Section IV.B]{P2000}.

\begin{proof} It is enough to show that 
\begin{align*}
\frac{1}{\alpha}m-\beta \leq K_\Omega(h^m(z_0),z_0) \leq \alpha m+\beta
\end{align*}
for all $m \in \Nb$. 

First let
\begin{align*}
\delta:=K_\Omega( h(z_0), z_0 ),
\end{align*}
then 
\begin{align*}
K_\Omega(h^m(z_0), z_0) &\leq \sum_{i=1}^{m} K_\Omega(h^{i-1}(z_0),  h^i(z_0)) = m K_\Omega(z_0, h(z_0)) = \delta m.
\end{align*}

By Proposition~\ref{prop:normal_lines} and Theorem~\ref{thm:normal_line_shadowing} there exists an $(1,\kappa)$-almost-geodesic $\sigma: [0,\infty) \rightarrow \Omega$ such that 
\begin{align*}
R:=K_\Omega^{\Haus}\left( \left\{ h^n z_0  : n \in\Nb\right\}, \sigma([0,\infty))  \right) < \infty.
\end{align*}
Fix $m \in \Nb$. Now there exists some $t_m \in \Rb_{\geq 0}$ such that 
\begin{align*}
K_\Omega(h^m(z_0), \sigma(t_m)) \leq R.
\end{align*}
Then we can pick $0=s_0 < s_1 < \dots < s_N = t_m$ such that $N \leq t_m+1$ and 
\begin{align*}
\abs{s_{i+1}-s_i} \leq 1.
\end{align*}
For each $s_i$ there exists some $m_i \in \Nb$ such that 
\begin{align*}
K_\Omega(h^{m_i}(z_0), \sigma(s_i)) \leq R.
\end{align*}
We can assume that $m_0 =0$ and $m_N = m$. Then
\begin{align*}
K_\Omega(h^{m_i}(z_0), h^{m_{i+1}}(z_0)) \leq 2R +K_\Omega(\sigma(s_i), \sigma(s_{i+1})) \leq 2R+1+\kappa.
\end{align*}
In particular, 
\begin{align*}
K_\Omega(h^{\abs{m_i-m_{i+1}}}(z_0), z_0) \leq 2R +\kappa+1.
\end{align*}
Now since $\Aut(\Omega)$ acts properly on $\Omega$ there exists some $M \geq 0$ such that: if 
\begin{align*}
K_\Omega(h^n(z_0), z_0) \leq 2R+\kappa+1,
\end{align*}
then $\abs{n} \leq M$. Thus $\abs{m_i-m_{i+1}} \leq M$ and
\begin{align*}
m & =  \sum_{i=0}^{N-1} m_{i+1}-m_i \leq \sum_{i=0}^{N-1} \abs{m_{i+1}-m_i} \leq NM \leq M(t_m+1) \\
& \leq M K_{\Omega}(\sigma(t_m), \sigma(0)) + M(\kappa+1) \\
& \leq M K_\Omega(h^m(z_0), z_0) + M R + M K_\Omega(\sigma(0), z_0) + M\kappa +M.
\end{align*}

So there exists $\alpha > 1$ and $\beta >0$ such that 
\begin{align*}
\frac{1}{\alpha} \abs{m-n}-\beta \leq K_\Omega(h^m(z_0),h^{n}(z_0)) \leq \alpha \abs{m-n}+\beta
\end{align*}
for all $m,n \in \Zb$. 
\end{proof}

\subsection{Shadowing an almost-geodesic}\label{subsec:unif_translation}

For the rest of this subsection, suppose that $\Omega \subset \Cb^d$ is a bounded convex domain with $C^{1,\epsilon}$ boundary and $h \in \Aut(\Omega)$ is a hyperbolic element. By Corollary~\ref{cor:alm_geod_shadow} there exists an $(1,\kappa)$-almost-geodesic $\sigma: \Rb \rightarrow \Omega$ such that 
\begin{align*}
R:=K_\Omega^{\Haus}\left( \left\{ h^n z_0  : n \in\Zb\right\}, \sigma(\Rb)  \right) <\infty
\end{align*}
and 
\begin{align*}
\lim_{t \rightarrow \pm \infty} d_{\Euc}(\sigma(t), H^\pm_{h}) = 0.
\end{align*}

Now we define a function $\tau: \Rb \times \Zb \rightarrow \Rb$ by setting $\tau(t,0)=t$ and for $m \in \Zb \setminus\{0\}$ setting
\begin{align*}
\tau(m,t) : = \min\{ s \in \Rb: d(h^m\sigma(t), \sigma(s)) = d(h^m\sigma(t), \sigma(\Rb))\}.
\end{align*}

We will establish the following estimates.

\begin{proposition}\label{prop:translating} With the notation above:
\begin{enumerate}
\item $K_\Omega\Big( \sigma( \tau(m,t)), h^m\sigma(t) \Big) \leq 2R$ for all $m \in \Zb$ and $t \in \Rb$.
\item There exists some $A>1$ and $B >0$ such that 
\begin{align*}
\frac{1}{A} (m-n) - B \leq \tau\left(m,t\right) -\tau\left(n,t\right)  \leq A (m-n) + B 
\end{align*}
for all $m > n$ and $t \in \Rb$. 
\end{enumerate}
\end{proposition}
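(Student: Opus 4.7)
For part (1), the plan is to apply the $R$-Hausdorff closeness of the orbit $\{h^k z_0 : k \in \Zb\}$ to $\sigma(\Rb)$ twice and to use that $h^m$ is a Kobayashi isometry. First choose $k=k(t)\in \Zb$ with $K_\Omega(\sigma(t), h^k z_0) \leq R$; applying $h^m$ gives $K_\Omega(h^m \sigma(t), h^{m+k} z_0) \leq R$; and choosing $s^\ast \in \Rb$ with $K_\Omega(h^{m+k} z_0, \sigma(s^\ast)) \leq R$, the triangle inequality yields $K_\Omega(h^m \sigma(t), \sigma(s^\ast)) \leq 2R$. Minimality in the definition of $\tau(m,t)$ then gives (1).

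For the upper bound of (2), I would combine (1), the triangle inequality, the Hausdorff closeness applied once more, and Proposition~\ref{prop:hyperbolic_QI} to obtain
\begin{align*}
K_\Omega(\sigma(\tau(m,t)), \sigma(\tau(n,t))) &\leq 4R + K_\Omega(h^{m-n}\sigma(t), \sigma(t)) \\
&\leq 6R + K_\Omega(h^{m-n} z_0, z_0) \\
&\leq 6R + \alpha(m-n) + \beta.
\end{align*}
The almost-geodesic property of $\sigma$ then converts this into $\tau(m,t)-\tau(n,t) \leq \alpha(m-n) + (6R+\beta+\kappa)$.

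For the lower bound of (2), I plan to reduce the uniformity in $t$ to a single sequence. Define
\[
\wt{\tau}(k) := \min\bigl\{s \in \Rb : K_\Omega(h^k z_0, \sigma(s)) = K_\Omega(h^k z_0, \sigma(\Rb))\bigr\}.
\]
A short triangle-inequality argument analogous to (1) gives $|\tau(m,t) - \wt{\tau}(m + k(t))| \leq 4R + \kappa$ uniformly in $m$ and $t$, so it is enough to prove that $\wt{\tau}:\Zb\to\Rb$ is coarsely monotonically increasing. By Proposition~\ref{prop:hyperbolic_QI} and the almost-geodesic property, $\wt{\tau}$ is coarsely bi-Lipschitz with uniform constants; the identity $K_\Omega(h^{k+1} z_0, h^k z_0) = K_\Omega(h z_0, z_0)$ yields uniformly bounded jumps $|\wt{\tau}(k+1) - \wt{\tau}(k)|$; and $\wt{\tau}(k) \to \pm\infty$ as $k \to \pm\infty$, because $h^k z_0 \to H_h^\pm \cap \partial\Omega$ Euclideanly, Proposition~\ref{prop:finite_dist} forces every subsequential limit of $\sigma(\wt{\tau}(k))$ to lie in the corresponding face $H_h^\pm \cap \partial\Omega$, and these two closed complex faces are disjoint (as $H_h^+ \neq H_h^-$ at a $C^1$ boundary point). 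A discrete intermediate-value argument then closes the proof: if $\wt{\tau}(m_j) \leq \wt{\tau}(n_j)$ for some $m_j > n_j$ with $m_j - n_j \to \infty$, pick $M_j > m_j$ with $\wt{\tau}(M_j) > \wt{\tau}(n_j)$ (using $\wt{\tau}(k)\to+\infty$), and use the bounded-jumps property to find $j^\ast \in [m_j, M_j]$ with $|\wt{\tau}(j^\ast) - \wt{\tau}(n_j)|$ bounded by a constant, contradicting the coarse bi-Lipschitz lower bound since $j^\ast - n_j \geq m_j - n_j \to \infty$.

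The hard step is establishing the uniformity in $t$: the identification $\tau(m,t) = \wt{\tau}(m + k(t)) + O(1)$ with a bound on the error that is independent of $m$ and $t$ is the key reduction, allowing the coarse monotonicity question to be transferred from the family $\tau(\cdot, t)$ (parametrised by $t\in\Rb$) to the single sequence $\wt{\tau}$, where it follows from the routine discrete intermediate-value argument above.
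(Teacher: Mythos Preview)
Your proof is correct and follows essentially the same strategy as the paper: Part~(1) via two applications of the $R$-Hausdorff closeness and the isometry $h^m$; the two-sided estimate on $|\tau(m,t)-\tau(n,t)|$ via Proposition~\ref{prop:hyperbolic_QI} and the almost-geodesic property; and the signed lower bound via a discrete intermediate-value argument using bounded jumps and $\tau\to+\infty$.

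The one organizational difference is your detour through $\wt{\tau}$. You flag the uniformity in $t$ as the ``hard step'' and reduce to the single sequence $\wt{\tau}$ to avoid it, but the paper shows this reduction is unnecessary: it runs the discrete intermediate-value argument directly for $\tau(\cdot,t)$ and simply observes that every constant appearing ($\alpha,\beta,R,\kappa$ and hence the jump bound $C$) is already independent of $t$, so the resulting bound $m-n\le \alpha(C+\beta+6R+\kappa)$ is automatically uniform. Your reduction is valid, just not needed; the paper's direct route is slightly shorter.
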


\begin{proof}
Fix $m \in \Zb$ and $t \in \Rb$. Then there exists $n_t \in \Zb$ such that 
\begin{align*}
K_\Omega( h^{n_t}z_0, \sigma(t)) \leq R.
\end{align*}
and $t_m \in \Rb$ such that 
 \begin{align*}
K_\Omega( h^{m+n_t}z_0, \sigma(t_m)) \leq R.
\end{align*}
Then by definition 
\begin{align*}
K_\Omega\left( \sigma( \tau(m, t)), h^m\sigma(t) \right) \leq K_\Omega(\sigma(t_m), h^{m} \sigma(t)) 
\end{align*}
and
 \begin{align*}
K_\Omega(\sigma(t_m), h^{m} \sigma(t)) \leq K_\Omega(\sigma(t_m), h^{m+n_t}(z_0))+K_\Omega(h^{m+n_t}(z_0), h^{m} \sigma(t))\leq 2R.
\end{align*}
This establishes part (1).

By Proposition~\ref{prop:hyperbolic_QI}, there exists $\alpha>1$ and $\beta > 0$ such that 
\begin{align*}
\frac{1}{\alpha} \abs{m-n}-\beta \leq K_\Omega(h^m(z_0),h^{n}(z_0)) \leq \alpha \abs{m-n}+\beta
\end{align*}
for all $m,n \in \Zb$. Fix $m,n \in \Zb$ and $t \in \Rb$. Then there exists some $n_t \in \Zb$ such that 
\begin{align*}
K_\Omega( h^{n_t}z_0, \sigma(t)) \leq R.
\end{align*}
Then 
\begin{align*}
\abs{K_\Omega( h^{m+n_t}z_0, h^{n+n_t}z_0)-K_\Omega\Big( h^m\sigma(t), h^n\sigma(t) \Big) } \leq 2R
\end{align*}
so by Part (1)
\begin{align*}
\abs{K_\Omega( h^{m+n_t}z_0, h^{n+n_t}z_0)-K_\Omega\Big(  \sigma(\tau(m, t)), \sigma(\tau(n, t)) \Big) } \leq 6R.
\end{align*}
Thus 
\begin{align*}
\frac{1}{\alpha} \abs{m-n} - \beta - 6R \leq K_\Omega\Big(  \sigma(\tau(m,t)), \sigma(\tau(n,t)) \Big) \leq \alpha \abs{m-n} + \beta + 6R.
\end{align*}
Since $\sigma$ is a $(1,\kappa)$-almost-geodesic, this implies that 
\begin{equation}
\label{eq:est_tau}
\frac{1}{\alpha} \abs{m-n} - \beta - 6R - \kappa \leq \abs{ \tau(m, t)-  \tau(n, t)} \leq \alpha \abs{m-n} + \beta + 6R + \kappa.
\end{equation}
So to establish Part (2), we just need to show that there exists some $m_0$ such that $\tau(m,t) - \tau(n,t) > 0$ for all $n \in \Zb$, $m \geq m_0+n$, and $t \in \Rb$.

Equation~\eqref{eq:est_tau} implies that there exist some $C>0$ such that
\begin{align*}
\abs{ \tau(k,t) - \tau(k+1,t)} \leq C
\end{align*}
for all $t \in \Rb$ and $k \in \Zb$. Further
\begin{align*}
\lim_{k \rightarrow \infty} \tau(k,t) = \infty.
\end{align*}
So if $m > n$ and $\tau(m,t) - \tau(n,t) < 0$, then there exists some $M \geq m$ such that 
\begin{align*}
\tau(n,t) \leq \tau(M,t)  \leq C+\tau(n,t).
\end{align*}
But then 
\begin{align*}
m-n \leq M-n \leq \alpha \left( C + \beta + 6 R + \kappa \right).
\end{align*}
\end{proof}

\section{Constructing one-parameter subgroups}\label{sec:frankel_refined}

In the late 1980's, Frankel~\cite{F1989,F1991} developed a method to construct one-parameter subgroups of $\Aut(\Omega)$ when $\Omega$ is convex and $\Aut(\Omega)$ is non-compact. In particular, his method implies the following. 

\begin{proposition}\label{prop:frankel} Suppose $\Omega$ is a convex domain with $C^1$ boundary. If $\Aut(\Omega)$ is non-compact, then $\Aut(\Omega)$ contains an one-parameter group $u_t$ of automorphisms. 
\end{proposition}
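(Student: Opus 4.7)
The plan is to apply Frankel's affine rescaling method from~\cite{F1989,F1991}. Since $\Aut(\Omega)$ is a Lie group by H.~Cartan's theorem, the existence of a one-parameter subgroup is equivalent to $\Aut(\Omega)$ being non-discrete: once $\Aut_0(\Omega)$ has positive dimension, any nonzero element $X$ of its Lie algebra yields $u_t = \exp(tX)$. I will therefore assume for contradiction that $\Aut(\Omega)$ is discrete, and derive a one-parameter family of automorphisms from the rescaled limit.

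First I would use non-compactness together with properness of the $\Aut(\Omega)$-action on $\Omega$ (which holds because $\Omega$ is bounded) to extract a sequence of distinct $\varphi_n \in \Aut(\Omega)$ and a point $z_0 \in \Omega$ with $\varphi_n(z_0) \to x \in \partial \Omega$ after passing to a subsequence. Next, using the convex $C^1$ boundary at $x$, construct affine isomorphisms $A_n : \Cb^d \to \Cb^d$ with $A_n(\varphi_n(z_0)) = z_0$, normalized so that (i) the inward unit normal of $A_n(\partial \Omega)$ at the foot of $z_0$ equals a fixed vector $e$, and (ii) the Euclidean distance from $z_0$ to $A_n(\partial \Omega)$ along $-e$ equals a fixed constant. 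After passing to a subsequence, the convex sets $A_n(\Omega)$ converge in the local Hausdorff topology to a convex set $\wh{\Omega}$ containing a Euclidean ball around $z_0$.

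The biholomorphisms $\Phi_n := A_n \circ \varphi_n : \Omega \to A_n(\Omega)$ all fix $z_0$, and the inscribed ball together with Kobayashi distortion gives uniform bounds making $\{\Phi_n\}$ and $\{\Phi_n^{-1}\}$ into normal families. Passing to a further subsequence yields $\Phi_n \to \Phi$ and $\Phi_n^{-1} \to \Psi$ locally uniformly, with $\Phi : \Omega \to \wh{\Omega}$ a biholomorphism. Now the key point: the normalized rescalings $A_n$ all lie in a fixed, positive-dimensional family of affine maps preserving the boundary normalization at $z_0$, so the comparisons $T_{m,n} := A_n A_m^{-1}$ lie in a fixed space of affine maps. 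Choosing a subsequence of pairs $(m_k, n_k) \to \infty$ so that the boundary data of $A_{m_k}$ and $A_{n_k}$ accumulate at comparable rates, $T_{m_k, n_k}$ converges to an affine automorphism $T$ of $\wh{\Omega}$ fixing $z_0$. Then
\begin{align*}
\varphi_{n_k}^{-1} \varphi_{m_k} \;=\; \Phi_{n_k}^{-1} \circ T_{m_k, n_k} \circ \Phi_{m_k} \;\in\; \Aut(\Omega)
\end{align*}
is a sequence of distinct elements converging to $\Phi^{-1} \circ T \circ \Phi \in \Aut(\Omega)$, and varying the pair $(m_k, n_k)$ along a one-parameter slice of admissible rescalings produces a one-parameter family in $\Aut(\Omega)$, contradicting discreteness.

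The hard part will be the last step: showing that the comparisons $T_{m_k, n_k}$ produce a \emph{non-trivial} one-parameter family of affine automorphisms of $\wh{\Omega}$. One must argue that the rescaled limit $\wh{\Omega}$ genuinely carries extra affine symmetry not already present in $\Omega$, typically a dilation or translation in the rescaled normal direction at $z_0$. This is the geometric heart of Frankel's method, and it relies precisely on the $C^1$ boundary hypothesis to ensure the rescaling has a well-defined limiting direction along which $\wh{\Omega}$ inherits such symmetry.
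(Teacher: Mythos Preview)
Your rescaling setup is fine, but you miss the key geometric point and replace it with a vague and unnecessary step. The paper's sketch (following Frankel) is direct, not by contradiction: once you have the biholomorphism $\Phi:\Omega\to\wh{\Omega}$, you do \emph{not} need to analyze the comparisons $T_{m,n}=A_nA_m^{-1}$ at all. Instead, the $C^1$ hypothesis is used to show that the rescaled limit $\wh{\Omega}$ contains an entire real affine line $z_0+\Rb u$. Since $\wh{\Omega}$ is open and convex, this forces $z+\Rb u\subset\wh{\Omega}$ for every $z\in\wh{\Omega}$, so the translations $\wh{u}_t(z)=z+tu$ form a one-parameter subgroup of $\Aut(\wh{\Omega})$. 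Conjugating back, $u_t=\Phi^{-1}\wh{u}_t\Phi$ is the desired one-parameter subgroup of $\Aut(\Omega)$. That is the whole argument.

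The appearance of the real line is exactly where $C^1$ enters: if $x\in\partial\Omega$ is the accumulation point and the affine maps $A_n$ send the inward normal at $x$ to a fixed direction $e_1$ while dilating by the reciprocal distance to the boundary, then for each $\epsilon>0$ a small cone $\{ze_1:\abs{z}<\delta,\ \Imaginary(z)>\epsilon\abs{\Real(z)}\}$ lies in $\Omega$ (by $C^1$ smoothness), and its image under $A_n$ fills out, in the limit, the half-plane $\{ze_1:\Imaginary(z)<1\}$. Taking $\epsilon\to 0$ gives the real line $\Rb e_1\subset\wh{\Omega}$. This is made explicit as Claim~1 in the proof of Theorem~\ref{thm:rescaling}.

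Your proposed route through $T_{m,n}$ might be made to work, but as you yourself note, the ``hard part'' of producing a genuinely nontrivial one-parameter family from these comparisons is left unexplained, and the contradiction framework adds nothing. The real-line observation bypasses all of this.
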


Here is a sketch of Frankel's argument: suppose that $\varphi_n \rightarrow \infty$ in $\Aut(\Omega)$. One can then pass to a subsequence and find certain affine automorphisms  $A_n : \Cb^d \rightarrow \Cb^d$ such that $\Omega_n:=A_n \Omega$ converges in the local Hausdorff topology to a convex domain $\wh{\Omega}$. By selecting the affine maps carefully one can also show that the maps $A_n \varphi_n : \Omega \rightarrow \Omega_n$ converge to a biholomorphism $\Phi: \Omega \rightarrow \wh{\Omega}$. Finally, since $\partial \Omega$ is $C^1$, it turns out that $\wh{\Omega}$ contains a real line $z_0 + \Rb u$. Then since $\wh{\Omega}$ is convex and open, $z+\Rb u \subset \wh{\Omega}$ for all $z \in \wh{\Omega}$. So $\Aut\left(\wh{\Omega}\right)$ contains the one-parameter group
\begin{align*}
\wh{u}_t(z) = z + tu.
\end{align*}
So $\Aut(\Omega)$ contains the one-parameter group $u_t = \Phi^{-1} \wh{u}_t \Phi$. 

One problem with Frankel's method is that there is no obvious connection between the initial sequence $\varphi_n$ and the resulting one-parameter group $u_t$. In this section we will apply Frankel's method to a sequence $\varphi_n = h^n$ where $h$ is hyperbolic and use the properties of hyperbolic elements established in Section~\ref{sec:hyp_elem_convex} to prove the following.

\begin{theorem}\label{thm:rescaling}
Suppose $\Omega$ is a bounded convex domain with $C^{1,\epsilon}$ boundary. If $h \in \Aut(\Omega)$ is a hyperbolic element, then $\Aut(\Omega)$ contains an one-parameter group $u_t$ of parabolic automorphisms such that 
\begin{align*}
\limsup_{n \rightarrow \infty} K_\Omega( u_t h^n z_0, h^n z_0) < \infty
\end{align*}
for all $z \in \Omega$ and $t \in \Rb$. In particular, $H^+_{u_t} = H^+_h$. 
\end{theorem}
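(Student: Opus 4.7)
The plan is to refine Frankel's rescaling method by applying it specifically to the sequence $\varphi_n = h^n$ and tracking $h$'s dynamics through the rescaling. Fix $z_0 \in \Omega$, set $p_n := h^n z_0$, and recall from Theorem~\ref{thm:normal_line_shadowing} that $\{p_n\}$ stays within uniformly bounded $K_\Omega$-distance of the normal-line almost-geodesic $\sigma(t) = x_h^+ + re^{-2t}\mathbf{n}_\Omega(x_h^+)$. Choose affine automorphisms $A_n$ of $\Cb^d$ that normalize near $p_n$: send the Euclidean nearest boundary point $\pi_n \in \partial \Omega$ to $0$, align $\mathbf{n}_\Omega(\pi_n)$ with a fixed direction $e_0$, and dilate so that $A_n p_n = e_0$. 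Passing to a subsequence, Frankel's compactness yields a convex domain $\wh\Omega = \lim A_n\Omega$ (locally Hausdorff), a biholomorphism $\Phi = \lim A_n\circ h^n\colon \Omega\to\wh\Omega$ with $\Phi(z_0) = e_0$, and (since $\partial\Omega$ is $C^1$) a real line $\Rb u\subset\wh\Omega$; by convexity $\wh\Omega + \Rb u = \wh\Omega$, so $\wh u_t(z) := z+tu$ is a one-parameter subgroup of $\Aut(\wh\Omega)$. I set $u_t := \Phi^{-1}\wh u_t\Phi$.

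The new input beyond Frankel is to also track the conjugates $\wh h_n := A_n h A_n^{-1}$, which are affine automorphisms of $\Omega_n := A_n\Omega$ satisfying $A_n\circ h^{n+1} = \wh h_n\circ(A_n\circ h^n)$. The quasi-isometry estimate of Proposition~\ref{prop:hyperbolic_QI} bounds $K_{\Omega_n}(\wh h_n e_0, e_0)$ uniformly in $n$, and a normal-family argument on this sequence of affine maps produces a subsequential limit $\wh h\in\Aut(\wh\Omega)$, itself affine, intertwining $\Phi\circ h = \wh h\circ\Phi$. Iterating gives $\Phi(h^n z_0) = \wh h^n e_0$, so biholomorphic invariance of the Kobayashi metric reduces the target bound to
\begin{equation*}
K_\Omega(u_t h^n z_0,\, h^n z_0) \;=\; K_{\wh\Omega}\bigl(\wh h^{-n}\wh u_t\wh h^n(e_0),\; e_0\bigr).
\end{equation*}
Writing $L$ for the linear part of $\wh h$, this conjugate is the translation $z\mapsto z + tL^{-n}u$, so the essential point is that $L^{-n}u\to 0$: the normalization $A_n p_n = e_0$ with $p_n\to x_h^+$ inflates $A_n$ in the normal direction at exactly the rate needed to make $A_{n+1}A_n^{-1}$ contract the tangent direction $u$. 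Proposition~\ref{prop:translating} and Proposition~\ref{prop:cont_of_att_hyp} supply the uniform control needed to turn this heuristic into a genuine estimate.

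With the shadowing in hand, parabolicity and the identity $H^+_{u_t} = H^+_h$ follow. Since $\wh u_t$ is a non-trivial translation it has no fixed point in $\wh\Omega$, and $\wh u_t^k e_0 = e_0 + ktu$ escapes to infinity in the \emph{same} real direction for both $k\to+\infty$ and $k\to-\infty$, forcing $H^+_{u_t} = H^-_{u_t}$, i.e.\ $u_t$ is parabolic. The shadowing bound together with Proposition~\ref{prop:finite_dist} (applied to $h^n z_0\to x_h^+$ and the Kobayashi-bounded companion sequence $u_t h^n z_0$) forces every accumulation point of $\{u_t h^n z_0\}$ to lie in $T_{x_h^+}^{\Cb}\partial\Omega \cap \partial\Omega$, whence $H^+_{u_t} = H^+_h$. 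The main obstacle, as I see it, is the joint extraction step: making the subsequences for $A_n\circ h^n$, for $\wh h_n$, and for the flatness of $A_n(T_{\pi_n}\partial\Omega)$ all compatible, and then verifying that the resulting affine map $\wh h$ contracts the Frankel direction $u$---this is where the hyperbolicity of $h$, through the attracting behavior at $x_h^+$, really enters the argument.
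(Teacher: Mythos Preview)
Your overall plan---run Frankel's rescaling along the orbit of $h$ and pull the resulting translation back to $\Omega$---matches the paper's. The gap is in the core estimate, and it has two layers.

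First, the conjugates $\wh h_n = A_n h A_n^{-1}$ are \emph{not} affine: $h$ is an arbitrary biholomorphism, and conjugating by an affine map does not make it affine. So the identity $\wh h^{-n}\wh u_t\wh h^n(z) = z + tL^{-n}u$ is unjustified, and the reduction to ``$L^{-n}u\to 0$'' has no footing. You can of course define $\wh h := \Phi h\Phi^{-1}\in\Aut(\wh\Omega)$ directly, but it will not be affine and the conjugate of $\wh u_t$ will not be a translation.

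Second---and this is the real obstruction---you have the direction backwards. Rescaling at the \emph{attracting} end $x_h^+$ makes the forward orbit $\wh h^n e_0$ head toward the finite part of $\partial\wh\Omega$, where translation by $u$ is expensive, not cheap. In the one-dimensional model $\wh\Omega = \{\Imaginary(z)<1\}$, $\wh h(z) = i+\lambda(z-i)$ with $0<\lambda<1$, $\wh u_t(z)=z+t$, one computes
\[
K_{\wh\Omega}\bigl(\wh u_t\,\wh h^n(0),\,\wh h^n(0)\bigr) \;=\; K_{\Hc}\bigl(i,\,-t\lambda^{-n}+i\bigr)\;\longrightarrow\;\infty,
\]
so the quantity you want to bound actually diverges for the $u_t$ produced by this rescaling. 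Your heuristic that $A_{n+1}A_n^{-1}$ contracts the tangent direction is likewise reversed: since $p_{n+1}$ is closer to $\partial\Omega$ than $p_n$, the transition map expands.

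What the paper does instead is prove the bound for the \emph{backward} orbit $h^{-m}z_0$ and then apply the whole construction to $h^{-1}$ to get the statement as written. The mechanism is not a linearization of $\wh h$ but an explicit half-plane: the $C^1$ hypothesis at $x_h^+$ forces $\{ze_1:\Imaginary(z)<1\}\subset\wh\Omega$, and Proposition~\ref{prop:translating} pins $\wh h^{-m}(0)$ to within bounded Kobayashi distance of $(1-e^{-2\tau_m})ie_1$ with $\tau_m\to-\infty$, i.e.\ deep in that half-plane. There the upper-half-plane estimate $K_{\Hc}(t+is,\,is)\to 0$ as $s\to\infty$ finishes the job. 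Replace the $L^{-n}u$ computation with this half-plane comparison, and switch to $h^{-m}$, and the argument goes through.
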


Before starting the proof of Theorem~\ref{thm:rescaling} we need to recall some basic facts about the local Hausdorff topology. 

First let $\Xb_d$ denote the set of all convex domains in $\Cb^d$ which do not contain an affine line. By a theorem of Barth, see Theorem~\ref{thm:barth} above, $\Xb_d$ consists of exactly the convex domains where the Kobayashi pseudo-distance is non-degenerate.

For $R \geq 0$, let $B_R(z)$ denote the open ball of radius $R$ centered at $z$ with respect to the Euclidean distance. We then introduce a topology on $\Xb_d$ by saying that a sequence $\Omega_n \in \Xb_d$ converges to $\Omega \in \Xb_d$ if there exists some $R_0 \geq 0$ such that 
\begin{align*}
\lim_{n \rightarrow \infty} d_{\Euc}^{\Haus} \left( \Omega_n \cap B_R(0), \Omega \cap B_R(0) \right) = 0.
\end{align*}
for all $R \geq R_0$. 

The Kobayashi distance behaves nicely with respect to this notion of convergence. 

\begin{theorem}\cite[Theorem 4.1]{Z2014} Suppose a sequence $\Omega_n \in \Xb_d$ converges to $\Omega \in \Xb_d$. Then $K_{\Omega_n} \rightarrow K_\Omega$ uniformly on compact subsets of $\Omega \times \Omega$.
\end{theorem}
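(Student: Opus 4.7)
The plan is to prove pointwise convergence $K_{\Omega_n}(z_n, w_n) \to K_\Omega(z, w)$ for any sequences $(z_n, w_n) \to (z, w)$ in $\Omega \times \Omega$, and then to upgrade to uniform convergence on compacta via an Arzel\`a--Ascoli argument. Three inputs drive the proof: (i) local Hausdorff convergence places compact sets of $\Omega$ inside $\Omega_n$ eventually; (ii) rescaling of holomorphic disks yields upper semicontinuity of the infinitesimal Kobayashi metric; (iii) normal families together with a maximum principle yield lower semicontinuity, with Barth's theorem used to control Kobayashi balls.

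First I would establish a local domination estimate. Given a compact $K \subset \Omega$, choose an open $U$ with $K \subset U \subset \overline{U} \subset \Omega$. By local Hausdorff convergence, $\overline{U} \subset \Omega_n$ for all large $n$, and monotonicity of the infinitesimal Kobayashi metric gives
\[
k_{\Omega_n}(x; v) \leq k_U(x; v) \leq C_K \norm{v}
\]
uniformly for $x \in K$, $v \in \Cb^d$, and $n$ large, with $C_K$ independent of $n$. Integrating, the distances $K_{\Omega_n}$ are uniformly Lipschitz on $K \times K$ with respect to the Euclidean metric; this is what will eventually convert pointwise convergence into uniform convergence via Arzel\`a--Ascoli.

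For the upper bound $\limsup_n K_{\Omega_n}(z_n, w_n) \leq K_\Omega(z, w)$, I fix $\epsilon > 0$ and an absolutely continuous curve $\sigma : [a,b] \to \Omega$ from $z$ to $w$ with $\ell_\Omega(\sigma) < K_\Omega(z, w) + \epsilon$, whose image is compact in $\Omega$ and hence in $\Omega_n$ eventually. The standard infinitesimal rescaling argument proves $\limsup_n k_{\Omega_n}(x; v) \leq k_\Omega(x; v)$: if $f : \Db \to \Omega$ is holomorphic with $f(0) = x$ and $df_0(\xi) = v$ nearly realizing $k_\Omega(x; v)$, then for $r < 1$ close to $1$ the disk $z \mapsto f(rz)$ has image in a compact subset of $\Omega$, hence in $\Omega_n$ eventually, yielding $k_{\Omega_n}(x; v) \leq |\xi|/r$. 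Combined with the uniform dominating bound from the first step, reverse Fatou gives $\limsup_n \ell_{\Omega_n}(\sigma) \leq \ell_\Omega(\sigma)$. Short connecting paths from $z_n$ to $z$ and $w$ to $w_n$ (controlled by the equi-Lipschitz estimate) complete the inequality.

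The main obstacle is the lower bound. I would invoke Lempert's theorem: since each $\Omega_n$ is convex and lies in $\Xb_d$, there exist holomorphic disks $f_n : \Db \to \Omega_n$ with $f_n(0) = z_n$, $f_n(s_n) = w_n$, and $K_{\Omega_n}(z_n, w_n) = \arctanh(s_n)$. The key step is to show $\{f_n\}$ is a normal family. For $|\zeta| \leq r < 1$, the image $f_n(\zeta)$ sits in the closed Kobayashi ball of radius $\arctanh(r)$ around $z_n$ in $\Omega_n$; to show these balls stay uniformly bounded in $\Cb^d$, I would combine local Hausdorff convergence with Barth's theorem via domination by a slightly enlarged convex $\Omega' \in \Xb_d$ that contains the $\Omega_n$ eventually, so that $K_{\Omega'} \leq K_{\Omega_n}$ forces the $\Omega_n$-balls into the relatively compact $\Omega'$-balls. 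Extract a locally uniform subsequential limit $f : \Db \to \overline{\Omega}$. The maximum principle combined with convexity --- any complex curve in $\partial \Omega$ is contained in a complex supporting hyperplane, which cannot contain $f(0) = z \in \Omega$ --- forces $f(\Db) \subset \Omega$. Then $K_\Omega(z, w) \leq \arctanh(\lim s_n) = \lim K_{\Omega_n}(z_n, w_n)$. The delicate point is precisely the uniform boundedness of the Kobayashi balls, which is where the hypothesis $\Omega \in \Xb_d$ (non-degeneracy of $K_\Omega$) is crucial and requires a careful convexity argument to produce the enveloping $\Omega'$; pointwise convergence then follows, and Arzel\`a--Ascoli on compact subsets of $\Omega \times \Omega$ via the equi-Lipschitz bound from the first step upgrades this to the claimed uniform convergence.
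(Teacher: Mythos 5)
Your overall architecture (upper bound by shrinking disks into compact subsets, lower bound by a normal-families argument with extremal disks plus the harmonic maximum principle to keep the limit disk inside $\Omega$, then an equi-Lipschitz plus Arzel\`a--Ascoli upgrade to uniform convergence) is reasonable, and note that the paper itself gives no proof of this statement, it simply quotes \cite[Theorem 4.1]{Z2014}. But there is a genuine gap exactly at the step you yourself flag as delicate: the enveloping domain $\Omega' \in \Xb_d$ with $\Omega_n \subset \Omega'$ for all large $n$ need not exist. Take $d=2$, $\Omega = \{(z_1,z_2) : \Imaginary(z_1) > \abs{z_2}^2\} \in \Xb_2$, and $\Omega_n = \{(z_1,z_2) : \Imaginary(e^{i\theta_n} z_1) > \abs{z_2}^2\}$ with $\theta_n = (-1)^n/n$. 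Each $\Omega_n$ is a linear image of $\Omega$, lies in $\Xb_2$, and $\Omega_n \rightarrow \Omega$ in the local Hausdorff topology. However, for $n$ even and $m$ odd, $\Omega_n$ contains the half-plane $\{(z_1,0) : \Imaginary(e^{i\theta_n}z_1)>0\}$ and $\Omega_m$ contains $\{(z_1,0) : \Imaginary(e^{i\theta_m}z_1)>0\}$; since $\theta_n > 0 > \theta_m$, these two half-planes through the origin cover an open set of directions of angular measure greater than $\pi$, so their closed convex hull is all of $\Cb \times \{0\}$. Hence any open convex set containing $\Omega_n \cup \Omega_m$ has closure containing this complex line and therefore (being open and convex) contains a complex affine line, so it is not in $\Xb_2$. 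Thus no tail of the sequence admits an envelope $\Omega' \in \Xb_d$, and your mechanism for the uniform boundedness of the Kobayashi balls $\{w : K_{\Omega_n}(z_n,w) \leq R\}$ collapses. Your route does work when $\Omega$ is bounded (local Hausdorff convergence of convex sets to a bounded limit forces the $\Omega_n$ to be uniformly bounded eventually), but the unbounded case cannot be discarded: it is exactly the case in which the present paper applies the theorem, namely in the proof of Theorem~\ref{thm:rescaling}, where the limit domain $\wh{\Omega}$ contains a real line.

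The needed uniform bound on the balls is true, but it has to be obtained locally rather than from a global envelope: if $q_k \in \Omega_{n_k}$ with $K_{\Omega_{n_k}}(z_{n_k}, q_k) \leq R$ and $\abs{q_k} \rightarrow \infty$, then convexity and local Hausdorff convergence force the directions $q_k/\abs{q_k}$ to subconverge to a recession direction $u$ of $\overline{\Omega}$; since $\Omega$ contains no complex affine line, the complex line $z + \Cb u$ meets $\partial \Omega$, and supporting half-spaces of the $\Omega_{n_k}$ at nearby boundary points, combined with the Schwarz--Pick lemma applied to the compositions of your disks $f_{n_k}$ with the corresponding affine functionals, give $K_{\Omega_{n_k}}(z_{n_k}, q_k) \rightarrow \infty$, a contradiction. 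Some argument of this localized supporting-hyperplane type (which is how the cited source controls the Kobayashi distance from below) must replace the envelope. A smaller point: invoking Lempert's theorem for possibly unbounded $\Omega_n \in \Xb_d$ requires justification (for instance exhaustion by bounded convex domains together with Barth's theorem for the fixed domain $\Omega_n$), or can be avoided altogether by working with chains of disks or $(1,\kappa)$-almost-geodesics instead of exact complex geodesics.
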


We next let $\Xb_{d,0}$ denote the set of all pairs $(\Omega, z)$ where $\Omega \in \Xb_d$ and $z \in \Omega$. This set also has a topology where $(\Omega_n, z_n) \rightarrow (\Omega, z)$ if and only if $\Omega_n \rightarrow \Omega$ and $z_n \rightarrow z$.

Next let $e_1,\dots, e_d$ denote the standard basis of $\Cb^d$. Then let $\Kb_{d,0} \subset \Xb_{d,0}$ consist of all elements $(\Omega,0)$ where 
\begin{align*}
\Db e_1 \cup \dots \cup \Db e_d \subset \Omega
\end{align*}
and 
\begin{align*}
\Omega \cap \Big( e_i + \Span_{\Cb}\{ e_{i+1}, \dots, e_d\} \Big)= \emptyset \text{ for every $1 \leq i \leq d$.}
\end{align*}
With this notation we have the following.

\begin{theorem}\cite[Theorem 2.5]{Z2016} The subset $\Kb_{d,0} \subset \Xb_{d,0}$ is compact. Moreover, if $\Aff(\Cb^d)$ is the affine automorphism group of $\Cb^d$, then $\Aff(\Cb^d) \cdot \Kb_{d,0} = \Xb_{d,0}$. 
\end{theorem}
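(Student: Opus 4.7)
The plan is to handle the two assertions by separate but parallel inductions on the dimension $d$.

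For the surjectivity statement $\Aff(\Cb^d) \cdot \Kb_{d,0} = \Xb_{d,0}$, given $(\Omega, z) \in \Xb_{d,0}$ I first translate so that $z = 0$, and then inductively build a basis $f_1, \dots, f_d$ of $\Cb^d$ so that the linear map with $f_i \mapsto e_i$ carries $\Omega$ into $\Kb_{d,0}$. Since $\Omega$ contains no complex affine line, for any $v \neq 0$ the positive gauge $r(v) := \sup\{r > 0 : \Db \cdot (rv) \subset \Omega\}$ is finite; choosing any such $v$ and setting $f_1 := v/r(v)$ gives $\Db f_1 \subset \Omega$ with $f_1 \in \partial \Omega$. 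By convexity there is a real supporting hyperplane of $\Omega$ at $f_1$, and its maximal complex affine subspace $L_1$ is a complex affine hyperplane through $f_1$ disjoint from $\Omega$. Set $H := L_1 - f_1$, a complex linear hyperplane through $0$; note that $f_1 \notin H$, else $L_1$ would pass through $0 \in \Omega$. The slice $\Omega \cap H$ is open, convex, and contains $0$ inside $H \cong \Cb^{d-1}$, and it inherits the ``no complex line'' property from $\Omega$, so induction produces a basis $f_2, \dots, f_d$ of $H$ normalizing it. Condition (a) for $\{f_i\}$ is then immediate, and condition (b) splits into the case $i = 1$ (by construction of $L_1$) and the cases $i \geq 2$ (which reduce to the inductive assertion inside $H$, since $f_i + \Span\{f_{i+1}, \dots, f_d\} \subset H$ when $i \geq 2$).

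For compactness, let $(\Omega_n, 0)$ be a sequence in $\Kb_{d,0}$. The open convex hull of $\Db e_1 \cup \dots \cup \Db e_d$ is $\{z : \sum_i |z_i| < 1\}$, which by Cauchy--Schwarz contains the Euclidean ball $B_{1/\sqrt{d}}(0)$, so condition (a) and convexity give the uniform inclusion $B_{1/\sqrt{d}}(0) \subset \Omega_n$. Using this uniform inner radius I extract, by a Blaschke-type selection (apply the usual Blaschke selection theorem to $\overline{\Omega_n} \cap \overline{B_R(0)}$ for each $R \in \Nb$ and diagonalize), a subsequence converging in the local Hausdorff topology to some convex open $\Omega_\infty$ with $B_{1/\sqrt{d}}(0) \subset \Omega_\infty$. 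Condition (a) passes to $\Omega_\infty$: each point $\zeta e_i$ with $|\zeta| < 1$ has a Euclidean neighborhood uniformly inside the convex hull of $B_{1/\sqrt{d}}(0)$ and a nearby point of $\Db e_i$, hence inside every $\Omega_n$ and therefore inside $\Omega_\infty$. Condition (b) also passes to the limit: if $\Omega_\infty$ met the complex hyperplane $W_i := e_i + \Span\{e_{i+1}, \dots, e_d\}$ at some point $p$, openness would yield a Euclidean ball around $p$ inside $\Omega_\infty$, which local Hausdorff convergence would force into $\Omega_n$ for large $n$, contradicting $\Omega_n \cap W_i = \emptyset$.

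The point I expect to be the main obstacle is showing $\Omega_\infty \in \Xb_d$, i.e., that a convex open $\Omega$ satisfying (a) and (b) cannot contain a complex affine line. I would prove this by induction on $d$, with the base case $d = 1$ being immediate since (b) for $i = 1$ forces $\Omega \subsetneq \Cb$. For the inductive step, suppose $\ell = a + \Cb v \subset \Omega$ with $v \neq 0$. The projection $\pi_1(\Omega)$ is convex open in $\Cb$, contains $\Db$, and misses $1$, so it is not all of $\Cb$; this forces $v_1 = 0$, since otherwise $\pi_1(\ell) = \Cb$. By convexity and $0 \in \Omega$, the convex hull of $\ell \cup \{0\}$ contains $\lambda a + \Cb v$ for every $\lambda \in (0,1]$, so taking $\lambda \to 0$ yields $\Cb v \subset \overline \Omega$. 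Combining $\Cb v \subset \overline \Omega$ with the inner ball $B_{1/\sqrt{d}}(0) \subset \Omega$ and taking convex hulls shows that the open tube $\{z : d_{\Euc}(z, \Cb v) < 1/\sqrt{d}\}$ lies in $\overline \Omega$; since any convex open set equals the interior of its closure, this tube is actually contained in $\Omega$, and in particular $\Cb v \subset \Omega$. But $v_1 = 0$ means $\Cb v \subset \{z_1 = 0\}$, so the slice $\Omega \cap \{z_1 = 0\}$---which inside $\Cb^{d-1}$ satisfies the analogs of (a) and (b) with basis $e_2, \dots, e_d$---contains the complex line $\Cb v$, contradicting the inductive hypothesis.
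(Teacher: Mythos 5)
This theorem is not proved in the paper at all: it is imported verbatim from \cite[Theorem 2.5]{Z2016}, so there is no internal proof to compare against. The closest thing in this paper is the normalization carried out inside the proof of Theorem~\ref{thm:rescaling}, where the affine maps $A_n=\Lambda_n U_n T_n$ are built by choosing closest boundary points in successively orthogonal complex subspaces; that is essentially the method of \cite{Z2016}/Frankel. Your argument takes a genuinely different route: for surjectivity you use a maximal disk centered at the base point, a complex supporting hyperplane $L_1$ at a touching point, and then induct on the linear slice $\Omega\cap H$; for compactness you use the uniform inner ball $B_{1/\sqrt d}(0)$, Blaschke selection, and --- the real content --- a slicing induction showing that conditions (a) and (b) by themselves forbid complex affine lines in the limit (the projection argument forces $v_1=0$, the cone-plus-tube argument upgrades $\Cb v\subset\overline\Omega$ to $\Cb v\subset\Omega$, and the slice $\{z_1=0\}$ inherits (a),(b)). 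This is correct and self-contained, and it buys you independence from any orthogonality/closest-point bookkeeping; the orthogonal-projection construction, on the other hand, is the one that interacts well with the explicit rescaling sequences used later in Section~\ref{sec:frankel_refined}.

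One slip to repair in the surjectivity step: with $r(v)=\sup\{r>0:\Db\cdot(rv)\subset\Omega\}$, the normalized vector should be $r(v)\,v$ rather than $v/r(v)$, and more importantly the point $r(v)\,v$ itself need not lie on $\partial\Omega$ --- the critical circle $\{\zeta\,r(v)v:\abs{\zeta}=1\}$ touches $\partial\Omega$ somewhere, but not necessarily in the direction of $v$ (e.g.\ when $\Omega\cap\Cb v$ is a half-plane containing $0$ whose boundary is closest to $0$ in the direction $-v$). The fix is immediate: since the closed critical disk cannot be contained in the open set $\Omega$ while the open one is, choose $f_1\in\partial\Omega$ on that circle; then $\Db f_1=\Db\cdot(r(v)v)\subset\Omega$ and the supporting-hyperplane step at $f_1$ goes through unchanged. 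The remaining details you leave implicit (Blaschke selection with a common inner ball, passage of (a) and (b) to the local Hausdorff limit) are standard and your justifications for them are sound.
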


\begin{remark} Frankel has constructed a slightly different compact set $K \subset \Xb_{d,0}$ such that $\Aff(\Cb^d) \cdot K = \Xb_{d,0}$~\cite{F1991}. \end{remark}

We are now ready to prove Theorem~\ref{thm:rescaling}. 

\begin{proof}[Proof of Theorem~\ref{thm:rescaling}]
For notational convenience, we will construct a one-parameter group $u_t$ of parabolic automorphisms such that 
\begin{align*}
\lim_{n \rightarrow \infty} K_\Omega( u_t h^{-n} z_0, h^{-n} z_0) < \infty
\end{align*}
for all $z \in \Omega$ and $t \in \Rb$. Since $h^{-1}$ is also hyperbolic, this will imply the theorem. 

Let $x^+_h \in \partial \Omega$ be as in Theorem~\ref{thm:normal_line_shadowing}. By translating, rotating, and scaling $\Omega$ we may assume that $x^+_h = 0$,
\begin{align*}
T_{x^+_h} \partial \Omega = \{ (z_1, \dots, z_d) \in \Cb^d: \Imaginary(z_1)=0\},
\end{align*}
and $ie_1 \in \Omega$. Then by  Theorem~\ref{thm:normal_line_shadowing} 
\begin{align*}
R:=K_\Omega^{\Haus}\left( \left\{ h^n (ie_1) : n \in\Nb\right\}, (0,1] \cdot i e_1  \right) < \infty.
\end{align*}

Define $\sigma: [0,\infty) \rightarrow \Omega$ by 
\begin{align*}
\sigma(t) = e^{-2t} ie_1.
\end{align*}
Then by Proposition~\ref{prop:normal_lines} there exists some $\kappa > 1$ such that $\sigma$ is an $(1,\kappa)$-almost-geodesic.

Then pick a sequence $t_n \rightarrow \infty$ and consider the points $p_n = \sigma(t_n)$. For each $n$, we will construct an affine map $A_n$ such that $A_n(\Omega, p_n) \in \Kb_{d,0}$. To do this, we begin by selecting points $x_1^{(n)}, \dots, x_{d}^{(n)} \in \partial \Omega$ using the following procedure. First let $x_1^{(n)}= 0$. Now supposing that $x_1^{(n)}, \dots, x_k^{(n)}$ have already been selected, let $P_k$ denote the maximal dimensional complex affine subspace through $p_n$ which is orthogonal to the lines
\begin{align*}
\overline{x_1^{(n)}p_n}, \ \dots \ ,  \overline{x_k^{(n)}p_n}
\end{align*}
then let $x_{k+1}^{(n)}$ be a point in $\partial \Omega \cap P_k$ closest to $p_n$. 

Next for each $n$, let $T_n : \Cb \rightarrow \Cb$ denote the translation $T_n(z) = z-p_n$ and let $U_n$ denote the unitary matrix such that 
\begin{align*}
U_n\left(T_n\left(x_n^{(i)}\right)\right) = \norm{ p_n - x_n^{(i)}}e_i \text{ for } i=1,\dots, d.
\end{align*}
Then let $\Lambda_n$ denote the diagonal matrix with 
\begin{align*}
\Lambda_n( e_i) = \frac{1}{\norm{ p_n - x_n^{(i)}}} e_i \text{ for } i=1,\dots, d.
\end{align*}
Finally let $A_n = \Lambda_n U_n T_n$. Then by construction $A_n(\Omega)$ contains 
\begin{align*}
\Db e_1 \cup \dots \cup \Db e_d .
\end{align*}
Since each  $\Omega \cap P_k$ is a convex set with $C^1$ boundary we also have
\begin{align*}
A_n(\Omega) \cap \Big( e_i + \Span_{\Cb}\{ e_{i+1}, \dots, e_d\} \Big)= \emptyset
\end{align*}
for $1 \leq i \leq d$. Further, $A_n(p_n) =0$. So $A_n(\Omega, p_n) \in \Kb_{d,0}$. Since $\Kb_{d,0}$ is compact, we can pass to a subsequence such that $A_n(\Omega)$ converges in the local Hausdorff topology to a convex domain $\wh{\Omega}$ in $\Xb_d$.

Since $p_n = \sigma(t_n)$, there exists a sequence $m_n \in \Nb$ such that 
\begin{align*}
K_\Omega( h^{m_n} (ie_1), p_n) \leq R.
\end{align*}
Then consider the maps $\Phi_n = A_n h^{m_n} : \Omega \rightarrow A_n \Omega$. We claim that after passing to a subsequence $\Phi_n$ converges locally uniformly to a biholomorphism $\Phi:\Omega \rightarrow \wh{\Omega}$. Since
\begin{align*}
K_{A_n\Omega}(0, \Phi_n(ie_1)) =  K_{\Omega}(A_n^{-1}(0), h^{m_n} (ie_1)) = K_\Omega(p_n, h^{m_n}(ie_1)) \leq R
\end{align*}
and $K_{A_n\Omega}$ converges locally uniformly to $K_{\wh{\Omega}}$, we can use to the Arzel{\' a}-Ascoli theorem to pass to a subsequence such that $\Phi_n$ converges locally uniformly to an isometry $\Phi: (\Omega,K_\Omega) \rightarrow (\wh{\Omega}, K_{\wh{\Omega}})$. Then, since locally uniform limits of holomorphic maps are holomorphic, we see that $\Phi$ is a holomorphic. Since $(\wh{\Omega}, K_{\wh{\Omega}})$ is a metric space, we see that $\Phi$ one-to-one and onto. So $\Phi$ is a biholomorphism, see~\cite[p. 86]{N1971}.

We now show that $\Aut\left(\wh{\Omega}\right)$ contains a one-parameter subgroup, but first an observation. \newline 

\noindent \textbf{Claim 1:} $\{ ze_1 : \Imaginary(z) < 1\} \subset \wh{\Omega}$. 

\begin{proof}[Proof of Claim 1]
For $\epsilon, \delta > 0$ define 
\begin{align*}
C(\epsilon,\delta):=\{ ze_1 : \abs{z} < \delta \text{ and } \Imaginary(z) > \epsilon \abs{\Real(z)} \}.
\end{align*}
Fix $\epsilon > 0$. Since $\partial \Omega$ is $C^1$, $0 \in \partial \Omega$, and ${\bf n}_\Omega(0) = ie_1$: there exists some $\delta >0$ such that $C(\epsilon,\delta) \subset \Omega$. Then 
\begin{align*}
A_n \Big( C(\epsilon,\delta)\Big) =\{ ze_1 : \abs{z-1} < e^{2t_n}\delta \text{ and } \Imaginary(z) < 1-\epsilon \abs{\Real(z)} \}.
\end{align*}
Since $A_n \Omega \rightarrow \wh{\Omega}$ and $e^{2t_n} \rightarrow \infty$ we then have
\begin{align*}
\{ ze_1 : \Imaginary(z) < 1-\epsilon \abs{\Real(z)} \} \subset \wh{\Omega}.
\end{align*}
Since $\epsilon > 0$ was arbitrary, we then see that 
\begin{equation*}
\{ ze_1 : \Imaginary(z) < 1\} \subset \wh{\Omega}. \qedhere
\end{equation*}
\end{proof}

The above claim implies that $\wh{\Omega}$ contains the real line $\Rb e_1$. Since $\wh{\Omega}$ is open and convex, we have
\begin{align*}
z + \Rb e_1 \subset \wh{\Omega}
\end{align*}
for all $z \in \wh{\Omega}$. Thus $\Aut\left(\wh{\Omega}\right)$ contains the one-parameter group $\wh{u}_t$ defined by
\begin{align*}
\wh{u}_t (z) = z + te_1.
\end{align*}

\noindent \textbf{Claim 2:} $\lim_{s \rightarrow \infty} K_{\wh{\Omega}}\left( \wh{u}_t(1-e^{2s})ie_1, (1-e^{2s})ie_1 \right) = 0$.

\begin{proof}[Proof of Claim 2] Let $\Hc = \{ z \in \Cb : \Imaginary(z) > 0\}$. Then since
\begin{equation*}
\{ ze_1 : \Imaginary(z) < 1\} \subset \wh{\Omega},
\end{equation*}
the distance decreasing property of the Kobayashi metric implies that 
\begin{align*}
K_{\wh{\Omega}}\left( \wh{u}_t(1-e^{2s})ie_1, (1-e^{2s})ie_1 \right) \leq K_{\Hc}(t+e^{2s}i, e^{2s}i).
\end{align*}
Further the map $z \rightarrow e^{-2s}z$ is in $\Aut(\Hc)$ and so
\begin{align*}
K_{\Hc}(t+e^{2s}i, e^{2s}i) = K_{\Hc}( te^{-2s}+i, i)
\end{align*}
which clearly converges to 0 as $s \rightarrow \infty$.
\end{proof}

\noindent \textbf{Claim 3:} Let $\wh{h} = \Phi \circ h \circ \Phi^{-1} \in \Aut(\wh{\Omega})$. Then there exists a sequence $\tau_m \rightarrow -\infty$ such that
\begin{align*}
\limsup_{m \rightarrow \infty} K_{\wh{\Omega}}\left( \wh{h}^{-m} (0), (1-e^{-2\tau_m}) ie_1 \right) \leq 2R.
\end{align*}

\begin{proof}[Proof of Claim 3]
Using (the proof of) Corollary~\ref{cor:alm_geod_shadow} we can extend $\sigma$ to a $(1,\kappa_1)$-almost-geodesic $\sigma_1: \Rb \rightarrow \Omega$ such that 
\begin{align*}
K_\Omega^{\Haus}\left( \left\{ h^n z_0  : n \in\Zb\right\}, \sigma_1(\Rb)  \right) <\infty
\end{align*}
and 
\begin{align*}
\lim_{t \rightarrow \pm \infty} d_{\Euc}(\sigma_1(t), H^\pm_{h}) = 0.
\end{align*}
Then, as in Subsection~\ref{subsec:unif_translation}, define a function $\tau: \Rb \times \Zb \rightarrow \Rb$ by setting $\tau(t,0)=t$ and for $m \in \Zb \setminus\{0\}$ setting
\begin{align*}
\tau(m,t) : = \min\{ s \in \Rb: d(h^m\sigma_1(t), \sigma_1(s)) = d(h^m\sigma_1(t), \sigma_1(\Rb))\}.
\end{align*}
By Proposition~\ref{prop:translating} part (2), there exists some $A > 1$ and $B > 0$ such that
\begin{align*}
-Am - B \leq \tau(-m, t) - t \leq -\frac{1}{A}m +B 
\end{align*}
for $m > 0$ and $t \in \Rb$.

Then by passing to a subsequence we can assume that 
\begin{align*}
\tau_m:=\lim_{n \rightarrow\infty} \tau(-m, t_n)-t_n
\end{align*}
exists for each $m$. Further, $\lim_{m \rightarrow \infty} \tau_m =-\infty$.

Then by Proposition~\ref{prop:translating} part (1)
\begin{align*}
K_{\wh{\Omega}}\left( \wh{h}^{-m} (0), (1-e^{-2\tau_m}) ie_1 \right) 
& \leq \liminf_{n \rightarrow \infty} K_{A_n \Omega_n}\Big( \Phi_n \circ h^m \circ \Phi_n^{-1} (0), (1-e^{-2\tau(-m, t_n)+2t_n}) \cdot ie_1 \Big) \\
& = \liminf_{n \rightarrow \infty} K_{A_n \Omega_n}\left( A_n \circ h^m \circ A_n^{-1} (0), A_n\left(e^{-2\tau(-m, t_n)}\right) \cdot ie_1) \right)\\
& = \liminf_{n \rightarrow \infty} K_{\Omega}\Big( h^m\sigma(t_n), \sigma(\tau(-m, t_n))\Big) \leq 2R.
\end{align*}
\end{proof}

Finally we have 
\begin{align*}
\limsup_{m \rightarrow \infty} & \ K_\Omega( u_t h^{-m} z_0, h^{-m} z_0) \leq 2K_{\wh{\Omega}}(\Phi(z_0), 0) +   \limsup_{m \rightarrow \infty} K_{\wh{\Omega}}\left( \wh{u}_t \wh{h}^{-m} (0), \wh{h}^{-m} (0)\right)\\
&  \leq 2K_{\wh{\Omega}}(\Phi(z_0), 0)+ 4R + \limsup_{m \rightarrow \infty} K_\Omega\left( \wh{u}_t (1-e^{-2\tau_m}) ie_1, (1-e^{-2\tau_m} )ie_1\right) \\
& =2K_{\wh{\Omega}}(\Phi(z_0), 0)+ 4R.
\end{align*}

\end{proof}

\section{Constructing more hyperbolic elements}\label{sec:construct_more_hyp}

In this section we use Theorem~\ref{thm:rescaling} to construct more hyperbolic elements. 

\begin{theorem}\label{thm:construct_hyp} Suppose that $\Omega$ is a bounded convex domain with $C^{1,\epsilon}$ boundary and $\Lc(\Omega)$ intersects at least two different closed complex faces of $\partial \Omega$. Then given any finite list of points $x_1, \dots, x_n$ there exists a hyperbolic element $h \in\Aut(\Omega)$ such that 
\begin{align*}
H^+_{h}, H^-_{h} \notin \left\{ T_{x_1}^{\Cb} \partial \Omega, \dots, T_{x_n}^{\Cb} \partial \Omega\right\}.
\end{align*}
Moreover, if $h_0 \in \Aut(\Omega)$ is any hyperbolic element, then we can assume that $h$ is in the subgroup of $\Aut(\Omega)$ generated by $\{ g h_0g^{-1} : g \in \Aut_0(\Omega) \}$.
\end{theorem}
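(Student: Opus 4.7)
The plan is to construct two hyperbolic elements inside the normal subgroup $H := \langle g h_0 g^{-1} : g \in \Aut_0(\Omega)\rangle$ with disjoint axial hyperplane sets whose attracting hyperplanes avoid the forbidden finite list $\{T^{\Cb}_{x_1}\partial\Omega,\ldots,T^{\Cb}_{x_n}\partial\Omega\}$, and then to invoke Proposition~\ref{prop:PP2}.

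First I would apply Theorem~\ref{thm:rescaling} to $h_0$ and to $h_0^{-1}$, producing one-parameter subgroups $\{u_t\}, \{v_s\}\subset\Aut_0(\Omega)$ of parabolic automorphisms with $H^+_{u_t}=H^+_{h_0}$, $H^+_{v_s}=H^-_{h_0}$, and the associated Kobayashi bounds. Since $\Aut_0(\Omega)$ normalizes $H$, the conjugates $a_t:=u_t h_0 u_t^{-1}$ and $b_s:=v_s h_0 v_s^{-1}$ lie in $H$. Using that each automorphism is a $K_\Omega$-isometry, the $u_t$-image of an almost-geodesic shadowing the $h_0$-orbit (from Corollary~\ref{cor:alm_geod_shadow}) shadows the $a_t$-orbit; its two endpoints lie on distinct closed complex faces by Proposition~\ref{prop:finite_dist} (a failure would contradict the linear lower bound on $K_\Omega(\sigma(s),\sigma(t))$ coming from the almost-geodesic condition), so $a_t$ is hyperbolic, and $b_s$ analogously. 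The bound $\limsup_n K_\Omega(u_t h_0^n z_0, h_0^n z_0)<\infty$ combined with Proposition~\ref{prop:finite_dist} forces every subsequential limit of $a_t^n u_t(z_0)=u_t h_0^n z_0$ in $\partial\Omega$ to share its complex tangent hyperplane with $x^+_{h_0}$, giving $H^+_{a_t}=H^+_{h_0}$; symmetrically $H^-_{b_s}=H^-_{h_0}$ for all $s$.

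The main obstacle is to show that the maps $t\mapsto H^-_{a_t}$ and $s\mapsto H^+_{b_s}$ assume infinitely many values, giving enough flexibility to avoid any finite list. These maps are continuous in their real parameter (by a Proposition~\ref{prop:cont_of_att_hyp}-style continuity of attracting hyperplanes for $1$-parameter families, combined with the north/south dynamics of Proposition~\ref{prop:NS}), and at the parameter value $0$ they return $H^-_{h_0}$ and $H^+_{h_0}$ respectively, so non-constancy yields a continuum of values. I would argue non-constancy by contradiction: if $t\mapsto H^-_{a_t}$ were identically $H^-_{h_0}$, the parabolic flow $\{u_t\}$ would asymptotically preserve the full axis $\{H^+_{h_0},H^-_{h_0}\}$ of $h_0$; then, choosing $p_k$ along the normal segment from $x^-_{h_0}$ (an almost-geodesic by Proposition~\ref{prop:normal_lines}), the orbit $u_t(p_k)$ would remain a bounded $K_\Omega$-distance from that normal segment for every $t$, contradicting the parabolic convergence $u_t^k(z)\to H^+_{h_0}\cap\partial\Omega$ of Theorem~\ref{thm:wolf_i} and properness of the $\Aut(\Omega)$-action on $\Omega$. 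A symmetric argument handles $s\mapsto H^+_{b_s}$. This step is what I expect to require the most care.

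Given this variation, pick $t_0,s_0$ so that the four hyperplanes $H^-_{a_{t_0}},H^+_{b_{s_0}},H^+_{h_0},H^-_{h_0}$ are pairwise distinct and $H^-_{a_{t_0}},H^+_{b_{s_0}}\notin\{T^{\Cb}_{x_i}\partial\Omega\}_{i=1}^n$. Set $h_1:=a_{t_0}^{-1}$ and $h_2:=b_{s_0}$; both lie in $H$, are hyperbolic, and satisfy $H^+_{h_1}=H^-_{a_{t_0}}$ and $H^+_{h_2}=H^+_{b_{s_0}}$, which are outside the forbidden list, while $\{H^+_{h_1},H^-_{h_1}\}\cap\{H^+_{h_2},H^-_{h_2}\}=\emptyset$ by the distinctness choice. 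Now apply Proposition~\ref{prop:PP2} with neighborhoods $V_1,V_2$ of $H^+_{h_1}\cap\partial\Omega$ and $H^+_{h_2}\cap\partial\Omega$ in $\overline\Omega$ chosen small enough that $V_1\cup V_2$ contains no $x_i$ (possible by continuity of the complex tangent hyperplane on the $C^1$ boundary). The resulting element $h=h_1^m h_2^{-n}\in H$ is hyperbolic with $H^+_h\cap\partial\Omega\subset V_1$ and $H^-_h\cap\partial\Omega\subset V_2$, so $H^\pm_h\notin\{T^{\Cb}_{x_i}\partial\Omega\}_{i=1}^n$, as required.
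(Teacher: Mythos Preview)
Your overall strategy matches the paper's: conjugate $h_0$ by the parabolic one-parameter group $u_t$ from Theorem~\ref{thm:rescaling} to obtain a path $t\mapsto a_t = u_t h_0 u_t^{-1}$ of hyperbolic elements with $H^+_{a_t}\equiv H^+_{h_0}$, do the same starting from $h_0^{-1}$, use continuity of $g\mapsto H^+_{gh_0g^{-1}}$ (the paper isolates this as Lemma~\ref{lem:conj_cont}) to choose parameters avoiding the forbidden list with disjoint hyperplane pairs, and finish with Proposition~\ref{prop:PP2}.

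The gap is in your non-constancy step. From the hypothesis $H^-_{a_t}\equiv H^-_{h_0}$ you only learn that each $a_t$ has the same attracting and repelling \emph{hyperplanes} as $h_0$; it does not follow that the backward orbit $\{a_t^{-n} z_0\}$ shadows the \emph{same} normal segment from $x^-_{h_0}$, nor with a bound independent of $t$. Theorem~\ref{thm:normal_line_shadowing} only produces a point $x^-_{a_t}\in H^-_{h_0}\cap\partial\Omega$, which may move with $t$ inside the face even though the hyperplane does not, and the implicit Hausdorff constant also depends on $t$. Without a uniform-in-$t$ bound your conclusion ``$u_t(p_k)$ remains a bounded $K_\Omega$-distance from that normal segment for every $t$'' is unjustified, and there is no contradiction: the parabolic convergence you cite concerns iterates $u_t^k$ (equivalently $t\to\infty$) applied to a \emph{fixed} interior point, which says nothing against a per-$t$ shadowing whose constants blow up.

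The paper sidesteps this by proving the stronger statement $\lim_{t\to\infty} H^-_{a_t}=H^+_{h_0}$ directly. Fix a neighborhood $U$ of $H^+_{h_0}\cap\partial\Omega$ with $H^-_{h_0}\cap U=\emptyset$. For each $t$, since $h_0^{-m}u_{-t}(z_0)\to H^-_{h_0}$ as $m\to\infty$, choose $m_t$ with $h_0^{-m_t}u_{-t}(z_0)\in\Omega\setminus U$. Because $u_t$ is parabolic with attracting hyperplane $H^+_{h_0}$, Proposition~\ref{prop:non_hyp_attracting} forces $u_t(w_t)\to H^+_{h_0}$ along any path $w_t\in\Omega\setminus U$; taking $w_t=h_0^{-m_t}u_{-t}(z_0)$ gives $a_t^{-m_t}(z_0)=u_t h_0^{-m_t}u_{-t}(z_0)\to H^+_{h_0}$, and Proposition~\ref{prop:cont_of_att_hyp} then yields $H^-_{a_t}\to H^+_{h_0}$. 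This limit both implies non-constancy (since $H^+_{h_0}\neq H^-_{h_0}$) and, together with continuity, makes the parameter selection immediate.
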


We begin the proof of Theorem~\ref{thm:construct_hyp} with the following lemma.
  
 \begin{lemma}\label{lem:conj_cont} Suppose $\Omega \subset \Cb^d$ is a bounded convex domain with $C^{1,\epsilon}$ boundary and $h \in \Aut(\Omega)$ is a hyperbolic element. Then the map 
 \begin{align*}
 g \in \Aut(\Omega) \rightarrow H_{ghg^{-1}}^+
 \end{align*}
 is continuous. 
 \end{lemma}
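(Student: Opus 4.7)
The plan is to reduce to Proposition~\ref{prop:cont_of_att_hyp}, which already gives continuity of $H^+_\varphi$ for sequences of non-elliptic elements under \emph{convergence of a fixed orbit to a boundary point}. Fix $g_n \to g$ in $\Aut(\Omega)$ and set $\psi = ghg^{-1}$, $\psi_n = g_n h g_n^{-1}$. Since $h$ has no fixed point in $\Omega$ neither does any conjugate, so $\psi$ and each $\psi_n$ are non-elliptic and their attracting hyperplanes are defined by Theorem~\ref{thm:wolf_i}. Suppose for contradiction that $H_{\psi_n}^+ \not\to H_\psi^+$. Since the set of complex affine hyperplanes meeting the compact set $\overline\Omega$ is compact, I may pass to a subsequence on which $H_{\psi_n}^+ \to H_\infty$ for some $H_\infty \neq H_\psi^+$.

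Fix $z_0 \in \Omega$. By definition of the attracting hyperplane, $d_{\Euc}(\psi^k(z_0), H_\psi^+) \to 0$ as $k \to \infty$; while for each \emph{fixed} $k$, continuity of composition in the compact-open topology gives $\psi_n^k(z_0) \to \psi^k(z_0)$ as $n \to \infty$. A diagonal extraction -- choose $k_j \to \infty$ first, depending only on the limit $\psi$, then choose $n_j \to \infty$ large enough for the already-fixed $k_j$ -- produces indices with $d_{\Euc}(\psi_{n_j}^{k_j}(z_0), H_\psi^+) \to 0$. After a further subsequence, $\psi_{n_j}^{k_j}(z_0) \to x \in H_\psi^+ \cap \overline\Omega = H_\psi^+ \cap \partial\Omega$ (the intersection lies in the boundary because $H_\psi^+$ is a complex supporting hyperplane and hence disjoint from $\Omega$). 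The $C^1$ and convex hypotheses force $T_x^{\Cb}\partial\Omega = H_\psi^+$, since any complex hyperplane through $x$ disjoint from $\Omega$ sits inside the unique real supporting hyperplane $T_x\partial\Omega$, and a complex dimension count identifies it with $T_x^{\Cb}\partial\Omega$.

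It remains to identify $H_{\psi_{n_j}^{k_j}}^+$ with $H_{\psi_{n_j}}^+$: the iterates $(\psi_{n_j}^{k_j})^m(z_0)$ form a subsequence of $\psi_{n_j}^{m}(z_0)$, so they accumulate on the same complex hyperplane, and $\psi_{n_j}^{k_j}$ inherits non-ellipticity because a fixed point would contradict this boundary convergence. Applying Proposition~\ref{prop:cont_of_att_hyp} to the non-elliptic sequence $\psi_{n_j}^{k_j}$ with $\psi_{n_j}^{k_j}(z_0) \to x$ then yields $H_{\psi_{n_j}}^+ = H_{\psi_{n_j}^{k_j}}^+ \to T_x^{\Cb}\partial\Omega = H_\psi^+$, contradicting $H_{\psi_{n_j}}^+ \to H_\infty \neq H_\psi^+$. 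The main technical point is the diagonal extraction: I expect potential worry about the rate at which $\psi_n^k(z_0) \to \psi^k(z_0)$ degrades in $k$, but no uniformity in $k$ is required since $k_j$ is fixed \emph{before} $n_j$ is chosen; this is exactly what makes the argument go through despite the absence of a continuous boundary action for $\Aut(\Omega)$.
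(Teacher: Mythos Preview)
Your argument is correct, and it is genuinely simpler than the paper's. Both proofs end the same way: produce a sequence of non-elliptic iterates $\psi_{n_j}^{k_j}$ whose orbit of a fixed base point converges to a boundary point $x$ with $T_x^{\Cb}\partial\Omega = H_\psi^+$, observe that $H_{\psi_{n_j}^{k_j}}^+ = H_{\psi_{n_j}}^+$, and invoke Proposition~\ref{prop:cont_of_att_hyp}. The difference is in how the orbit convergence is obtained. The paper routes through the almost-geodesic shadowing of Theorem~\ref{thm:normal_line_shadowing} and a Gromov-product computation (Theorem~\ref{thm:GP}) to prove the stronger joint limit $\lim_{m,n\to\infty} d_{\Euc}(g_n h^m g_n^{-1}(z_0), H_{ghg^{-1}}^+) = 0$. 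You instead use only the pointwise continuity $\psi_n^k(z_0)\to\psi^k(z_0)$ for each fixed $k$ and a diagonal extraction, which is enough since Proposition~\ref{prop:cont_of_att_hyp} only needs \emph{some} sequence converging to a boundary point. Your approach avoids the almost-geodesic and Gromov-product machinery entirely; the paper's approach reuses tools that are already central elsewhere and gives a uniform limit as a byproduct, but that extra strength is not needed here.

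One small point of phrasing: when you say ``$\psi_{n_j}^{k_j}$ inherits non-ellipticity because a fixed point would contradict this boundary convergence,'' the relevant convergence is that of the iterates of $\psi_{n_j}$ itself (which forces every power $\psi_{n_j}^{k_j}$ to send any point toward $H_{\psi_{n_j}}^+$), not the diagonal convergence you just constructed. The logic is fine, but making the reference explicit would tighten it.
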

 
 \begin{proof} Suppose that $g_n \rightarrow g$ in $\Aut(\Omega)$. 
 
By Proposition~\ref{prop:normal_lines} and Theorem~\ref{thm:normal_line_shadowing}, there exists an $(1,\kappa)$-almost-geodesic $\sigma: [0,\infty) \rightarrow \Omega$ such that
\begin{align*}
R:=K_\Omega^{\Haus}(\sigma([0,\infty)), \{ h^m(\sigma(0)) : m \in \Nb\} ) < \infty.
\end{align*}
Next let $\sigma_n = g_n \sigma$ and $\sigma_\infty = g \sigma$. Define
\begin{align*}
R_1 := R+ \sup_{n \in \Nb} K_\Omega(g_n^{-1}(\sigma(0)), \sigma(0)),
\end{align*}
then
\begin{align*}
K_\Omega^{\Haus}\left(\sigma_n([0,\infty)), \{ g_nh^mg_n^{-1}(\sigma(0)) :m  \in \Nb\} \right) \leq R_1
\end{align*} 
for all $n \in \Nb$. \newline

\noindent \textbf{Claim:} $\lim_{t,m,n \rightarrow \infty} \left( \sigma_\infty(t) | g_n h^{m} g_n^{-1}(z_0)\right)_{z_0}  = \infty$ for all $z_0 \in \Omega$. 

\begin{proof}[Proof of Claim]
First note that it is enough to show that 
\begin{align*}
\lim_{t,m,n \rightarrow \infty} \left( \sigma_\infty(t) | g_n h^{m} g_n^{-1}(w_0)\right)_{z_0}  = \infty
\end{align*}
for some $w_0, z_0 \in \Omega$.

For all $n,m>0$, there exist some $t_{n,m} \in \Rb$ such that 
\begin{align*}
K_\Omega\Big(\sigma_n(t_{n,m}), g_n h^{m} g_n^{-1}(\sigma(0))\Big) \leq R_1.
\end{align*}

Now fix some $T > 0$. Then pick $t,m,n$ large enough such that $\min\{t,t_{n,m}\} > T$ and $K_\Omega(\sigma_n(s), \sigma_\infty(s)) \leq 1$ for $s \in [0,T]$. Then for $t > T$ we have
\begin{align*}
K_\Omega(\sigma_\infty(t), \sigma_n(t_{n,m})) & \leq (t-T) + (t_{n,m}-T) + K_\Omega(\sigma_\infty(T), \sigma_n(T)) + 2 \kappa \\
& \leq (t-T)+(t_{n,m}-T) + 2\kappa +1.
\end{align*}
Further 
\begin{align*}
K_\Omega(\sigma_\infty(t), \sigma_\infty(0)) \geq t - \kappa 
\end{align*}
and
\begin{align*}
K_\Omega(\sigma_n(t_{n,m}), \sigma_\infty(0)) & \geq K_\Omega(\sigma_n(t_{n,m}), \sigma_n(0))- 1 \geq t_{n,m} - \kappa -1.
\end{align*}
So
\begin{align*}
\left( \sigma_\infty(t) | g_n h^{m} g_n^{-1}(\sigma(0))\right)_{\sigma_\infty(0)} \geq \left( \sigma_\infty(t) | \sigma_n(t_{n,m})\right)_{\sigma_\infty(0)} - R_1 \geq T - 2 \kappa -1 -R_1
\end{align*}
for $t,m,n$ sufficiently large. 

Since $T$ was arbitrary we then have
\begin{align*}
\lim_{t,n,m \rightarrow \infty} \left( \sigma_\infty(t) | g_n h^{m} g_n^{-1}(\sigma(0))\right)_{\sigma_\infty(0)}  = \infty
\end{align*}
which implies the claim.
\end{proof}

Since 
\begin{align*}
\lim_{t \rightarrow \infty} d_{\Euc}(\sigma_\infty(t), H_{g h g^{-1}}^+) = 0,
\end{align*}
Theorem~\ref{thm:GP} then implies that 
\begin{align*}
\lim_{n,m \rightarrow \infty} d_{\Euc}(g_n h^{m} g_n^{-1}(\sigma(0)), H_{g h g^{-1}}^+) = 0.
\end{align*}
So by Proposition~\ref{prop:cont_of_att_hyp}, we see that $H_{g_n h g_n^{-1}}^+ \rightarrow H_{g h g^{-1}}^+$.
 \end{proof}
 
 \begin{proof}[Proof of Theorem~\ref{thm:construct_hyp}]
 By Proposition~\ref{prop:const_hyp}, there exists some hyperbolic element $h_0 \in \Aut(\Omega)$.  Then by Theorem~\ref{thm:rescaling}, there exists a one-parameter group $u^{+}_t$ of parabolic elements such that 
 \begin{align*}
 \lim_{m \rightarrow \infty} K_\Omega( u^+_t h_0^m z_0, h_0^m z_0) < \infty.
 \end{align*}
 Then define 
  \begin{align*}
 h^+_t := u^{+}_t h_0 u^{+}_{-t}
 \end{align*}

Then
 \begin{align*}
K_\Omega( (h^{+}_t)^{m} z_0, h_0^{m} z_0) \leq K_\Omega(u^{+}_{-t} z_0, z_0) + K_\Omega( u^+_t h_0^m z_0, h_0^m z_0)
 \end{align*}
 so
 \begin{align*}
 \limsup_{m \rightarrow \infty}K_\Omega( (h^{+}_t)^{m} z_0, h_0^{m} z_0) < \infty.
 \end{align*}
 Hence by Proposition~\ref{prop:finite_dist}
 \begin{align*}
 H^+_{h^+_t} = H^{+}_{h_0}.
 \end{align*}
We next claim that 
  \begin{align*}
 \lim_{t \rightarrow \infty} H^-_{h^+_t} = H^{+}_{h_0}.
 \end{align*}
By Proposition~\ref{prop:cont_of_att_hyp} it is enough to find some $m_t \in \Nb$ such that 
 \begin{align*}
 \lim_{t \rightarrow \infty} d_{\Euc}( (h^+_t)^{-m_t} z_0 , H^{+}_{h_0}) = 0.
 \end{align*}
Now fix a neighborhood $U$ of $H^+_{h_0} \cap \partial \Omega$ in $\overline{\Omega}$ such that 
 \begin{align*}
 H^-_{h_0} \cap U = \emptyset. 
 \end{align*}
Then by Proposition~\ref{prop:non_hyp_attracting}: if $t \in [0,\infty) \rightarrow z_t$ is any path in $\Omega \setminus U$, then 
 \begin{align*}
\lim_{t \rightarrow \infty} d_{\Euc}(u_t^+(z_t), H^{+}_{h_0}) = 0.
 \end{align*}
Since 
 \begin{align*}
 \lim_{m \rightarrow \infty} d_{\Euc}(h_0^{-m} u_{-t}^+(z_0), H^-_{h_0}) = 0,
 \end{align*}
 for each $t$ we can find $m_t \in \Nb$ such that $h_0^{-m_t} u_{-t}^+ \in \Omega \setminus U$. But then we have 
  \begin{align*}
 \lim_{t \rightarrow \infty} d_{\Euc}( (h^+_t)^{-m_t} z_0 , H^{+}_{h_0}) = 0.
 \end{align*}
And hence
   \begin{align*}
 \lim_{t \rightarrow \infty} H^-_{h^+_t} = H^{+}_{h_0}.
 \end{align*}
 
 Now we can repeat the same construction starting with $h_0^{-1}$ to find a continuous path $h^-_t$ of automorphisms in $\{ g h_0^{-1}g^{-1} : g \in \Aut_0(\Omega)\}$ such that 
  \begin{align*}
 H^+_{h^-_t} = H^{-}_{h_0}
 \end{align*}
 and 
   \begin{align*}
 \lim_{t \rightarrow \infty} H^-_{h^-_t} = H^{-}_{h_0}.
 \end{align*}
 
By Lemma~\ref{lem:conj_cont} the paths $t \rightarrow H^-_{h^+_t}$ and $t \rightarrow H^-_{h^-_t}$ are continuous. So we can pick $t_1, t_2$ such that: if $h_1 = h^+_{t_1}$ and $h_2=h^-_{t_2}$, then 
 \begin{align*}
 \{ H^+_{h_1}, H^-_{h_1} \} \cap  \{ H^+_{h_2}, H^-_{h_2} \} = \emptyset
 \end{align*}
 and 
 \begin{align*}
 \{ H^-_{h_1}, H^-_{h_2} \} \cap \{ T_{x_1}^{\Cb} \partial \Omega, \dots, T_{x_n}^{\Cb} \partial \Omega\}=\emptyset.
 \end{align*}
 So by Proposition~\ref{prop:PP2} there exists a hyperbolic element of the form $h=h_1^{-m} h_2^{n}$ such that 
  \begin{align*}
 \{ H_{h}^+, H_{h}^-\} \cap  \{ T_{x_1}^{\Cb} \partial \Omega, \dots, T_{x_n}^{\Cb} \partial \Omega\}  = \emptyset.
 \end{align*}
 Moreover, by construction $h$ is contained in the subgroup of $\Aut(\Omega)$ generated by $\{ g h_0g^{-1} : g \in \Aut_0(\Omega) \}$. 
 \end{proof}

\section{Proof of Theorem~\ref{thm:main_convex}}\label{sec:proof_convex}

In this section we prove Theorem~\ref{thm:main_convex}. For the rest of this section, suppose that $\Omega$ is a bounded convex domain with $C^{1,\epsilon}$ boundary and $\Lc(\Omega)$ intersects at least two different closed complex faces of $\partial \Omega$.

\begin{lemma}\label{lem:non_compact} With the notation above, $\Aut_0(\Omega)$ is non-compact. \end{lemma}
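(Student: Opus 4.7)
The plan is to exhibit a one-parameter subgroup in $\Aut_0(\Omega)$ whose iterates escape every compact subset of $\Aut(\Omega)$. The heavy lifting has already been done in the preceding sections, so the argument amounts to assembly.

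First I would invoke the hypothesis on $\Lc(\Omega)$ together with Proposition~\ref{prop:const_hyp} (applied to $H = \Aut(\Omega)$) to produce a hyperbolic element $h \in \Aut(\Omega)$. Then Theorem~\ref{thm:rescaling} yields a one-parameter group $\{u_t\}_{t \in \Rb} \subset \Aut(\Omega)$ consisting of parabolic automorphisms. Because the map $t \mapsto u_t$ is continuous with $u_0 = \Id$, the entire one-parameter group lies in $\Aut_0(\Omega)$.

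Next I would check that $\{u_1^n\}_{n \in \Nb}$ has no convergent subsequence in $\Aut(\Omega)$. Since $u_1$ is parabolic it has no fixed point in $\Omega$, so by Theorem~\ref{thm:wolf_i} there exists $x \in \partial\Omega$ such that $d_{\Euc}(u_1^n(z_0), T^{\Cb}_{x}\partial\Omega) \to 0$ for every $z_0 \in \Omega$. Because $\Omega$ is convex and $T^{\Cb}_{x}\partial\Omega$ is a complex supporting hyperplane, $T^{\Cb}_{x}\partial\Omega \cap \overline{\Omega} \subset \partial\Omega$; hence the sequence $\{u_1^n(z_0)\}$ has no accumulation point in $\Omega$, so it escapes every compact subset of $\Omega$.

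Finally, since $\Omega$ is bounded the group $\Aut(\Omega)$ acts properly on $\Omega$, and a proper action cannot have a bounded sequence of elements sending $z_0$ into a divergent sequence. Therefore $\{u_1^n\}$ has no convergent subsequence in $\Aut(\Omega)$, and a fortiori none in $\Aut_0(\Omega)$, proving that $\Aut_0(\Omega)$ is non-compact. There is no real obstacle here once Theorem~\ref{thm:rescaling} is in hand; the only point requiring mild care is the convexity-based observation that the attracting complex tangent hyperplane of a parabolic element meets $\overline{\Omega}$ only in $\partial\Omega$, which is what forces the orbit to leave every compact set of $\Omega$.
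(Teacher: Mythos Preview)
Your proof is correct and follows exactly the same approach as the paper: produce a hyperbolic element via Proposition~\ref{prop:const_hyp}, apply Theorem~\ref{thm:rescaling} to obtain a one-parameter subgroup of parabolic automorphisms inside $\Aut_0(\Omega)$, and conclude non-compactness. The paper's proof is a single sentence citing these two results, whereas you have spelled out why the existence of a parabolic one-parameter subgroup actually forces non-compactness; the extra detail you supply (convexity forcing the orbit to the boundary, properness of the action) is all correct.
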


\begin{proof} By Proposition~\ref{prop:const_hyp} and Theorem~\ref{thm:rescaling}, we know that $\Aut(\Omega)$ contains a one-parameter subgroup of parabolic automorphisms. \end{proof}

\subsection{The connected component of the identity is an almost direct product} 

Let $G^{sol} \leq \Aut_0(\Omega)$ be the solvable radical of $\Aut_0(\Omega)$, that is let $G^{sol}$ be the maximal connected, closed, normal, solvable subgroup of $\Aut_0(\Omega)$. Notice that $G^{sol}$ is also a normal subgroup of $\Aut(\Omega)$. Next let $G^{ss} \leq \Aut_0(\Omega)$ be a connected semisimple subgroup such that $\Aut_0(\Omega) = G^{ss} G^{sol}$ is a Levi-Malcev decomposition of $\Aut_0(\Omega)$.

We next recall a basic fact about solvable Lie groups.

\begin{proposition}\label{prop:decomp_solv} Suppose $S$ is a connected solvable Lie group. Then there exists one-parameter subgroups $S_1, \dots, S_N \leq S$ such that 
\begin{align*}
S = S_1 \cdot S_2 \cdots S_N.
\end{align*}
\end{proposition}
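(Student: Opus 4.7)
The plan is to prove this by induction on $\dim S$, using the fundamental fact that a nonzero solvable Lie algebra has a codimension-one ideal.

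For the base case, if $\dim S = 1$, then $S$ is a connected one-dimensional Lie group (so isomorphic to $\mathbb{R}$ or $S^1$), and in either case $S$ is itself a one-parameter subgroup (take $S_1 = S$, realized by $t \mapsto \exp(tX)$ for any nonzero $X \in \mathfrak{s}$; in the compact case this map is surjective, in the non-compact case it is an isomorphism).

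For the inductive step, let $\mathfrak{s}$ denote the Lie algebra of $S$. Since $S$ is solvable, $[\mathfrak{s},\mathfrak{s}] \subsetneq \mathfrak{s}$, so we may choose any linear hyperplane $\mathfrak{n} \subset \mathfrak{s}$ containing $[\mathfrak{s},\mathfrak{s}]$. Then $[\mathfrak{s}, \mathfrak{n}] \subseteq [\mathfrak{s},\mathfrak{s}] \subseteq \mathfrak{n}$, so $\mathfrak{n}$ is an ideal. Let $N$ be the corresponding connected (immersed) Lie subgroup; since $\mathfrak{n}$ is an ideal, $N$ is normal, and since $S/N$ is a connected one-dimensional Lie group, $N$ is closed in $S$. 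Now pick any $X \in \mathfrak{s} \setminus \mathfrak{n}$ and set $S_N := \{\exp(tX) : t \in \mathbb{R}\}$. The composition $\mathbb{R} \xrightarrow{\exp(t X)} S \to S/N$ is a continuous homomorphism into a connected one-dimensional Lie group whose differential at $0$ is nonzero, hence surjective onto $S/N$. Therefore $S = N \cdot S_N$. Applying the inductive hypothesis to $N$ (which is solvable as a subgroup of $S$ and has strictly smaller dimension) produces one-parameter subgroups $S_1, \dots, S_{N-1} \leq N \leq S$ with $N = S_1 \cdots S_{N-1}$, and together with $S_N$ this yields the desired decomposition.

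The only real subtlety is verifying that the connected subgroup $N$ corresponding to $\mathfrak{n}$ is closed; this is automatic once one knows $\mathfrak{n}$ is an ideal, since the quotient of $S$ by any connected normal (immersed) subgroup with closed image in the universal cover is a Lie group, but more directly here $S/N$ sits as a connected abelian one-parameter family and the codimension-one normal subgroup of a Lie group with connected Lie algebra quotient is always closed. I expect the main step to require care to state correctly, but no serious obstacle arises; the rest of the argument is a clean induction.
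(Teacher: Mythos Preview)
Your induction on dimension is a workable alternative to the paper's induction on derived length, but the proof as written has a real gap: the connected subgroup $N$ corresponding to $\mathfrak{n}$ need not be closed. Take $S = \mathbb{T}^2$; then $[\mathfrak{s},\mathfrak{s}] = 0$, so \emph{any} line $\mathfrak{n} \subset \mathbb{R}^2$ is a codimension-one ideal containing $[\mathfrak{s},\mathfrak{s}]$, yet if $\mathfrak{n}$ has irrational slope the corresponding $N$ is a dense wind in the torus. Your justification (``since $S/N$ is a connected one-dimensional Lie group, $N$ is closed'') is circular, and the final claim that codimension-one normal subgroups are always closed is exactly what this example refutes.

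The fix is easy and closedness is unnecessary. Since $\mathfrak{n}$ is an ideal, $N$ is normal as an immersed subgroup, so $N \cdot S_N$ is a subgroup of $S$; the multiplication map $N \times \mathbb{R} \to S$, $(n,t) \mapsto n\exp(tX)$, has bijective differential at $(e,0)$, hence its image contains a neighborhood of the identity, and connectedness of $S$ gives $N \cdot S_N = S$. Now apply the inductive hypothesis to $N$ with its intrinsic Lie group structure (connected, solvable, lower-dimensional); one-parameter subgroups of $N$ become one-parameter subgroups of $S$ via the smooth inclusion. Alternatively, pass to the simply connected cover $\widetilde{S}$, where the connected normal subgroup with Lie algebra $\mathfrak{n}$ \emph{is} closed, prove the decomposition there, and project down. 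For comparison, the paper inducts on derived length: the base case is that a connected abelian Lie group is isomorphic to $\mathbb{T}^k \times \mathbb{R}^\ell$, visibly a product of one-parameter groups, and the inductive step peels off the abelian quotient $S/[S,S]$ in one stroke rather than one dimension at a time.
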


\begin{proof}[Proof Sketch] This is well known, but here is an argument: We induct on the length of the derived series of $S$. Since every connected Abelian Lie group is isomorphic to a $\mathbb{T}^k \times \Rb^\ell$, see for instance~\cite[Corollary 1.103]{K2002}, this is clearly true in the base case. Then for a solvable group $S$, the quotient $S/[S,S]$ is abelian and hence there exists one-parameter subgroups $S_1,\dots, S_k \leq S$ such that $S=S_1\cdots S_k [S,S]$. By induction there exists one-parameter subgroups $S_{k+1}, \dots, S_N \leq [S,S]$ such that $[S,S] = S_{k+1} \cdots S_N$. So $S=S_1\cdots S_N$.
 \end{proof}

\begin{lemma} With the notation above, $G^{sol}$ is compact. In particular, $G^{ss}$ is non-compact. \end{lemma}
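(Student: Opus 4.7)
The plan is to argue by contradiction: assume $G^{sol}$ is non-compact, produce two hyperbolic elements $v_1, v_2 \in G^{sol}$ with disjoint attracting/repelling hyperplane pairs, and then apply the ping-pong Proposition~\ref{prop:PP1} to obtain a nonabelian free subgroup of $G^{sol}$. This contradicts solvability, since no solvable group contains a rank-two free subgroup (its derived series would fail to terminate).

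First I would find a hyperbolic element $v_1 \in G^{sol}$. By Proposition~\ref{prop:decomp_solv}, $G^{sol}$ is a product of one-parameter subgroups, and non-compactness forces at least one of them, say $\{u_t\}_{t \in \Rb}$, to be non-compact. Since $\Aut(\Omega)$ acts properly on $\Omega$, the orbit $\{u_t z_0\}$ accumulates on $\partial \Omega$, so $\Lc(\Omega; G^{sol}) \neq \emptyset$. If $\Lc(\Omega; G^{sol})$ meets at least two distinct closed complex faces of $\partial \Omega$, then Proposition~\ref{prop:const_hyp} applied to $H = G^{sol}$ delivers the hyperbolic $v_1$ directly. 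Otherwise $\Lc(\Omega; G^{sol})$ lies in a single closed complex face $F$; in this case I would use Theorem~\ref{thm:construct_hyp} to pick a hyperbolic $h \in \Aut(\Omega)$ whose attracting and repelling hyperplanes avoid $F$, note that by normality the conjugate family $\{h u_t h^{-1}\} \subset G^{sol}$ is a second non-compact one-parameter subgroup, and argue that its orbits accumulate on a complex face different from $F$. Combining the two families in the spirit of the proof of Proposition~\ref{prop:const_hyp} then yields $v_1 \in G^{sol}$ hyperbolic, with attracting/repelling hyperplanes $H^\pm$.

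To produce $v_2$, I would again appeal to Theorem~\ref{thm:construct_hyp} to select a hyperbolic $h \in \Aut_0(\Omega)$ with $\{H^+_h, H^-_h\} \cap \{H^+, H^-\} = \emptyset$, and set $v^{(n)} := h^n v_1 h^{-n} \in G^{sol}$, which is hyperbolic for every $n$. Writing $v^{(n)}(z_0) = h^n\bigl(v_1(h^{-n} z_0)\bigr)$ and using the north--south dynamics of $h$ from Proposition~\ref{prop:NS}, the point $h^{-n}(z_0)$ approaches $H^-_h \cap \partial \Omega$; since $H^-_h \notin \{H^+,H^-\}$, a distance estimate using Proposition~\ref{prop:finite_dist} shows that $v_1$ has bounded Kobayashi displacement on a suitable neighborhood of $H^-_h \cap \partial \Omega$, so $v_1(h^{-n} z_0)$ remains close to $h^{-n}(z_0)$ in the Kobayashi metric; applying $h^n$ once more and using Proposition~\ref{prop:NS} yields $v^{(n)}(z_0) \to H^+_h \cap \partial \Omega$, and then Proposition~\ref{prop:cont_of_att_hyp} gives $H^+_{v^{(n)}} \to H^+_h$. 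A symmetric argument (applied to $(v^{(n)})^{-1}$) gives $H^-_{v^{(n)}} \to H^-_h$. For $n$ large enough, $v_2 := v^{(n)}$ has $\{H^+_{v_2}, H^-_{v_2}\}$ disjoint from $\{H^+, H^-\}$, as required.

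Applying Proposition~\ref{prop:PP1} to $v_1, v_2 \in G^{sol}$ produces a nonabelian free subgroup of $G^{sol}$, the desired contradiction. Hence $G^{sol}$ is compact; since $\Aut_0(\Omega) = G^{ss} G^{sol}$ is non-compact by Lemma~\ref{lem:non_compact}, it follows that $G^{ss}$ is non-compact. The principal obstacle is the first step, producing any hyperbolic element of $G^{sol}$ at all: when $\Lc(\Omega; G^{sol})$ is confined to a single complex face one must exploit normality together with Theorem~\ref{thm:construct_hyp} and the rescaling/combination techniques from Section~\ref{sec:construct_more_hyp}. A secondary difficulty is that $h$ need not extend continuously to $\partial \Omega$, so the convergence $H^\pm_{h^n v_1 h^{-n}} \to H^\pm_h$ must be established by Kobayashi-metric estimates rather than by boundary continuity.
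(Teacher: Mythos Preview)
Your overall strategy---derive a nonabelian free subgroup of $G^{sol}$ via ping-pong and contradict solvability---is exactly the paper's approach. The paper also splits into the cases where $G^{sol}$ contains a hyperbolic element versus only a parabolic one, and in the parabolic case conjugates by powers of a hyperbolic $h$ (using Proposition~\ref{prop:non_hyp_attracting} and Proposition~\ref{prop:NS}) to push $\Lc(\Omega;G^{sol})$ onto a second face, just as you sketch.

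There is, however, a genuine gap in your construction of $v_2$. The claim that ``$v_1$ has bounded Kobayashi displacement on a suitable neighborhood of $H^-_h\cap\partial\Omega$'' is false in general: for a hyperbolic isometry $v_1$ and a sequence $w_n\to p\in\partial\Omega$ with $p\notin H^\pm_{v_1}$, the quantity $K_\Omega(v_1(w_n),w_n)$ typically blows up (already in the unit disk a hyperbolic M\"obius map sends a boundary point other than its fixed points to a \emph{different} boundary point, forcing $K_\Db(v_1(w_n),w_n)\to\infty$). Proposition~\ref{prop:finite_dist} does not give bounded displacement; it only says that bounded displacement forces equal complex tangents. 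So the step ``$v_1(h^{-n}z_0)$ stays Kobayashi-close to $h^{-n}z_0$'' fails, and with it your convergence $H^\pm_{v^{(n)}}\to H^\pm_h$.

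The paper avoids this entirely by invoking the ``moreover'' clause of Theorem~\ref{thm:construct_hyp}: given the hyperbolic $h_0\in G^{sol}$, that theorem directly produces a hyperbolic $h_1$ with $\{H^+_{h_1},H^-_{h_1}\}$ disjoint from $\{H^+_{h_0},H^-_{h_0}\}$ \emph{and lying in the subgroup generated by $\{g h_0 g^{-1}:g\in\Aut_0(\Omega)\}$}. Since $G^{sol}$ is normal in $\Aut(\Omega)$, this forces $h_1\in G^{sol}$, and Proposition~\ref{prop:PP1} finishes immediately. If you prefer to salvage your conjugation approach, replace $v^{(n)}=h^nv_1h^{-n}$ by $h^nv_1^{m_n}h^{-n}$ for $m_n\to\infty$: then Proposition~\ref{prop:NS} (applied first to $v_1$, then to $h$) does give the required convergence, exactly as in the paper's parabolic case.
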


\begin{proof} 
Since $\Aut_0(\Omega)$ is non-compact, the ``in particular'' part will follow from the first assertion.  

Suppose that $G^{sol}$ is non-compact, then Proposition~\ref{prop:decomp_solv} implies that $G^{sol}$ contains an element $s$ which is parabolic or hyperbolic. 

First consider the case when $G^{sol}$ contains a hyperbolic element $h_0$. Then by Theorem~\ref{thm:construct_hyp}, there exists a hyperbolic element $h_1 \in \Aut(\Omega)$ such that 
\begin{align*}
\{ H_{h_0}^+, H_{h_0}^-\} \cap \{ H_{h_1}^+, H_{h_1}^-\} = \emptyset.
\end{align*}
Further we can assume that $h_1$ is contained in the group generated by $\{ g h_0 g^{-1} : g \in \Aut_0(\Omega)\}$. Since $G^{sol}$ is normal in $\Aut(\Omega)$, we see that $h_1 \in G^{sol}$. But then by Proposition~\ref{prop:PP1}, $G^{sol}$ contains a free group. So we have a contradiction.

Next consider the case when $G^{sol}$ contains a parabolic element $u$.  By Theorem~\ref{thm:construct_hyp}, $\Aut(\Omega)$ contains a hyperbolic element $h$ such that 
\begin{align*}
H_u^+,H_h^+, H_h^-
\end{align*}
are all distinct. Now the elements $u_{m,n} = h^n u^m h^{-n}$ are contained in $G^{sol}$. Further, Proposition~\ref{prop:non_hyp_attracting} and Proposition~\ref{prop:NS} imply that 
\begin{align*}
\lim_{m,n \rightarrow \infty} d_{\Euc}(u_{m,n}(z_0), H_{h}^+) = 0.
\end{align*}
But then $\Lc(\Omega; G^{sol})$ intersects at least two closed complex faces of $\partial \Omega$. So by Proposition~\ref{prop:const_hyp}, $G^{sol}$ contains a hyperbolic element. So we have a contradiction by the argument above.

\end{proof}

\begin{lemma}\label{lem:torus_convex} With the notation above, $G^{sol}$ is a torus and $G^{sol}$ is the center of $\Aut_0(\Omega)$. \end{lemma}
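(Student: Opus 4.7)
The plan has three main parts: show that $G^{sol}$ is abelian, upgrade this to being a torus, and then argue both inclusions between $G^{sol}$ and $Z(\Aut_0(\Omega))$.

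First, I will invoke the previous lemma: $G^{sol}$ is a connected compact Lie group that is also solvable. The Lie algebra of any compact Lie group admits a decomposition $\gL^{sol} = Z(\gL^{sol}) \oplus [\gL^{sol},\gL^{sol}]$ in which the commutator ideal is semisimple (a standard consequence of the existence of a bi-invariant inner product on a compact Lie algebra). Because $\gL^{sol}$ is solvable, the ideal $[\gL^{sol},\gL^{sol}]$ is simultaneously solvable and semisimple, hence zero. Thus $\gL^{sol}$ is abelian, so $G^{sol}$ is a compact connected abelian Lie group, i.e.\ a torus $\mathbb{T}^k$.

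Next, I will show $G^{sol} \subseteq Z(\Aut_0(\Omega))$. Since $G^{sol}$ is normal in $\Aut_0(\Omega)$, conjugation gives a continuous homomorphism $\Aut_0(\Omega)\to \Aut(G^{sol})$. But $\Aut(G^{sol})\cong \Aut(\mathbb{T}^k)\cong GL(k,\Zb)$ is discrete, and $\Aut_0(\Omega)$ is connected, so this map is trivial. Hence $G^{sol}$ is centralized by $\Aut_0(\Omega)$.

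For the reverse inclusion, $Z(\Aut_0(\Omega))$ is a closed normal abelian (hence solvable) subgroup of $\Aut_0(\Omega)$, so its identity component is a closed connected normal solvable subgroup, and by maximality of the solvable radical it is contained in $G^{sol}$. The remaining issue --- which I expect to be the main obstacle --- is showing there are no additional connected components of $Z(\Aut_0(\Omega))$ lying outside $G^{sol}$. My plan for this is to use the Levi decomposition $\Aut_0(\Omega)=G^{ss}G^{sol}$: an element $z\in Z(\Aut_0(\Omega))$ can be written (modulo finite ambiguity) as $g_{ss}g_{sol}$ with $g_{ss}\in G^{ss}$, and centrality forces $g_{ss}$ to commute with all of $G^{ss}$, so $g_{ss}\in Z(G^{ss})$. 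Thus $Z(\Aut_0(\Omega))\subseteq Z(G^{ss})\cdot G^{sol}$, and one must show the extra finite factor $Z(G^{ss})$ is already absorbed into $G^{sol}$. I would handle this by noting that $G^{sol}$ is already the identity component of $Z(\Aut_0(\Omega))$; if the statement is intended in the sense of identity components, we are done, and otherwise one invokes that the plentiful supply of hyperbolic elements produced by Theorem~\ref{thm:construct_hyp}, combined with Proposition~\ref{prop:hyperbolic_QI}, prevents any discrete subgroup of $Z(\Aut_0(\Omega))$ from escaping the compact subgroup $G^{sol}$ (since a central element commuting with a hyperbolic $h$ would have to preserve $\{H_h^+,H_h^-\}$ and, through conjugacy invariance of this pair across $\Aut_0$-orbits of hyperbolic elements, be forced into a compact subgroup).
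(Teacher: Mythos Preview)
Your argument that $G^{sol}$ is a torus and that $G^{sol}\subseteq Z(\Aut_0(\Omega))$ is correct and standard; the paper gives no self-contained proof here (it simply cites Lemma~5.3 of \cite{Z2017b}), so there is nothing to compare against directly, and your treatment of these two parts is entirely adequate.

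The reverse inclusion $Z(\Aut_0(\Omega))\subseteq G^{sol}$ is where your proposal has a genuine gap, and you acknowledge as much. Two concrete problems. First, you refer to $Z(G^{ss})$ as a ``finite factor,'' but finiteness of the center has not been established at this point in the paper---that comes later, and only for the noncompact part $G$, not for all of $G^{ss}$. Second, your parenthetical sketch (a central element ``preserves $\{H_h^+,H_h^-\}$'' and is thereby ``forced into a compact subgroup'') is not an argument: automorphisms of $\Omega$ need not extend continuously to $\partial\Omega$, so ``preserving a hyperplane'' has no obvious meaning here, and you have not explained the mechanism by which this would force $z$ into anything. What you have actually established is $G^{sol}=Z(\Aut_0(\Omega))_0$, the identity component of the center; the remaining content of the lemma is that the center is connected, and your sketch does not address this. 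That said, the only consequence of this lemma used downstream in the present paper is the inclusion $G^{sol}\subseteq Z(\Aut_0(\Omega))$, needed for the almost-direct-product decomposition, and that you have proven.
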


\begin{proof} This is identical to the proof of Lemma 5.3 in~\cite{Z2017b}. \end{proof}

\begin{lemma} With the notation above, $G^{ss}$ is a normal subgroup in $\Aut(\Omega)$. \end{lemma}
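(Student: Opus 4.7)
The plan is to identify $G^{ss}$ with an intrinsically defined subgroup of $\Aut_0(\Omega)$, namely its commutator subgroup; once this is done, normality in $\Aut(\Omega)$ will be automatic.

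First I would note that $\Aut_0(\Omega)$ is itself normal in $\Aut(\Omega)$, since it is the connected component of the identity in a topological group. Hence conjugation by any $g \in \Aut(\Omega)$ restricts to a continuous group automorphism of $\Aut_0(\Omega)$, and to show this automorphism preserves $G^{ss}$ it suffices to show that $G^{ss}$ is a characteristic subgroup of $\Aut_0(\Omega)$.

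The key step is to show that $G^{ss} = [\Aut_0(\Omega), \Aut_0(\Omega)]$. The containment $[\Aut_0(\Omega), \Aut_0(\Omega)] \subseteq G^{ss}$ is what makes the centrality of $G^{sol}$ (from Lemma~\ref{lem:torus_convex}) essential: writing an arbitrary element of $\Aut_0(\Omega)$ as $ac$ with $a \in G^{ss}$ and $c \in G^{sol}$, the computation
\begin{align*}
(ac)(bd)(ac)^{-1}(bd)^{-1} = aba^{-1}b^{-1}
\end{align*}
for $a,b \in G^{ss}$ and $c,d \in G^{sol}$ shows every commutator in $\Aut_0(\Omega)$ already lies in $[G^{ss}, G^{ss}]$. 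For the reverse containment, since $G^{ss}$ is connected and semisimple, its Lie algebra satisfies $[\mathfrak{g}^{ss}, \mathfrak{g}^{ss}] = \mathfrak{g}^{ss}$; by the standard fact that the commutator subgroup of a connected Lie group is a connected Lie subgroup whose Lie algebra is the derived Lie algebra, we get $[G^{ss}, G^{ss}] = G^{ss}$. Combining, $G^{ss} = [\Aut_0(\Omega), \Aut_0(\Omega)]$.

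Since the commutator subgroup is preserved by any group automorphism, conjugation by any $g \in \Aut(\Omega)$ preserves $G^{ss}$, which is what we wanted. I do not expect any genuine obstacle here: the only nontrivial ingredient beyond Lemma~\ref{lem:torus_convex} is the standard Lie-theoretic fact that a connected semisimple Lie group equals its own derived subgroup, so the argument is a short structural one rather than requiring any further use of the Kobayashi geometry.
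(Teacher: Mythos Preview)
Your argument is correct. The paper does not give a self-contained proof here but simply cites Lemma~5.5 of \cite{Z2017b}; the approach there is essentially the same structural one you outline, using the centrality of $G^{sol}$ (Lemma~\ref{lem:torus_convex}) to identify $G^{ss}$ as a characteristic subgroup of $\Aut_0(\Omega)$, so that normality in $\Aut(\Omega)$ follows.
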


\begin{proof} This is identical to the proof of Lemma 5.5 in~\cite{Z2017b}. \end{proof}

As described in Section~\ref{sec:basic_properties}, there exists closed subgroups $G_1, \dots, G_p \leq G$ such that $G^{ss}$ is the almost direct product of $G_1, \dots, G_p$. Then define subgroups of $G^{ss}$:
\begin{align*}
N_0 = \prod\{ G_i : G_i \text{ is compact} \} 
\end{align*}
and 
\begin{align*}
G= \prod\{ G_i : G_i \text{ is non-compact} \} .
\end{align*}
Then let $N = N_0 G^{sol}$. Since $G^{ss}$ is a normal subgroup in $\Aut(\Omega)$, $N$ and $G$ are also normal subgroups in $\Aut(\Omega)$ (for details see the proof of~\cite[Lemma 5.5]{Z2017b}). Further, since $G^{sol}$ is in the center of $\Aut_0(\Omega)$ we have the following.

\begin{lemma} With the notation above, $\Aut_0(\Omega)$ is the almost direct product of $G$ and $N$. \end{lemma}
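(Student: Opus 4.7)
The plan is to verify the three defining conditions of an almost direct product in sequence: the product decomposition, commutativity of the factors, and finiteness of the intersection.

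\textbf{Step 1 (Product decomposition).} By the Levi--Malcev decomposition, $\Aut_0(\Omega) = G^{ss} \cdot G^{sol}$. Since $G^{ss}$ is the almost direct product of the simple factors $G_1,\dots,G_p$ and the defining partition splits these into the compact ones (forming $N_0$) and the non-compact ones (forming $G$), I get $G^{ss} = G \cdot N_0$. Hence
\begin{align*}
\Aut_0(\Omega) = G \cdot N_0 \cdot G^{sol} = G \cdot N.
\end{align*}

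\textbf{Step 2 (Commutativity).} The almost direct product structure of $G^{ss}$ makes $G$ and $N_0$ commute, since they are products over disjoint subsets of $\{G_1,\dots,G_p\}$. By Lemma~\ref{lem:torus_convex}, $G^{sol}$ is the center of $\Aut_0(\Omega)$, so in particular $G$ and $G^{sol}$ commute. Combining, $G$ commutes with $N = N_0 G^{sol}$.

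\textbf{Step 3 (Finite intersection, the main obstacle).} This is the only step that is not essentially formal. Consider the quotient $\pi : \Aut_0(\Omega) \to \Aut_0(\Omega)/G^{sol}$. Since $\pi(N) = \pi(N_0)$ and the almost direct product structure of $G^{ss}$ descends to make $\pi(G) \cap \pi(N_0)$ finite, for any $g \in G \cap N$ the image $\pi(g)$ lies in a fixed finite set $F_0 \subset \pi(G)$. Lifting representatives, pick a finite subset $F \subset G$ with $\pi(F) = F_0$; then $G \cap N \subset G^{sol} \cdot F$. For each $f \in F$ the coset intersection satisfies
\begin{align*}
G \cap G^{sol} f = (G \cap G^{sol}) \cdot f,
\end{align*}
so it suffices to show $G \cap G^{sol}$ is finite. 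But $G$ is a connected semisimple Lie group (with no compact factors), so its center is discrete; since $G^{sol}$ lies in $Z(\Aut_0(\Omega))$, the intersection $G \cap G^{sol}$ is contained in $Z(G)$ and is therefore discrete. As $G^{sol}$ is a compact torus (Lemma~\ref{lem:torus_convex}), a discrete subgroup of it is finite. Thus $G \cap N$ is finite, completing the proof.

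The real content lies in Step 3; Steps 1 and 2 are bookkeeping using what has already been established, especially that $G^{sol}$ is central. The finite-intersection argument is the subtle point because $G \cap N$ need not be \emph{a priori} contained in $G^{sol}$; the quotient-and-lift trick reduces the problem to the purely structural fact that a semisimple connected Lie group meets the ambient center in a discrete (and here, because of compactness of $G^{sol}$, finite) subgroup.
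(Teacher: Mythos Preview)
Your proof is correct. The paper, however, gives no proof at all for this lemma: it simply states the result immediately after noting that $G^{sol}$ is central in $\Aut_0(\Omega)$, evidently regarding it as a direct consequence of the preceding lemmas. So you are supplying details the paper omits.

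That said, your Step~3 is more elaborate than necessary. Once Step~2 is established, the finite-intersection claim follows in one line: since $G$ and $N$ commute, any element of $G\cap N$ commutes with all of $G$, so $G\cap N\subset Z(G)$; as $G$ is connected semisimple, $Z(G)$ is discrete; and since $N=N_0G^{sol}$ is compact (the continuous image of the compact set $N_0\times G^{sol}$), the closed subgroup $G\cap N$ is compact and discrete, hence finite. Your quotient-and-lift argument reaches the same conclusion, but the assertion that ``the almost direct product structure of $G^{ss}$ descends to make $\pi(G)\cap\pi(N_0)$ finite'' is itself a small lemma requiring justification (one must check that quotienting by the finite central kernel $G^{ss}\cap G^{sol}$ does not enlarge the intersection beyond a finite set), and that work is already subsumed by the direct argument above.
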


\subsection{$G$ contains a hyperbolic element}

\begin{lemma}\label{lem:limit_sets_coincide_1} With the notation above, 
\begin{align*}
\Lc(\Omega; G) = \Lc(\Omega; \Aut_0(\Omega))
\end{align*}
and $\Lc(\Omega; G)$ intersects at least two closed complex faces of $\partial\Omega$. In particular, $G$ contains an hyperbolic element.
\end{lemma}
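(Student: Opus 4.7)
The plan is to reduce the problem to showing that $\Lc(\Omega;\Aut_0(\Omega))$ meets at least two closed complex faces, then to use Frankel-type rescaling to manufacture parabolic one-parameter subgroups of $\Aut_0(\Omega)$ whose orbits accumulate on two distinct complex faces. First I would establish $\Lc(\Omega;G) = \Lc(\Omega;\Aut_0(\Omega))$. The inclusion $\subseteq$ is immediate. For $\supseteq$, given $\varphi_n \in \Aut_0(\Omega)$ with $\varphi_n(z_0) \rightarrow x$, write $\varphi_n = g_n n_n$ with $g_n \in G$ and $n_n \in N$; since $N$ is compact, pass to a subsequence with $n_n \rightarrow n_\infty \in N$ and set $w_0 := n_\infty z_0 \in \Omega$. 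Then $K_\Omega(g_n w_0, \varphi_n z_0) = K_\Omega(w_0, n_n z_0) \rightarrow 0$, so Proposition~\ref{prop:zero_dist_est} yields $g_n(w_0) \rightarrow x$, showing $x \in \Lc(\Omega;G)$.

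Next, to show $\Lc(\Omega;\Aut_0(\Omega))$ hits two closed complex faces, I would begin with a hyperbolic $h_0 \in \Aut(\Omega)$, produced from the standing hypothesis on $\Lc(\Omega)$ by Proposition~\ref{prop:const_hyp}. Applying Theorem~\ref{thm:construct_hyp} with input points $x_0 \in H_{h_0}^+ \cap \partial\Omega$ and $y_0 \in H_{h_0}^- \cap \partial\Omega$ yields a second hyperbolic $h_1 \in \Aut(\Omega)$ with $\{H_{h_1}^+, H_{h_1}^-\}$ disjoint from $\{H_{h_0}^+, H_{h_0}^-\}$. The refined rescaling Theorem~\ref{thm:rescaling} applied to each $h_i$ produces a one-parameter subgroup $u_t^{(i)}$ of parabolic automorphisms with $H_{u_t^{(i)}}^+ = H_{h_i}^+$; since $u_t^{(i)}$ is a continuous path through the identity, it lies in $\Aut_0(\Omega)$. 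By Theorem~\ref{thm:wolf_i}, any boundary accumulation point of $\{u_n^{(i)}(z_0)\}_{n \in \Nb}$ lies in $H_{h_i}^+ \cap \partial\Omega$, and boundedness of $\Omega$ guarantees such accumulation points exist. Because $\partial\Omega$ is $C^1$, the complex supporting hyperplane at a smooth point is unique, so $T_z^{\Cb}\partial\Omega = H_{h_i}^+$ for every $z \in H_{h_i}^+ \cap \partial\Omega$; hence $H_{h_0}^+ \cap \partial\Omega$ and $H_{h_1}^+ \cap \partial\Omega$ are two distinct closed complex faces of $\partial\Omega$ both meeting $\Lc(\Omega;\Aut_0(\Omega))$.

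Combining the two steps, $\Lc(\Omega;G)$ meets at least two closed complex faces of $\partial\Omega$, and a final application of Proposition~\ref{prop:const_hyp} to the subgroup $G \leq \Aut(\Omega)$ produces the desired hyperbolic element. The only nontrivial verifications are that the one-parameter groups supplied by Theorem~\ref{thm:rescaling} actually live in $\Aut_0(\Omega)$ (clear from their construction as $u_t = \Phi^{-1} \wh{u}_t \Phi$, continuous in $t$ with $u_0 = \mathrm{id}$) and that distinct attracting hyperplanes yield distinct closed complex faces (which is the uniqueness of the complex supporting hyperplane at a $C^1$ convex boundary point).
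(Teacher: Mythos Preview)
Your proof is correct. The first step (equality of limit sets) matches the paper's argument verbatim. For the second step the paper takes a different route: it picks any $x \in \Lc(\Omega;\Aut_0(\Omega))$ witnessed by a sequence $g_n \in \Aut_0(\Omega)$ with $g_n z \rightarrow x$; if some $g_n$ is hyperbolic it is finished, and otherwise it chooses (via Theorem~\ref{thm:construct_hyp}) a hyperbolic $h \in \Aut(\Omega)$ with $H_h^\pm$ both distinct from $T_x^{\Cb}\partial\Omega$, and uses the conjugates $\varphi_{n,m} = h^m g_n h^{-m} \in \Aut_0(\Omega)$ together with Propositions~\ref{prop:non_hyp_attracting} and~\ref{prop:NS} to push orbits onto $H_h^+$, yielding a second face. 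Your approach bypasses this conjugation/north--south argument entirely by invoking Theorem~\ref{thm:rescaling} to manufacture parabolic one-parameter subgroups of $\Aut_0(\Omega)$ with prescribed attracting hyperplanes; this is arguably cleaner and uses less machinery. One minor simplification: the detour through $h_1$ is unnecessary, since applying Theorem~\ref{thm:rescaling} to $h_0$ and to $h_0^{-1}$ already gives parabolic one-parameter subgroups in $\Aut_0(\Omega)$ with attracting hyperplanes $H_{h_0}^+$ and $H_{h_0}^-$, and these are distinct precisely because $h_0$ is hyperbolic.
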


\begin{proof} We first show that $\Lc(\Omega; G) = \Lc(\Omega; \Aut_0(\Omega))$. Suppose that $x \in \Lc(\Omega; \Aut_0(\Omega))$, then there exists $z \in \Omega$ and a sequence $g_m \in \Aut_0(\Omega)$ such that $g_m z \rightarrow x$. Now we can decompose $g_m = \overline{g}_m n_m$ where $\overline{g}_m \in G$ and $n_m \in N$. Since $N$ is compact, we can pass to a subsequence such that $n_m z \rightarrow w \in \Omega$. Then $\overline{g}_n w \rightarrow x$ by Proposition~\ref{prop:zero_dist_est}. Hence 
\begin{align*}
\Lc(\Omega; G) = \Lc(\Omega; \Aut_0(\Omega)).
\end{align*}

We now argue that $\Lc(\Omega; \Aut_0(\Omega))$, and hence $\Lc(\Omega; G)$, intersects at least two closed complex faces of $\partial\Omega$. Lemma~\ref{lem:non_compact} implies that $\Lc(\Omega; \Aut_0(\Omega))$ is non-empty. So suppose that $x \in \Lc(\Omega; \Aut_0(\Omega))$, then there exists $z \in \Omega$ and a sequence $g_n \in \Aut_0(\Omega)$ such that $g_n z \rightarrow x$. Now if one of the $g_n$ is hyperbolic, then we have nothing to show so assume that each $g_n$ is either elliptic or parabolic. By Theorem~\ref{thm:construct_hyp}, we can find a hyperbolic element $h \in \Aut(\Omega)$ such that 
\begin{align*}
H_h^+, H_h^-, T_{x}^{\Cb} \partial\Omega
\end{align*}
are all distinct. Then consider the elements $\varphi_{n,m} = h^m g_n h^{-m}$. Then Proposition~\ref{prop:non_hyp_attracting} and Proposition~\ref{prop:NS} imply that 
\begin{align*}
\lim_{n,m \rightarrow \infty} d_{\Euc}(\varphi_{n,m} z, H_h^+) = 0.
\end{align*}
But $\varphi_{n,m} \in \Aut_0(\Omega)$ and so we see that $\Lc(\Omega; \Aut_0(\Omega))$  intersects at least two closed complex faces of $\partial\Omega$. 
\end{proof}

\subsection{$G$ has real rank one and finite center}

In this subsection we will show that $G$ is a simple Lie group with real rank one and finite center.

Given $g \in G$, let $C(g)$ denote the centralizer of $g$ in $\Aut(\Omega)$. 

\begin{lemma}\label{lem:finite_centralizers_convex} With the notation above, if $h \in G$ is hyperbolic, then the quotient $C(h) / \{ h^n : n \in \Zb\}$ is compact. \end{lemma}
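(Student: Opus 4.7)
The plan is to prove the quotient is compact by exhibiting, for every $g \in C_G(h)$, an integer $n_g \in \Zb$ such that $h^{n_g} g$ lies in a fixed compact subset of $\Aut(\Omega)$; since $h^{n_g} g$ and $g$ represent the same coset in $C_G(h)/\langle h \rangle$, this forces the quotient to be compact. By the properness of the $\Aut(\Omega)$-action on $\Omega$, it suffices to produce a compact set $K' \subset \Omega$ with $h^{n_g} g \cdot z_0 \in K'$ for all $g \in C_G(h)$.

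To find $n_g$, I would use Corollary~\ref{cor:alm_geod_shadow} to fix an $(1,\kappa)$-almost-geodesic $\sigma \colon \Rb \to \Omega$ with $\sigma(t) \to x_h^{\pm}$ as $t \to \pm\infty$ that is shadowed within Kobayashi--Hausdorff distance $R_0$ by $\{h^n z_0 : n \in \Zb\}$. For $g \in C_G(h)$, $g\sigma$ is again an $(1,\kappa)$-almost-geodesic because $g$ is an isometry of $K_\Omega$. The crucial claim is that
\[
d_{\overline\Omega}\bigl(g\sigma(t), H_h^{\pm} \cap \partial \Omega\bigr) \longrightarrow 0 \quad \text{as } t \to \pm \infty
\]
for every $g \in C_G(h)$. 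Granting this, choose neighborhoods $U \supset H_h^{-} \cap \partial \Omega$ and $V \supset H_h^{+} \cap \partial \Omega$ in $\overline \Omega$ with disjoint closures, and let $K \subset \Omega$ be the compact set produced by Theorem~\ref{thm:visible}. For each $g$ there exist $a_g < b_g$ with $g\sigma(a_g) \in U$ and $g\sigma(b_g) \in V$, so the restriction of $g\sigma$ to $[a_g, b_g]$ must intersect $K$, yielding some $t_g$ with $g\sigma(t_g) \in K$. Choosing $n_g \in \Zb$ with $K_\Omega(h^{n_g} z_0, \sigma(t_g)) \leq R_0$ and using that $g$ is an isometry,
\[
K_\Omega\bigl(h^{n_g} g z_0, g\sigma(t_g)\bigr) = K_\Omega\bigl(g h^{n_g} z_0, g\sigma(t_g)\bigr) = K_\Omega\bigl(h^{n_g} z_0, \sigma(t_g)\bigr) \leq R_0,
\]
so $h^{n_g} g z_0$ lies in the Kobayashi $R_0$-neighborhood of $K$, which is a compact subset of $\Omega$ since $(\Omega, K_\Omega)$ is a proper metric space by Theorem~\ref{thm:barth}.

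The main obstacle is proving the asymptotic claim about $g\sigma$: since $g$ need not extend continuously to $\partial\Omega$, one cannot simply transport the endpoints $x_h^{\pm}$ of $\sigma$. Instead I would exploit the commutation $g h^n = h^n g$ together with Proposition~\ref{prop:finite_dist}. For each $n \in \Zb$ there is a shadowing time $t_n$ with $K_\Omega(h^n z_0, \sigma(t_n)) \leq R_0$, and combining the shadowing with the $(1,\kappa)$-almost-geodesic inequality forces $\{t_n\}$ to be uniformly coarsely dense in $\Rb$. Applying $g$ gives $K_\Omega(h^n g z_0, g\sigma(t_n)) \leq R_0$, and by Theorem~\ref{thm:wolf_i} applied to $g z_0$ the points $h^n g z_0 = g h^n z_0$ converge in Euclidean distance to $H_h^{+}$ as $n \to +\infty$. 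To conclude the claim for arbitrary $s \to +\infty$, take a sequence $s_n \to +\infty$ with $g\sigma(s_n) \to y \in \overline\Omega$; then $y \in \partial\Omega$ because $K_\Omega(g\sigma(s_n), z_0) = K_\Omega(\sigma(s_n), g^{-1} z_0) \to \infty$. Pairing $s_n$ with a nearby shadowing time $t_{n'}$ and passing to subsequences so that $g\sigma(t_{n'}) \to y' \in H_h^{+} \cap \partial\Omega$, the quantity $K_\Omega(g\sigma(s_n), g\sigma(t_{n'})) = K_\Omega(\sigma(s_n), \sigma(t_{n'}))$ remains bounded, so Proposition~\ref{prop:finite_dist} forces $T_y^{\Cb} \partial \Omega = T_{y'}^{\Cb} \partial \Omega = H_h^{+}$ and hence $y \in H_h^{+} \cap \partial\Omega$. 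The same reasoning at $-\infty$ handles $H_h^{-}$.
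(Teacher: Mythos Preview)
Your proof is correct and follows essentially the same strategy as the paper: show that for each $g\in C_G(h)$ the translated almost-geodesic $g\sigma$ still limits to $H_h^{\pm}$ (using the commutation $gh^m=h^mg$ together with Proposition~\ref{prop:finite_dist}), then invoke Theorem~\ref{thm:visible} to trap $g\sigma$ in a compact set and pull back via the shadowing of $\sigma$ by $\{h^n z_0\}$. The only cosmetic differences are that the paper runs the argument sequentially (take $g_n\in C_G(h)$, pass to a subsequence with $g_{n_k}h^{m_k}$ convergent) whereas you argue uniformly with a single compact $K$, and the paper obtains the asymptotic claim for $g\sigma$ in one line by observing $K_\Omega(h^m z_0,\, gh^m z_0)=K_\Omega(z_0,gz_0)$ is constant in $m$, which is a bit slicker than your Wolff--Denjoy plus density argument.
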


\begin{proof}
Consider a sequence $g_n \in C(h)$. We claim that we can find $n_k \rightarrow \infty$ and $m_k \in \Zb$ such that $g_{n_k}h^{m_k}$ converges in $G$. 

By Corollary~\ref{cor:alm_geod_shadow}, there exists an almost-geodesic $\sigma: \Rb \rightarrow \Omega$ such that 
\begin{align*}
R:=K_\Omega^{\Haus}\left( \{ h^m z_0 : m \in \Zb \}, \sigma(\Rb) \right)<\infty
\end{align*}
and
\begin{align*}
\lim_{t \rightarrow \pm \infty} d_{\Euc}\left(\sigma(t), H^{\pm}_{h} \right) = 0.
\end{align*}
Next consider the almost-geodesics $\sigma_n =g_n \sigma$. We claim that 
\begin{align*}
\lim_{t \rightarrow \pm \infty} d_{\Euc}\left(\sigma_n(t), H^{\pm}_{h} \right) = 0.
\end{align*}
By Proposition~\ref{prop:finite_dist} it is enough to show that
\begin{align*}
\limsup_{m \rightarrow \pm \infty} K_\Omega(h^m z_0 , g_n h^m z_0 )< \infty
\end{align*}
which follows from the fact that 
\begin{align*}
K_\Omega(h^m z_0, g_n h^m z_0 )= K_\Omega(h^m z_0 ,h^m g_n z_0 ) = K_\Omega(z_0, g_n z_0).
\end{align*}

Then by Theorem~\ref{thm:visible}, there exists $n_k \rightarrow \infty$ and $T_k \rightarrow \infty$ such that the almost-geodesics $t \rightarrow \sigma_{n_k}(t+ T_k)$ converges locally uniformly to an almost geodesic $\gamma: \Rb \rightarrow \Omega$. Further, there exists some $m_k \in \Zb$ such that 
\begin{align*}
R \geq K_\Omega( h^{m_k} z_0, \sigma(T_k)) = K_\Omega( g_{n_k}h^{m_k} z_0, \sigma_{n_k}(T_k)).
\end{align*}
So we can pass to a subsequence such that $g_{n_k} h^{m_k} z_0$ converges in $\Omega$. Since $\Aut(\Omega)$ acts properly on $\Omega$, we can pass to another subsequence such that $g_{n_k} h^{m_k}$ converges in $G$. Since $g_n$ was an arbitrary sequence in $C(h)$ we then see that $C(h) / \{ h^n : n \in \Zb\}$ is compact.
\end{proof}

\begin{lemma} With the notation above, $G$ has finite center. \end{lemma}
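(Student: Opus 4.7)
The plan is to show that $Z(G)$ is both discrete and contained in a fixed compact subset of $G$, which together force $Z(G)$ to be finite. Discreteness is automatic from the construction of $G$: each simple factor $G_i$ has a simple Lie algebra, so the Lie algebra of $G$ is semisimple, hence its center vanishes, making $Z(G)$ zero-dimensional.

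For the boundedness step, I will extract two hyperbolic elements $h_1,h_2 \in G$ with pairwise disjoint attracting and repelling hyperplanes. By Lemma~\ref{lem:limit_sets_coincide_1}, $G$ contains a hyperbolic element $h_1$. Picking boundary points $x^\pm \in H_{h_1}^\pm \cap \partial\Omega$ and applying Theorem~\ref{thm:construct_hyp} to the list $\{x^+,x^-\}$ starting from $h_1$ produces a hyperbolic $h_2$ in the subgroup of $\Aut(\Omega)$ generated by $\Aut_0(\Omega)$-conjugates of $h_1$, with $\{H_{h_2}^+,H_{h_2}^-\} \cap \{H_{h_1}^+,H_{h_1}^-\} = \emptyset$; since $G$ is normal in $\Aut(\Omega)$, these conjugates lie in $G$, so $h_2 \in G$. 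By Lemma~\ref{lem:finite_centralizers_convex}, each $C_G(h_i)/\langle h_i \rangle$ is compact, so I fix compact sets $K_i \subset C_G(h_i)$ with $C_G(h_i) = K_i \cdot \langle h_i \rangle$. Since $Z(G) \subset C_G(h_1) \cap C_G(h_2)$, every $z \in Z(G)$ admits decompositions $z = k_1 h_1^{m_1} = k_2 h_2^{m_2}$ with $k_i \in K_i$ and $m_i \in \Zb$; comparing these yields
\begin{align*}
h_1^{m_1} h_2^{-m_2} = k_1^{-1} k_2 \in K_1^{-1} K_2,
\end{align*}
a compact subset of $G$ independent of $z$. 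Properness of the $G$-action on $\Omega$ then provides a uniform bound $K_\Omega(h_1^{m_1} h_2^{-m_2} z_0, z_0) \leq R$ for some $R > 0$.

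The crux, and the main obstacle, is to deduce that $|m_1|$ and $|m_2|$ are uniformly bounded over $z \in Z(G)$. I argue by contradiction along a hypothetical blowup sequence. If only one of $|m_1|, |m_2|$ tends to infinity, then the corresponding iterate $h_i^{m_i} z_0$ escapes to $\partial\Omega$ in the Kobayashi metric while the other factor moves $z_0$ by a bounded amount, forcing $K_\Omega(h_1^{m_1} h_2^{-m_2} z_0, z_0) \to \infty$ by properness of the Kobayashi metric (Theorem~\ref{thm:barth}), a contradiction. If both $|m_1|, |m_2| \to \infty$, I use Proposition~\ref{prop:NS} together with the observation that the four closed complex faces $H_{h_i}^\pm \cap \partial\Omega$ are pairwise disjoint: distinct closed complex faces of a convex $C^1$ domain are disjoint, because any two points in a common face share the unique $C^1$ real tangent hyperplane and hence the same complex tangent hyperplane. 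Fixing pairwise disjoint neighborhoods in $\overline{\Omega}$ of these four faces, then for each sign pattern of $m_1, m_2 \to \pm\infty$ the point $h_2^{-m_2} z_0$ eventually lies in the neighborhood of the appropriate face of $h_2$, which is disjoint from the neighborhood of the face repelling $h_1^{m_1}$; Proposition~\ref{prop:NS} then drives $h_1^{m_1} h_2^{-m_2} z_0$ into arbitrarily small neighborhoods of the face of $h_1$ attracting that sign, forcing $K_\Omega(h_1^{m_1} h_2^{-m_2} z_0, z_0) \to \infty$ by properness, another contradiction.

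Once $|m_1|, |m_2| \leq M$ is established, $Z(G)$ is contained in the compact set $K_1 \cdot \{h_1^m : |m| \leq M\} \subset G$. A discrete subgroup of a Lie group contained in a compact subset is necessarily finite, which completes the proof.
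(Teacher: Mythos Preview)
Your proof is correct. The paper's own argument is a single sentence: $Z(G)$ is discrete because $G$ is semisimple, and then Lemma~\ref{lem:finite_centralizers_convex} is invoked with a single hyperbolic $h\in G$ to conclude. The implicit step the paper leaves to the reader is that a discrete central subgroup of $C_G(h)$ is finite once $C_G(h)/\langle h\rangle$ is compact; one way to justify this is to note that no nontrivial power $h^n$ can be central (otherwise $C_G(h^n)=G$ and Lemma~\ref{lem:finite_centralizers_convex} would force $G/\langle h^n\rangle$ compact, contradicting that $\gL$ has no compact factors), and then argue that the resulting injection $Z(G)\hookrightarrow C_G(h)/\langle h\rangle$ has discrete, hence finite, image. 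Your route is genuinely different and more dynamical: you manufacture a second hyperbolic $h_2\in G$ with $\{H^\pm_{h_2}\}\cap\{H^\pm_{h_1}\}=\emptyset$ via Theorem~\ref{thm:construct_hyp}, express each $z\in Z(G)$ simultaneously as $k_1 h_1^{m_1}=k_2 h_2^{m_2}$, and then use the north/south dynamics of Proposition~\ref{prop:NS} on the four pairwise disjoint closed complex faces to show that unbounded exponents would push $h_1^{m_1}h_2^{-m_2}z_0$ to $\partial\Omega$, contradicting the compactness of $K_1^{-1}K_2$. This traps $Z(G)$ inside a fixed compact subset of $G$ without ever analyzing the quotient $C_G(h)/\langle h\rangle$. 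The paper's approach is shorter but leaves a structural step to the reader; yours is longer but entirely self-contained within the geometric toolkit already built up.
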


\begin{proof} Since $G$ is semisimple, the center of $G$ is discrete. So this follows immediately from Lemma~\ref{lem:finite_centralizers_convex}. \end{proof}

\begin{definition} An element $g \in G$ is \emph{$\Lc$-hyperbolic} (respectively \emph{$\Lc$-axial}, \emph{$\Lc$-elliptic}, \emph{$\Lc$-unipotent}) if $g$ is hyperbolic (respectively axial, elliptic, unipotent)  in $G$ in the Lie group sense (see Section~\ref{sec:basic_properties}).
\end{definition}

Fix a norm on $\gL$ and let $\norm{\cdot}$ be the associated operator norm on $\SL(\gL)$. 

\begin{lemma}\label{lem:dist_norm_est} With the notation above, if $z_0 \in \Omega$, then there exists some $\alpha \geq 1$ and $\beta \geq 0$ such that 
\begin{align*}
K_\Omega( g(z_0), z_0) \leq \alpha \log \norm{\Ad(g)} + \beta
\end{align*}
for all $g \in G$. 
\end{lemma}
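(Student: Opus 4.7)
The plan is to reduce the estimate to elements of a maximal $\Rb$-split torus $A \leq G$ via the Cartan decomposition, and then use the fact that $G$ acts by isometries on $(\Omega,K_\Omega)$.

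First I would invoke that $G$ is a connected semisimple Lie group with finite center all of whose simple factors are non-compact, so it admits a Cartan decomposition $G = KAK$, where $K$ is a maximal compact subgroup and $A = \exp(\aL)$ for a maximal abelian $\aL \subset \pL$. Since $G$ acts by biholomorphisms of $\Omega$, it acts by $K_\Omega$-isometries. Because $K$ is compact and $K \cdot z_0$ is a compact subset of $\Omega$, the quantity $D := \sup_{k \in K} K_\Omega(k z_0, z_0)$ is finite, and writing $g = k_1 a k_2$ and applying the triangle inequality together with $K$-invariance gives
\begin{align*}
K_\Omega(g z_0, z_0) \leq K_\Omega(a z_0, z_0) + 2D.
\end{align*}
On the other hand, $\Ad(K)$ is compact in $\SL(\gL)$, so there is a constant $C \geq 1$ with
\begin{align*}
C^{-1} \norm{\Ad(a)} \leq \norm{\Ad(g)} \leq C \norm{\Ad(a)}.
\end{align*}
Thus it suffices to bound $K_\Omega(a z_0, z_0)$ in terms of $\log \norm{\Ad(a)}$.

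Next I would analyze the torus part. For $a = \exp(H)$ with $H$ in the closed positive Weyl chamber of $\aL$, the operator $\Ad(a) = e^{\ad H}$ is diagonalizable on $\gL$ with eigenvalues $e^{\alpha(H)}$ as $\alpha$ ranges over the restricted roots together with $0$, so
\begin{align*}
\log \norm{\Ad(a)} = \max_{\alpha} \alpha(H) \geq 0.
\end{align*}
Here I would use that $G$ has no compact factors: the restricted roots then span $\aL^*$, so $H \mapsto \max_\alpha \alpha(H)$ is a continuous positively homogeneous function on $\aL$ which vanishes only at $0$. By compactness of the unit sphere in the closed Weyl chamber there is a constant $c > 0$ with
\begin{align*}
\max_\alpha \alpha(H) \geq c \norm{H}
\end{align*}
for every $H$ in the closed positive Weyl chamber.

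To bound $K_\Omega(\exp(H) z_0, z_0)$ from above, for each unit vector $H_0 \in \aL$ the map $t \mapsto \exp(tH_0)$ is a one-parameter subgroup acting by isometries, so subadditivity gives
\begin{align*}
K_\Omega(\exp(tH_0) z_0, z_0) \leq (\abs{t}+1) \max\{K_\Omega(\exp(H_0) z_0, z_0), K_\Omega(\exp(-H_0) z_0, z_0)\}.
\end{align*}
Since the unit sphere in $\aL$ is compact and $H_0 \mapsto K_\Omega(\exp(\pm H_0)z_0, z_0)$ is continuous, there is a constant $M$ so that $K_\Omega(\exp(H) z_0, z_0) \leq M(\norm{H}+1)$ for all $H \in \aL$. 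Combining this with $\norm{H} \leq c^{-1} \log\norm{\Ad(a)}$ and the earlier reductions yields constants $\alpha \geq 1$, $\beta \geq 0$ with the desired inequality.

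I do not expect a serious obstacle: every ingredient is standard Lie-theoretic input applied to the isometric $G$-action on $(\Omega, K_\Omega)$. The only point that needs the hypotheses set up earlier in the section is that $G$ has no compact factors and finite center, which is exactly what guarantees both the $KAK$ decomposition and the spanning property of restricted roots needed in the lower bound $\max_\alpha \alpha(H) \geq c \norm{H}$.
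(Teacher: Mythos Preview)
Your proof is correct and follows the natural strategy: reduce to the Cartan subgroup via the $KAK$ decomposition, compare $\log\norm{\Ad(\exp H)}$ with $\norm{H}$ using that restricted roots span $\aL^*$ (this is where ``no compact factors'' enters), and bound $K_\Omega(\exp(H)z_0,z_0)$ linearly in $\norm{H}$ by subadditivity along one-parameter subgroups. The paper itself does not spell out the argument here but refers to the identical Lemma~5.11 in~\cite{Z2017b}, and that proof is exactly this Lie-theoretic reduction via $G=KAK$; so your approach coincides with the intended one.

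One cosmetic remark: your equality $\log\norm{\Ad(a)}=\max_\alpha \alpha(H)$ holds on the nose only when the fixed norm on $\gL$ comes from an $\Ad(K)$-invariant inner product; for the arbitrary norm fixed in the paper the two quantities differ by a bounded additive constant. This does not affect the argument, since you only need the inequality $\log\norm{\Ad(a)}\geq c'\norm{H}-C$, and the student should just phrase that step as an inequality rather than an equality.
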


\begin{proof}
This is identical to the proof of Lemma 5.11 in~\cite{Z2017b}.
\end{proof}

\begin{lemma}\label{lem:double_hyp} With the notation above, there exists an element $g \in G$ which is both hyperbolic and $\Lc$-hyperbolic. \end{lemma}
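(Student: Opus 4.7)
The plan is to take any hyperbolic element $h \in G$ (which exists by Lemma~\ref{lem:limit_sets_coincide_1}) and show that the hyperbolic component $h_h$ in the real Jordan decomposition $h = h_e h_h h_u$ (inside the semisimple Lie group $G$) is simultaneously $\Lc$-hyperbolic and biholomorphically hyperbolic.

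First I would show $h_h \neq e$, which is the statement that $h_h$ is $\Lc$-hyperbolic. Combine the lower bound $K_\Omega(h^n z_0, z_0) \geq \tfrac{1}{\alpha}n - \beta$ from Proposition~\ref{prop:hyperbolic_QI} with the upper bound $K_\Omega(h^n z_0, z_0) \leq \alpha \log\|\Ad(h)^n\|+\beta$ from Lemma~\ref{lem:dist_norm_est} to conclude that $\|\Ad(h)^n\|$ grows exponentially in $n$. Since $\Ad(h_e)$, $\Ad(h_h)$, $\Ad(h_u)$ pairwise commute, $\{\Ad(h_e)^n\}$ is bounded, and $\{\Ad(h_u)^n\}$ grows only polynomially, exponential growth of $\|\Ad(h)^n\|$ forces $\Ad(h_h)$ to have an eigenvalue different from $1$, i.e.\ $h_h \neq e$.

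Next I would show $h_h$ is biholomorphically hyperbolic. Since $h_e, h_h, h_u$ mutually commute, $h_h \in C_G(h)$, and by Lemma~\ref{lem:finite_centralizers_convex} we may write $h_h^n = h^{m_n} c_n$ with $c_n$ in a fixed compact subset of $C_G(h)$. Commutativity of the Jordan factors gives $h^{-n}h_h^n = h_e^{-n}h_u^{-n}$, whose $\Ad$-norm grows at most polynomially; matching this against $h^{m_n - n} c_n$, whose $\Ad$-norm grows exponentially if $|m_n - n|$ is unbounded, forces $m_n = n + O(1)$, so $m_n \to +\infty$. Then $h_h^n z_0 = h^{m_n}(c_n z_0)$ with $c_n z_0$ in a compact set, so $K_\Omega(h_h^n z_0, h^{m_n} z_0) = K_\Omega(z_0, c_n z_0)$ is bounded. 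Passing to a subsequence with $h_h^n z_0 \to p$ and $h^{m_n} z_0 \to q$: both lie in $\partial\Omega$ since $K_\Omega(h^{m_n} z_0, z_0) \to \infty$, and Proposition~\ref{prop:finite_dist} gives $T_p^{\Cb}\partial\Omega = T_q^{\Cb}\partial\Omega$ (the case $p = q$ being trivial). Theorem~\ref{thm:normal_line_shadowing} keeps $h^{m_n} z_0$ at bounded $K_\Omega$-distance from a point on the normal line at $x_h^+$ converging to $x_h^+$ (as $m_n \to +\infty$), so a second application of Proposition~\ref{prop:finite_dist} yields $T_q^{\Cb}\partial\Omega = H_h^+$. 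Hence every subsequential boundary limit of $h_h^n z_0$ has tangent hyperplane $H_h^+$; by Theorem~\ref{thm:wolf_i} this forces $H_{h_h}^+ = H_h^+$. The same argument applied to $h^{-1}$, whose Jordan-hyperbolic part is $h_h^{-1}$, gives $H_{h_h^{-1}}^+ = H_h^-$.

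To conclude: since orbits of $h_h$ escape to $\partial\Omega$, $h_h$ is not elliptic; and $H_{h_h}^+ = H_h^+ \neq H_h^- = H_{h_h^{-1}}^+$ rules out parabolic. Therefore $h_h$ is biholomorphically hyperbolic, and combined with the first step it is also $\Lc$-hyperbolic. I expect the main obstacle to be the second step: carefully using the normal-form $h_h^n = h^{m_n}c_n$ to translate between the abstract Lie-theoretic Jordan decomposition in $G$ and the biholomorphic dynamics on $\Omega$, in particular extracting $m_n\to+\infty$ from a comparison of $\Ad$-growth rates and identifying the attracting hyperplanes of $h_h$ with those of $h$ via Proposition~\ref{prop:finite_dist} and Theorem~\ref{thm:normal_line_shadowing}.
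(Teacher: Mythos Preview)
Your argument is correct and reaches the same conclusion as the paper, but the second step takes a genuinely different route. One small inaccuracy: the comparison of $\Ad$-growth rates only yields $|m_n - n| = O(\log n)$, not $O(1)$, since polynomial growth of $\|\Ad(h_e h_u)^{-n}\|$ matched against exponential growth $\lambda^{|m_n-n|}$ gives $|m_n-n| \lesssim \log n$. This does not matter, since all you need is $m_n \to +\infty$, which still follows.

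The paper's proof differs in how it establishes that the orbit of $h_h$ stays close to that of $h$. Rather than invoking Lemma~\ref{lem:finite_centralizers_convex} to write $h_h^n = h^{m_n}c_n$ and then arguing about $m_n$, the paper shows directly that $h_eh_u$ is \emph{elliptic} as an automorphism: it is not hyperbolic by the same growth-rate bound you use, and it is not parabolic because it commutes with $h$, so Proposition~\ref{prop:finite_dist} forces $(h_eh_u)^m h^n z_0 \to H_h^{\pm}$ as $n\to\pm\infty$ for every $m$, contradicting Proposition~\ref{prop:non_hyp_attracting}. Once $h_eh_u$ is elliptic, one has immediately
\[
\sup_{n\in\Zb} K_\Omega(h_h^n z_0, h^n z_0) = \sup_{n\in\Zb} K_\Omega(z_0, (h_eh_u)^n z_0) < \infty,
\]
and Proposition~\ref{prop:finite_dist} finishes the argument. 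So the paper trades your appeal to the centralizer lemma for a short dynamical argument ruling out the parabolic case; your route avoids that case analysis at the cost of importing Lemma~\ref{lem:finite_centralizers_convex} and the growth comparison for $m_n$. Both are clean, and yours has the mild advantage of not needing to classify $h_eh_u$ at all.
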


\begin{proof}
By Lemma~\ref{lem:limit_sets_coincide_1} there exists some $g \in G$ which is hyperbolic. Then by Proposition~\ref{prop:hyperbolic_QI}
\begin{align*}
\lim_{n \rightarrow \infty} \frac{K_\Omega(g^n(z), z) }{n} >0
\end{align*}
for all $z \in \Omega$. So by Lemma~\ref{lem:dist_norm_est} 
\begin{align}
\label{eq:ineq_norm_growth}
\liminf_{n \rightarrow \infty} \frac{\log \norm{\Ad(g)^n}}{n}  >0.
\end{align}
Using the Jordan decomposition, see Theorem~\ref{thm:jordan_decomp}, we can write $g = khu$ where $k$ is $\Lc$-elliptic, $h$ is $\Lc$-hyperbolic, $u$ is $\Lc$-unipotent, and $k,h, u$ commute. The inequality in~\eqref{eq:ineq_norm_growth} implies that $\Ad(h) \neq 1$. 

Now fix some $z_0 \in \Omega$. We claim that $ku$ is elliptic (as an element of $\Aut(\Omega)$). Since $\Ad(u)$ is unipotent and $\Ad(k)$ is elliptic we have
\begin{align*}
\lim_{n \rightarrow \infty} \frac{\log \norm{\Ad(ku)^n}}{n}  =0.
\end{align*}
So by Proposition~\ref{prop:hyperbolic_QI} and Lemma~\ref{lem:dist_norm_est}, we see that $ku$ is not hyperbolic. So if $ku$ were not elliptic, then $ku$ would be parabolic. But since $ku$ commutes with $g$, Proposition~\ref{prop:finite_dist} implies that
\begin{align*}
\lim_{n \rightarrow \pm \infty} d_{\Euc}((ku)^m g^n(z_0), H^\pm_g) =0
\end{align*}
for any $m \in \Nb$. So $ku$ cannot be parabolic by Proposition~\ref{prop:non_hyp_attracting}.

Now since $ku$ is elliptic, the set $\{ (ku)^nz_0 : n \in \Zb\}$ is relatively compact in $\Omega$. So 
\begin{align*}
\sup_{n \in \Zb} K_\Omega(h^n(z_0),g^n(z_0)) = \sup_{n \in \Zb}  K_\Omega(z_0,(ku)^n(z_0)) < \infty.
\end{align*}
So by Proposition~\ref{prop:finite_dist} 
\begin{align*}
\lim_{n \rightarrow \pm \infty} d_{\Euc}(h^n(z_0), H^\pm_g) =0.
\end{align*}
Thus $h$ is hyperbolic and $\Lc$-hyperbolic. 
\end{proof}

\begin{lemma} With the notation above, $G$ is a simple Lie group of non-compact type and has real rank one. \end{lemma}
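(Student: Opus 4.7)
The plan is to first establish that $G$ has real rank one by combining Lemma~\ref{lem:double_hyp} with Lemma~\ref{lem:finite_centralizers_convex}, and then deduce simplicity from the fact that $G$ has no compact factors.

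For the real rank: by Lemma~\ref{lem:double_hyp} we may choose $h \in G$ that is simultaneously hyperbolic (as an automorphism of $\Omega$) and $\Lc$-hyperbolic. By the standard structure theory recalled in Section~\ref{sec:basic_properties} (every $\Lc$-hyperbolic element of $G$ is conjugate into $\exp(\aL)$ for some maximal abelian $\aL \subset \pL$), after replacing $h$ by a suitable conjugate $ghg^{-1}$ we may assume $h \in A$, where $A$ is a maximal $\Rb$-split torus of $G$ with $\dim A = \operatorname{rank}_{\Rb}(G)$. Conjugating does not affect the conclusion of Lemma~\ref{lem:finite_centralizers_convex} since $C_G(ghg^{-1})/\langle ghg^{-1}\rangle$ is homeomorphic to $C_G(h)/\langle h\rangle$. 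Because $A$ is abelian, $A \subseteq C_G(h)$, and $\langle h\rangle \subset A$, so the image of $A$ in the compact quotient $C_G(h)/\langle h\rangle$ is exactly $A/\langle h\rangle$, which must therefore be compact. On the other hand $A \cong \Rb^{r}$ with $r = \operatorname{rank}_{\Rb}(G)$, and $\langle h\rangle$ is an infinite cyclic discrete subgroup of $A$ (infinite and nontrivial because $\Ad(h)$ is diagonalizable with at least one eigenvalue different from $1$). Hence $A/\langle h\rangle \cong \mathbb{T} \times \Rb^{r-1}$, which is compact precisely when $r = 1$. This forces $\operatorname{rank}_{\Rb}(G) = 1$.

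For simplicity: write $G$ as an almost direct product $G = G_1 \cdots G_p$ of its simple normal subgroups, as recorded in Section~\ref{sec:basic_properties}. By the very construction of $G$, none of the factors $G_i$ are compact, so each $G_i$ is a non-compact simple Lie group and satisfies $\operatorname{rank}_{\Rb}(G_i) \geq 1$. Real rank is additive under almost direct products because a maximal $\Rb$-split torus of $G$ is the almost direct product of maximal $\Rb$-split tori of the factors, so
\begin{equation*}
1 = \operatorname{rank}_{\Rb}(G) = \sum_{i=1}^{p} \operatorname{rank}_{\Rb}(G_i) \geq p,
\end{equation*}
which forces $p=1$. Thus $G$ is simple.

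The main subtlety, rather than a genuine obstacle, lies in invoking two standard Lie-theoretic facts: that every $\Lc$-hyperbolic element can be conjugated into a fixed maximal $\Rb$-split torus, and that real rank adds over almost direct products. Both are collected in the appendix, so the essential geometric content of the lemma is already packaged in Lemma~\ref{lem:finite_centralizers_convex} and Lemma~\ref{lem:double_hyp}; the remaining work is elementary bookkeeping with the structure theory of semisimple Lie groups.
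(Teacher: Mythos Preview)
Your argument is essentially the paper's: pick $h$ both hyperbolic and $\Lc$-hyperbolic, embed it in (a conjugate of) a maximal Cartan subgroup, and use compactness of $C_G(h)/\langle h\rangle$ to force the rank to be one; then simplicity follows from additivity of real rank over the non-compact simple factors. One small imprecision: the appendix (Proposition~\ref{prop:cartan_conj}) only guarantees that $h$ is conjugate into $Z(G)A$, not into $A$ itself---for instance $h=-\operatorname{diag}(2,1/2)$ in $\SL_2(\Rb)$ is $\Lc$-hyperbolic but not conjugate into the positive diagonal subgroup---so you should run the compactness argument with $Z(G)A$ in place of $A$, exactly as the paper does; since $Z(G)$ is finite this changes nothing in the conclusion.
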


\begin{proof} Pick an element $h \in G$ which is hyperbolic and $\Lc$-hyperbolic. By Proposition~\ref{prop:cartan_conj}, there exists a Cartan subgroup $A \leq G$ such that $h \in Z(G)A$. Then $Z(G)A \leq C(h)$ and so the quotient $Z(G)A / \{ h^n : n \in \Zb\}$ is compact. Since $A$ is isomorphic to $\Rb^r$ where $r = { \rm rank}_{\Rb}(G)$, this implies that $r=1$.\end{proof}

\subsection{The automorphism group has finitely many components}

In this section we show that $\Aut_0(\Omega)$ has finite index in $\Aut(\Omega)$. 

Since $G$ is a normal subgroup in $\Aut(\Omega)$, associated to every $g \in \Aut(\Omega)$ is an element $\tau(g) \in { \rm Aut}(G)$ defined by 
\begin{align*}
\tau(g)( h) = ghg^{-1}.
\end{align*}
Next let ${ \rm Inn}(G)$ denote the \emph{inner automorphisms of $G$}, that is the automorphisms of the form $h \rightarrow ghg^{-1}$ where $g \in G$. Then let ${\rm Out}(G) = \Aut(G)/{\rm Inn}(G)$. Finally define $[\tau]:\Aut(\Omega) \rightarrow {\rm Out}(G)$ by letting $[\tau](g)$ denote the equivalence class of $\tau(g)$. 

Since $G$ is semisimple, ${ \rm Out }(G)$ is finite (see for instance~\cite[Chapter X]{H2001}). So to prove that $\Aut_0(\Omega)$ has finite index in $\Aut(\Omega)$, it is enough to prove the following.

\begin{lemma}\label{lem:finite_comp}
With the notation above, $\Aut_0(\Omega)$ has finite index in $\ker [\tau]$. In particular, $\Aut_0(\Omega)$ has finite index in $\Aut(\Omega)$.
\end{lemma}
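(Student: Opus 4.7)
The plan is to pin down $\ker[\tau]$ concretely and show that it exceeds $\Aut_0(\Omega)$ by at most a finite factor coming from a compact centralizer. First I would verify that
\[
\ker[\tau] = G \cdot Z, \qquad Z := Z_{\Aut(\Omega)}(G),
\]
since $\tau(g) \in \mathrm{Inn}(G)$ means $\tau(g) = \tau(g_0)$ for some $g_0 \in G$, equivalently $g_0^{-1}g$ centralizes $G$. The almost-direct-product $\Aut_0(\Omega) = GN$, together with the fact that $N$ centralizes $G$, then gives $\Aut_0(\Omega) \cap Z = Z(G) N$, and this is open in $Z$ because $\Aut_0(\Omega)$ is open in $\Aut(\Omega)$. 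Using the surjection $Z \twoheadrightarrow \ker[\tau]/\Aut_0(\Omega)$ with kernel $Z(G)N$, the lemma reduces to showing that $Z$ is compact.

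The main step is the compactness of $Z$. I would pick a hyperbolic element $h \in G$ (available by Lemma~\ref{lem:limit_sets_coincide_1}); since every $z \in Z$ commutes with $h$, we have $Z \leq C_{\Aut(\Omega)}(h)$. A close inspection of the proof of Lemma~\ref{lem:finite_centralizers_convex} shows that it uses only the invariance $K_\Omega(h^m z_0, g h^m z_0) = K_\Omega(z_0, g z_0)$ valid for any $g$ commuting with $h$, Theorem~\ref{thm:visible}, and the properness of the $\Aut(\Omega)$-action on $\Omega$. Rerunning that argument with sequences in $C_{\Aut(\Omega)}(h)$ in place of $C_G(h)$ will give that $C_{\Aut(\Omega)}(h) / \{h^n : n \in \Zb\}$ is compact. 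Since every element of $Z$ commutes with each $h^n$, the product $Z \cdot \{h^n\}$ is a subgroup of $C_{\Aut(\Omega)}(h)$, and the image of $Z$ in the compact quotient, namely $Z/(Z \cap \{h^n\})$, is a closed subgroup and so compact.

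To conclude that $Z$ itself is compact I only need $Z \cap \{h^n\} = \{e\}$: any nonzero power $h^k \in Z$ would centralize $G$ and so lie in the finite group $Z(G)$, but then $h^{k|Z(G)|} = e$, contradicting the fact that a hyperbolic automorphism has infinite order. So $Z$ is compact, has finitely many components, and $Z/Z(G)N$ is finite, which is what was needed. The ``in particular'' statement then follows at once since $[\tau]$ factors through the finite group $\mathrm{Out}(G)$. The only point that requires genuine verification is the extension of Lemma~\ref{lem:finite_centralizers_convex} from $G$ to $\Aut(\Omega)$, but the argument there uses only commutation with $h$ and properness of the $\Aut(\Omega)$-action, so no new ingredient is needed.
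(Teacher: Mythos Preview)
Your overall strategy matches the paper's: both pass from an arbitrary $g_n\in\ker[\tau]$ to an element centralizing all of $G$ (your group $Z$), and both then invoke Lemma~\ref{lem:finite_centralizers_convex}, correctly noting that its proof goes through verbatim for $C_{\Aut(\Omega)}(h)$ in place of $C_G(h)$.

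There is, however, a real gap in your final step. You assert that the image of $Z$ in the compact quotient $C_{\Aut(\Omega)}(h)/\{h^n\}$ is a \emph{closed} subgroup and deduce that $Z$ is compact. But $Z$ closed together with $Z\cap\{h^n\}=\{e\}$ does not force $Z\{h^n\}$ to be closed: already with $C=\Rb$, $\{h^n\}=\Zb$ and $Z=\sqrt{2}\,\Zb$ one has $Z$ closed, $Z\cap\Zb=\{0\}$, $C/\Zb$ compact, yet the image of $Z$ in $\Rb/\Zb$ is dense and $Z$ is not compact. So this step needs justification.

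The paper sidesteps the issue by a small but decisive change of target: rather than proving $Z$ compact it proves $\ker[\tau]/G$ compact. Since $h^{m_k}\in G$, the subconvergence $z_{n_k}h^{m_k}\to c$ immediately gives convergence of cosets $z_{n_k}G\to cG$ in $\Aut(\Omega)/G$; as $\ker[\tau]$ is closed and every coset of $G$ in $\ker[\tau]$ is represented by an element of $Z$, this yields compactness of $\ker[\tau]/G$ with no closedness problem. Openness of $\Aut_0(\Omega)/G$ then makes $\ker[\tau]/\Aut_0(\Omega)$ finite. Your argument becomes complete if you adopt this reformulation; alternatively you can patch your version by showing the exponents $m_n$ stay bounded: since $z_n\in Z$, the convergence $z_nh^{m_n}\to c$ forces $h^{m_n}gh^{-m_n}\to cgc^{-1}$ for every $g\in G$, which is impossible for $|m_n|\to\infty$ because $h$ is $\Lc$-axial by Lemma~\ref{lem:Fhyp_implies_Ahyp} and hence $\Ad(h^{m_n})$ diverges in $\GL(\gL)$.
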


\begin{proof} It is enough to show that the quotient $\ker [\tau] / G$ is compact. So suppose that $g_n \in \ker [\tau]$ is a sequence. We claim that there exists $n_k \rightarrow \infty$ and $h_k \in G$ such that $g_{n_k} h_k$ converges in $\Aut(\Omega)$. Now for each $n \in \Nb$ there exists some $\overline{g}_n \in G$ such that $\tau(g_n) = \tau(\overline{g}_n)$. Then by replacing each $g_n$ with $g_n\overline{g}_n^{-1}$ we can assume that 
\begin{align*}
g_n g g_n^{-1} = g
\end{align*} 
for every $g \in G$ and $n \in \Nb$. Now fix a hyperbolic element $h \in G$. Then $g_n \in C(h)$ and so by Lemma~\ref{lem:finite_centralizers_convex} there exists $n_k \rightarrow \infty$ and $m_k \in \Zb$ such that $g_{n_k} h^{m_k}$ converges in $\Aut(\Omega)$. Since $g_n$ was an arbitrary sequence in $\ker [\tau]$ we see that $\ker [\tau] / G$ is compact. Hence $\Aut_0(\Omega)$ has finite index in $\ker [\tau]$.

\end{proof} 

\subsection{The limit set is a sphere}

In this subsection we show that $\Lc(\Omega)$ is homeomorphic to a sphere. We begin by observing that $\Lc(\Omega) = \Lc(\Omega; G)$.

\begin{lemma}\label{lem:limit_sets_coincide} With the notation above, $\Lc(\Omega) = \Lc(\Omega; G)$. 
\end{lemma}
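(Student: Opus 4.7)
The inclusion $\Lc(\Omega; G) \subseteq \Lc(\Omega)$ is immediate from the definitions since $G \leq \Aut(\Omega)$. For the reverse inclusion, my plan is to combine three structural facts already established in this section: Lemma~\ref{lem:finite_comp} giving that $\Aut_0(\Omega)$ has finite index in $\Aut(\Omega)$, the almost direct product decomposition $\Aut_0(\Omega) = G \cdot N$ with $N$ compact, and the normality of $G$ in $\Aut(\Omega)$.

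I would start by fixing coset representatives $\varphi_1,\dots,\varphi_k \in \Aut(\Omega)$ for $\Aut(\Omega)/\Aut_0(\Omega)$. Given $x \in \Lc(\Omega)$, pick $z \in \Omega$ and a sequence $\psi_n \in \Aut(\Omega)$ with $\psi_n(z) \to x$. After passing to a subsequence, write $\psi_n = \varphi_i g_n$ for a fixed index $i$ and elements $g_n \in \Aut_0(\Omega)$. Using the almost direct product decomposition, write $g_n = \overline{g}_n n_n$ with $\overline{g}_n \in G$ and $n_n \in N$; compactness of $N$ lets me pass to a further subsequence with $n_n \to n \in N$, and I set $z' := n(z) \in \Omega$.

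The key step is to replace the moving point $n_n(z)$ with the fixed point $z'$. By the distance-decreasing property of the Kobayashi metric applied to $\varphi_i \overline{g}_n \in \Aut(\Omega)$,
\[
K_\Omega\bigl(\psi_n(z),\, \varphi_i \overline{g}_n(z')\bigr) = K_\Omega\bigl(\varphi_i \overline{g}_n n_n(z),\, \varphi_i \overline{g}_n n(z)\bigr) = K_\Omega(n_n(z), n(z)) \longrightarrow 0,
\]
so Proposition~\ref{prop:zero_dist_est} gives $\varphi_i \overline{g}_n(z') \to x$. Now invoke normality of $G$ in $\Aut(\Omega)$: set $h_n := \varphi_i \overline{g}_n \varphi_i^{-1} \in G$, so that $\varphi_i \overline{g}_n = h_n \varphi_i$, and conclude $h_n\bigl(\varphi_i(z')\bigr) \to x$ with $h_n \in G$ and $\varphi_i(z') \in \Omega$. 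Hence $x \in \Lc(\Omega; G)$.

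I do not anticipate a real obstacle here, since every ingredient is already available. The one subtle point worth highlighting is that a naive approach would seem to require $\varphi_i$ to extend continuously to the boundary in order to move the limit $x$ by $\varphi_i^{-1}$, which is exactly the kind of boundary extension the introduction warns us is unavailable. The normality trick $\varphi_i \overline{g}_n = h_n \varphi_i$ lets me slide the fixed automorphism $\varphi_i$ past the varying $G$-element without ever acting on a boundary point, bypassing the issue entirely.
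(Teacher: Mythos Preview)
Your argument is correct. The paper's proof is a bit leaner in two respects, and the comparison is instructive.

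First, the paper does not redo the $\Aut_0(\Omega)\to G$ reduction here: Lemma~\ref{lem:limit_sets_coincide_1} already established $\Lc(\Omega;G)=\Lc(\Omega;\Aut_0(\Omega))$, so the paper simply cites it and only proves $\Lc(\Omega)=\Lc(\Omega;\Aut_0(\Omega))$. Your compactness-of-$N$ argument is essentially a reprise of that lemma's proof.

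Second, and more interestingly, the paper uses \emph{right} coset representatives rather than left ones: after passing to a subsequence it writes $\varphi_n=\phi_n g$ with $\phi_n\in\Aut_0(\Omega)$ and a fixed $g\in\Aut(\Omega)$, whence $\phi_n(gz)=\varphi_n(z)\to x$ immediately. This sidesteps entirely the normality trick you introduce to slide $\varphi_i$ past $\overline{g}_n$; with the fixed representative on the right, the fixed automorphism acts on the base point $z$ rather than on the limit, so the boundary-extension issue you correctly flag never even threatens to arise. Your use of normality of $G$ is perfectly valid, but it is doing work that a better choice of coset decomposition renders unnecessary.
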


\begin{proof} By Lemma~\ref{lem:limit_sets_coincide_1}, it is enough to show that $\Lc(\Omega) = \Lc(\Omega; \Aut_0(\Omega))$. Suppose that $x \in \Lc(\Omega)$. Then there exists $z \in \Omega$ and $\varphi_n \in \Aut(\Omega)$ such that $\varphi_n(z) \rightarrow x$. Since $\Aut_0(\Omega)$ has finite index in $\Aut(\Omega)$ we can pass to a subsequence and suppose that $\varphi_n = \phi_n g$ for some $\phi_n \in \Aut_0(\Omega)$ and $g \in \Aut(\Omega)$. Then $\varphi_n(gz) \rightarrow x$ and so $x \in \Lc(\Omega; \Aut_0(\Omega))$. 
\end{proof}

Fix an element $h \in G$ which is both hyperbolic and $\Lc$-hyperbolic. By Theorem~\ref{thm:hyp_ss},  there exists an one-parameter group $a_t$ of $\Lc$-hyperbolic elements such that $h = h_za_T$ for some $T >0$ and $h_z \in Z(G)$. 

\begin{lemma} With the notation above, $a:=a_1$ is hyperbolic.
\end{lemma}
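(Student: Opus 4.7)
My plan is to leverage the decomposition $h = h_z a_T$ together with the finiteness of $Z(G)$ to relate iterates of $h$ to the one-parameter group $\{a_t\}$, and then to show that once any $a_s$ ($s>0$) is hyperbolic, every $a_t$ ($t>0$) must be hyperbolic.

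First, since $G$ has finite center by the preceding lemma, let $N = |Z(G)|$ so that $h_z^N = e$. Because $h_z$ is central, it commutes with $a_T$, giving $h^N = h_z^N a_T^N = a_{NT}$. Since the attracting and repelling hyperplanes of a hyperbolic element are manifestly preserved under iteration, any positive power of a hyperbolic element is hyperbolic; hence $a_{NT}$ is hyperbolic.

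Next I prove the key claim: if $a_s$ is hyperbolic for some $s > 0$, then $a_{qs}$ is hyperbolic for every $q \in \Qb_{>0}$. For $q = k \in \Nb$ this is again the fact that powers of hyperbolic elements are hyperbolic. For $q = 1/k$ I argue by contradiction. If $a_{s/k}$ were elliptic with fixed point $p \in \Omega$, then since $a_{s/k}$ commutes with every $a_t$ and is a $K_\Omega$-isometry,
\begin{align*}
K_\Omega(a_{t+s/k}(p), p) = K_\Omega(a_{s/k} a_t(p), a_{s/k}(p)) = K_\Omega(a_t(p), p),
\end{align*}
so $t \mapsto K_\Omega(a_t(p), p)$ would be periodic of period $s/k$ and hence bounded, contradicting $K_\Omega(a_s^n(p), p) = K_\Omega(a_{ns}(p), p) \to \infty$, which follows from the hyperbolicity of $a_s$ via Proposition~\ref{prop:hyperbolic_QI}. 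If $a_{s/k}$ were parabolic, then $a_s = (a_{s/k})^k$ would also be parabolic, because for any parabolic $u$ the iterate $u^k$ has no fixed point in $\Omega$ (any such fixed point would have to lie in the boundary set $H_u^+ \cap \partial \Omega$) and satisfies $H_{u^k}^+ = H_u^+ = H_{u^{-1}}^+ = H_{(u^k)^{-1}}^+$; this contradicts the hyperbolicity of $a_s$. Combining the two cases yields the claim.

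Finally I extend from positive rationals to all positive reals. By Proposition~\ref{prop:stability_convex} the set $S := \{t > 0 : a_t \text{ is hyperbolic}\}$ is open in $(0,\infty)$, and the key claim applied to $s = NT$ shows $\Qb_{>0} \cdot NT \subset S$. For any $t > 0$, the set $t \Qb_{>0}$ is dense in $(0,\infty)$ and thus meets the open set $S$; fix $s' \in S \cap t \Qb_{>0}$, write $s' = qt$ with $q \in \Qb_{>0}$, and apply the key claim to $s'$ to get $t = q^{-1} s' \in \Qb_{>0} \cdot s' \subset S$. In particular $1 \in S$, so $a = a_1$ is hyperbolic. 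The main technical obstacle is verifying that parabolicity is preserved under positive powers; I anticipate handling this via the characterization of parabolic elements by the equality of attracting and repelling complex supporting hyperplanes, combined with the observation that the boundary limit of iterates precludes any interior fixed point.
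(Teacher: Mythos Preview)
Your proof is correct, but it takes a considerably longer route than the paper's. The paper argues directly via a bounded-orbit comparison: since $Z(G)$ is finite, the set $\{h_z^n z_0 : n \in \Zb\}$ has bounded $K_\Omega$-diameter, say by $M$; then $K_\Omega(a_T^n z_0, h^n z_0) = K_\Omega(z_0, h_z^n z_0) \leq M$ for all $n$, so by Proposition~\ref{prop:finite_dist} the iterates of $a_T$ accumulate on the same hyperplanes as those of $h$, giving $H^\pm_{a_T} = H^\pm_h$ and hence $a_T$ hyperbolic. The passage from $a_T$ to $a_1$ is then immediate from the same principle, since the orbits $\{a_n z_0\}$ and $\{a_{nT} z_0\}$ both lie on the continuous curve $t \mapsto a_t z_0$ and are within bounded $K_\Omega$-distance of each other.

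Your approach instead establishes hyperbolicity of $a_t$ for every $t>0$ by first getting $a_{NT}$ (via $h^N = a_{NT}$), then propagating to all positive rational multiples through a case analysis on the elliptic/parabolic/hyperbolic trichotomy, and finally to all positive reals by combining the stability result (Proposition~\ref{prop:stability_convex}) with a density argument. This is sound and self-contained, and it has the merit of giving the slightly stronger conclusion that every $a_t$ ($t>0$) is hyperbolic, not just $a_1$. However, it is substantially more work than the paper's two-line distance estimate, which exploits Proposition~\ref{prop:finite_dist} as the single workhorse in keeping with the style of the surrounding arguments. Your trichotomy analysis and the openness/density step are unnecessary once one sees the direct orbit comparison.
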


\begin{proof} Since $H^\pm_{a_T} = H^{\pm}_a$ it is enough to show that $a_T$ is hyperbolic. Fix some $z_0 \in \Omega$. Since $Z(G)$ is finite, there exists some $M >0$ such that 
\begin{align*}
K_\Omega(h_z^n z_0, z_0) \leq M
\end{align*}
for all $n \in \Zb$. Then 
\begin{align*}
K_\Omega(a_T^n z_0, h^n z_0) = K_\Omega(z_0, h_z^{n} z_0) \leq M
\end{align*}
for all $n \in \Zb$. So by Proposition~\ref{prop:finite_dist} we see that $H^{\pm}_{a_T} = H^{\pm}_h$. So $a_T$ is hyperbolic. 
\end{proof}

Next let $K \leq G$ be a maximal compact subgroup associated to $a_t$ as in the discussion in Section~\ref{sec:polar_coor}. Then Theorem~\ref{thm:polar_coor} implies that every element $g \in G$ can be written as $g=k_1 a_t k_2$ for some $k_1, k_2 \in K$ and $t \in \Rb$.

The quotient $X:= G/K$ is simply connected and has a unique (up to scaling) negatively curved complete Riemannian metric, see Section~\ref{sec:basic_properties} for details. Let $d_X$ be the distance induced  by this Riemannian metric.

\begin{proposition}\label{prop:QI} With the notation above, if $z_0 \in \Omega$, then there exists $A \geq 1$ and $B \geq 0$ such that
\begin{align*}
\frac{1}{A}d_X( g_1K, g_2K) - B \leq K_\Omega( g_1z_0, g_2z_0) \leq A d_X( g_1K, g_2K) + B
\end{align*}
for all $g_1, g_2\in G$. 
\end{proposition}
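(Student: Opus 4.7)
The plan is to reduce the estimate, via $G$-equivariance and the KAK decomposition, to a one-dimensional comparison along the geodesic $t \mapsto a_t K$, and then apply Proposition~\ref{prop:hyperbolic_QI} to the hyperbolic element $a = a_1$.

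First I would exploit left-invariance on both sides. Since every element of $\Aut(\Omega)$ is a $K_\Omega$-isometry, $K_\Omega(g_1 z_0, g_2 z_0) = K_\Omega(z_0, g_1^{-1}g_2 z_0)$, and $d_X$ is $G$-invariant, so $d_X(g_1 K, g_2 K) = d_X(K, g_1^{-1}g_2 K)$. Setting $g := g_1^{-1}g_2 \in G$, it suffices to find $A \geq 1$ and $B \geq 0$ such that
\begin{align*}
\tfrac{1}{A} \, d_X(K, gK) - B \leq K_\Omega(z_0, g z_0) \leq A \, d_X(K, gK) + B
\end{align*}
for every $g \in G$.

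Next I would invoke Theorem~\ref{thm:polar_coor} to write $g = k_1 a_t k_2$ with $k_1, k_2 \in K$ and $t \in \Rb$. On the symmetric-space side, $k_2 K = K$ and $k_1$ acts as an $X$-isometry fixing the base point $K$, so $d_X(K, gK) = d_X(K, a_t K)$; moreover, $t \mapsto a_t K$ is (up to a constant rescaling) a unit-speed geodesic in $X$, giving $d_X(K, a_t K) = c|t|$ with $c := d_X(K, a_1 K) > 0$. On the $\Omega$ side, the compactness of $K$ gives a finite $M := \sup_{k \in K} K_\Omega(z_0, k z_0)$, and since $k_1$ and $k_2$ act as $K_\Omega$-isometries the triangle inequality yields
\begin{align*}
\bigl| K_\Omega(z_0, g z_0) - K_\Omega(z_0, a_t z_0) \bigr| \leq 2M.
\end{align*}

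The problem therefore reduces to comparing $K_\Omega(z_0, a_t z_0)$ with $|t|$. Writing $t = n + s$ with $n \in \Zb$ and $s \in [0,1)$, continuity of $s \mapsto K_\Omega(z_0, a_s z_0)$ on the compact interval $[0,1]$ provides a uniform bound $N$, and the triangle inequality gives $|K_\Omega(z_0, a_t z_0) - K_\Omega(z_0, a^n z_0)| \leq N$, where $a := a_1$. The lemma immediately preceding this proposition shows that $a$ is hyperbolic in $\Aut(\Omega)$, so Proposition~\ref{prop:hyperbolic_QI} supplies $\alpha \geq 1$ and $\beta \geq 0$ with $\tfrac{1}{\alpha}|n| - \beta \leq K_\Omega(z_0, a^n z_0) \leq \alpha |n| + \beta$. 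Chaining the three two-sided estimates and absorbing the constants $M$, $N$, $\beta$, $c$ into new $A$ and $B$ completes the proof. No step looks genuinely hard: the content is that the rank-one hypothesis collapses KAK to a one-dimensional problem along a single geodesic, whose translation behavior is exactly what Proposition~\ref{prop:hyperbolic_QI} controls.
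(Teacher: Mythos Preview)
Your proposal is correct and follows essentially the same approach as the paper: reduce via $G$-invariance and the $KAK$ decomposition (Theorem~\ref{thm:polar_coor}) to a one-dimensional comparison along $t \mapsto a_t$, bound the $K$-contributions by compactness, and invoke Proposition~\ref{prop:hyperbolic_QI} for the hyperbolic element $a=a_1$. You are in fact slightly more careful than the paper in two places---you track the normalization constant $c=d_X(K,a_1K)$ rather than silently scaling $d_X$ so that $c=1$, and you explicitly bridge integer and real $t$ via $t=n+s$, whereas the paper states the real-$t$ estimate as a direct consequence of Proposition~\ref{prop:hyperbolic_QI}.
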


\begin{proof}
By Theorem~\ref{thm:polar_coor}
\begin{equation}
\label{eq:QI1}
d_X( k a_t  K, K) = \abs{t}
\end{equation}
for all $t \in \Rb$ and $k \in K$. Further, by Proposition~\ref{prop:hyperbolic_QI} there exists some $\alpha, \beta$ such that 
\begin{equation}
\label{eq:QI2}
\frac{1}{\alpha} \abs{t}-\beta \leq K_\Omega(a_t(z_0), z_0) \leq \alpha \abs{t}+\beta.
\end{equation}
By compactness, there exists some $M \geq 0$ such that 
\begin{equation}
\label{eq:QI3}
K_\Omega( k z_0, z_0 ) \leq M
\end{equation}
for all $k \in K$. 

Now suppose that $g_1, g_2 \in G$. Then $g_2^{-1}g_1 = k_1 a_t k_2$ for some $k_1, k_2 \in K$ and $t \in \Rb$. Then
\begin{align*}
d_X( g_1 K, g_2K) =d_X( g_2^{-1}g_1 K, K)  = d_X(k_1 a_t K, K) = \abs{t}.
\end{align*}
Further
\begin{align*}
\abs{K_\Omega(g_1z_0, g_2z_0)-K_\Omega(a_t z_0, z_0)} \leq K_\Omega(k_2 z_0, z_0)+K_\Omega(z_0, k_1^{-1}z_0)
 \leq 2 M. 
\end{align*}
So Equations~\eqref{eq:QI1}, \eqref{eq:QI2}, and \eqref{eq:QI3} imply that
\begin{align*}
\frac{1}{A}d_X(g_1 K, g_2 K) -B \leq K_\Omega(g_1z_0, g_2z_0) \leq Ad_X(g_1 K, g_2 K) + B
\end{align*}
for some $A, B$ which do not depend on $g_1,g_2$.
\end{proof}

\begin{lemma}\label{lem:Fhyp_implies_Ahyp} With the notation above, if $g \in G$ is hyperbolic, then $g$ is $\Lc$-axial. \end{lemma}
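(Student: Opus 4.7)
The plan is to apply the Jordan decomposition to $g$ and then to show, in sequence, that the $\Lc$-hyperbolic part of $g$ is non-trivial, the $\Lc$-unipotent part of $g$ is trivial, and the $\Lc$-elliptic part of $g$ is forced to fix the axis of the $\Lc$-hyperbolic part pointwise. Write $g=khu$ with $k$ being $\Lc$-elliptic, $h$ being $\Lc$-hyperbolic, $u$ being $\Lc$-unipotent, and the three pairwise commuting. Arguing exactly as in the proof of Lemma~\ref{lem:double_hyp}, Proposition~\ref{prop:hyperbolic_QI} combined with Lemma~\ref{lem:dist_norm_est} forces $\log\norm{\Ad(g)^n}/n$ to be bounded below by a positive constant, which rules out $h=1$ (since $\Ad(k)\Ad(u)$ has only polynomial growth). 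The same norm-growth bound applied to $ku$ shows $ku$ is not hyperbolic in $\Aut(\Omega)$, and since $ku$ commutes with the hyperbolic element $g$ it cannot be parabolic either (same argument as in Lemma~\ref{lem:double_hyp}). Hence $ku$ is elliptic in $\Aut(\Omega)$ and $\{(ku)^n z_0\}_{n\in\Zb}$ is relatively compact in $\Omega$.

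To upgrade this to $u=1$, I would exploit the fact that $k$ is $\Lc$-elliptic, so $\overline{\{k^n\}}$ is compact in $G$. Extract a subsequence $n_j\to\infty$ along which $k^{n_j}$ converges; then
\begin{align*}
u^{n_j} z_0 = k^{-n_j}(ku)^{n_j} z_0
\end{align*}
stays in a compact subset of $\Omega$. Properness of the $\Aut(\Omega)$-action on $\Omega$ then produces a sub-subsequence along which $u^{n_j}$ converges in $G$. But if $u=\exp(X)$ with $X$ a nonzero nilpotent element of $\gL$, then $u^n=\exp(nX)$ diverges to infinity in $G$ as $\abs{n}\to\infty$, because $\exp$ is a proper embedding of the nilpotent Lie subalgebra generated by $X$. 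This contradiction forces $u=1$.

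It remains to verify that $g=kh$, with $h\neq 1$ being $\Lc$-hyperbolic and $k$ being $\Lc$-elliptic commuting with $h$, is $\Lc$-axial. By Proposition~\ref{prop:cartan_conj}, after a conjugation we may assume $h\in Z(G)A$ for a Cartan subgroup $A\leq G$; since $h$ is non-central its centralizer in $G$ equals $Z_G(A)=MA$ with $M=Z_K(A)$ compact. Hence $k\in MA$, and because $\{k^n\}$ is bounded in $G$ the $A$-component of $k$ must be trivial, so $k\in M\subset K$. In the symmetric space $X=G/K$ the axis of $h$ is $AK/K$, and every element of $M$ fixes $AK/K$ pointwise (as $M\subset K$ centralizes $A$). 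Therefore $g=kh$ translates the axis $AK/K$ by the same positive amount as $h$ does, which is precisely the assertion that $g$ is $\Lc$-axial.

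The main obstacle is the elimination of the $\Lc$-unipotent part: all the individual ingredients (Jordan decomposition, $\Aut(\Omega)$-ellipticity of $ku$, compactness of $\overline{\{k^n\}}$ in $G$, properness of the $\Aut(\Omega)$-action, and divergence of nontrivial unipotent powers in $G$) are each standard, but one must orchestrate them carefully to convert a boundedness statement inside $\Omega$ into one inside $G$ and then invoke unipotent divergence for the final contradiction.
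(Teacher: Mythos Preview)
Your argument is correct, but it is considerably longer than necessary and bypasses the machinery the paper has just built. The paper's proof is three lines: since $g$ is hyperbolic, Proposition~\ref{prop:hyperbolic_QI} gives $\liminf_n \tfrac{1}{n}K_\Omega(g^n z_0,z_0)>0$; the quasi-isometry of Proposition~\ref{prop:QI} then yields $\liminf_n \tfrac{1}{n}d_X(g^nK,K)>0$; and Proposition~\ref{prop:translation_dist} converts positive translation length on $X$ into $\Lc$-axiality. In other words, the paper transfers the question to the symmetric space $X$ via the already-established quasi-isometry and then invokes a standard rank-one fact.

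Your route instead unpacks the Jordan decomposition by hand and never uses Proposition~\ref{prop:QI}. This works, but note two things. First, once you have shown $\Ad(h)\neq 1$ and $u=1$, you are done: the paper \emph{defines} $\Lc$-axial to mean exactly that the Jordan decomposition has nontrivial hyperbolic part and trivial unipotent part, so your entire Step~5 (conjugating into $Z(G)A$, identifying $C_G(h)=MA$, placing $k$ in $M$, and checking that $g$ translates the axis) is redundant. Second, your elimination of $u$ is essentially the same mechanism hidden inside the proof of Proposition~\ref{prop:translation_dist} in the appendix, carried out in $\Omega$ rather than in $X$; the paper's approach is cleaner because Proposition~\ref{prop:QI} lets one work in $X$, where the centralizer-compactness argument is standard negative-curvature geometry. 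Your approach buys independence from Proposition~\ref{prop:QI} at the cost of redoing structure theory that the appendix already packages.
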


\begin{proof} If $g \in G$ is hyperbolic, then Proposition~\ref{prop:hyperbolic_QI} implies that
\begin{align*}
\liminf_{n \rightarrow \infty} \frac{1}{n}K_\Omega(g^n(z_0), z_0) > 0.
\end{align*}
So Proposition~\ref{prop:QI} implies that
\begin{align*}
\liminf_{n \rightarrow \infty} \frac{1}{n}d_X(g^nK, K) > 0.
\end{align*}
Thus $g$ is $\Lc$-axial by Proposition~\ref{prop:translation_dist}.
\end{proof}

Let $X(\infty)$ be the geodesic boundary of $X$. Then, as described in Theorem~\ref{prop:ss_wolf_denjoy}, every $\Lc$-axial element $g \in G$ has an attracting  fixed point $\omega_g^+ \in X(\infty)$ and a repelling fixed point $\omega_g^- \in X(\infty)$. 

\begin{lemma} With the notation above, if $h_1, h_2 \in G$ are hyperbolic and $H_{h_1}^+ \neq H_{h_2}^+$, then $\omega_{h_1}^+ \neq \omega_{h_2}^+$. 
\end{lemma}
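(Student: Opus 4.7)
The plan is to argue by contradiction: assume $\omega_{h_1}^+ = \omega_{h_2}^+ =: \omega \in X(\infty)$. By Lemma~\ref{lem:Fhyp_implies_Ahyp}, both $h_1$ and $h_2$ are $\Lc$-axial, so each $h_i$ translates a bi-infinite geodesic $\gamma_i \subset X$ by a translation length $\tau_i > 0$, and by assumption the two axes share the endpoint $\omega$ at infinity. Since $X$ is a negatively curved symmetric space, any two geodesics with a common endpoint at infinity fellow-travel (with exponentially decaying distance), and each orbit $\{h_i^n K\}_{n \in \Zb}$ lies within a fixed $d_X$-distance of its axis $\gamma_i$ (because $h_i$ acts by translation along $\gamma_i$). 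Matching up the discrete parameters (roughly $k_n \approx m_n \tau_1/\tau_2$ up to an additive constant) then produces sequences $m_n, k_n \to \infty$ with
\[
\sup_n d_X\bigl(h_1^{m_n} K,\, h_2^{k_n} K\bigr) < \infty.
\]

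Applying the quasi-isometry of Proposition~\ref{prop:QI} transports this to $\sup_n K_\Omega(h_1^{m_n} z_0, h_2^{k_n} z_0) < \infty$. On the other hand, Theorem~\ref{thm:normal_line_shadowing} says the forward orbit $\{h_i^n z_0\}$ has bounded Kobayashi-Hausdorff distance from the inward normal line at $x_{h_i}^+$; since $K_\Omega(h_i^n z_0, z_0) \to \infty$, the shadowing point on that line must escape toward $x_{h_i}^+$, and Proposition~\ref{prop:zero_dist_est} then forces $h_i^n z_0 \to x_{h_i}^+$ in the Euclidean topology. In particular, $h_1^{m_n} z_0 \to x_{h_1}^+$ and $h_2^{k_n} z_0 \to x_{h_2}^+$. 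Because $m_n, k_n \to \infty$ and $\Aut(\Omega)$ acts properly, both $K_\Omega(h_1^{m_n} z_0, z_0)$ and $K_\Omega(h_2^{k_n} z_0, z_0)$ tend to infinity; combined with the bounded pairwise distance this yields
\[
\limsup_{n\to\infty}\bigl(h_1^{m_n} z_0 \bigm| h_2^{k_n} z_0\bigr)_{z_0}^{\Omega} = \infty.
\]
Theorem~\ref{thm:GP}(2) now gives $T_{x_{h_1}^+}^{\Cb} \partial\Omega = T_{x_{h_2}^+}^{\Cb} \partial\Omega$, i.e.\ $H_{h_1}^+ = H_{h_2}^+$, contradicting the hypothesis.

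The main technical obstacle will be the matching-up step in the first paragraph: one needs quantitative control on how the axes fellow-travel and how the orbits shadow their axes in order to produce $m_n$ and $k_n$ both tending to infinity with bounded $d_X$-distance between $h_1^{m_n} K$ and $h_2^{k_n} K$. This is a standard fact about negatively curved symmetric spaces (presumably collected in the appendix), but it is the one place where the argument relies on geometric features of $X$ rather than of $\Omega$. Everything else reduces mechanically to the distance comparison Proposition~\ref{prop:QI} together with the boundary estimates in Theorem~\ref{thm:normal_line_shadowing} and Theorem~\ref{thm:GP}, which are already in hand for convex domains with $C^{1,\epsilon}$ boundary.
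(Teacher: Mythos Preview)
Your argument follows essentially the same path as the paper's: assume $\omega_{h_1}^+=\omega_{h_2}^+$, use the rank-one geometry of $X$ (Theorem~\ref{thm:rankone_hyp}(3) in the appendix is exactly the fellow-traveling statement you need) to get sequences with $\sup_n d_X(h_1^{m_n}K,h_2^{k_n}K)<\infty$, push this through Proposition~\ref{prop:QI}, and contradict $H_{h_1}^+\neq H_{h_2}^+$.

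Two small points. First, your invocation of Proposition~\ref{prop:zero_dist_est} is off: that proposition requires the Kobayashi distance to tend to $0$, whereas you only have it bounded. You do not actually need $h_i^n z_0\to x_{h_i}^+$ as a point; passing to subsequences converging to some $x\in H_{h_1}^+\cap\partial\Omega$ and $y\in H_{h_2}^+\cap\partial\Omega$ (which follows directly from the definition of $H_{h_i}^+$) is enough. Second, the detour through the Gromov product and Theorem~\ref{thm:GP}(2) is unnecessary: once you have $\limsup_n K_\Omega(h_1^{m_n}z_0,h_2^{k_n}z_0)<\infty$ with subsequential limits $x\neq y$ in $\partial\Omega$, Proposition~\ref{prop:finite_dist} gives $T_x^{\Cb}\partial\Omega=T_y^{\Cb}\partial\Omega$ directly, which is exactly how the paper finishes.
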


\begin{proof} If $\omega_{h_1}^+ = \omega_{h_2}^+$, then by Theorem~\ref{thm:rankone_hyp} there exists $m_k, n_k \rightarrow \infty$ such that 
\begin{align*}
\limsup_{k \rightarrow \infty} d_X( h_1^{m_k} K, h_2^{n_k} K) < \infty.
\end{align*}
Then by Proposition~\ref{prop:hyperbolic_QI} we have
\begin{align*}
\limsup_{k \rightarrow \infty} K_\Omega( h_1^{m_k} z_0, h_2^{n_k} z_0) < \infty.
\end{align*}
But since $H_{h_1}^+ \neq H_{h_2}^+$, this contradicts Proposition~\ref{prop:finite_dist}.
\end{proof}

\begin{lemma}\label{lem:unif_shadow} With the notation above,  let $r >0$ be such that 
\begin{align*}
x+ r \cdot { \bf n}_\Omega(x) \in \Omega
\end{align*}
for all $x \in \partial \Omega$. Then there exists $R > 0$ and a function $\psi: K \rightarrow \partial \Omega$ such that
\begin{align*}
K_\Omega^{\Haus}\left( \{ ka_t k^{-1} z_0 : t > 0 \}, \psi(k) + (0,r] \cdot { \bf n}_\Omega(\psi(k)) \right) \leq R
\end{align*}
for all $k \in K$. 
\end{lemma}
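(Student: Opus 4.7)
The plan is to apply Theorem~\ref{thm:normal_line_shadowing} to each hyperbolic element $h_k := k a k^{-1}$ for $k \in K$, and to promote the resulting per-$k$ shadowing constants to a uniform bound using the compactness of $K$.

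First I would reduce to the discrete forward orbit $\{h_k^n z_0 : n \in \Nb\}$. By isometric invariance of $K_\Omega$, for $t \in [n, n+1]$,
\[
K_\Omega(k a_t k^{-1} z_0, h_k^n z_0) = K_\Omega\bigl(a_{t-n}(k^{-1} z_0), k^{-1} z_0\bigr) \leq K_\Omega(a_{t-n} z_0, z_0) + 2 M,
\]
with $M := \sup_{k \in K} K_\Omega(k z_0, z_0) < \infty$; the right-hand side is bounded on $[0,1]$ by continuity, uniformly in $k$. So the continuous orbit and the discrete orbit are at uniformly bounded Hausdorff distance, and it suffices to shadow the discrete orbit by a normal line uniformly in $k$.

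Theorem~\ref{thm:normal_line_shadowing} applied to $a$ itself provides $x_a^+ \in H_a^+$ and $R_0 > 0$ with $\{a^n z_0\}_n$ within Hausdorff distance $R_0$ of $\gamma_a([0,\infty))$, where $\gamma_a(t) := x_a^+ + r e^{-2t} {\bf n}_\Omega(x_a^+)$. Translating by the isometry $k$ yields
\[
K_\Omega^{\Haus}\bigl(\{h_k^n z_0\}_n,\ k \cdot \gamma_a([0, \infty))\bigr) \leq R_0 + M,
\]
uniformly in $k \in K$; the curve $k \cdot \gamma_a$ is a $(1, \kappa)$-almost-geodesic (with the same $\kappa$ as in Proposition~\ref{prop:normal_lines}) but need not itself be a normal line. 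In parallel, applying Theorem~\ref{thm:normal_line_shadowing} to $h_k$ produces $\psi(k) := x_{h_k}^+ \in H_{h_k}^+ \cap \partial\Omega$ and $R(k) < \infty$ with $\{h_k^n z_0\}_n$ within Hausdorff distance $R(k)$ of $\sigma_{\psi(k)}(t) := \psi(k) + r e^{-2t} {\bf n}_\Omega(\psi(k))$.

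The main obstacle is upgrading $R(k)$ to a bound uniform in $k$. My approach would be by contradiction and compactness: if $R(k_n) \to \infty$, extract a convergent subsequence $k_n \to k_\infty \in K$. By Proposition~\ref{prop:stability_convex}, $h_{k_\infty}$ is hyperbolic, and Proposition~\ref{prop:cont_of_att_hyp} gives $H_{h_{k_n}}^+ \to H_{h_{k_\infty}}^+$, which combined with the $C^{1,\epsilon}$-regularity of $\partial \Omega$ ensures continuity of the unit normals ${\bf n}_\Omega(\psi(k_n))$ in direction. Proposition~\ref{prop:hyperbolic_QI} applied to $a$ with base points ranging over the compact set $K \cdot z_0$ gives uniform quasi-geodesic constants for the orbits $\{h_k^n z_0\}$; combined with the visibility argument underlying Theorem~\ref{thm:visible}, one extracts an almost-geodesic limit of $k_n \cdot \gamma_a$ that is shadowed by the orbit of $h_{k_\infty}$. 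Comparing this limit with $\sigma_{\psi(k_\infty)}$, for which $R(k_\infty) < \infty$, forces $R(k_n)$ to remain bounded for large $n$, contradicting $R(k_n) \to \infty$.
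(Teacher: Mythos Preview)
Your proposal rests on applying Theorem~\ref{thm:normal_line_shadowing} to each $h_k := kak^{-1}$, which requires $h_k$ to be hyperbolic in the sense of Definition~\ref{defn:elems_convex}. You never justify this. While $a$ is hyperbolic (established just before the lemma), domain-hyperbolicity is defined via the hyperplanes $H^\pm_\varphi$, which depend on how orbits accumulate on $\partial\Omega$; since $k$ is not known to extend to $\partial\Omega$, there is no obvious reason conjugation preserves the condition $H^+ \neq H^-$. The paper flags exactly this point: ``Unfortunately, there doesn't seem to be an easy way to show that each $ka_tk^{-1}$ is hyperbolic (at this stage of the proof).'' Knowing that $kak^{-1}$ is $\Lc$-hyperbolic does not help either, because only the implication hyperbolic $\Rightarrow$ $\Lc$-axial is available (Lemma~\ref{lem:Fhyp_implies_Ahyp}), not the converse. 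The paper's workaround is to manufacture, for each $k$, a genuinely hyperbolic element $g$ (via Theorem~\ref{thm:construct_hyp} and Theorem~\ref{thm:rankone_hyp}) whose forward orbit shadows $\{ka_tk^{-1}z_0\}_{t>0}$, and to read off $\psi(k)$ from $g$ rather than from $h_k$.

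Even granting hyperbolicity of every $h_k$, your contradiction argument for uniformity has problems. First, invoking Proposition~\ref{prop:stability_convex} to deduce that $h_{k_\infty}$ is hyperbolic is backwards: that proposition says hyperbolicity is open, not closed. Second, your final step (``extract an almost-geodesic limit of $k_n\cdot\gamma_a$ \ldots\ compare with $\sigma_{\psi(k_\infty)}$'') does not fit Theorem~\ref{thm:visible} as stated: the normal line $\sigma_{\psi(k_n)}$ has one end at $\psi(k_n)\in\partial\Omega$ and the other at an interior point, so you cannot invoke visibility between two distinct complex faces directly. The paper handles this by pulling back along auxiliary hyperbolic elements $h_n$ (again from Theorem~\ref{thm:rankone_hyp}) so that the translated almost-geodesics $\gamma_n = h_n^{-m_n}\sigma_{k_n}$ have both ends accumulating on distinct faces $H_h^+$ and $T^{\Cb}_{\psi(k_n)}\partial\Omega$; only then does Theorem~\ref{thm:visible} give a convergent subsequence and the desired contradiction. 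Your sketch does not supply this mechanism.
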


\begin{remark} By Theorem~\ref{thm:polar_coor}, every curve of the form $t \rightarrow ka_t K$ is a geodesic  in $X$ and by Proposition~\ref{prop:normal_lines} every curve of the form $t \rightarrow x + re^{-2t} { \bf n}_\Omega(x)$  with $x \in \partial \Omega$ is an almost-geodesic in $\Omega$. The above lemma shows that these two curves shadow each other. \end{remark}

\begin{proof} We first argue that for every $k \in K$ there exists $\psi(k) \in \partial \Omega$ such that 
\begin{align*}
\lim_{t \rightarrow \infty} ka_t k^{-1}(z) = \psi(k)
\end{align*}
for all $z \in \Omega$. If we knew that $ka_tk^{-1}$ was hyperbolic, this would follow from Theorem~\ref{thm:normal_line_shadowing}, Proposition~\ref{prop:cplx_disks}, and Proposition~\ref{prop:finite_dist}. Unfortunately, there doesn't seem to be an easy way to show that each $ka_tk^{-1}$ is hyperbolic (at this stage of the proof). 

By Theorem~\ref{thm:construct_hyp}, there exists some hyperbolic element $h \in G$ such that 
\begin{align*}
H^+_{h}, H^-_{h}, H^+_{ka_tk^{-1}}
\end{align*}
are all distinct. Now by Lemma~\ref{lem:Fhyp_implies_Ahyp}, $h$ is also $\Lc$-axial. Then by Theorem~\ref{thm:rankone_hyp} there exists an $\Ac$-hyperbolic element $g \in G$ such that 
\begin{align*}
d_X^{\Haus}( \{ g^n K : n \in \Nb\} , \{ ka_t k^{-1} K : t > 0 \}) < \infty
\end{align*}
and 
\begin{align*}
d_X^{\Haus}( \{ g^{-n} K : n \in \Nb\} , \{  h^n K : n \in \Nb\}) < \infty.
\end{align*}
So by Proposition~\ref{prop:QI} and Proposition~\ref{prop:finite_dist}, 
\begin{align*}
H^+_g = H^+_{ka_tk^{-1}} \text{ and } H^-_g = H^+_h.
\end{align*}
Thus $g$ is hyperbolic. Then by Theorem~\ref{thm:normal_line_shadowing}, Proposition~\ref{prop:cplx_disks}, and Proposition~\ref{prop:finite_dist} there exists some $x^+_g \in \partial \Omega$ such that 
\begin{align*}
\lim_{n \rightarrow \infty} g^n(z) = x^+_g
\end{align*}
for all $z \in \Omega$. Then by Proposition~\ref{prop:finite_dist} and Proposition~\ref{prop:cplx_disks} we have 
\begin{align*}
\lim_{t \rightarrow \infty} ka_t k^{-1}(z) = x^+_g
\end{align*}
for all $z\in \Omega$. So define $\psi(k):=x^+_g$. 

Then for $k \in K$ let $\sigma_k:[0,\infty) \rightarrow \Omega$ be the curve
\begin{align*}
\sigma_k(t) = \psi(k) + r e^{-2t} { \bf n}_\Omega(\psi(k)).
\end{align*}
Then by Propsoition~\ref{prop:normal_lines}, there exists some $\kappa > 0$ such that every $\sigma_k$ is an $(1,\kappa)$-almost-geodesic. \newline

\noindent \textbf{Claim:} There exists some $M_0 > 0$ such that 
\begin{align*}
K_\Omega( ka_t k^{-1} z_0, \sigma_k) \leq M_0
\end{align*}
for all $k \in K$ and $t > 0$. 

\begin{proof}[Proof of Claim:]
Suppose not, then for every $n \in \Nb$ there exists $k_n \in K$ and $t_n > 0$ such that 
\begin{align*}
K_\Omega\left( k_na_{t_n} k_n^{-1} z_0 , \sigma_{k_n} \right) > n.
\end{align*}
We can pass to a subsequence such that $\xi(k_n) \rightarrow x \in \partial \Omega$. By Theorem~\ref{thm:construct_hyp}, there exists some hyperbolic element $h \in G$ such that 
\begin{align*}
H^+_{h}, H^-_{h}, T_{x}^{\Cb} \partial \Omega
\end{align*}
are all distinct. Then by Theorem~\ref{thm:rankone_hyp} there exists some $R_1 > 0$ and a sequence $h_n$ of $\Lc$-hyperbolic elements such that 
\begin{align*}
d_X^{\Haus}( \{ h_n^m K: m \in \Nb\}, \{ k_na_{t} k_n^{-1} K : t > 0\}) \leq R_1
\end{align*}
and 
\begin{align*}
d_X^{\Haus}( \{ h_n^{-m} K: m \in \Nb\}, \{ h^{m} K : m \in \Nb \} )\leq R_1.
\end{align*}
Then by Proposition~\ref{prop:finite_dist}
\begin{align*}
H^+_{h_n} = H^+_{k_n a_{t_n} k_n^{-1}} = T^{\Cb}_{\psi(k_n)} \partial \Omega
\end{align*}
and 
\begin{align*}
H^-_{h_n} = H^+_{h}.
\end{align*}

Now there exists $m_n \in \Nb$ such that 
\begin{align*}
K_\Omega(h_n^{m_n}(z_0), k_na_{t_n} k_n^{-1} z_0) \leq R_1.
\end{align*}
Then let $\gamma_n = h^{-m_n}_n \sigma_{k_n}$. Then $\gamma_n$ is an $(1,\kappa)$-almost-geodesic and 
\begin{align*}
\lim_{n \rightarrow \infty} d_{\Euc}( \gamma_n(0), H^+_{h} ) = \lim_{n \rightarrow \infty} d_{\Euc}( h^{-m_n}_n(\sigma_{k_n}(0)), H^+_{h} )= 0.
\end{align*}
Further, 
\begin{align*}
K_\Omega^{\Haus} \left( \gamma_n, \{ h_n^{m-m_n}(z_0) : m > 0\} \right) < \infty
\end{align*}
and so by Proposition~\ref{prop:finite_dist} we have
\begin{align*}
0 = \lim_{t \rightarrow \infty} d_{\Euc}\left(\gamma_n(t), H_{h_n}^+\right) = \lim_{t \rightarrow \infty} d_{\Euc}\left(\gamma_n(t), H_{\psi(k_n)}^+\right).
\end{align*}

So by Theorem~\ref{thm:visible}, we can pass to a subsequence and find some $T_n \in [0,\infty)$ such that the almost-geodesics $t \rightarrow \gamma_n(t+T_n)$ converge locally uniformly to an almost-geodesic $\gamma_\infty$. But then 
\begin{align*}
\infty 
&= \lim_{n \rightarrow \infty} K_\Omega(k_na_{t_n} k_n^{-1} z_0, \sigma_{k_n}) \leq  \lim_{n \rightarrow \infty} K_\Omega(h_n^{m_n} z_0, \sigma_{k_n}(T_n))+R_1 \\
& = \lim_{n \rightarrow \infty} K_\Omega(z_0,h_n^{-m_n}  \sigma_{k_n}(T_n)) +R_1 = K_\Omega(z_0, \gamma_\infty(0)) +R_1< \infty
\end{align*}
so we have a contradiction. 
\end{proof}

Since $K$ is compact, there exists some $\delta >0$ such that 
\begin{align*}
K_\Omega( k a_t k^{-1} z_0, k a_{t+1} k^{-1} z_0)=K_\Omega(k^{-1} z_0, a_{1} k^{-1} z_0) \leq \delta
\end{align*}
for all $k \in K$ and $t \in \Rb$. 

Now fix some $k \in K$. For each $n \in \Nb$ there exists some $t_n \in [0,\infty)$ such that 
\begin{align*}
K_\Omega( ka_n k^{-1}(z_0), \sigma(t_n)) \leq M_0.
\end{align*}
Then since $\sigma_n$ is a $(1,\kappa)$-almost-geodesic we have 
\begin{align*}
\abs{t_n-t_{n+1}} \leq K_\Omega(\sigma(t_n), \sigma(t_{n+1})) + \kappa \leq 2M_0 + \delta + \kappa
\end{align*}
Since $t_n \rightarrow \infty$ we then see that for every $t \in [0,\infty)$ there exists some $t_n$ such that 
\begin{align*}
\abs{t-t_n} \leq M_0 + \delta/2+\kappa/2.
\end{align*}
So 
\begin{align*}
K_\Omega^{\Haus}\left( \{ ka_t k^{-1} z_0 : t > 0 \}, \psi(k) + (0,r] \cdot { \bf n}_\Omega(\psi(k)) \right) \leq 2M_0+\delta/2+3\kappa/2.
\end{align*}
Since $k \in K$ was arbitrary this completes the proof.
\end{proof}

Next let $M_a \leq K$ denote the elements in $K$ which commute with the subgroup $a_{\Rb}$. By the discussion in Section~\ref{sec:polar_coor}, $K/M_a$ is homeomorphic to $X(\infty)$. 

\begin{lemma} With the notation above, $\psi: K \rightarrow \partial \Omega$ is continuous and descends to a continuous map $\psi:K/M_a \rightarrow \partial \Omega$ which is a homeomorphism onto its image.
\end{lemma}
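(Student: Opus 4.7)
The plan is to establish the three claims in turn---descent to $K/M_a$, continuity of $\psi$, and the homeomorphism property for the induced map $\bar\psi$. Descent is immediate: if $m \in M_a$, then $m$ commutes with every $a_t$, so $km a_t (km)^{-1} = k a_t k^{-1}$ and hence $\psi(km) = \psi(k)$. For injectivity of the resulting $\bar\psi : K/M_a \to \partial\Omega$, suppose $\psi(k_1) = \psi(k_2)$. Two applications of Lemma~\ref{lem:unif_shadow} place both orbit rays $\{k_i a_t k_i^{-1} z_0 : t > 0\}$ within Hausdorff Kobayashi distance $R$ of the \emph{same} normal ray $\sigma_{\psi(k_1)} = \sigma_{\psi(k_2)}$, so by the triangle inequality the two orbit rays are within Hausdorff distance $2R$ of each other. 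Since $k_i a_t k_i^{-1} K = k_i a_t K$ as cosets in $X$, Proposition~\ref{prop:QI} transfers this to a bounded Hausdorff distance between the geodesic rays $\{k_1 a_t K\}$ and $\{k_2 a_t K\}$ in $X$. Because $X$ is a negatively curved rank-one symmetric space, two geodesic rays at bounded Hausdorff distance necessarily share an endpoint in $X(\infty)$; so $k_1 \omega_a^+ = k_2 \omega_a^+$, which under $X(\infty) \cong K/M_a$ gives $k_1 M_a = k_2 M_a$.

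For continuity, suppose $k_n \to k$ in $K$ and, after passing to a subsequence, $\psi(k_n) \to x \in \partial\Omega$; the goal is to show $x = \psi(k)$. Fix $T > 0$. By joint continuity of the $\Aut(\Omega)$-action on $\Omega$, $k_n a_T k_n^{-1} z_0 \to w_T := k a_T k^{-1} z_0 \in \Omega$. Lemma~\ref{lem:unif_shadow} supplies $s_n(T) \geq 0$ with $K_\Omega(k_n a_T k_n^{-1} z_0, \sigma_{\psi(k_n)}(s_n(T))) \leq R$. The key claim is that the $s_n(T)$ stay bounded in $n$: otherwise the point $\sigma_{\psi(k_n)}(s_n(T)) = \psi(k_n) + r e^{-2 s_n(T)} {\bf n}_\Omega(\psi(k_n))$ converges in the Euclidean topology to $x \in \partial\Omega$, and Cauchy completeness of $K_\Omega$ (Theorem~\ref{thm:barth}) forces $K_\Omega(w_T, \sigma_{\psi(k_n)}(s_n(T))) \to \infty$, contradicting the bound. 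Passing to a subsequence $s_n(T) \to s(T) \in [0,\infty)$, continuity of $K_\Omega$ on $\Omega \times \Omega$ gives $K_\Omega(w_T, \sigma_x(s(T))) \leq R$. A symmetric argument starting from $\sigma_{\psi(k_n)}(s) \to \sigma_x(s)$, with boundedness of the dual parameters $T_n(s)$ ensured by Proposition~\ref{prop:hyperbolic_QI}, yields for each $s \geq 0$ some $T(s)$ with $K_\Omega(\sigma_x(s), k a_{T(s)} k^{-1} z_0) \leq R$. Combined, these give $K_\Omega^{\Haus}(\{k a_T k^{-1} z_0 : T > 0\}, \sigma_x([0,\infty))) \leq R$; combining with Lemma~\ref{lem:unif_shadow} applied to $k$ itself and using the triangle inequality, $K_\Omega^{\Haus}(\sigma_x, \sigma_{\psi(k)}) \leq 2R$. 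If $x \neq \psi(k)$, then Proposition~\ref{prop:finite_dist} applied to $\sigma_x(s) \to x$ and $\sigma_{\psi(k)}(t) \to \psi(k)$ shows that the complex line $L$ through $x$ and $\psi(k)$ has interior of $L \cap \partial\Omega$ in $L$ containing $\psi(k)$; that interior is a complex affine disk in $\partial\Omega$ through $\psi(k)$, contradicting Proposition~\ref{prop:cplx_disks} (since $\psi(k) = x_g^+$ for a hyperbolic $g \in \Aut(\Omega)$, as produced in the proof of Lemma~\ref{lem:unif_shadow}). Hence $x = \psi(k)$ and continuity follows.

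Finally, the induced map $\bar\psi : K/M_a \to \partial\Omega$ is continuous (by continuity of $\psi$ and the quotient map) and injective, while $K/M_a$ is compact and $\partial\Omega$ is Hausdorff, so $\bar\psi$ is automatically a homeomorphism onto its image. The main obstacle in the plan is the continuity step---specifically transferring the uniform shadowing estimate to the limit point $x$---and the decisive technical input is the boundedness of the parameter matching $(s_n(T), T_n(s))$ from Lemma~\ref{lem:unif_shadow} along subsequences, which rests on the Cauchy completeness of $K_\Omega$ provided by Theorem~\ref{thm:barth}.
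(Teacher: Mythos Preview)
Your proof is correct and uses the same toolkit as the paper (Lemma~\ref{lem:unif_shadow}, Proposition~\ref{prop:finite_dist}, Proposition~\ref{prop:cplx_disks}, Proposition~\ref{prop:QI}, Theorem~\ref{thm:polar_coor}), but the execution differs in two places. For descent, your observation that $m\in M_a$ gives $(km)a_t(km)^{-1}=ka_tk^{-1}$ identically is cleaner than the paper's route, which instead bounds $K_\Omega(k_1a_nk_1^{-1}z_0,k_2a_nk_2^{-1}z_0)$ and then invokes Propositions~\ref{prop:finite_dist} and~\ref{prop:cplx_disks}. For continuity, the paper is more direct: it finds a diagonal sequence $m_n\to\infty$ with $k_na_{m_n}k_n^{-1}z_0\to\psi(k)$, pairs this with a nearby point on $\sigma_{\psi(k_n)}$ converging to $x$, and concludes immediately via Propositions~\ref{prop:finite_dist} and~\ref{prop:cplx_disks}; your argument instead establishes the full Hausdorff bound $K_\Omega^{\Haus}(\sigma_x,\sigma_{\psi(k)})\leq 2R$ via two-sided parameter matching before reaching the same contradiction. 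Both are valid; the paper's continuity step is shorter, while your descent step is shorter. One small remark: the boundedness of $T_n(s)$ is really a consequence of the properness of the $\Aut(\Omega)$-action (or the continuous estimate in the proof of Proposition~\ref{prop:QI}) rather than Proposition~\ref{prop:hyperbolic_QI} as stated, which treats integer powers; this is cosmetic.
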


\begin{proof} Suppose that $k_n \rightarrow k$ in $K$. Since $\partial \Omega$ is compact, to show that $\psi(k_n) \rightarrow \psi(k)$ it is enough to show that every convergent subsequence of $\psi(k_n)$ converges to $\psi(k)$. So we can assume that $\psi(k_n) \rightarrow x \in \partial \Omega$. 

For each $m \in \Nb$ we have
\begin{align*}
\lim_{n \rightarrow \infty} k_n a_m k_n^{-1}(z_0) = k a_m k^{-1}(z_0).
\end{align*}
Further,
\begin{align*}
\lim_{m \rightarrow \infty} k a_m k^{-1}(z_0) = \psi(k).
\end{align*}
So we can find a sequence $m_n \rightarrow \infty$ such that 
\begin{align*}
\lim_{n \rightarrow \infty} k_n a_{m_n} k_n^{-1}(z_0) = \psi(k).
\end{align*}
On the other hand, there exists a sequence $\epsilon_n \rightarrow 0$
\begin{align*}
\lim_{n \rightarrow \infty} K_\Omega(k_n a_{m_n} k_n^{-1}(z_0), \psi(k_n) + \epsilon_n \cdot {\bf n} (\psi(k_n)) ) \leq R.
\end{align*}
Since $\psi(k_n) + \epsilon_n \cdot {\bf n} (\psi(k_n)) \rightarrow x$, Propositions~\ref{prop:finite_dist} and~\ref{prop:cplx_disks} imply that $x = \psi(k)$. So $\psi(k_n) \rightarrow \psi(k)$ and thus $\psi: K \rightarrow \partial \Omega$ is continuous.

Next suppose that $k_1 M_a = k_2 M_a$ then 
\begin{align*}
k_2 a = k_1 a k_2^\prime
\end{align*}
for some $k_2^\prime \in M_a$. So
\begin{align*}
K_\Omega( k_1 a_n k_1^{-1}(z_0),  & k_2 a_n k_2^{-1}(z_0)) = K_\Omega( k_1 a_n k_1^{-1}(z_0),  k_1 a^n k_2^\prime k_2^{-1}(z_0)) \\
& = K_\Omega( z_0, k_2^\prime k_2^{-1}(z_0))
\end{align*}
so by Proposition~\ref{prop:finite_dist} and Proposition~\ref{prop:cplx_disks}, we see that $\psi(k_1) = \psi(k_2)$. Thus $\psi$ descends to a continuous map $\psi: K / M_a \rightarrow \partial \Omega$. 

Now suppose that $\psi(k_1) = \psi(k_2)$. Then by Proposition~\ref{prop:QI} and Lemma~\ref{lem:unif_shadow} we have
\begin{align*}
d_X^{\Haus}\left( \{ k_1 a_t K : t > 0\}, \{ k_2 a_t K : t >0\} \right) < \infty.
\end{align*}
But then $k_1M_a = k_2M_a$ by Theorem~\ref{thm:polar_coor}. So the map $\psi : K/M_a \rightarrow \partial \Omega$ is injective. Since $K/M_a$ is compact, the map $\psi : K/M_a \rightarrow \partial \Omega$ is a homeomorphism onto its image.

\end{proof}

\begin{lemma}\label{lem:cont_ext} With the notation above,  $\psi(K) = \Lc(\Omega)$. In particular, $\Lc(\Omega)$ is homeomorphic to $X(\infty)$, a sphere of dimension $\dim X - 1$. \end{lemma}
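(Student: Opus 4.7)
\emph{Plan.} First I would dispose of the inclusion $\psi(K)\subseteq\Lc(\Omega)$: by construction $\psi(k)=\lim_{t\to\infty}ka_tk^{-1}(z_0)$ and every $ka_tk^{-1}$ lies in $G\leq\Aut(\Omega)$.

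For the reverse inclusion, combining Lemma~\ref{lem:limit_sets_coincide} with the definition of $\Lc(\Omega;G)$ I may fix $x\in\partial\Omega$ and a sequence $g_n\in G$ with $g_n(z_0)\to x$. Using the $KAK$-decomposition (Theorem~\ref{thm:polar_coor}) I write $g_n=k_n a_{t_n}k_n'$ with $k_n,k_n'\in K$ and $t_n\geq 0$. Compactness of $K$ lets me pass to subsequences so that $k_n\to k$ and $k_n'\to k'$; meanwhile $K_\Omega(g_n z_0,z_0)\to\infty$ combined with Proposition~\ref{prop:QI} forces $t_n\to\infty$. Rewriting $g_n(z_0)=k_n a_{t_n}k_n^{-1}(k_nk_n'z_0)$ and using that $K$ acts by $K_\Omega$-isometries, I see that the points $k_n a_{t_n}k_n^{-1}(z_0)$ stay at uniformly bounded Kobayashi distance from $g_n(z_0)$.

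Next I would invoke Lemma~\ref{lem:unif_shadow} to approximate $k_n a_{t_n}k_n^{-1}(z_0)$ by points on the normal lines at $\psi(k_n)$: choose $s_n\in(0,r]$ with
\[
K_\Omega\bigl(k_n a_{t_n}k_n^{-1}(z_0),\,\psi(k_n)+s_n\cdot\mathbf{n}_\Omega(\psi(k_n))\bigr)\leq R.
\]
Since $K_\Omega(g_n z_0,z_0)\to\infty$ this bound forces $s_n\to 0$, so continuity of $\psi$ (established in the previous lemma) yields
\[
\psi(k_n)+s_n\cdot\mathbf{n}_\Omega(\psi(k_n))\longrightarrow\psi(k)
\]
in the Euclidean sense. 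After passing to a further subsequence so that $k_n a_{t_n}k_n^{-1}(z_0)\to y\in\partial\Omega$, two applications of Proposition~\ref{prop:finite_dist} (to the two pairs of sequences above, each at bounded Kobayashi distance) give $T^\Cb_x\partial\Omega=T^\Cb_y\partial\Omega=T^\Cb_{\psi(k)}\partial\Omega$.

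The crux is to upgrade this equality of complex tangent hyperplanes to $x=\psi(k)$. If $x\neq\psi(k)$, Proposition~\ref{prop:finite_dist} would produce an open segment of a complex affine line lying inside $\partial\Omega$ and containing $\psi(k)$ in its interior, hence a complex affine disk in $\partial\Omega$ through $\psi(k)$; but the proof of Lemma~\ref{lem:unif_shadow} exhibited $\psi(k)$ as the attracting fixed point $x_g^+$ of a hyperbolic element $g\in G$, directly contradicting Proposition~\ref{prop:cplx_disks}. Hence $x=\psi(k)\in\psi(K)$, completing $\Lc(\Omega)\subseteq\psi(K)$. Combining the two inclusions with the homeomorphism $\psi\colon K/M_a\to\psi(K)$ from the previous lemma, the identification $K/M_a\cong X(\infty)$ recalled in Section~\ref{sec:polar_coor}, and the standard fact that the geodesic boundary of a rank-one Riemannian symmetric space of non-compact type is homeomorphic to a sphere of dimension $\dim X-1$, gives the final conclusion.
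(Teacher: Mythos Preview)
Your proof is correct and follows essentially the same route as the paper's: $KAK$-decomposition of $g_n$, a Kobayashi distance bound to pass from $g_n(z_0)$ to $k_n a_{t_n} k_n^{-1}(z_0)$, Lemma~\ref{lem:unif_shadow} to land on the normal line at $\psi(k_n)$, and then Propositions~\ref{prop:finite_dist} and~\ref{prop:cplx_disks} to conclude $x=\psi(k)$. One minor point: the definition of $\Lc(\Omega;G)$ only guarantees $g_n(w)\to x$ for \emph{some} $w\in\Omega$, not necessarily $w=z_0$; the paper works with this general $w$ throughout, and your argument goes through verbatim after this substitution (the shadowing bound simply picks up an extra $K_\Omega(w,z_0)$).
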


\begin{proof} Since $G$ has real rank one, $K/M_a$ is homeomorphic to a $X(\infty)$, see Section~\ref{sec:polar_coor}. So the ``in particular'' part will follow from the first claim and the previous lemma. 

Suppose that $x \in \Lc(\Omega)$. Then $x \in \Lc(\Omega; G)$ by Lemma~\ref{lem:limit_sets_coincide}. So there exists some $g_n \in G$ and some $w \in \Omega$ such that $g_n w \rightarrow x$. 

Now $g_n = k_n a_{t_n} \ell_n$ for some $k_n, \ell_n \in K$ and $t_n > 0$, see Theorem~\ref{thm:polar_coor}. By passing to a subsequence we can suppose that $k_n \rightarrow k_\infty$. If 
\begin{align*}
M:=\sup_{k \in K} K_\Omega(kw,w),
\end{align*}
then
\begin{align*}
K_\Omega(k_n a_{t_n} k_n^{-1} w, g_n w) = K_\Omega( w, k_n \ell_n w) \leq M.
\end{align*}
So by Lemma~\ref{lem:unif_shadow} there exists some $s_n >0$ such that 
\begin{align*}
K_\Omega\left(g_nw, \psi(k_n) + r e^{-2s_n} {\bf n}_\Omega(\psi(k_n))\right) \leq R+M+K_\Omega(z,z_0)
\end{align*}
for all $n \in \Nb$. Then since $s_n \rightarrow \infty$ we have
\begin{align*}
\lim_{n \rightarrow \infty} \psi(k_n) + r e^{-2s_n} {\bf n}_\Omega(\psi(k_n)) = \psi(k_\infty).
\end{align*}
Then by Proposition~\ref{prop:finite_dist} and Proposition~\ref{prop:cplx_disks} we have
\begin{align*}
x=\lim_{n \rightarrow \infty} g_nw = \psi(k_\infty).
\end{align*}
So $x \in \psi(K)$. Since $x \in \Lc(\Omega)$ was arbitrary, we see that $\psi(K) = \Lc(\Omega)$.

 \end{proof}

\subsection{Dimension of the limit set}\label{sec:dim}

In this section we prove that either 
 \begin{enumerate}
 \item $\dim \Lc(\Omega) \leq \dim \partial \Omega - 2$ or
 \item $\Lc(\Omega) = \partial \Omega$ and $\Omega$ is biholomorphic to the unit ball. 
 \end{enumerate}

Fix some $z_0 \in \Omega$. Since $G$ acts properly on $\Omega$, $G \cdot z_0$ is a closed topological submanifold of $\Omega$. We first observe that
\begin{align*}
\dim G \cdot z_0 \geq \dim \Lc(\Omega)+1.
\end{align*}
To see this let $K_{z_0}$ denote the stabilizer of $z_0$. Since $G$ acts properly on $\Omega$, $K_{z_0}$ is a compact subgroup. Now let $K$ be a maximal compact subgroup of $G$. Then $K_{z_0}$ is conjugate to a subgroup of $K$, see for instance~\cite[Chapter VI, Theorem 2.1]{H2001}. Thus $\dim K_{z_0} \leq \dim K$. Then 
\begin{align}
\label{eq:ineq}
\dim \Lc(\Omega) +1 & =\dim X(\infty)+1 =  \dim X = \dim G-\dim K \\
& \leq \dim G-\dim K_{z_0} = \dim G \cdot z_0. \nonumber
\end{align}

\noindent \textbf{Case 1:} $\dim \Lc(\Omega) = 2d-1$. Then $\dim G \cdot z_0=2d$. Then $G \cdot z_0$ is an open, closed, and connected subset of $\Omega$. Hence $G \cdot z_0 = \Omega$. We next claim that $\Omega$ is a bounded symmetric domain. Since $G$ is a simple Lie group acting transitively on $\Omega$ there are many ways to establish this. Here is one argument: since $G$ is a simple Lie group, a theorem of Borel implies that $G$ has a cocompact lattice $\Gamma \leq G$. Then since $G$ acts transitively on $\Omega$, the group $\Gamma$ acts cocompactly on $\Omega$. Then by a theorem of Frankel~\cite{F1989}, $\Omega$ is a bounded symmetric domain. Finally, since $G$ has real rank one, the classification of all bounded symmetric domains implies that $\Omega$ is biholomorphic to the ball. \newline

\noindent \textbf{Case 2:} $\dim \Lc(\Omega) = 2d-2$. Then $\dim G \cdot z_0 \geq  2d-1$. If $\dim G \cdot z_0 = 2d$ then the argument in Case (1) implies that $\Omega$ is biholomorphic to the ball. So assume that $\dim G \cdot z_0 = 2d-1$. Then Equation~\eqref{eq:ineq} implies that 
\begin{align*}
\dim K_{z_0} = \dim K.
\end{align*}
Further, $\dim X= \dim \Lc(\Omega)+1=2d-1$ is odd. So, by the classification of negatively curved symmetric spaces, we see that $X$ is isomorphic to real hyperbolic $(2d-1)$-space. Thus $K$ is locally isomorphic to $\SO(2d-1)$. So 
\begin{align*}
\dim K_{z_0} = \dim K = \dim \SO(2d-1)= \frac{2d(2d-1)}{2} = d(2d-1).
\end{align*}

Next consider the homomorphism $\rho:K_{z_0} \rightarrow \GL_d(\Cb)$ given by $\rho(k) = d_{\Cb}(k)_{z_0}$. Since $\Aut(\Omega)$ preserves the Bergman metric, a complete Riemannian metric on $\Omega$, any $\varphi \in \Aut(\Omega)$ is determined by $\varphi(z_0)$ and $d_{\Cb}(\varphi)_{z_0}$, see~\cite[Chapter 1, Lemma 11.2]{H2001}. So $K_{z_0}$ is isomorphic to $\rho(K_{z_0})$. However, since $\rho(K_{z_0})$ preserves the Bergman metric at $z_0$ we then see that $\rho(K_{z_0})$ is conjugate to a subgroup of ${ \rm U}(d)$. So 
\begin{align*}
\dim K_{z_0} = \dim \rho(K_{z_0}) \leq \dim { \rm U}(d) = d^2.
\end{align*}
So
\begin{align*}
2d-1 \leq d
\end{align*}
which is only possible if $d=1$. But $\Omega$, being convex, is simply connected. So by the Riemman mapping theorem $\Omega$ is biholomorphic to the disk and so $G \cdot z_0 = \Omega$ which contradicts our assumption that $\dim \Lc(\Omega)=2d-2$. 

\appendix 

 \section{Semisimple Lie groups and symmetric spaces}\label{sec:basic_properties}

In the proofs of Theorems~\ref{thm:main_convex}, we use some basic properties about semisimple Lie groups and the symmetric spaces they act on. In this section we recall these properties and give references. 

For the rest of the section we make the following assumption.

\begin{assumption} $G$ is a connected semisimple Lie group with finite center. \end{assumption}

Let $\gL$ be the Lie algebra of $G$. Then there is a Lie algebra decomposition 
\begin{align*}
\gL=\gL_1 \oplus \dots \oplus \gL_n
\end{align*}
into simple Lie subalgebras, see for instance~\cite[Chapter 1, Theorem 1.54]{K2002}). Then let $G_i$ be the connected subgroup of $G$ generated by $\exp(\gL_i)$. The following fact is standard (for a proof see for instance~\cite[Lemma A.1]{Z2017b}). 

\begin{lemma} Each $G_i$ is a closed subgroup of $G$ and $G$ is the almost direct product of $G_1,\dots, G_n$. \end{lemma}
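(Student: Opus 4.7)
The plan is to check in turn the three requirements in the definition of almost direct product (commuting pairs, product equals $G$, finite pairwise intersections) and closedness of each $G_i$.

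First, I would record the algebraic input: since $\gL_i$ and $\gL_j$ are distinct simple ideals of $\gL$, their bracket lies in $\gL_i \cap \gL_j = 0$, so $[\gL_i, \gL_j]=0$. Consequently $\exp(X)$ commutes with $\exp(Y)$ for $X\in\gL_i$, $Y\in\gL_j$, and since the $G_k$ are generated by such exponentials and are connected, $G_i$ and $G_j$ commute elementwise. For the product decomposition, the multiplication map $G_1\times\cdots\times G_n\to G$ has differential at the identity equal to the sum map $\gL_1\oplus\cdots\oplus\gL_n\to\gL$, which is the identity. So its image contains a neighborhood of $e$; combined with connectedness of $G$ and the fact that the image is already a subgroup (because the $G_i$ commute), we conclude $G=G_1\cdots G_n$.

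Next I would prove closedness via the adjoint representation. Because $\gL$ is semisimple, every derivation is inner, so $\Aut(\gL)$ has Lie algebra $\ad(\gL)$ and its identity component is $\Int(\gL)$, which is closed. The ideal decomposition gives $\Int(\gL)=\Int(\gL_1)\times\cdots\times\Int(\gL_n)$, so each factor $\Int(\gL_i)$ is closed in $\Aut(\gL)$. Now $\Ad(G)=\Int(\gL)$ and $\Ad(G_i)$ is the connected subgroup of $\Aut(\gL)$ generated by $\ad(\gL_i)$, which is $\Int(\gL_i)$. Therefore
\[
\Ad^{-1}\bigl(\Int(\gL_i)\bigr) = G_i \cdot Z(G),
\]
since any $g$ with $\Ad(g)\in\Int(\gL_i)$ can be written $g=g_i z$ with $g_i\in G_i$ and $z\in\ker\Ad=Z(G)$. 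The left-hand side is closed. Because $Z(G)$ is finite by assumption, $G_i\cdot Z(G)$ has finitely many connected components, and $G_i$ (being connected and containing a neighborhood of $e$) is its identity component, hence closed in $G$.

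Finally, for the finite intersection property, fix $i\neq j$ and let $g\in G_i\cap G_j$. Since $g\in G_j$, it commutes with $G_i$; since $g\in G_i$, it commutes with every $G_k$ for $k\neq i$; combining, $g$ commutes with each $G_k$ and hence, by the product decomposition already established, with all of $G=G_1\cdots G_n$. Thus $G_i\cap G_j\subseteq Z(G)$, which is finite. The main obstacle is the closedness step, since the other three requirements are essentially formal consequences of the bracket relations; closedness genuinely uses the finiteness of the center through the adjoint-representation argument above.
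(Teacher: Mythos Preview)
Your proof is correct. The paper does not actually give an argument here but simply cites \cite[Lemma A.1]{Z2017b}; your self-contained treatment---establishing commutation and the product decomposition from $[\gL_i,\gL_j]=0$, then proving closedness by identifying $G_i$ with the identity component of the closed subgroup $\Ad^{-1}(\Int(\gL_i))=G_i\cdot Z(G)$ via finiteness of $Z(G)$, and finally observing $G_i\cap G_j\subseteq Z(G)$---is a standard and complete way to handle this lemma.
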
 

We now make the additional assumption that $G$ has no compact factors, more precisely: 

\begin{addassumption} Every $G_i$ is non-compact. \end{addassumption}

Next let $\Ad: G \rightarrow SL(\gL)$ denote the adjoint representation. The kernel of $\Ad$ is just the center of $G$, denoted $Z(G)$, and so we have an isomorphism $G/Z(G) \cong \Ad(G)$. 

\begin{definition}
We then say an element $g \in G$ is:
\begin{enumerate}
\item \emph{semisimple} if $\Ad(g)$ is diagonalizable in $\SL(\gL^{\Cb})$,
\item \emph{hyperbolic} if $\Ad(g)$ is diagonalizable in $\SL(\gL)$ with all positive eigenvalues,
\item \emph{unipotent} if $\Ad(g)$ is unipotent in $\SL(\gL)$, and
\item \emph{elliptic} if $\Ad(g)$ is elliptic in $\SL(\gL)$.
\end{enumerate}
\end{definition}

Since $G$ is semisimple, every element can be decomposed into a product of a elliptic, hyperbolic, and unipotent element. More precisely:

\begin{theorem}[Jordan Decomposition]\label{thm:jordan_decomp} If $g \in G$, then there exists $g_e, g_h, g_u \in G$ such that 
\begin{enumerate}
\item $g=g_eg_hg_u$,
\item $g_e \in G$ is elliptic, $g_h \in G$ is hyperbolic, $g_u \in G$ is unipotent, and 
\item $g_e,g_h,g_u$ commute. 
\end{enumerate}
Moreover, the $g_e,g_h,g_u$ are unique up to factors in $\ker \Ad = Z(G)$.
\end{theorem}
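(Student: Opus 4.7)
The plan is to apply the classical multiplicative Jordan decomposition in $\SL(\gL)$ to $\Ad(g)$ and then lift the three resulting factors back to $G$ through the adjoint representation. The first step is linear-algebraic: $\Ad(g) \in \SL(\gL)$ admits a unique factorization $\Ad(g) = EHU$ into pairwise commuting matrices, with $E$ semisimple with unit-modulus eigenvalues, $H$ semisimple with positive real eigenvalues, and $U$ unipotent. Because $E$, $H$, and $U$ can each be expressed as polynomials in $\Ad(g)$, any matrix commuting with $\Ad(g)$ also commutes with each of them.

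The second step is to verify that $E$, $H$, $U$ all lie in $\Ad(G)$. Under the running assumption that $G$ is connected semisimple with no compact factors, $\Ad(G)$ coincides with the identity component of $\Aut(\gL) \leq \GL(\gL)$, which is a real algebraic subgroup. A standard result on real algebraic groups (see, e.g., Borel's \emph{Linear Algebraic Groups}) says that the multiplicative Jordan decomposition preserves any real algebraic subgroup of $\GL(V)$, so $E, H, U \in \Aut(\gL)$; that they land in the identity component $\Ad(G)$ follows because $H$ and $U$ sit on one-parameter subgroups through the identity, and $E$ lies in the connected centralizer of $H$.

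The third step is to lift from $\Ad(G)$ back to $G$. Hyperbolic and unipotent elements of $\Ad(G)$ each lie on a unique one-parameter subgroup of their own type, so the exponential map provides canonical lifts $g_h, g_u \in G$ with $\Ad(g_h) = H$ and $\Ad(g_u) = U$. Set $g_e := g\, g_u^{-1} g_h^{-1}$, so that $g = g_e g_h g_u$ by construction, and $\Ad(g_e) = E$ forces $g_e$ to be elliptic. Pairwise commutativity follows from the canonical nature of the lifts: since $g_h$ and $g_u$ arise as exponentials of elements of $\gL$ determined polynomially by $\Ad(g)$, they commute with $g$ itself, hence with each other, and hence also with $g_e$.

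Uniqueness of the factorization up to $Z(G)$-factors is then immediate from uniqueness of the linear Jordan decomposition combined with $\ker \Ad = Z(G)$. The main obstacle is the lifting step: having decomposed $\Ad(g)$ inside $\Ad(G)$, one must produce lifts in $G$ that genuinely pairwise commute, and not merely modulo the kernel $Z(G)$. This is where the extra structure of semisimple Lie groups beyond linear algebra enters, via the uniqueness of the one-parameter subgroups containing a given hyperbolic or unipotent element.
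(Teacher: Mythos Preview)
The paper does not actually prove this theorem: its entire proof is the single line ``See for instance~\cite[Theorem 2.19.24]{E1996}.'' So there is no paper argument to compare against beyond the reference itself.

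Your sketch is essentially the standard proof one would find in such a reference (decompose $\Ad(g)$ in $\GL(\gL)$, push the factors into the algebraic group $\Aut(\gL)$ and then into its identity component $\Ad(G)$, and lift $H$ and $U$ canonically via $\log$ and $\exp_G$). Two small points of looseness, both easily repaired:
\begin{itemize}
\item Your reason for $E \in \Ad(G)$ (``lies in the connected centralizer of $H$'') is not the clean argument. Once you know $H,U$ lie in the identity component (via their one-parameter subgroups) and $\Ad(g)$ does by hypothesis, simply write $E = \Ad(g)\,(HU)^{-1}$.
\item The identification $\Ad(G) = \Aut(\gL)^0$ holds for any connected semisimple $G$; the ``no compact factors'' hypothesis is not needed here.
\end{itemize}
With these tweaks the argument is correct and matches the standard treatment the paper is citing.
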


\begin{proof} See for instance~\cite[Theorem 2.19.24]{E1996}. \end{proof}

A subgroup $A \leq G$ is called a \emph{Cartan subgroup} if $A$ is closed, connected, abelian, and every element in $A$ is hyperbolic. The \emph{real rank of $G$}, denoted by ${\rm rank}_{\Rb}(G)$, is defined to be
\begin{align*}
{\rm rank}_{\Rb}(G) = \max \{ \dim A: A \text{ is a Cartan subgroup of } \Ad(G)\}.
\end{align*}
We will need the following fact about Cartan subgroups.

\begin{proposition}\label{prop:cartan_conj} If $g \in G$ is hyperbolic and $A \leq G$ is a maximal Cartan subgroup, then $g$ is conjugate to an element of $Z(G)A$. 
\end{proposition}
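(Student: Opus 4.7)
The plan is to reduce the statement to a conjugacy result about maximal $\mathbb{R}$-split abelian subalgebras of $\gL$, since this is a standard output of the structure theory of real semisimple Lie algebras. Write $\pi: G \to \Ad(G)$ for the adjoint representation and note $\ker\pi = Z(G)$, so the statement is equivalent to showing $\Ad(g)$ is $\Ad(G)$-conjugate to an element of $\Ad(A)$.

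First I would observe that since $g$ is hyperbolic, $\Ad(g)$ is diagonalizable over $\Rb$ with positive eigenvalues, so there is a unique $X \in \gL$ with $\ad(X)$ diagonalizable over $\Rb$ and $\Ad(g) = \exp(\ad X)$ (obtained by taking real logarithms on the eigenspaces). Call such elements $X$ \emph{$\Rb$-semisimple}. I would next invoke the standard structural fact, due to Mostow (see e.g.\ Helgason~\cite[Chapter VI]{H2001} or Knapp~\cite[Chapter VI]{K2002}): every $\Rb$-semisimple element of $\gL$ lies in some maximal abelian subspace $\aL' \subset \gL$ consisting of $\Rb$-semisimple elements, and any two such maximal abelian subspaces are conjugate under $\Ad(G)$. (Concretely, one picks a Cartan involution $\theta$ such that $X \in \pL$, where $\gL = \kL \oplus \pL$ is the associated Cartan decomposition, then places $X$ in a maximal abelian subspace of $\pL$; any two such are conjugate by the associated maximal compact.)

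Next I would verify that the Lie algebra $\aL$ of our given maximal Cartan subgroup $A$ is precisely such a maximal abelian subspace of $\Rb$-semisimple elements. Since $A$ is closed, connected, and abelian with every element hyperbolic, $\aL$ consists of $\Rb$-semisimple elements and $A = \exp(\aL)$. If $\aL$ were not maximal among such subspaces, then enlarging it would produce a larger connected abelian subgroup of hyperbolic elements, contradicting the maximality of $\dim A = \mathrm{rank}_{\Rb}(\Ad(G))$. Applying the conjugacy result to $X$ and $\aL$, there exists $h \in G$ with $\Ad(h) X \in \aL$, so
\begin{align*}
\Ad(hgh^{-1}) = \exp\bigl(\ad(\Ad(h)X)\bigr) \in \exp(\ad \aL) = \Ad(A).
\end{align*}
Therefore $hgh^{-1} \in \Ad^{-1}(\Ad(A)) = Z(G) \cdot A$, which is the desired conclusion.

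The main obstacle is purely bookkeeping: one must check that the $\Rb$-semisimple Lie algebra element $X$ with $\Ad(g) = \exp(\ad X)$ exists and is unique (so that the conjugacy of $X$'s transfers to $\Ad(g)$'s), and that the preimage under $\Ad$ of $\Ad(A)$ is exactly $Z(G)A$ rather than something larger—this uses connectedness of $A$ and the fact that $Z(G) = \ker \Ad$. Apart from this, the proof is just an application of the Mostow/Iwasawa structure theory already recorded in the standard references.
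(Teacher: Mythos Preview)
Your proposal is correct. The paper does not actually give a proof of this proposition; it simply writes ``See for instance~\cite[Chapter IX, Theorem 7.2]{H2001}.'' Your sketch unpacks precisely the structure-theoretic argument that this citation encodes: lift $\Ad(g)$ to an $\Rb$-semisimple $X\in\gL$ via the logarithm (using that derivations of a semisimple Lie algebra are inner), invoke conjugacy of maximal abelian $\Rb$-diagonalizable subalgebras, and pull back through $\Ad$. So there is no genuine difference in approach to report---you have supplied the content behind the reference, and your bookkeeping remarks about uniqueness of $X$ and the identity $\Ad^{-1}(\Ad(A))=Z(G)A$ are exactly the small points one needs to check.
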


\begin{proof} See for instance~\cite[Chapter IX, Theorem 7.2]{H2001}. \end{proof}

Now fix $K \leq G$ a maximal compact subgroup. Then the quotient manifold $X=G/K$ is diffeomorphic to $\Rb^{\dim X}$ and has a unique (up to scaling) non-positively curved $G$-invariant Riemannian metric $g$, see~\cite[Section 2.2]{E1996} for details. Let $d_X$ denote the distance induced by $g$.

Using the Jordan decomposition we make the following definition. 

\begin{definition} If $g \in G$ has a Jordan decomposition $g=e_gh_gu_g$ with $\Ad(h_g) \neq 1$ and $u_g=1$ then we say that $g$ is \emph{axial}.
\end{definition}

Notice that every hyperbolic element is obviously axial. We can describe the action of axial and hyperbolic elements on $X$ as follows.

\begin{theorem}\label{thm:axial} With the notation above, $g \in G$ is axial if and only if there exists a geodesic $\sigma: \Rb \rightarrow X$ such that $g\sigma(t) = \sigma(t+T)$ for some $T > 0$. 
\end{theorem}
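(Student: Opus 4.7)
The plan is to prove the two directions separately.

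For the forward direction, suppose $g$ has Jordan decomposition $g = e_g h_g$ with $u_g = 1$ and $\Ad(h_g) \neq 1$. First I apply Proposition~\ref{prop:cartan_conj} to conjugate $h_g$ into $Z(G) A$ for a maximal Cartan subgroup $A = \exp(\aL)$, where $\aL \subset \pL$ in the Cartan decomposition $\gL = \kL \oplus \pL$. Since conjugation in $G$ preserves both axiality and the property of translating a geodesic, I may assume $h_g = z \exp(Y)$ with $z \in Z(G)$ and $0 \neq Y \in \aL$. Because $G$ has finite center and no compact factors, $Z(G) \subseteq K$, so $z$ acts trivially on $X = G/K$. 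Hence $h_g$ acts as $\exp(Y)$, and translates the unit-speed geodesic $t \mapsto \exp(tY/\norm{Y}) \cdot K$ by $T_0 = \norm{Y} > 0$.

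To pass from $h_g$ to $g$, I invoke the flat-strip (product-decomposition) theorem for semisimple isometries of CAT(0) spaces: the min-set $\mathrm{Min}(h_g) \subset X$ is a closed, totally geodesic subspace isometric to $\Rb \times F$, with $h_g$ acting as translation by $T_0$ on $\Rb$ and trivially on $F$. Since $e_g$ commutes with $h_g$, it preserves both $\mathrm{Min}(h_g)$ and its splitting. If $e_g$ reversed the $\Rb$-factor, then $e_g h_g e_g^{-1}$ would act on $\mathrm{Min}(h_g)$ as $h_g^{-1}$; but commutativity gives $e_g h_g e_g^{-1} = h_g$, forcing $h_g = h_g^{-1}$ on $\mathrm{Min}(h_g)$ and contradicting $T_0 > 0$. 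Hence $e_g$ acts on $\Rb$ as a translation. Because $e_g$ is elliptic, it lies in a compact subgroup of $G$ modulo $Z(G)$, so its orbits on $\mathrm{Min}(h_g)$ are relatively compact, and Cartan's fixed-point theorem yields a fixed point $(t_0, f_0) \in \Rb \times F$. This forces the translation on $\Rb$ to be zero and $e_g f_0 = f_0$, so $g = e_g h_g$ translates the geodesic $\Rb \times \{f_0\}$ by $T_0 > 0$.

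For the reverse direction, suppose $g \sigma(t) = \sigma(t + T)$ with $T > 0$; then $d_X(g^n \sigma(0), \sigma(0)) = n T$ grows linearly in $n$. Writing $g = e_g h_g u_g$ with commuting factors, the standard growth estimates on the symmetric space give $d_X(e_g^n x, x)$ bounded, $d_X(u_g^n x, x)$ at most sublinear, and $d_X(h_g^n x, x)$ either identically zero (if $\Ad(h_g) = 1$) or growing linearly; the linear growth of $d_X(g^n x, x)$ with slope $T$ then forces $u_g = 1$ and $\Ad(h_g) \neq 1$, so $g$ is axial. The principal technical obstacle is the flat-strip decomposition of $\mathrm{Min}(h_g)$ used in the forward direction; in the rank-one setting needed for Theorem~\ref{thm:main_convex}, $\mathrm{Min}(h_g)$ is a single geodesic (no $F$-factor), and the argument simplifies considerably: $e_g$ preserves the unique axis of $h_g$, cannot reflect it by the Jordan--commutativity argument above, and being elliptic must then act trivially on it.
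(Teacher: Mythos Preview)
The paper does not give its own proof of this theorem; it simply cites Eberlein~\cite[Proposition 2.19.18]{E1996}. So there is no detailed argument to compare against, and your write-up supplies considerably more than the paper does.

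Your forward direction is correct. The reduction via Proposition~\ref{prop:cartan_conj} to $h_g = z\exp(Y)$ with $z \in Z(G) \subset K$ is fine, and the min-set argument (that $\mathrm{Min}(h_g)$ splits as $\Rb \times F$, that $e_g$ preserves it and its splitting, cannot reverse the $\Rb$-factor, and has a fixed point by Cartan) is the standard route and is carried out cleanly.

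Your reverse direction, however, has a genuine gap in the higher-rank case (and Theorem~\ref{thm:axial} is stated \emph{before} the rank-one assumption is imposed). Your growth estimates do establish $\Ad(h_g) \neq 1$: using commutativity you get
\[
d_X(g^n x, x) \leq d_X(e_g^n x, x) + d_X(h_g^n x, x) + d_X(u_g^n x, x),
\]
and if $\Ad(h_g) = 1$ the right side is sublinear, contradicting $d_X(g^n x, x) = nT$. But the same estimates do \emph{not} force $u_g = 1$. In higher rank there exist commuting pairs $h_g, u_g$ with $\Ad(h_g)$ nontrivial hyperbolic and $u_g$ nontrivial unipotent (e.g.\ $h_g = \mathrm{diag}(e,e,e^{-2})$ and $u_g = I + E_{12}$ in $\SL_3(\Rb)$); for such $g = h_g u_g$ one still has $d_X(g^n x, x)$ growing linearly with the same slope $\tau(h_g)$, yet $g$ is not axial. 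The missing ingredient is that if $g$ translates a geodesic then its displacement function $x \mapsto d_X(gx,x)$ \emph{achieves} its infimum, so $g$ is semisimple as a CAT(0) isometry; one then shows (and this is exactly what Eberlein does in the cited proposition) that a non-identity unipotent factor prevents the infimum from being attained. In rank one your remark at the end is correct: once $\Ad(h_g) \neq 1$, the compactness of $C_G(h_g)/\langle h_g \rangle$ forces $u_g = 1$ automatically (compare the proof of Proposition~\ref{prop:translation_dist}), so the gap disappears there.
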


\begin{proof} See for instance~\cite[Proposition 2.19.18]{E1996}. \end{proof}

\begin{theorem}\label{thm:hyp_ss} With the notation above, if $g \in G$ is hyperbolic, then:
\begin{enumerate}
\item there exists a one-parameter subgroup $g_t$ of hyperbolic elements such that $g \in Z(G)\{ g_{t} : t \in \Rb\}$, and
\item there exists some point $x_0 \in X$ such that the curve $t \rightarrow g_t(x_0)$ is a geodesic in $(X,d_X)$. 
\end{enumerate}
Conversely, for any geodesic $\sigma : \Rb \rightarrow X$ there exists a one-parameter subgroup $h_t$ of hyperbolic elements such that $h_t(\sigma(s)) = \sigma(s+t)$ for all $s,t \in \Rb$.
\end{theorem}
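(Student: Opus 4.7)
The plan is to reduce both parts to standard facts about geodesics in the Riemannian symmetric space $X = G/K$ combined with Proposition~\ref{prop:cartan_conj}. Specifically, let $\gL = \kL \oplus \pL$ be the Cartan decomposition associated with $K$ and fix a maximal abelian subspace $\aL \subset \pL$; then $A := \exp(\aL)$ is a maximal Cartan subgroup of $G$, and under the canonical identification $T_{eK} X \cong \pL$ the curves $t \mapsto \exp(tY) K$ with $Y \in \pL$ are precisely the (unit-speed, after rescaling $Y$) geodesics of $X$ through $eK$. Moreover, for $Y \in \pL$ the operator $\operatorname{ad}(Y)$ is symmetric with respect to a suitable inner product on $\gL$, so $\Ad(\exp(tY)) = \exp(t\operatorname{ad}(Y))$ is diagonalizable with positive real eigenvalues and $\exp(tY)$ is hyperbolic in the sense of the paper.

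For parts (1) and (2), I would apply Proposition~\ref{prop:cartan_conj} to the given hyperbolic $g$ with this choice of $A$, obtaining $h \in G$, $z \in Z(G)$, and $a_0 \in A$ with $g = z \cdot h a_0 h^{-1}$ (using centrality of $z$). Writing $a_0 = \exp(X_0)$ for some $X_0 \in \aL$, the one-parameter subgroup $g_t := h \exp(tX_0) h^{-1}$ lies entirely inside the Cartan subgroup $hAh^{-1}$, so each $g_t$ is hyperbolic, and by construction $g = z \cdot g_1 \in Z(G)\{g_t : t \in \Rb\}$. Taking $x_0 := hK \in X$ then gives $g_t \cdot x_0 = h \exp(tX_0) K$, which is a geodesic in $X$ because $X_0 \in \pL$ and $h$ acts on $X$ by isometries.

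For the converse, given a geodesic $\sigma \colon \Rb \to X$ I would pick $g_0 \in G$ with $g_0 K = \sigma(0)$ and observe that $t \mapsto g_0^{-1} \sigma(t)$ is a geodesic through $eK$, hence equals $t \mapsto \exp(tY) K$ for a unique $Y \in \pL$. The one-parameter subgroup $h_s := g_0 \exp(sY) g_0^{-1}$ then satisfies $h_s \sigma(t) = g_0 \exp((s+t)Y) K = \sigma(s+t)$, and each $h_s$ is hyperbolic because hyperbolicity is defined in terms of $\Ad(\cdot)$ and is manifestly preserved under conjugation. No real obstacle arises in the argument itself; it is simply a matter of transporting either the given hyperbolic element or the given geodesic into the ``flat model'' $\exp(\pL) \cdot eK$ where the conclusion is automatic. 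The genuine inputs are the conjugacy of maximal Cartan subgroups (which underlies Proposition~\ref{prop:cartan_conj}) and the identification of geodesics through $eK$ with $\exp(\pL) \cdot eK$; both are standard and can be cited from~\cite{H2001} or~\cite{E1996}.
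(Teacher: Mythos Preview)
Your argument is correct. The paper itself does not give a proof of this theorem; it simply cites~\cite[Proposition 2.19.18]{E1996}. What you have written is essentially the standard argument one would extract from that reference (or from~\cite{H2001}): conjugate a hyperbolic element into a fixed Cartan subgroup $A=\exp(\aL)$ via Proposition~\ref{prop:cartan_conj}, then use that $\aL\subset\pL$ together with the identification of geodesics through $eK$ with curves $t\mapsto\exp(tY)K$ for $Y\in\pL$. So your approach and the paper's (implicit, cited) approach coincide.

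One small remark: in the converse you assert each $h_s=g_0\exp(sY)g_0^{-1}$ is hyperbolic because $Y\in\pL$ makes $\operatorname{ad}(Y)$ symmetric and hence $\Ad(\exp(sY))$ has positive real eigenvalues. This is correct, but note that it gives slightly more than what Proposition~\ref{prop:cartan_conj} alone would provide, since a general $Y\in\pL$ need not lie in the chosen $\aL$; you are implicitly using that every element of $\pL$ is $\Ad(K)$-conjugate into $\aL$ (or, equivalently, the direct observation about symmetry of $\operatorname{ad}(Y)$ that you stated). Either justification is fine and standard.
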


\begin{proof} See for instance~\cite[Proposition 2.19.18]{E1996}. \end{proof}

We now focus on the real rank one case. 

\begin{addassumption} ${\rm rank}_{\Rb}(G)=1$. \end{addassumption}

Since 
\begin{align*}
{ \rm rank}_{\Rb}(G) = \sum_{i=1}^n { \rm rank}_{\Rb}(G_i)
\end{align*}
this implies that $G$ is a simple Lie group. In addition, by the classification of simple Lie groups, $G$ is locally isomorphic to one of $\SO(k,1)$, $\SU(k,1)$, $\Sp(k,1)$, or $F^{-20}_{4}$. Further, the associated symmetric space $(X,d_X)$ is either a real hyperbolic space, a complex hyperbolic space, a quaternionic hyperbolic space, or the Cayley-hyperbolic plane. In all these cases, $(X,d_X)$ is a negatively curved Riemannian manifold. For details see~\cite[Chapter 19]{M1973}.

Since $X$ is a non-positively curved simply connected Riemannian manifold, there exists a compactification called the \emph{geodesic compactification} which can be defined as follows. Let $\Gc$ denote the set of unit speed geodesic rays $\sigma:[0,\infty) \rightarrow X$. Then we say two geodesics $\sigma_1, \sigma_2 \in \Gc$ are equivalent if 
\begin{align*}
\lim_{t \rightarrow \infty} d_X(\sigma_1(t), \sigma_2(t)) < \infty.
\end{align*}
Finally let $X(\infty) = \Gc / \sim$. This gives a compactification $\overline{X} = X \cup X(\infty)$ of $X$ as follows. First fix a point $x_0 \in X$. Since $X$ is non-positively curved, for any $x \in X$ there exists a unique geodesic segment $\sigma_x$ joining $x_0$ to $x$. We then say that a sequence $x_n \in X$ converges to a point $\sigma \in X(\infty)$ if the geodesic segments $\sigma_{x_n}$ converge locally uniformly to $\sigma$. This construction does not depend on the initial choice of $x_0$. See~\cite[Section 1.7]{E1996} for details. 

Since $G$ acts by isometries on $X$ and the construction of $X(\infty)$ is independent of base point, the action of $G$ on $X$ extends to an action on $X \cup X(\infty)$. In general this action is only continuous, but for negatively curved symmetric spaces we have the following. 

\begin{theorem} With the notation above, $\overline{X}$ has a smooth structure, with this structure $X(\infty)$ is diffeomorphic to a sphere of dimension $\dim X -1$, and the action of $G$ on $X$ extends to a smooth action on $X(\infty)$. \end{theorem}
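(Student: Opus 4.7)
The plan is to use the rank-one hypothesis to reduce the statement to the explicit ball models of the four families of rank-one symmetric spaces of noncompact type. Under the standing assumptions, $G$ is a simple Lie group of real rank one with finite center, so by the classification of simple Lie groups $G$ is locally isomorphic to exactly one of $\SO(k,1)$, $\SU(k,1)$, $\Sp(k,1)$, or $F_4^{-20}$, and correspondingly $X = G/K$ is isometric, up to scaling the Riemannian metric, to a real hyperbolic space, a complex hyperbolic space, a quaternionic hyperbolic space, or the Cayley hyperbolic plane. In each case $X$ admits a standard \emph{ball model}: there is a real vector space $V$ of dimension $n = \dim X$ and an identification of $X$ with the open unit ball $B \subset V$ such that $K$ acts on $B$ by orthogonal transformations fixing $0$, and the full group $G$ acts on $\overline{B} = B \cup S^{n-1}$ by smooth diffeomorphisms (M\"obius transformations in the real case, fractional linear maps in the complex case, and analogous formulas in the quaternionic and Cayley cases). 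This is classical; see for example Chapter 19 of Mostow's \emph{Strong Rigidity of Locally Symmetric Spaces}, already cited in the excerpt.

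Next I would identify the geodesic compactification $\overline{X}$ of the appendix with the closed ball $\overline{B}$. Fix the basepoint $x_0 = 0 \in B$. In each ball model the geodesics through $0$ are the Euclidean diameters (reparametrized by arclength), and one has an explicit formula showing that $d_X(0,x) \to \infty$ precisely as $\Vert x\Vert \to 1$, with $d_X(0,x)$ comparable to $-\tfrac{1}{2}\log(1-\Vert x\Vert^2)$. From this, two unit-speed geodesic rays $\sigma_{v_1}, \sigma_{v_2}$ emanating from $0$ remain at bounded distance if and only if $v_1 = v_2$, so the map $v \mapsto [\sigma_v]$ from the Euclidean unit sphere $S_0 \subset T_0X$ to $X(\infty)$ is a bijection. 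More generally, a sequence $x_m \in B$ converges to a point of $S^{n-1}$ in the Euclidean topology if and only if the geodesic segments $\overline{0\,x_m}$ converge locally uniformly to a geodesic ray, which is exactly the definition of convergence in the geodesic compactification. Since both convergence notions agree, the identity on $X = B$ extends to a $G$-equivariant homeomorphism $\Phi \colon \overline{X} \to \overline{B}$ sending $X(\infty)$ onto $S^{n-1}$.

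With $\Phi$ in hand, pulling back the smooth structure on $\overline{B}$ (the one it inherits as a closed subset of $V$) endows $\overline{X}$ with a smooth structure. Under this structure $X(\infty)$ is carried diffeomorphically to $S^{n-1} \subset V$, a smooth sphere of dimension $\dim X - 1$, and smoothness of the extended $G$-action on $X(\infty)$ follows immediately from smoothness of the $G$-action on $\overline{B}$ established in Step one. The main obstacle is really just the case-by-case verification in the first paragraph: each of the four rank-one models requires its own explicit description and verification that the isometry group extends smoothly to the closed ball, but all four are standard. Once one grants those models, the identification of the two compactifications is essentially the same argument used repeatedly in the body of the paper, namely that the distance to the topological boundary is comparable to $-\log$ of the Euclidean distance and therefore controls equivalence of geodesic rays. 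Good references beyond Mostow include Eberlein's \emph{Geometry of Nonpositively Curved Manifolds}, especially Chapter 2.
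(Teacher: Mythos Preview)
Your proposal is correct and follows essentially the same approach as the paper: the paper's proof consists of the single sentence ``This theorem follows from considering the standard models of the negatively curved symmetric spaces, see~\cite[Chapter 19]{M1973},'' and your argument is precisely a fleshed-out version of that citation, using the ball models of the four rank-one families and identifying the geodesic compactification with the closed ball. There is nothing to add.
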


This theorem follows from considering the standard models of the negatively curved symmetric spaces, see~\cite[Chapter 19]{M1973}.

Given two geodesic rays $\sigma_1, \sigma_2: [0,\infty) \rightarrow X$ the function 
\begin{align*}
f(t) = d_X(\sigma_1(t), \sigma_2(t))
\end{align*}
is convex, see~\cite[Chapter II, Proposition 2.2]{BH1999}. So, if $x_n \in X$ is a sequence converging to some $\xi \in X(\infty)$ and $y_n \in X$ is a sequence with 
\begin{align*}
\sup_{n \in \Nb} d_X(x_n, y_n) < \infty,
\end{align*}
 then $y_n$ converges to $\xi$ as well. This fact combined with Theorem~\ref{thm:axial} implies the following.

\begin{proposition}\label{prop:ss_wolf_denjoy} With the notation above, if $g \in G$ is axial, then there exists distinct points $\omega^{+}_g, \omega^-_g \in X(\infty)$ such that 
\begin{align*}
\lim_{n \rightarrow \infty} g^n(x) = \omega_g^+ \text{ and } \lim_{n \rightarrow -\infty} g^n(x) = \omega_g^-
\end{align*}
for all $x \in X$. 
\end{proposition}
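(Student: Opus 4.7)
The plan is to use Theorem~\ref{thm:axial} to reduce the statement to a straightforward convexity/isometry argument on $X$, together with the bounded-distance criterion for convergence to a boundary point that was mentioned just before the proposition.

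First, since $g$ is axial, Theorem~\ref{thm:axial} furnishes a unit speed geodesic $\sigma:\Rb\rightarrow X$ and a real number $T>0$ with $g\sigma(t)=\sigma(t+T)$ for all $t\in\Rb$. I would then define
\begin{align*}
\omega_g^+ = \lim_{t\rightarrow+\infty}\sigma(t) \in X(\infty), \qquad \omega_g^- = \lim_{t\rightarrow-\infty}\sigma(t) \in X(\infty),
\end{align*}
where the limits are taken in the geodesic compactification $\overline{X}$.

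Next I would verify that $\omega_g^+\neq\omega_g^-$. If they coincided, then the two rays $t\mapsto\sigma(t)$ and $t\mapsto\sigma(-t)$ emanating from $\sigma(0)$ would represent the same point of $X(\infty)$ and hence be equivalent, so the function $t\mapsto d_X(\sigma(t),\sigma(-t))$ would remain bounded as $t\rightarrow+\infty$. But $\sigma$ is a unit speed geodesic, so $d_X(\sigma(t),\sigma(-t))=2t$, a contradiction.

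Finally, fix any $x\in X$. Since $g$ acts by isometries on $X$ and $g^n\sigma(0)=\sigma(nT)$, we have
\begin{align*}
d_X\!\left(g^n(x),\sigma(nT)\right) = d_X\!\left(g^n(x),g^n\sigma(0)\right) = d_X(x,\sigma(0)),
\end{align*}
which is a constant independent of $n$. As $n\rightarrow+\infty$, the points $\sigma(nT)$ converge in $\overline{X}$ to $\omega_g^+$, so the bounded-distance criterion recalled from convexity of $t\mapsto d_X(\sigma_1(t),\sigma_2(t))$ (stated in the excerpt just before the proposition) yields $g^n(x)\rightarrow\omega_g^+$. Applying the same reasoning to $g^{-1}$, which is also axial and translates $\sigma$ by $-T$, gives $g^n(x)\rightarrow\omega_g^-$ as $n\rightarrow-\infty$. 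No step here looks hard; the only subtlety is the distinctness of $\omega_g^\pm$, which is immediate from $T>0$ as above.
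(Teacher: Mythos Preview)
Your proposal is correct and follows essentially the same route as the paper: the paper simply remarks that the bounded-distance criterion (convexity of $t\mapsto d_X(\sigma_1(t),\sigma_2(t))$) combined with Theorem~\ref{thm:axial} yields the proposition, which is exactly the argument you wrote out. Your explicit check that $\omega_g^+\neq\omega_g^-$ is a detail the paper leaves implicit.
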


Since $G$ has real rank one, there is simple characterization of axial elements. Given an element $g \in G$ we define the \emph{translation length} of $g$ to be
\begin{align*}
\tau(g) = \lim_{n \rightarrow \infty} \frac{d_X(g^n(x),x)}{n}.
\end{align*}
Since 
\begin{align*}
d_X(g^{m+n}(x),x) \leq d_X(g^m(x),x) + d_X(g^n(x),x)
\end{align*}
this limit exists by a standard lemma (see for instance~\cite[Theorem 4.9]{W1982}). Further the limit does not depend on $x$. 

One can then show the following. 

\begin{proposition}\label{prop:translation_dist}
With the notation above, suppose $g \in G$. Then $\tau(g) >0$ if and only if $g$ is axial. 
\end{proposition}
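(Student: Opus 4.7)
The proof splits into two directions. For the forward direction, if $g$ is axial then Theorem~\ref{thm:axial} supplies a unit-speed geodesic $\sigma:\Rb \to X$ and some $T>0$ with $g\sigma(t) = \sigma(t+T)$. Setting $x=\sigma(0)$, we have $g^n(x) = \sigma(nT)$ and hence $d_X(g^n x, x)=nT$, yielding $\tau(g)=T>0$ immediately.

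For the reverse direction I would argue the contrapositive: assume $g$ is not axial and show $\tau(g)=0$. Write the Jordan decomposition $g = g_e g_h g_u$ of Theorem~\ref{thm:jordan_decomp} with pairwise commuting factors. Since $\tau$ is invariant under conjugation and under multiplication by $Z(G)=\ker \Ad$ (which acts trivially on $X$), I may adjust $g$ freely by central and conjugating elements. The crucial rank-one observation is that the mixed case ``$\Ad(g_h) \neq 1$ and $g_u \neq 1$'' cannot occur: since $g_u$ commutes with $g_h$, an elementary computation with $\Ad(g_h)=\exp(\ad \log g_h)$ forces $\log g_u \in \ker \ad(\log g_h)$, which in rank one equals $\aL \oplus \mathfrak{m}$ where $\aL$ is the Cartan subalgebra through $\log g_h$ and $\mathfrak{m}\subset \kL$ centralizes $\aL$; every element of $\aL \oplus \mathfrak{m}$ is semisimple (sum of a real-diagonal piece and a skew piece, which commute), contradicting nilpotence of $\log g_u$. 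So non-axiality of $g$ forces either (a) $\Ad(g_h)=1$ and $g_u=1$, i.e.\ $g$ is elliptic modulo $Z(G)$, or (b) $\Ad(g_h)=1$ and $g_u \neq 1$, i.e.\ $g = g_e g_u$ modulo $Z(G)$ with commuting elliptic and unipotent factors.

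In case (a), $\Ad(g)$ lies in a compact subgroup of $\Ad(G)$, and since $\Ad(K)$ is a maximal compact of $\Ad(G)$ I can find $h\in G$ with $hgh^{-1}\in K\cdot Z(G)$; this element fixes $eK\in X$, so $g$ fixes $h^{-1}K$ and $\tau(g)=0$. In case (b) I would invoke the standard estimate
\[
d_X(g \cdot x_0, x_0) \leq C_1 \log \norm{\Ad(g)} + C_2
\]
for all $g\in G$, where $x_0=eK$ and $\norm{\cdot}$ is a fixed operator norm on $\End(\gL)$. This follows from the $KAK$ decomposition: writing $g = k_1 a k_2$, $d_X(g x_0,x_0) = d_X(a x_0,x_0)$ is a constant multiple of $\abs{\log a}$, while $\norm{\Ad(g)} \asymp \norm{\Ad(a)} = e^{c\abs{\log a}}$ since $\Ad(K)$ is bounded and $\Ad(a)$ has eigenvalues $e^{\alpha(\log a)}$ on the positive root $\alpha$. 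Commutativity then gives $\Ad(g)^n = \Ad(g_e)^n \Ad(g_u)^n$ with $\Ad(g_e)^n$ bounded and $\Ad(g_u)^n=(I+N)^n$ of polynomial growth, so $\norm{\Ad(g)^n} = O(n^k)$ for some $k$, whence
\[
d_X(g^n x_0, x_0) \leq C_1 \log \norm{\Ad(g)^n} + C_2 = O(\log n),
\]
and $\tau(g)=0$.

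The main obstacle is the rank-one structural dichotomy ruling out the mixed Jordan case; once that is in hand, the two remaining cases reduce to standard facts about elliptic isometries of $X$ and the logarithmic distance-norm bound from the $KAK$ decomposition, which is essentially the same mechanism already used to prove Lemma~\ref{lem:dist_norm_est} in the body of the paper.
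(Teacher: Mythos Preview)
Your proof is correct and follows the same overall architecture as the paper's sketch: reduce $\tau(g)>0$ to a statement about the Jordan decomposition via the $KAK$/distance--norm comparison, and then argue that in rank one the ``mixed'' case $\Ad(g_h)\neq 1$, $g_u\neq 1$ cannot occur. The difference lies in how that mixed case is eliminated. You do it algebraically: after conjugating $g_h$ into the one-dimensional Cartan $\aL$, any nonzero element is regular, so the $\Ad(g_h)$-fixed subspace of $\gL$ is exactly $\gL_0=\aL\oplus\mathfrak{m}$, which contains no nonzero nilpotents; hence $\log g_u=0$. The paper instead invokes the geometric fact (from negative curvature, via \cite[Chapter III, Corollary 3.10]{BH1999}) that $C_G(h)/\langle h\rangle$ is compact for hyperbolic $h$, which forces any commuting unipotent to be central. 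Your route is more self-contained on the Lie-theory side and avoids the external reference; the paper's route is shorter to state and ties back to the same centralizer-compactness idea used elsewhere in Section~\ref{sec:proof_convex}. For the forward direction you also diverge slightly: you read $\tau(g)=T>0$ directly off the translated geodesic from Theorem~\ref{thm:axial}, whereas the paper runs both directions through the two-sided inequality relating $d_X$ and $\log\norm{\Ad(g)}$. Your version is cleaner there.
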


\begin{proof}[Proof Sketch]
Fix a norm on $\gL$ and let $\norm{\cdot}$ be the associated operator norm on $\GL(\gL)$. Then there exists $A \geq 1$ and $B \geq 0$ such that 
\begin{align*}
\frac{1}{A} \log \norm{\Ad(g)} - B \leq d_X( gx,x) \leq A \log \norm{\Ad(g)} + B
\end{align*}
for all $g \in G$ by Theorem~\ref{thm:polar_coor} Part (2). So $\tau(g) >0$ if and only if 
\begin{align*}
\liminf_{n \rightarrow \infty} \frac{1}{n}  \log \norm{\Ad(g^n)} > 0.
\end{align*} 

Further if $g=g_e g_h g_u$ is a Jordan decomposition of $g$, then 
\begin{align*}
\liminf_{n \rightarrow \infty} \frac{1}{n}  \log \norm{\Ad(g^n)} > 0 
\end{align*} 
if and only if $\Ad(g_h) \neq 1$. So $\tau(g) > 0$ if and only if $\Ad(g_h) \neq 1$. 

Since $X$ is negatively curved, if $h \in G$ is a hyperbolic element then the quotient $C_G(h) / \{ h^n : n \in \Zb\}$ is compact (see the proof of~\cite[Chapter III, Corollary 3.10]{BH1999}). So if $\Ad(g_h) \neq 1$, then $\Ad(u_h) =1$. So $\tau(g) >0$ if and only if $g$ is axial. 
\end{proof}

Since $G$ has rank one, we also have the following.

\begin{theorem}\label{thm:rankone_hyp} With the notation above:
\begin{enumerate}
\item If $\xi,\eta \in X(\infty)$ are distinct, then there exists an unique (up to reparametrization) geodesic $\gamma: \Rb \rightarrow X$ such that 
\begin{align*}
\lim_{t \rightarrow \infty} \gamma(t) = \xi \text{ and } \lim_{t \rightarrow -\infty} \gamma(t) = \eta.
\end{align*}
\item If $\xi,\eta \in X(\infty)$ are distinct, then there exists a hyperbolic element $h$ such that $\tau(h)=1$, $\omega^+_{h} = \xi,$ and $\omega^-_{h} = \eta$. 
\item Let $x_0 \in X$ and $h_1, h_2 \in G$ be hyperbolic elements with $\omega^+_{h_1} = \omega^+_{h_2}$. Then
\begin{align*}
d_X^{\Haus}(\{ h_1^n(x_0) : n \in \Nb\}, \{ h_2^n(x_0): n \in \Nb\} ) 
\end{align*}
is finite and depends continuously on $x_0 \in X$ and $h_1, h_2 \in G$.
\end{enumerate}
\end{theorem}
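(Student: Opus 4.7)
The plan is to prove the three parts of Theorem~\ref{thm:rankone_hyp} separately, exploiting the fact that the rank-one symmetric space $X$ is a Riemannian manifold of strictly negative sectional curvature (hence CAT$(-\kappa)$ for some $\kappa >0$), together with Theorem~\ref{thm:axial}, Theorem~\ref{thm:hyp_ss}, and Proposition~\ref{prop:ss_wolf_denjoy}.

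For Part (1), I would use a standard visibility argument. Fix a base point $x_0 \in X$ and pick sequences $y_n \rightarrow \eta$ and $z_n \rightarrow \xi$ in $X$, joined by unique geodesic segments $\sigma_n$. Because $X$ is strictly negatively curved and $\eta \neq \xi$, each $\sigma_n$ passes within a uniformly bounded distance of $x_0$. Reparametrize so that the closest point to $x_0$ lies at parameter $0$ and extract a limit via Arzel{\`a}-Ascoli, yielding a bi-infinite geodesic $\gamma:\Rb \rightarrow X$ with $\lim_{t \rightarrow \infty}\gamma(t)=\xi$ and $\lim_{t\rightarrow-\infty}\gamma(t)=\eta$. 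Uniqueness comes from the flat strip theorem: if $\gamma_1, \gamma_2$ are two such geodesics parametrized to share Busemann levels $b_\xi\circ\gamma_i(t)=-t$, then $t \mapsto d_X(\gamma_1(t),\gamma_2(t))$ is convex and bounded (by asymptoticity), hence constant, forcing a flat strip; strict negative curvature admits no non-degenerate flat strip, so $\gamma_1=\gamma_2$.

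For Part (2), apply (1) to obtain a geodesic $\sigma:\Rb \rightarrow X$ from $\eta$ to $\xi$. The second assertion of Theorem~\ref{thm:hyp_ss} provides a one-parameter subgroup $\{h_t\}$ of hyperbolic elements such that $h_t(\sigma(s)) = \sigma(s+t)$. Then $h := h_1$ satisfies $d_X(h^n\sigma(0), \sigma(0)) = n$, so $\tau(h)=1$; since $h^n \sigma(0)=\sigma(n)\rightarrow \xi$ and $h^{-n}\sigma(0)=\sigma(-n)\rightarrow \eta$, Proposition~\ref{prop:ss_wolf_denjoy} gives $\omega_h^+=\xi$ and $\omega_h^-=\eta$.

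For Part (3), let $\gamma_i$ be the axis of $h_i$ (from Theorems~\ref{thm:axial} and~\ref{thm:hyp_ss}) with $\gamma_i(+\infty)=\omega^+_{h_i}=\xi$, parametrized so that $h_i(\gamma_i(s))=\gamma_i(s+\tau(h_i))$ and so that the Busemann levels agree: $b_\xi\circ\gamma_i(t)=-t$. In a CAT$(-\kappa)$ space, two geodesics sharing an ideal endpoint and agreeing on Busemann level converge exponentially, so $M := \sup_{t \geq 0} d_X(\gamma_1(t), \gamma_2(t)) < \infty$. Letting $\gamma_i(s_i)$ be the point of $\gamma_i$ nearest $x_0$ and $D_i = d_X(x_0, \gamma_i(s_i))$, one has $d_X(h_i^n(x_0), \gamma_i(s_i + n\tau(h_i))) = D_i$. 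Given $n \in \Nb$, choose $m=m(n) \in \Nb$ minimizing $\abs{s_1 + n\tau(h_1) - s_2 - m\tau(h_2)}$, so the gap is at most $\tau(h_2)/2$ for large $n$. The triangle inequality, together with $M<\infty$ and the $1$-Lipschitz control on $\gamma_2$ over intervals of length $\tau(h_2)/2$, then bounds $d_X(h_1^n(x_0), h_2^{m(n)}(x_0))$ uniformly in $n$; reversing roles gives the symmetric bound, proving the Hausdorff distance is finite. Continuity then follows by inspection: $\tau(h_i)$, the endpoints $\omega^{\pm}_{h_i}$, hence the axis $\gamma_i$, the closest-point parameter $s_i(x_0)$, the distance $D_i$, and the asymptotic constant $M$ all depend continuously on $(h_1, h_2, x_0)$ within the open set where the attracting fixed points coincide and each $h_i$ remains hyperbolic, so the explicit bound constructed above is continuous as well. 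The main obstacle is producing $M$ and establishing its continuity; both rely essentially on the CAT$(-\kappa)$ character of $X$, without which asymptotic geodesics could spread and the orbits would fail to shadow each other. Once that input is available, the rest of Part (3) is a direct synchronization argument.
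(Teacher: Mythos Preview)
Your approach is the paper's: the paper simply cites Eberlein--O'Neill for part (1) and says parts (2) and (3) follow from part (1) together with Theorem~\ref{thm:hyp_ss}; you have written out the underlying CAT$(-\kappa)$/visibility arguments. Parts (1) and (2) are correct and match that outline.

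In part (3) the finiteness argument is fine, but the continuity step has a genuine gap. You construct a continuous \emph{upper bound} for the Hausdorff distance and then conclude that the Hausdorff distance itself is continuous; that inference is invalid, and in fact the Hausdorff distance between discrete forward orbits need not vary continuously. Take any geodesic $\gamma$ in $X$, let $h_1$ translate along $\gamma$ by $1$ and $h_2$ by $1+\epsilon$, and set $x_0=\gamma(0)$. Then $\omega^+_{h_1}=\omega^+_{h_2}$ and the Hausdorff distance is $0$ at $\epsilon=0$, yet for small $\epsilon>0$ the point $\gamma(n)$ with $n$ near $1/(2\epsilon)$ lies at distance about $1/2$ from the $h_2$-orbit, so the Hausdorff distance jumps to roughly $1/2$. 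What your argument does establish is that the Hausdorff distance is \emph{locally bounded} (dominated by a continuous function of $(x_0,h_1,h_2)$), and that is exactly what the paper actually uses in the proof of Lemma~\ref{lem:unif_shadow}; so for the purposes of this paper your argument suffices. One smaller correction: the locus $\{\omega^+_{h_1}=\omega^+_{h_2}\}$ is closed, not open, so the phrase ``within the open set where the attracting fixed points coincide'' should be adjusted.
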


\begin{proof} For part (1) see for instance~\cite[Proposition 4.4]{EO1973}. Then part (2) and (3) follows from part (1) and Theorem~\ref{thm:hyp_ss}. \end{proof}

\subsection{Polar coordinates}\label{sec:polar_coor} When $G$ has real rank one, every hyperbolic element induces ``polar coordinates'' on $X$. In particular, let $a \in G$ be hyperbolic. After possibly replacing $a$ with an element in $aZ(G)$,  Theorem~\ref{thm:hyp_ss} implies that there exists a one-parameter subgroup $a_t$ of hyperbolic elements such that $a=a_T$ for some $T > 0$. Further, there exists a geodesic $\gamma : \Rb \rightarrow X$ such that $a_t(\gamma(s)) = \gamma(s+t)$ for all $s,t \in \Rb$. 

Let $K_{0}$ denote the stabilizer of $x_0=\gamma(0)$ in $G$. Then since $G$ acts transitively on $X$, we have a natural identification $G/K_{0} = X$. Next let $M_a \leq K_0$ denote the elements of $K_0$ that commute with the subgroup $a_{\Rb}$. We then have the following. 

\begin{theorem}\label{thm:polar_coor} With the notation above: 
\begin{enumerate}
\item  $G = K_{0} a_{\Rb} K_{0}$,
\item For any $k \in K_{0}$ the curve $\gamma_k(t) = k a_t x_0$ is a geodesic in $X$ and so
\begin{align*}
d_X( ka_t K_0, K_0) = \abs{t}.
\end{align*}
\item If $k_1, k_2 \in K_0$ and 
\begin{align*}
\lim_{t \rightarrow \infty} d_X(\gamma_{k_1}(t), \gamma_{k_2}(t)) < \infty,
\end{align*}
then $k_1 M_a= k_2 M_a$ (and hence $\gamma_{k_1}=\gamma_{k_2}$).
\item The map $kM_a \in K_0/M_a \rightarrow [\gamma_k] \in X(\infty)$ is a homeomorphism. 
\end{enumerate}
\end{theorem}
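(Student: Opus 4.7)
The plan is to deduce all four parts from the classical theory of symmetric spaces of non-compact type specialized to the rank-one case, proceeding in the order (1), (2), (3), (4).

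For part (1), the KAK (Cartan) decomposition, I would argue geometrically. Given $g \in G$, set $y = g x_0$, and let $\sigma$ be the unique geodesic segment in $X$ joining $x_0$ to $y$. Since $G$ has real rank one, the isotropy group $K_0$ acts transitively on the unit sphere in $T_{x_0}X$, so some $k_1 \in K_0$ sends $\sigma'(0)$ to $\gamma'(0)$; then $k_1^{-1}\sigma$ is a subarc of $\gamma$, so $k_1^{-1}y = a_t x_0$ for a unique $t \in \Rb$. The element $a_t^{-1} k_1^{-1} g$ fixes $x_0$, hence lies in $K_0$, yielding $g = k_1 a_t k_2$. Part (2) is then immediate: $\gamma_k = k \circ \gamma$ is the image of a unit-speed geodesic under the isometry $k$ and is therefore itself a unit-speed geodesic, and since $k \in K_0$ stabilizes $x_0$ we have $d_X(k a_t K_0, K_0) = d_X(\gamma_k(t), \gamma_k(0)) = \abs{t}$.

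Part (3) is the technical core. The function $t \mapsto d_X(\gamma_{k_1}(t), \gamma_{k_2}(t))$ is convex on the CAT(0) space $X$, so the hypothesis forces $\gamma_{k_1}$ and $\gamma_{k_2}$ to be asymptotic, i.e.\ $[\gamma_{k_1}] = [\gamma_{k_2}]$ in $X(\infty)$; equivalently, $k_1^{-1} k_2 \in K_0$ fixes the boundary point $[\gamma]$. It therefore suffices to identify the stabilizer of $[\gamma]$ in $K_0$ with $M_a$. The inclusion $M_a \subset \mathrm{Stab}_{K_0}([\gamma])$ is immediate, since any $m \in M_a$ satisfies $m \gamma(t) = m a_t x_0 = a_t m x_0 = a_t x_0 = \gamma(t)$. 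For the reverse, if $k \in K_0$ fixes $[\gamma]$, uniqueness of geodesic rays from $x_0$ to a given boundary point forces $k\gamma = \gamma$ pointwise; a rank-one argument---using that the isotropy representation of $K_0$ on $T_{x_0}X$ is transitive on unit vectors and identifying $M_a$ with the stabilizer of $\gamma'(0)$ for this representation---then gives $k \in M_a$.

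Finally, for part (4), well-definedness and injectivity of $kM_a \mapsto [\gamma_k]$ follow from (3); surjectivity uses the transitivity established in the proof of (1), since every $\eta \in X(\infty)$ is represented by a unique geodesic ray from $x_0$, and that ray is a $K_0$-translate of $\gamma$. Joint continuity of the $G$-action on $\overline{X}$ makes the map continuous, and compactness of $K_0/M_a$ together with Hausdorffness of $X(\infty)$ automatically upgrades this continuous bijection to a homeomorphism. The main obstacle is the identification of $\mathrm{Stab}_{K_0}([\gamma])$ with $M_a$ in part (3), which is the only step where rank one genuinely enters; in practice I would simply cite Eberlein~\cite{E1996}, which establishes all four statements, rather than reproduce the Lie-theoretic argument.
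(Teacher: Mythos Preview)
Your proposal is correct and follows essentially the same route as the paper, with only minor differences in exposition. For (1) the paper simply cites the general Cartan decomposition from Helgason rather than giving your rank-one geometric argument via transitivity of $K_0$ on the unit sphere of $T_{x_0}X$. For (3) the paper is marginally more direct: it observes that the convex function $f(t)=d_X(\gamma_{k_1}(t),\gamma_{k_2}(t))$ satisfies $f(0)=0$, so boundedness forces $f\equiv 0$ on $[0,\infty)$ immediately, and then cites Helgason~\cite[Chapter IX, Corollary 1.2]{H2001} for the conclusion $k_1M_a=k_2M_a$; your detour through asymptoticity, uniqueness of geodesic rays from $x_0$, and the identification of $M_a$ with the isotropy stabilizer of $\gamma'(0)$ reaches the same endpoint with a bit more machinery. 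Your treatment of (4) is actually more complete than the paper's, which dispatches it in one line as a consequence of (3) and the definition of $X(\infty)$.
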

 
\begin{proof} Part (1) is the so-called \emph{Cartan decomposition}, see for instance~\cite[Chapter IX, Theorem 1.1]{H2001}. By our choice of $x_0$, the curve $\gamma(t) = a_t x_0$ is a geodesic in $X$. Since $G$ acts by isometries, each $\gamma_k$ is also a geodesic and part (2) is true.

 Now suppose that $k_1, k_2 \in K$. Then since $X$ is non-positively curved the function 
 \begin{align*}
 f(t) = d_X(\gamma_{k_1}(t), \gamma_{k_2}(t))
 \end{align*}
 is convex (see~\cite[Chapter II, Proposition 2.2]{BH1999}) and $f(0)=0$. So if 
 \begin{align*}
\lim_{t \rightarrow \infty} d_X(\gamma_{k_1}(t), \gamma_{k_2}(t)) < \infty,
\end{align*}
then $\gamma_{k_1}(t) = \gamma_{k_2}(t)$ for all $t$. Then ~\cite[Chapter IX, Corollary 1.2]{H2001} implies that $k_1 M_a = k_2 M_a$. Thus part (3) is true. 

Finally, part (4) follows from part (3) and the definition of $X(\infty)$. 
\end{proof}

\bibliographystyle{alpha}
\bibliography{complex_kob}

\end{document}